   \providecommand{\fg}{\ifdim\lastskip>\z@\unskip\fi~\frqq}%
\newcommand{\noun}[1]{\textsc{#1}}
\numberwithin{equation}{section}
\numberwithin{figure}{section}
\theoremstyle{plain}
\newtheorem{thm}{\protect\theoremname}[section]
\theoremstyle{remark}
\newtheorem{rem}[thm]{\protect\remarkname}
\theoremstyle{definition}
\newtheorem{problem}[thm]{\protect\problemname}
\theoremstyle{plain}
\newtheorem{cor}[thm]{\protect\corollaryname}
\theoremstyle{remark}
\newtheorem*{acknowledgement*}{\protect\acknowledgementname}
\theoremstyle{definition}
\newtheorem{example}[thm]{\protect\examplename}
\theoremstyle{plain}
\newtheorem{lem}[thm]{\protect\lemmaname}
\theoremstyle{definition}
\newtheorem{defn}[thm]{\protect\definitionname}
\theoremstyle{remark}
\newtheorem{notation}[thm]{\protect\notationname}
\theoremstyle{plain}
\newtheorem{prop}[thm]{\protect\propositionname}
\providecommand{\acknowledgementname}{Acknowledgement}
\providecommand{\corollaryname}{Corollary}
\providecommand{\definitionname}{Definition}
\providecommand{\examplename}{Example}
\providecommand{\lemmaname}{Lemma}
\providecommand{\notationname}{Notation}
\providecommand{\problemname}{Problem}
\providecommand{\propositionname}{Proposition}
\providecommand{\remarkname}{Remark}
\providecommand{\theoremname}{Theorem}
\begin{document}
\selectlanguage{english}
\title{Micro-local analysis of contact Anosov flows and band structure of
the Ruelle spectrum}
\author{\href{https://www-fourier.ujf-grenoble.fr/~faure/}{Frédéric Faure}\\
Institut Fourier, UMR 5582, Laboratoire de Mathématiques\\
Université Grenoble Alpes, CS 40700, 38058 Grenoble cedex 9, France\\
\href{mailto:frederic.faure@univ-grenoble-alpes.fr}{frederic.faure@univ-grenoble-alpes.fr}\\
\and\\
\href{https://tsujiimasato.wordpress.com/}{Masato Tsujii}\\
Department of Mathematics, Kyushu University,\\
Moto-oka 744, Nishi-ku, Fukuoka, 819-0395, Japan \\
\href{mailto:tsujii@math.kyushu-u.ac.jp}{tsujii@math.kyushu-u.ac.jp}}

\maketitle
\selectlanguage{english}

\selectlanguage{french}%

\newtcolorbox{cBoxA}[1][]{enhanced, frame style={purple!80}, interior style={red!0}, #1}

\newtcolorbox{cBoxB}[2][]{enhanced, frame style={teal!80}, interior style={cyan!0}, #2}

\selectlanguage{english}
\begin{abstract}
We develop a geometric micro-local analysis of contact Anosov flow,
such as the geodesic flow on negatively curved manifold. This microlocal
analysis is based on the wavepacket transform discussed in \cite{faure_tsujii_Ruelle_resonances_density_2016}.
The main result is that the transfer operator is well approximated
(in the high frequency limit) by the quantization of the Hamiltonian
flow naturally defined from the contact Anosov flow and extended to
some vector bundle over the symplectization set. This has a few important
consequences: the discrete eigenvalues of the generator of transfer
operators, called Ruelle spectrum, are structured into vertical bands.
If the right-most band is isolated from the others, most of the Ruelle
spectrum in it concentrate along a line parallel to the imaginary
axis and, further, the density satisfies a Weyl law as the imaginary
part tend to infinity. Some of these results were announced in \cite{faure_tsujii_band_CRAS_2013}.
\end{abstract}
\footnote{2010 Mathematics Subject Classification:

37D20 Uniformly hyperbolic systems (expanding, Anosov, Axiom A, etc.)

37D35 Thermodynamic formalism, variational principles, equilibrium
states

37C30 Zeta functions, (Ruelle-Frobenius) transfer operators, and other
functional analytic techniques in dynamical systems

81Q20 Semiclassical techniques, including WKB and Maslov methods

81Q50 Quantum chaos

Keywords: Transfer operator; Ruelle resonances; decay of correlations;
Semi-classical analysis.}

\global\long\def\eq#1{\underset{(#1)}{=}}%
\global\long\def\ineq#1{\underset{(#1)}{\leq}}%
\global\long\def\ineqs#1{\underset{(#1)}{\geq}}%

\begin{rem}
On this pdf file, you can click on the colored words, they contain
an hyper-link to \href{https://en.wikipedia.org/wiki/Main_Page}{wikipedia}
or other multimedia contents. Appendix \ref{sec:General-notations-used}
contains some convention of notations used in this paper.
\end{rem}

\newpage{}

\tableofcontents{}

\newpage{}

\section{Introduction}

\subsection{Broad mathematical context}

In this paper we consider a \textbf{\href{https://en.wikipedia.org/wiki/Chaos_theory}{deterministic chaotic dynamical system}}
defined by a contact Anosov vector field $X$ on a closed manifold
$M$ that generates a flow $\phi^{t}:M\rightarrow M$. A typical example
is the \textbf{geodesic flow for a negatively curved Riemannian manifold}.
These flows possess the Anosov property also known as uniform hyperbolicity:
each trajectory is very unstable with \href{https://en.wikipedia.org/wiki/Chaos_theory\#Sensitivity_to_initial_conditions}{sensitivity to initial conditions},
and most of them look complex and random (i.e. chaotic) even though
this system is deterministic. Long time prediction of the dynamics
therefore seems difficult. For such chaotic dynamical systems a fruitful
approach initiated in the 1970s has been to consider not individual
orbits but a smooth collection of them, i.e. the evolution of a cloud
of points under the flow. It appears that this cloud of points equidistributes
over long time, i.e., converges weakly to some measure called equilibrium
and thus providing a predictive tool for these systems. We can moreover
refine this description by considering corrections to this evolution
towards the equilibrium, giving an asymptotic series that describes
the long time behavior of the cloud of points. Technically the clouds
of points will be modeled by a smooth function $u\in C^{\infty}\left(M;\mathbb{C}\right)$
(or more generally by a smooth section of some arbitrary vector bundle)
and we are interested in describing the evolution $u\circ\phi^{t}$
for long times $t\gg1$ in the weak sense, i.e., ``correlation functions''
$t\in\mathbb{R}\mapsto\langle v|u\circ\phi^{t}\rangle\in\mathbb{C}$
for any smooth test function $v$.

Important progress has been made by Pollicott, Ruelle and others who
were able to describe these correlation functions using an asymptotic
expansion over some discrete set of ``Pollicott-Ruelle resonances'',
which are the poles of the resolvent $z\in\mathbb{C}\rightarrow\left(z-X\right)^{-1}$
and more recently, that appear to be the discrete eigenvalues of the
vector field $X:\mathcal{H}\rightarrow\mathcal{H}$ in a suitable
Hilbert space $\mathcal{H}$ (an anisotropic Sobolev space) \cite{pollicott1986meromorphic,ruelle_86,Ruelle1991},
\cite{baladi_livre_00}.

More recently researchers have been able to provide more precise information
on the location of these eigenvalues and associated eigenvectors for
some specific models. This paper presents the special case where the
vector field $X$ is an Anosov contact vector field. We show that
the Pollicott-Ruelle eigenvalues are organized into vertical bands
in the spectral plane $\mathbb{C}$ and provide consequences for the
description of correlation functions $t\mapsto\langle v|u\circ\phi^{t}\rangle$
defined above. The main result presented here is that these corrections
to the equilibrium are small fluctuations that behave as if governed
by a quantum evolution equation, cf. Theorem \ref{thm:1_emergence_of_QM}.
We can say roughly that a\textbf{ quantum dynamics emerges from the
correlation functions of a deterministic chaotic dynamical system}.
Technically we use \textbf{microlocal analysis}, which appears to
be a very fruitful approach in this domain \cite{fred-roy-sjostrand-07,fred_flow_09}\cite{dyatlov_zworski_zeta_2013}.

\subsubsection*{\label{subsec:Related-papers}Related papers}

We present now previous works specifically related to the topics of
band structure of the Ruelle spectrum for contact Anosov dynamics
and the emergence of an effective quantum dynamics as expressed in
Eq.(\ref{eq:emerging-1}), using micro-local analysis. This paper
is a continuation along the following series of papers.
\begin{itemize}
\item In \cite{fred-PreQ-06}, the simplest toy model has been considered.
This dynamical system is a contact $U\left(1\right)$-extension of
the Arnold's \href{https://en.wikipedia.org/wiki/Arnold\%27s_cat_map}{cat map}
$M=\left(\begin{array}{cc}
2 & 1\\
1 & 1
\end{array}\right)$ on $\mathbb{T}^{2}$ (or any hyperbolic $M\in SL_{2}\left(\mathbb{Z}\right)$).
Band structure of the Ruelle spectrum and emergence of the quantum
cat map dynamics has been shown. This model is also called ``prequantum
cat map'' since its construction follows the \href{https://en.wikipedia.org/wiki/Geometric_quantization\#Prequantization}{prequantization}
procedure of Kostant, Souriau, Kirillov that is equivalent to a contact
$U\left(1\right)$-extension of the Hamiltonian dynamics.
\begin{rem}
\label{rem:The-symplectic-form}The symplectic form $\Omega$ on $\mathbb{T}^{2}$
is not \href{https://en.wikipedia.org/wiki/Closed_and_exact_differential_forms}{exact}
so we need to consider a non trivial line bundle $L\rightarrow\mathbb{T}^{2}$
with curvature $\Omega$, called prequantum line bundle. On the opposite,
in the model of contact Anosov flows considered in this paper the
symplectic form $\Omega$ on $\Sigma\subset T^{*}M$ is exact since
$\Omega=d\theta$ with $\theta$ being the Liouville one-form (\ref{eq:canonical_Liouville}).
Hence the prequantum line bundle $L\rightarrow\Sigma$ is trivial
and we have decided to ignore it, although its presence manifests
all along the paper, for example  in the definition of the twist operator
(\ref{eq:def_twisted_pull_back}). For a more correct geometrical
description, we have to twist by $L$ in the tensor product $\mathcal{F}\otimes L\rightarrow\Sigma$
in (\ref{eq:def_F-1}).
\end{rem}

\item In \cite{faure-tsujii_prequantum_maps_12}, the previous model is
generalized. We consider a $U\left(1\right)$-extension of an arbitrary
symplectic Anosov map $\phi:M\rightarrow M$ (also called prequantum
Anosov map as explained before). Band structure of the Ruelle spectrum
and emergence of the quantum dynamics has been shown. The models in
\cite{fred-PreQ-06,faure-tsujii_prequantum_maps_12} can be considered
as toy models (or simplified models) for contact Anosov flows considered
in this paper because they are analogous (but not equivalent) to a
time one contact Anosov flow map. These toy models are indeed at the
core of this paper, we will come back to them in the appendix \ref{sec:Bargmann-transform-and}
and \ref{sec:Linear-expanding-maps}.
\item The paper \cite{faure_tsujii_band_CRAS_2013} announced the band spectrum
for contact Anosov flows, (less precise) Weyl law and the method presented
in this paper.
\item In the paper \cite{faure-tsujii_anosov_flows_13}, one has considered
the interesting choice of the bundle $F=\left|\mathrm{det}E_{s}\right|^{1/2}$,
giving the trivial bundle (\ref{eq:trivial bundle}). This model is
treated in relation with the ``semi-classical zeta function'' that
generalizes the \href{https://en.wikipedia.org/wiki/Selberg_zeta_function}{Selberg zeta function}
to non constant curvature.  Technically, the case of the Hölder continuous
bundle $F=\left|\mathrm{det}E_{s}\right|^{1/2}$ is more tricky than
the case a smooth bundle $F$ and needs to consider an extension of
the dynamics to a Grassmanian bundle.
\item C. Guillarmou and M. Cekic in \cite{guillarmou2020band} prove the
first band for contact Anosov flows in dimension 3 using horocycle
operators.
\item For the special case of hyperbolic manifolds, endowed with an algebraic
structure, ie. high symmetry described by Lie groups, band spectrum
of Anosov dynamics has been studied in \cite{dyatlov_faure_guillarmou_2014},
\cite{guillarmou_weich_resonances_16}, \cite{hilgert2021higher},
\cite{barkhofen2022semiclassical}.
\item In different situations, band structure and Weyl law for the spectrum
of resonances has been studied in \cite{stefanov1995distribution},\cite{sjostrand1999asymptotic}
for convex obstacles and by S. Dyatlov in \cite{dyatlov_resonance_band_2013}
for regular normally hyperbolic trapped sets.
\end{itemize}

\subsection{Model considered in this paper}

\paragraph{Contact Anosov flows:}

In this paper we consider a smooth \textbf{\href{https://en.wikipedia.org/wiki/Contact_geometry\#Reeb_vector_field}{contact}
\href{https://en.wikipedia.org/wiki/Anosov_diffeomorphism\#Anosov_flow}{Anosov}
vector field} $X$ on a closed \href{https://en.wikipedia.org/wiki/Contact_geometry}{contact manifold}
$\left(M,\mathcal{A}\right)$ with contact one form $\mathcal{A}$.
A particular and important example is with $\left(\mathcal{N},g_{\mathcal{N}}\right)$
a closed Riemannian manifold with negative curvature, the \href{https://en.wikipedia.org/wiki/Geodesic\#Geodesic_flow}{geodesic vector field}
$X$ is a contact Anosov vector field on the \href{https://en.wikipedia.org/wiki/Unit_tangent_bundle}{unit cotangent bundle}
$M=T_{1}^{*}\mathcal{N}$, where the \href{https://en.wikipedia.org/wiki/Contact_geometry}{contact one form}
$\mathcal{A}=pdq$ is the \href{https://en.wikipedia.org/wiki/Tautological_one-form}{Liouville one-form}.

The vector field $X$ is considered as a \href{https://en.wikipedia.org/wiki/Vector_field\#Vector_fields_on_manifolds}{derivation}
$X:C^{\infty}\left(M\right)\rightarrow C^{\infty}\left(M\right)$
and generates the pullback operators by the flow $\phi^{t}:M\rightarrow M$,
i.e. $e^{tX}u=u\circ\phi^{t}$ with $u\in C^{\infty}\left(M\right)$
and $t\in\mathbb{R}$. More generally, if $F$ is a vector bundle
over $M$, we will consider a derivation $X_{F}$ acting on smooth
sections $C^{\infty}\left(M;F\right)$ of $F$ over\footnote{See Definition (\ref{eq:def_XF}).}
$X$, generator of the pull back operator 
\begin{equation}
e^{tX_{F}}:C^{\infty}\left(M;F\right)\rightarrow C^{\infty}\left(M;F\right).\label{eq:pull_back_exp_t_XF}
\end{equation}
As an example, we may consider $F=\Lambda^{\bullet}\left(M\right)$,
the bundle of \href{https://en.wikipedia.org/wiki/Differential_form}{differential forms},
and $X_{F}$ being the \href{https://en.wikipedia.org/wiki/Lie_derivative}{Lie derivative}. 

\paragraph{Question of long time behavior:}

Anosov flows are a typical model of \href{https://en.wikipedia.org/wiki/Chaos_theory}{chaotic dynamics}
with sensitivity to initial conditions. A typical question in dynamical
systems theory that is addressed in this paper is to describe the
long time action of the flow generated by $X$ on smooth sections,
i.e. for any given smooth sections $u,v\in C^{\infty}\left(M;F\right)$,
describe the correlation $\langle v|e^{tX_{F}}u\rangle_{L^{2}}$ for
$t\rightarrow\infty$? Here $L^{2}=L_{dm}^{2}\left(M;F\right)$ with
the invariant contact volume $dm$ (\ref{eq:volume_form_M}) on $M$
and an arbitrary hermitian metric on $F$. In other words we are looking
for a ``good description'' of the pull back operator $e^{tX_{F}}$
for $t\rightarrow\infty$.

In the case of a trivial bundle $F=\mathbb{C}$, it is known from
Sinaï \cite{sinai1961geodesic} that contact Anosov flows are \textbf{\href{https://en.wikipedia.org/wiki/Mixing_(mathematics)}{mixing},
}i.e. for any smooth functions $u,v\in C^{\infty}\left(M;\mathbb{C}\right)$,
we have $\langle v|e^{tX}u\rangle_{L^{2}}\underset{t\rightarrow+\infty}{\rightarrow}\langle v|\boldsymbol{1}\rangle_{L^{2}}\langle\frac{\boldsymbol{1}}{\mathrm{Vol}\left(M\right)}|u\rangle_{L^{2}}$.
This mixing property dominates the long time behavior and says that
the operator $e^{tX}$ converges (in the weak sense) to the rank one
operator $\Pi_{0}=\boldsymbol{1}\langle\frac{\boldsymbol{1}}{\mathrm{Vol}\left(M\right)}|.\rangle_{L^{2}}$. 
\begin{problem}
\label{Aim}(Aim) In this paper we will be interested in finding a
description of the operator $e^{tX_{F}}$ in the limit of large $t$,
up to an error that decays as $e^{-\Lambda t}$ with an arbitrary
large rate $\Lambda>0$.
\end{problem}

\paragraph{Discrete Ruelle spectrum and anisotropic Sobolev space:}

Although the space $L^{2}\left(M;F\right)$ may be natural to consider
in order to describe the operator $e^{tX_{F}}$, this is not what
we will do. We will instead consider a family of generalized Sobolev
spaces $\mathcal{H}_{W}\left(M;F\right)$, defined from a weight function
$W$ on $T^{*}M$. By choosing a good weight $W$, we can get that
the essential spectrum of the generator $X_{F}$ is in $\left\{ z\in\mathbb{C},\mathrm{Re}\left(z\right)<-\Lambda\right\} $
with arbitrary large $\Lambda>0$ \cite[thm 2.11]{faure_tsujii_Ruelle_resonances_density_2016},
revealing some \textbf{intrinsic discrete spectrum} called \textbf{Ruelle
resonances} in $\left\{ z\in\mathbb{C},\mathrm{Re}\left(z\right)>-\Lambda\right\} $.
See Figure \ref{fig:Band-structure-of}. This fact has been essentially
first obtained by Butterley and Liverani in \cite{liverani_butterley_07}.
The function space $\mathcal{H}_{W}\left(M;F\right)$ is called anisotropic
Sobolev space \cite{liverani_04}\cite{Baladi05}\cite{fred-roy-sjostrand-07,fred_flow_09}
because it is a Hilbert space of distributional sections that contains
smooth sections $C^{\infty}\left(M;F\right)$ and its order depends
on the stable/unstable directions of the dynamics.

This discrete spectrum governs the long time behavior of $e^{tX_{F}}$
but only little\footnote{Ruelle spectrum can be calculated explicitly in only exceptional cases.
Appendix \ref{subsec:Discrete-Ruelle-Pollicott-spectr} presents the
Ruelle spectrum of a very simple toy model on \noun{$\mathbb{R}$}.
See \cite[Prop. 4.1]{faure_tsujii_band_CRAS_2013} for the Ruelle
spectrum of the geodesic flow on a compact hyperbolic surface.} is known about it for a general Anosov flow (or Axiom A flow). In
the special case of a contact Anosov vector field $X$, much more
can be said about the Ruelle spectrum of $X_{F}$ and about the effective
action of the operator $e^{tX_{F}}$. This is the subject of this
paper.

\subsection{\label{subsec:Main-result.-Emergence}Main result: emergence of an
effective quantum dynamics}

\subsubsection{Known results beyond the exponential mixing description}

In order to motivate the analysis and few results presented in this
paper, let us discuss again about the question of long time behavior
of the dynamics $e^{tX}$ in the case of a contact Anosov vector field
$X$ (i.e. trivial bundle $F=\mathbb{C}$ for this discussion) and
known results so far. Recall the result of \textbf{mixing} given above,
that is the weak convergence $e^{tX}\underset{t\rightarrow+\infty}{\rightarrow}\Pi_{0}=\boldsymbol{1}\langle\frac{\boldsymbol{1}}{\mathrm{Vol}\left(M\right)}|.\rangle_{L^{2}}$.
Beyond this, there is \textbf{exponential mixing} obtained by C. Liverani
\cite{liverani_contact_04}, that is $\exists\epsilon>0,\exists C>0,\forall t\geq0$,
$\left\Vert e^{tX}-\Pi_{0}\right\Vert _{\mathcal{H}_{W}\left(M\right)}\leq Ce^{-t\epsilon}$.
Even better, there is an explicit description of the \textbf{exponential
small corrections \cite{tsujii_08} in terms of a finite number of
Ruelle eigenvalues} of $X$ in $\mathcal{H}_{W}\left(M\right)$, $z_{j}=a_{j}+i\omega_{j}\in\mathbb{C}$
with real part $\gamma_{0}^{+}+\epsilon<\ldots a_{j+1}\leq a_{j}\leq\ldots<a_{1}<a_{0}=0$
and associated finite rank spectral projectors $\Pi_{j}$, that can
be expressed as follows:
\begin{equation}
\exists\epsilon>0,C_{\epsilon}>0,\forall t\geq0,\left\Vert e^{tX}-R_{t}\right\Vert _{\mathcal{H}_{W}\left(M\right)}\leq C_{\epsilon}e^{t\left(\gamma_{0}^{+}+\epsilon\right)},\label{eq:expon_terms}
\end{equation}
with a finite rank operator\footnote{In case of a simple eigenvalue $z_{j}$ this gives $e^{tX}\Pi_{j}=e^{z_{j}t}\Pi_{j}.$
Notice also that $z_{0}=0$ is simple, hence $e^{tX}\Pi_{0}=\Pi_{0}$
corresponding to the ``mixing main term'' so called ``equilibrium''.}
\begin{equation}
R_{t}=\sum_{z_{j},\mathrm{Re}\left(z_{j}\right)>\gamma_{0}^{+}+\epsilon}e^{tX}\Pi_{j},\label{eq:R_t}
\end{equation}
with the explicit threshold 
\begin{equation}
\gamma_{0}^{+}:=\lim_{t\rightarrow+\infty}\log\left\Vert e^{tX_{\mathcal{F}_{0}}}\right\Vert _{L^{\infty}}^{1/t}<0,\label{eq:def_gamma_0+}
\end{equation}
where $X_{\mathcal{F}_{0}}$ is the Lie derivative of the vector field
acting on section of the line bundle $\mathcal{F}_{0}\left(E_{s}\right):=\left|\mathrm{det}E_{s}\right|^{-1/2}\rightarrow M$,
$E_{s}\subset TM$ is the vector bundle over $M$ of the stable directions
of the Anosov flow, $\mathrm{det}E_{s}=E_{s}^{\wedge^{d}}$ is the
\href{https://en.wikipedia.org/wiki/Line_bundle\#Determinant_bundles}{determinant bundle}
from $E_{s}$, with $d=\mathrm{dim}E_{s}$, $\left|\mathrm{det}E_{s}\right|^{1/2}$
is the \href{https://en.wikipedia.org/wiki/Density_on_a_manifold}{half densities bundle}
and $\left|\mathrm{det}E_{s}\right|^{-1/2}=\left(\left|\mathrm{det}E_{s}\right|^{1/2}\right)^{*}$
is its dual.
\begin{rem}
~
\begin{itemize}
\item We have $\gamma_{0}^{+}<0$ because $E_{s}$ is the stable bundle
and therefore under the pullback operator $e^{tX}$, the induced action
of the line bundle $\left|\mathrm{det}E_{s}\right|$ is expanding
hence on $\mathcal{F}_{0}\left(E_{s}\right)=\left|\mathrm{det}E_{s}\right|^{-1/2}$
it is contracting. We have the estimate $\gamma_{0}^{+}<-\frac{d}{2}\lambda_{\mathrm{min}}$
with the minimal Lyapounov exponent $\lambda_{\mathrm{min}}>0$ and
$d=\mathrm{dim}E_{s}=\frac{1}{2}\left(\mathrm{dim}M-1\right)$.
\item Except for the obvious eigenvalue $z_{0}=0$ of $X$ (whose eigenspace
is constant functions), the set of other eigenvalues $\left\{ z_{j}\right\} _{j}$
that enter in the sum (\ref{eq:R_t}) is quite unknown in general
and may be empty.
\item Considering an arbitrary vector bundle $F\rightarrow M$ changes the
operator $R_{t}$ and the estimate $\gamma_{0}^{+}$, see below.
\end{itemize}
\end{rem}

\subsubsection{Looking for a better description of the dynamics up to arbitrary
small rate $e^{-\Lambda t}$}

In this paper our aim \ref{Aim} is to give a description of the dynamics
$e^{tX}$ similar to (\ref{eq:expon_terms}) but with an error term
$Ce^{-\Lambda t}$ with arbitrary large rate $\Lambda>0$. So we have
to overcome the threshold $\gamma_{0}^{+}$. For this we do some micro-local
analysis of the pullback operator $e^{tX}$ which means that we describe
precisely its action on functions (or sections) with high frequencies
(i.e. roughly speaking considering in any local chart only Fourier
components with large Fourier variable $\left|\xi\right|\gg1$, $\xi\in\mathbb{R}^{\mathrm{dim}M}$).

With the \textbf{micro-local approach}, it is a general fact that
we can describe the operator $e^{tX}$ in terms of the action of the
flow induced on the cotangent bundle $T^{*}M$ (in other terms $e^{tX}$
is considered as a \href{https://en.wikipedia.org/wiki/Fourier_integral_operator}{Fourier Integral Operator}
F.I.O.). In the special case of contact Anosov flow we show that only
a neighborhood of the symplectic subset $\Sigma=\mathbb{R}^{*}\mathcal{A}\subset T^{*}M$
has to be considered, where $\mathcal{A}$ is the contact one-form,
because (in some sense explained in this paper) ``outside of $\Sigma$,
the norm of $e^{tX}$'' decays faster than $e^{-\Lambda t}$ with
arbitrary large $\Lambda>0$. As a result, we obtain an effective
description of $e^{tX}$ in terms of an Hamiltonian flow on the symplectic
subset $\Sigma$ but considering also its action on the symplectic
normal bundle $N\rightarrow\Sigma$, needed to get the neighborhood
of $\Sigma$. This micro-local description of the operator $e^{tX}$
is in terms of \textbf{\href{https://en.wikipedia.org/wiki/Quantization_(physics)}{quantization}}
of the ``classical Hamiltonian action'' $e^{tX_{\mathcal{F}}}$
giving an operator $\mathrm{Op}_{\Sigma}\left(e^{tX_{\mathcal{F}}}\right)$. 

The \textbf{main drawback} of the micro-local approach is its inability
to describe the ``low frequency subspace of functions'' (that corresponds
effectively to a finite dimensional subspace since $M$ is compact)
and for this reason, our approximation below in Theorem \ref{thm:1_emergence_of_QM}
will contain an unknown finite rank operator $R_{\sigma,t}$, similar
to the operator $R_{t}$ in (\ref{eq:expon_terms}). Optimistically
we can consider that a finite rank operator is negligible compared
to the infinite rank quantum operator $\mathrm{Op}_{\Sigma}\left(e^{tX_{\mathcal{F}}}\right)$
that will enter in the description.

Let us now explain more precisely this micro-local approach, the effective
classical dynamics that appears in a first result presented in Theorem
\ref{thm:1_emergence_of_QM} below.

\subsubsection{Effective classical dynamics}

\paragraph{Bundle $\mathcal{F}_{k}\left(E_{s}\right)\rightarrow M$ and its
dynamics:}

We now go to our first aim \ref{Aim} and consider again a general
vector bundle $F\rightarrow M$. For this, for every $k\in\mathbb{N}$,
we introduce the Hölder continuous finite rank vector bundle over
$M$:
\begin{equation}
\mathcal{F}_{k}\left(E_{s}\right):=\left|\mathrm{det}E_{s}\right|^{-1/2}\otimes\mathrm{Pol}_{k}\left(E_{s}\right)\otimes F\rightarrow M\label{eq:def_Fk}
\end{equation}
where $\mathrm{Pol}_{k}\left(E_{s}\right)$ is the bundle of \href{https://en.wikipedia.org/wiki/Homogeneous_polynomial}{homogeneous polynomials}
of degree $k$ on $E_{s}$. The bundle $E_{s}\rightarrow M$ is Hölder
continuous but smooth along the flow and $E_{s}$ directions. Let
$C^{\beta}\left(M;\mathcal{F}_{k}\left(E_{s}\right)\right)$ be the
space of $\beta-$Hölder continuous sections. Then we consider $X_{\mathcal{F}_{k}}:C^{\beta}\left(M;\mathcal{F}_{k}\left(E_{s}\right)\right)\rightarrow C^{\beta}\left(M;\mathcal{F}_{k}\left(E_{s}\right)\right)$
being the derivation of sections of $\mathcal{F}_{k}\left(E_{s}\right)$
over $X$ ($C^{1}$ along the flow direction). $X_{\mathcal{F}_{k}}$
is naturally induced from the initial derivation $X_{F}$ and is the
generator of the group of operators $e^{tX_{\mathcal{F}_{k}}}:C^{\beta}\left(M;\mathcal{F}_{k}\left(E_{s}\right)\right)\rightarrow C^{\beta}\left(M;\mathcal{F}_{k}\left(E_{s}\right)\right),\,t\in\mathbb{R}$.
We introduce the related quantities that generalize (\ref{eq:def_gamma_0+})
\begin{equation}
\gamma_{k}^{\pm}:=\lim_{t\rightarrow\pm\infty}\log\left\Vert e^{tX_{\mathcal{F}_{k}}}\right\Vert _{L^{\infty}\left(M;\mathcal{F}_{k}\left(E_{s}\right)\right)}^{1/t}.\label{eq:def_gamma_-2}
\end{equation}

\begin{rem}
We can give obvious (but rough) estimates for $\gamma_{k}^{\pm}$
as follows. If $0<\lambda_{\mathrm{min}}\leq\lambda_{\mathrm{max}}$
denote the minimal and maximal Lyapounov exponents of the Anosov flow
$\phi^{t}$, we have
\begin{equation}
-\left(\frac{d}{2}+k\right)\lambda_{\mathrm{max}}+c_{F}\leq\gamma_{k}^{-}\leq\gamma_{k}^{+}\leq-\left(\frac{d}{2}+k\right)\lambda_{\mathrm{min}}+C_{F},\label{eq:estimates_lambda_k}
\end{equation}
where $\mathrm{dim}M=2d+1$ and $c_{F}\leq C_{F}$ depend on $X_{F}$.
For example, if $X_{F}=X$ is the vector field itself with $F=\mathbb{C}$,
we have $c_{F}=C_{F}=0$ and from (\ref{eq:estimates_lambda_k}),
the pinching condition $\frac{\lambda_{\mathrm{max}}}{\lambda_{\mathrm{min}}}<1+\frac{1}{k}$
implies $\gamma_{k+1}^{+}<\gamma_{k}^{-}$ that we call the \textbf{``spectral
gap assumption''} later.
\end{rem}

\paragraph{A notation $\mathcal{S}_{\beta}\left(E_{s}\right)$:}

For every point $m\in M$, $\mathcal{S}\left(E_{s}\left(m\right)\right)$
is the space of Schwartz functions on the vector space $E_{s}\left(m\right)$.
However the bundle $E_{s}\rightarrow M$ is only $\beta-$Hölder,
i.e. the map $m\in M\rightarrow E_{s}\left(m\right)$ is Hölder continuous.
For simplicity we will denote 
\begin{equation}
\mathcal{S}_{\beta}\left(E_{s}\right):=C^{\beta}\left(M;\mathcal{S}\left(E_{s}\right)\right)\label{eq:def_S_beta}
\end{equation}
the space of $\beta-$Hölder continuous sections valued in $\mathcal{S}\left(E_{s}\left(m\right)\right)$
at each point $m\in M$. Similarly $\mathcal{S}'_{\beta}\left(E_{s}\right):=C^{\beta}\left(M;\mathcal{S}'\left(E_{s}\right)\right)$
is the space of sections valued in Schwartz distributions $\mathcal{S}'\left(E_{s}\left(m\right)\right)$,
or more precisely distributional extensions of $\mathcal{S}\left(E_{s}\left(m\right)\right)$,
the dual of Schwartz forms $\mathcal{S}\left(E_{s}\left(m\right);\Lambda^{d}\left(E_{s}\left(m\right)\right)\right)$,
so that polynomials are included in $\mathcal{S}'\left(E_{s}\left(m\right)\right)$.

\paragraph{Taylor projectors $T_{k}$:}

The homogeneous polynomials $\mathrm{Pol}_{k}\left(E_{s}\right)$
are obtained as the image of the ``Taylor projector'' (see section
\ref{subsec:Taylor-operators-} for a definition) $T_{k}:\mathcal{S}_{\beta}\left(E_{s}\right)\rightarrow\mathrm{Pol}_{k}\left(E_{s}\right)\subset\mathcal{S}'_{\beta}\left(E_{s}\right)$
that we extend to a \href{https://en.wikipedia.org/wiki/Bundle_map}{bundle map}
$T_{k}\in L\left(\mathcal{F}\left(E_{s}\right),\mathcal{F}'\left(E_{s}\right)\right)$,
\begin{equation}
T_{k}:\mathcal{F}\left(E_{s}\right):=\left|\mathrm{det}E_{s}\right|^{-1/2}\otimes\mathcal{S}_{\beta}\left(E_{s}\right)\otimes F\rightarrow\mathcal{F}'\left(E_{s}\right):=\left|\mathrm{det}E_{s}\right|^{-1/2}\otimes\mathcal{S}'_{\beta}\left(E_{s}\right)\otimes F\label{eq:def_F_cal}
\end{equation}
For any given $K\in\mathbb{N}$, we denote $T_{\left[0,K\right]}:=\bigoplus_{k=0}^{K}T_{k}$.

\paragraph{Bundle $\mathcal{F}_{k}\left(N_{s}\right)\rightarrow\Sigma$:}

We consider the symplectic sub-manifold
\[
\Sigma:=\mathbb{R}^{*}\mathcal{A}=\left\{ \omega\mathcal{A}\left(m\right),\omega\in\mathbb{R}^{*},m\in M\right\} 
\]
being the \textbf{\href{https://en.wikipedia.org/wiki/Symplectization}{symplectization}}
of the contact manifold $\left(M,\mathcal{A}\right)$. This manifold
$\Sigma$, as a sub-manifold of $T^{*}M$, is also the trapped set
(or \href{https://en.wikipedia.org/wiki/Wandering_set}{non wandering set})
of the induced dynamics $\tilde{\phi}^{t}=\left(d\phi^{t}\right)^{*}$
on $T^{*}M$ and therefore invariant. We consider the symplectic normal
bundle $N=\left(T\Sigma\right)^{\perp}$, its stable component $N_{s}\subset N$
and construct the bundle\footnote{From the precise definition of $N_{s}$ in Lemma \ref{lem:LetEach-component-},
it will appear that this bundle $\mathcal{F}_{k}\left(N_{s}\right)\rightarrow\Sigma$
is isomorphic to the the pull back of $\mathcal{F}_{k}\left(E_{s}\right)\rightarrow M$
under the projection $\pi:\Sigma\rightarrow M$.} $\mathcal{F}_{k}\left(N_{s}\right):=\left|\mathrm{det}N_{s}\right|^{-1/2}\otimes\mathrm{Pol}_{k}\left(N_{s}\right)\otimes F\rightarrow\Sigma$.
The operator $e^{tX_{\mathcal{F}_{k}}}$ defined above extends to
\begin{equation}
e^{tX_{\mathcal{F}_{k}}}:C^{\beta}\left(\Sigma;\mathcal{F}_{k}\left(N_{s}\right)\right)\rightarrow C^{\beta}\left(\Sigma;\mathcal{F}_{k}\left(N_{s}\right)\right)\label{eq:classical_dyn}
\end{equation}
over the Hamiltonian flow $\tilde{\phi}^{t}=\left(d\phi^{t}\right)^{*}:\Sigma\rightarrow\Sigma$.
The family of operators $\left(e^{tX_{\mathcal{F}_{k}}}\right)_{t\in\mathbb{R}}$
in (\ref{eq:classical_dyn}), will be called the \textbf{classical
dynamics}. For any $t\in\mathbb{R}$, we also denote the flow $e^{tX_{\mathcal{F}}}:C^{\beta}\left(\Sigma;\mathcal{F}\left(N_{s}\right)\right)\rightarrow C^{\beta}\left(\Sigma;\mathcal{F}\left(N_{s}\right)\right)$
with similar definition $\mathcal{F}\left(N_{s}\right):=\left|\mathrm{det}N_{s}\right|^{-1/2}\otimes\mathcal{S}_{\beta}\left(N_{s}\right)\otimes F$
and that $e^{tX_{\mathcal{F}}}\tilde{T}_{\left[0,K\right]}=\bigoplus_{k=0}^{K}e^{tX_{\mathcal{F}_{k}}}$
with Taylor projectors $\tilde{T}_{k}:\mathcal{S}_{\beta}\left(N_{s}\right)\rightarrow\mathrm{Pol}_{k}\left(N_{s}\right)$
and $\tilde{T}_{\left[0,K\right]}:=\bigoplus_{k=0}^{K}\tilde{T}_{k}$.

\subsubsection{General remarks about quantization}

Before continuing the presentation, we first make a pause to say few
important words about quantization. These remarks will be important
to understand the content of the next theorem.
\begin{itemize}
\item In the most common situation, on a given manifold $M$, quantization
denoted $\mathrm{Op}\left(.\right)$ maps a function $a\in\mathcal{S}\left(T^{*}M;\mathbb{C}\right)$
called ``symbol'' to a bounded operator $\mathrm{Op}\left(a\right):L^{2}\left(M\right)\rightarrow L^{2}\left(M\right)$
called \href{https://en.wikipedia.org/wiki/Pseudo-differential_operator}{Pseudo Differential Operator}
P.D.O. There are many different definitions for $\mathrm{Op}\left(.\right)$,
and it is important to known that \textbf{they are all equivalent
at leading order in the limit of high frequencies}. At leading order
the map $\mathrm{Op}\left(.\right)$ satisfies some very interesting
and useful universal properties as the ``boundness theorem'', ``composition
theorem'' that is homomorphism of algebra, ``trace formula'', etc
(under some additional hypothesis called ``symbol classes'' \cite[(1.4) p.3]{taylor_tome2})
and our analysis will rely on these properties. For these reasons
the precise definition of $\mathrm{Op}\left(.\right)$ is not very
crucial for the moment.
\item The simplest example of $\mathrm{Op}\left(.\right)$ called ordinary
quantization, is given in local charts $x\in\mathbb{R}^{n}$ on $M$
with dual coordinates $\xi\in\mathbb{R}^{n}$ on $T_{x}^{*}M$, by
\cite[(1.3) p.2]{taylor_tome2}
\[
\left(\mathrm{Op}\left(a\right)u\right)\left(x\right)=\int a\left(x,\xi\right)\left(\mathcal{F}u\right)\left(\xi\right)e^{i\langle\xi|x\rangle}d\xi,
\]
where $\left(\mathcal{F}u\right)\left(\xi\right)$is Fourier transform
of $u\in\mathcal{S}\left(\mathbb{R}^{n}\right)$. \href{https://en.wikipedia.org/wiki/Wigner\%E2\%80\%93Weyl_transform}{Weyl quantization}
is different and has the advantage that a real valued symbol $a$
gives a self-adjoint operator $\mathrm{Op}\left(a\right)$, \cite[(14.1) p.67]{taylor_tome2}.
\item In geometric quantization people introduce an operator $\mathcal{T}:C^{\infty}\left(M\right)\rightarrow\mathcal{S}\left(T^{*}M\right)$,
called Bargman transform or FBI transform, wavelet transform, wave-packet
transform (depending on the context, and additional structures for
its construction) and they define Toeplitz quantization by
\[
\mathrm{Op}\left(a\right):=\mathcal{T}^{\dagger}a\mathcal{T}
\]
where $a$ on the right hand side stands for the multiplication operator
by $a$ and $\mathcal{T}^{\dagger}:\mathcal{S}\left(T^{*}M\right)\rightarrow C^{\infty}\left(M\right)$
is its $L^{2}$ adjoint satisfying $\mathrm{Op}\left(1\right)=\mathcal{T}^{\dagger}\mathcal{T}=\mathrm{Id}$.
We will mainly use this quantization in this paper.
\item It is also possible to define the quantization of a symplectomorphism
$\tilde{\phi}:T^{*}M\rightarrow T^{*}M$ giving an operator $\mathrm{Op}\left(\tilde{\phi}\right):L^{2}\left(M\right)\rightarrow L^{2}\left(M\right)$
called \href{https://en.wikipedia.org/wiki/Fourier_integral_operator}{Fourier Integral Operator F.I.O.}
(e.g. \cite{zelditch_Index_98}). In geometric quantization the definition
is
\begin{equation}
\mathrm{Op}\left(\tilde{\phi}\right):=\mathcal{T}^{\dagger}\Upsilon_{\tilde{\phi}}^{1/2}\tilde{\phi}^{-\circ}\mathcal{T}\label{eq:def_OIF}
\end{equation}
where $\tilde{\phi}^{-\circ}:\mathcal{S}\left(T^{*}M\right)\rightarrow\mathcal{S}\left(T^{*}M\right)$
is the push-forward operator and $\Upsilon_{\tilde{\phi}}^{1/2}\in C^{\infty}\left(T^{*}M;\mathbb{R}^{+}\right)$
is the ``metaplectic correction'' (\ref{eq:def_d}) to get an almost
unitary operator.
\item It is possible to consider a vector bundle $F\rightarrow M$ and define
the quantization of a symbol $a\in\mathcal{S}\left(T^{*}M;\mathrm{End}\left(F\right)\right)$
giving $\mathrm{Op}\left(a\right):L^{2}\left(M;F\right)\rightarrow L^{2}\left(M;F\right)$.
\end{itemize}

\subsubsection{Quantization in our setting}

In this paper in order to study the operator $e^{tX_{F}}:C^{\infty}\left(M;F\right)\rightarrow C^{\infty}\left(M;F\right)$
in (\ref{eq:pull_back_exp_t_XF}), we will need all these previous
and different variations around quantization and moreover another
variation: instead of considering the whole cotangent bundle $T^{*}M$
as the symplectic manifold, we will restrict to a vicinity of the
symplectic sub-manifold $\Sigma:=\mathbb{R}^{*}\mathcal{A}\subset T^{*}M$.
To have a good description of a vicinity of $\Sigma$, we will use
its normal bundle $N=\left(T\Sigma\right)^{\perp}\rightarrow\Sigma$
and the exponential map $\exp_{N}:N\rightarrow T^{*}M$ that is a
local diffeomorphism from a vicinity of the zero section of $N$ to
a vicinity of $\Sigma$ in $T^{*}M$. See Figure \ref{fig:N_Sigma}.

Then we will use a wave-packet transform given by some operators $\mathcal{T}_{N_{s}}:C^{\infty}\left(M;F\right)\rightarrow C^{\beta}\left(\Sigma;\mathcal{F}\left(N_{s}\right)\right)$
and a converse $\mathcal{T}_{N}^{\Delta}:C^{\beta}\left(\Sigma;\mathcal{F}\left(N_{s}\right)\right)\rightarrow C^{\infty}\left(M;F\right)$
defined by 
\[
\mathcal{T}_{N_{s}}\eq{\ref{eq:def_T_Ns}}\mathcal{B}_{N_{s},N_{u}}^{\dagger}\chi_{\Sigma}^{\mu}\widetilde{\left(\exp_{N}\right)^{\circ}}\mathcal{T}
\]
and
\[
\mathcal{T}_{N_{s}}^{\Delta}\eq{\ref{eq:def_T_Ns_Delta}}\mathcal{T}^{\dagger}\widetilde{\left(\exp_{N}^{-1}\right)^{\circ}}\chi_{\Sigma}^{\mu}\mathcal{B}_{N_{s},N_{u}},
\]
where $\mathcal{T}:C^{\infty}\left(M;F\right)\rightarrow\mathcal{S}\left(T^{*}M;F\right)$
is a wave-packet transform, $\widetilde{\left(\exp_{N}\right)^{\circ}}:\mathcal{S}\left(T^{*}M;F\right)\rightarrow\mathcal{S}\left(\Sigma;\mathcal{S}\left(N\right)\right)$
is the pull back of $\exp_{N}$ with some additional phase in (\ref{eq:def_twisted_push_forward-1}),
$\chi_{\Sigma}^{\mu}$ is some cutoff function to a vicinity of the
zero section of $N$ at distance $\omega^{\mu/2}$ in (\ref{eq:def_Chi_mu})
and $B_{N_{s},N_{u}}:C^{\beta}\left(\Sigma;\mathcal{S}_{\beta}\left(N_{s}\right)\right)\rightarrow C^{\beta}\left(\Sigma;\mathcal{S}\left(N\right)\right)$
in (\ref{eq:def_B_Ns_Nu}) is a bundle-wise Bargman transform. With
this, we define the quantization of the classical dynamics (\ref{eq:classical_dyn})
with the additional symbol $\tilde{T}_{\left[0,K\right]}$ by
\begin{equation}
\mathrm{Op}_{\Sigma}\left(e^{tX_{\mathcal{F}}}\tilde{T}_{\left[0,K\right]}\right)\eq{\ref{eq:def_Op_a}}\mathcal{T}_{N_{s}}^{\Delta}\Upsilon_{t}^{1/2}e^{tX_{\mathcal{F}}}\tilde{T}_{\left[0,K\right]}\mathcal{T}_{N_{s}}\quad:C^{\infty}\left(M;F\right)\rightarrow C^{\infty}\left(M;F\right)\label{eq:OP_sigma}
\end{equation}
called \textbf{quantum evolution operators}. In the analysis, the
semi-classical parameter is the frequency $\omega$ along the flow
direction (equivalently the parameter along $\Sigma$) and the \textbf{semi-classical
limit} is $\left|\omega\right|\rightarrow\infty$ (usually denoted
$2\pi\hbar=1/\left|\omega\right|\rightarrow0$). 

\subsubsection{Emergence of an effective quantum dynamics}

To express the result, we also need an operator $\mathrm{Op}\left(\chi_{\Sigma,\sigma}\right):C^{\infty}\left(M;F\right)\rightarrow C^{\infty}\left(M;F\right)$
defined in (\ref{eq:Op_Sigma_sigma}) that restricts functions microlocally
to their vicinity of $\Sigma$ at distance $\sigma>0$. The next theorem
shows how the \href{https://en.wikipedia.org/wiki/Pullback_(differential_geometry)}{pull-back operator}
$e^{tX_{F}}$ in (\ref{eq:pull_back_exp_t_XF}) is well approximated
by the quantum evolution operator $\mathrm{Op}_{\Sigma}\left(e^{tX_{\mathcal{F}}}T_{\left[0,K\right]}\right)$
in (\ref{eq:OP_sigma}).

\begin{cBoxB}{}
\begin{thm}[\textbf{Emergence of quantum dynamics}]
\label{thm:1_emergence_of_QM}For any $K\in\mathbb{N}$ and $\epsilon>0$,
we can choose an anisotropic Sobolev space $\mathcal{H}_{W}\left(M;F\right)$,
such that $\exists C>0$, $\forall t>0,\exists\sigma_{t}>0,\forall\sigma>\sigma_{t}$,
there exists a finite rank operator $R_{\sigma,t}$ and
\begin{equation}
\left\Vert e^{tX_{F}}-\mathrm{Op}\left(\chi_{\Sigma,\sigma}\right)\mathrm{Op}_{\Sigma}\left(e^{tX_{\mathcal{F}}}\tilde{T}_{\left[0,K\right]}\right)\mathrm{Op}\left(\chi_{\Sigma,\sigma}\right)-R_{\sigma,t}\right\Vert _{\mathcal{H}_{W}\left(M;F\right)}\leq Ce^{t\left(\gamma_{K+1}^{+}+\epsilon\right)}.\label{eq:emerging-1}
\end{equation}
\end{thm}

\end{cBoxB}

The proof of Theorem \ref{thm:1_emergence_of_QM} is given in Section
\ref{subsec:Proof-of-Theorem-2}.
\begin{rem}
From (\ref{eq:estimates_lambda_k}) we have $\gamma_{k}^{+}\underset{k\rightarrow\infty}{\rightarrow}-\infty$,
hence the term $e^{t\left(\gamma_{K+1}^{+}+\epsilon\right)}$ in (\ref{eq:emerging-1})
decays very fast for $t\rightarrow+\infty$ when $K$ is large, so
this theorem is a kind of answer to our aim \ref{Aim}. One interpretation
of Theorem \ref{thm:1_emergence_of_QM} is that for large time $t\gg1$,
an \textbf{effective quantum dynamics emerges from the contact Anosov
dynamics}. This quantum dynamics is the \href{https://en.wikipedia.org/wiki/Quantization_(physics)}{quantization}
of the Hamiltonian dynamics (\ref{eq:classical_dyn}) that takes place
on $\Sigma$.
\end{rem}

To show (\ref{eq:emerging-1}) we will use \href{https://en.wikipedia.org/wiki/Microlocal_analysis}{microlocal analysis}
directly on $T^{*}M$ following the approach proposed in \cite{faure_tsujii_Ruelle_resonances_density_2016},
using a metric $g$ on $T^{*}M$ compatible with the symplectic form,
that measures the size of wave-packets (sometimes called \href{https://en.wikipedia.org/wiki/Coherent_state}{coherents states})
in accordance with the \href{https://en.wikipedia.org/wiki/Uncertainty_principle}{uncertainty principle}.

\subsubsection{\label{subsec:Few-comments-about}Few comments about the result (\ref{eq:emerging-1})}
\begin{itemize}
\item As explained above, in the case of a trivial bundle $F=\mathbb{C}$,
contact Anosov flows are \textbf{mixing} $e^{tX}\stackrel[t\rightarrow\infty]{\mathrm{weak}}{\rightarrow}\Pi_{0}=\boldsymbol{1}\langle\frac{\boldsymbol{1}}{\mathrm{Vol}\left(M\right)}|.\rangle_{L^{2}}$.
Although this mixing property dominates the long time behavior, the
limit rank one operator $\boldsymbol{1}\langle\frac{\boldsymbol{1}}{\mathrm{Vol}\left(M\right)}|.\rangle_{L^{2}}$,
belongs to the finite rank operator $R_{\sigma,t}$ in (\ref{eq:emerging-1})
that we do not describe in this paper. We instead consider terms that
are exponentially small compared to it, but that belong to an infinite
dimensional effective space. In our analysis the rank of the operator
$R_{\sigma,t}$ may increase with $t$ because we use micro-local
analysis (i.e. high frequency analysis) where the cutoff $\omega$
we need in frequency increases with $t$. The operator $R_{\sigma,t}$
represents the remainder in low frequencies that we are not able to
describe with this approach. See also footnote \ref{fn:Remark-also-that}
for more comments.
\item In (\ref{eq:def_Fk}), the vector bundle $F\rightarrow M$ is arbitrary
but notice that the \textbf{special choice} of the bundle $F=\left|\mathrm{det}E_{s}\right|^{1/2}$
is particularly interesting because it gives
\begin{equation}
\mathcal{F}_{0}\eq{\ref{eq:def_F-1}}\left|\mathrm{det}E_{s}\right|^{-1/2}\otimes\mathrm{Pol}_{0}\left(E_{s}\right)\otimes\left|\mathrm{det}E_{s}\right|^{1/2}=\mathbb{C}\label{eq:trivial bundle}
\end{equation}
i.e. the trivial bundle over $\Sigma$. The paper \cite{faure-tsujii_anosov_flows_13}
is devoted to that case\footnote{$F=\left|\mathrm{det}E_{s}\right|^{1/2}$ is not a smooth bundle but
only Hölder continuous and technically we consider an smooth extension
to a Grassmanian bundle in \cite{faure-tsujii_anosov_flows_13}.}. Eq.(\ref{eq:def_gamma_-2}) gives $\gamma_{0}^{\pm}=0$ and $\gamma_{1}^{+}<0$.
For that case and taking $K=0$, Eq.(\ref{eq:emerging-1}) shows that
for $t\gg1$, the operator $e^{tX_{F}}$ is well described by the
dominant term $\mathrm{Op}_{\Sigma}\left(e^{tX_{\mathcal{F}_{0}}}T_{0}\right)$
that is the ``quantization'' of the dynamics $X$ itself. From Theorem
\ref{Thm: bands} below, for $\omega=\mathrm{Im}\left(z\right)\rightarrow\infty$,
the eigenvalues of $X_{F}$ accumulate on the imaginary axis $\mathrm{Re}\left(z\right)=\gamma_{0}^{\pm}=0$
with density given by the Weyl law, separated by a uniform spectral
gap $\gamma_{1}^{+}<0$. See Figure \ref{fig:Band-structure-of}(b).
\item In the special case of the \textbf{geodesic flow} on a surface $\mathcal{N}$
of constant and negative curvature, i.e. \emph{$\mathcal{N}=\Gamma\backslash\mathrm{SL}_{2}\left(\mathbb{R}\right)/\mathrm{SO}_{2}\left(\mathbb{R}\right)$,
}giving \emph{$M=\Gamma\backslash\mathrm{SL}_{2}\left(\mathbb{R}\right)$},
with $\Gamma$ being a co-compact subgroup of $\mathrm{SL}_{2}\left(\mathbb{R}\right)$
(that contains $-\mathrm{Id}$), there is no remainder operator $R_{t}$
in (\ref{eq:emerging-1}) and the dominant quantum operator $\mathrm{Op}_{\Sigma}\left(e^{tX_{\mathcal{F}}}\tilde{T}_{0}\right)$
has the same spectrum as the wave operators $\exp\left(\pm it\sqrt{\Delta-\frac{1}{4}}\right)$
on $\mathcal{S}\left(\mathcal{N}\right)$, \cite{faure_tsujii_band_CRAS_2013,dyatlov_faure_guillarmou_2014},
see Figure 4.1(a) in \cite[Fig 4.1(a) p.389]{faure_tsujii_band_CRAS_2013}.
This operator is indeed considered in physics as the \href{https://en.wikipedia.org/wiki/Schr\%C3\%B6dinger_equation}{Schrödinger evolution operator},
giving the quantum description of a \href{https://en.wikipedia.org/wiki/Free_particle}{free particle}.
For the case of geodesic flow on a non-constant negatively curved
Riemannian manifold $\left(\mathcal{N},g\right)$, let us observe
that the symplectic phase space $\Sigma$ is (symplectically) isomorphic
to a double cover of the cotangent bundle $T^{*}\mathcal{N}\backslash\left\{ 0\right\} $
and that the principal symbol of the generator of the leading quantum
operator $\mathrm{Op}_{\Sigma}\left(e^{tX_{\mathcal{F}}}\tilde{T}_{0}\right)$
in (\ref{eq:emerging-1}) is the frequency $\omega$, equal to the
principal symbol of $\sqrt{\Delta}$ on $T^{*}\mathcal{N}$ under
this isomorphism, where $\Delta=d^{\dagger}d$ is the \href{https://en.wikipedia.org/wiki/Laplace\%E2\%80\%93Beltrami_operator}{Laplace Beltrami operator}
on $\mathcal{S}\left(\mathcal{N}\right)$. However we do not expect
that the spectra of both operators coincide in general. We will investigate
this question in a future work. See also \cite{barkhofen2022semiclassical}
for related results and discussions.
\item In semi-classical analysis, the Egorov Theorem \cite[p.26]{taylor_tome2}
or the \href{https://en.wikipedia.org/wiki/WKB_approximation}{WKB approximation}
\cite[p.11]{weinstein_97} show that \textbf{classical Hamiltonian
dynamics emerges in the high frequency limit (and finite time) of
quantum dynamics}. This is how in physics, \href{https://en.wikipedia.org/wiki/Hamiltonian_optics}{geometrical optics}
is derived from \href{https://en.wikipedia.org/wiki/Maxwell\%27s_equations}{electromagnetic waves},
\href{https://en.wikipedia.org/wiki/Hamiltonian_mechanics}{Newtonian (and Hamilton) mechanics}
is derived from quantum waves mechanics of Schrödinger, etc. Inspired
from these physical phenomena this is how quantization has been defined
in mathematics, for example standard quantization \cite[p.2]{taylor_tome2}
that defines a pseudo-differential operator on $\mathcal{S}\left(\mathbb{R}^{n}\right)$
from a Hamiltonian function (symbol) on $\mathbb{R}^{2n}$, or geometric
quantization in a more geometric setting \cite{woodhouse2}. In this
paper we have exhibited the converse and maybe \textbf{unexpected
phenomena: how quantum mechanics emerges from the classical mechanics}
when this later is chaotic. From the mathematical point of view, an
interesting consequence is that it furnishes a \textbf{natural quantization}
of a given classical dynamics among all possible quantizations. This
natural quantization has indeed the preferable properties (that characterize
it) that the semi-classical Van-Vleck formula or semi-classical Trace
formula are asymptotically exact, i.e. they have error terms that
decay exponentially fast with $t\rightarrow\infty$ at large but fixed
$\omega$. This provides a kind of generalization of Selberg theory
to non constant curvature. For more discussions, see \cite[ Section 1.5,1.6,1.7]{faure-tsujii_prequantum_maps_12}
and \cite{faure-tsujii_anosov_flows_13}. From a physical point of
view one may wonder if quantum phenomena observed in experimental
data may emergence from an underlying deterministic but chaotic dynamics,
see e.g. the last paragraph in \cite{nelson2012review}. In section
\ref{subsec:Discussion-about-emergence} we give Theorem \ref{cor:emergence_contact}
that concerns a general contact flow (i.e. without Anosov assumption)
so this may concern a general geodesic flow and show the emergence
of quantum dynamics near the invariant set $\Sigma$.
\end{itemize}

\subsection{Sketch of proof}

The proof of the emerging quantum dynamics in (\ref{eq:emerging-1})
can be summarized by the following mechanisms that will be explained
in details in this paper and illustrated in Figure \ref{fig:N_Sigma}.
\begin{enumerate}
\item In the limit of high frequencies, evolution of functions (or sections)
by the pull-back operator $e^{tX_{F}}$ is well described on the cotangent
bundle $T^{*}M$ with the induced flow $\tilde{\phi}^{t}:=\left(d\phi^{t}\right)^{*}$,
$t\in\mathbb{R}$. This is because $e^{tX_{F}}$ is a \href{https://en.wikipedia.org/wiki/Fourier_integral_operator}{Fourier integral operator}
(i.e. has micro-local property (\ref{eq:Psi_tilde})). We introduce
a specific metric $g$ on $T^{*}M$, compatible with the symplectic
form and then we define an $L^{2}$-isometric ``wave-packet transform''
$\mathcal{T}:C^{\infty}\left(M;F\right)\rightarrow\mathcal{S}\left(T^{*}M:F\right)$,
that allows to use \textbf{micro-local analysis} on $T^{*}M$ for
the pull back operator $e^{tX_{F}}$. The unit boxes for the metric
$g$ correspond to the effective size of wave-packets and reflect
the uncertainty principle.
\item For a contact Anosov flow, the dynamics induced on the cotangent bundle
$T^{*}M$ is a ``scattering dynamics'' on the trapped set $\Sigma=\mathbb{R}^{*}\mathcal{A}\subset T^{*}M$
and $\Sigma$ is symplectic and normally hyperbolic. See Figure \ref{fig:N_Sigma}.
In terms of dynamics, the subset $\Sigma$ is the \href{https://en.wikipedia.org/wiki/Wandering_set}{non wandering set}
for the flow $\tilde{\phi}^{t}$ on $T^{*}M$ and the orbits on the
outside of $\Sigma$ go to infinity either as $t\rightarrow+\infty$
or $t\rightarrow-\infty$. As a consequence, for large time $\left|t\right|\gg1$,
the outer part of the trapped set $\Sigma$ has a negligible contribution,
because information escapes to infinity (i.e. the Sobolev norm measured
by the specific weight $W$ decays). This will be given in Theorem
\ref{thm:decay}. So only the dynamics on $\Sigma$ plays a role for
our purpose. But due to the uncertainty principle in $T^{*}M$, we
still have to consider a neighborhood of $\Sigma$ in $T^{*}M$. We
will hence consider a neighborhood of $\Sigma$ of a given size $\left\langle \omega\right\rangle ^{\mu/2}$
in (\ref{eq:def_Chi_mu}) (measured by the metric $g$), with some
$0<\mu<1$. It is important to remark that for large frequencies $\omega$
this neighborhood of size $\left\langle \omega\right\rangle ^{\mu/2}$
projected down on $M$ is scaled by $\left\langle \omega\right\rangle ^{-1/2}$
(from the definition of the metric $g$ on $T^{*}M$ in (\ref{eq:metric_g_tilde_in_coordinates}),
see Remark \ref{rem:an-important-property}) and get size $\asymp\omega^{-\left(1-\mu\right)/2}\underset{\omega\rightarrow+\infty}{\rightarrow}0$
that goes to zero as $\omega$ goes to infinity. This will allow us
to use \textbf{the linearization of the dynamics} $\tilde{\phi}^{t}$
as a local approximation.
\item In the neighborhood of size $\left\langle \omega\right\rangle ^{\mu/2}$
of the trapped set $\Sigma$ (i.e. the \href{https://en.wikipedia.org/wiki/Wandering_set}{non wandering set})
that matters, there is a micro-local decoupling between the directions
tangent to $\Sigma$ and those (symplectically) normal to $\Sigma$,
represented by a normal vector bundle denoted $N=\left(T\Sigma\right)^{\perp}\rightarrow\Sigma$.
The dynamics on the normal direction $N$ is hyperbolic and responsible
for the emergence of polynomial functions along the stable direction
$N_{s}$ (that projects to $E_{s}$). By considering the Taylor operators
$\tilde{T}_{k}$ as symbols, we introduce approximate projectors $\mathrm{Op}_{\Sigma}\left(\tilde{T}_{k}\right)$
on $C^{\infty}\left(M;F\right)$ that micro-locally (i.e. seen in
$\mathcal{S}\left(T^{*}M;F\right)$ after the wave-packet transform
$\mathcal{T}$) restricts functions to the symplectic trapped set
$\Sigma$ and that are polynomial valued along $N_{s}$ with degree
$k$. This projector plays a similar role as the Bergman projector
(or Szegö projector) in geometric quantization. What remains for large
time, is an effective Hilbert space of ``quantum waves'' that live
on the trapped set $\Sigma$, valued in the vector bundle $\mathcal{F}_{k}$
defined in (\ref{eq:def_Fk}).
\end{enumerate}
The very simple toy model that is useful to have in mind is given
in Section \ref{subsec:Discrete-Ruelle-Pollicott-spectr}. It explains
in particular the emergence of polynomials $\mathrm{Pol}_{k}\left(E_{s}\right)$
as in (\ref{eq:def_F-1}).

\begin{figure}
\centering{}\input{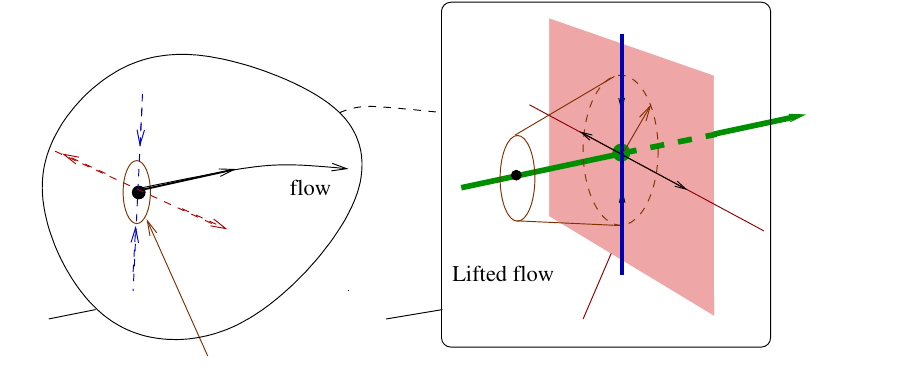tex_t}\caption{\label{fig:N_Sigma}The dynamics induced on $T^{*}M$ scatters on
the trapped set $\Sigma\subset T^{*}M$ defined in (\ref{eq:def_Sigma}).
$\Sigma$ is a line bundle over $M$, a symplectic sub-manifold of
$T^{*}M$ and at every point $\rho=\omega\mathcal{A}\left(m\right)\in\Sigma$,
where $\omega$ called frequency is the coordinate along the line,
the symplectic-normal bundle $N\left(\rho\right)=\left(T_{\rho}\Sigma\right)^{\perp}$
(a symplectic linear subspace of $T_{\rho}T^{*}M$) splits into unstable/stable
subspaces, $N\left(\rho\right)=N_{u}\left(\rho\right)\oplus N_{s}\left(\rho\right)$,
see (\ref{eq:decomp_K_K0_N}). The \textbf{main geometrical object}
considered in this paper is this fibration $N_{s}\rightarrow\Sigma\rightarrow M$.
Beware that for a geodesic flow on $\left(\mathcal{N},g\right)$,
this fibration sequence continues with $M=\left(T^{*}\mathcal{N}\right)_{1}\rightarrow\mathcal{N}$.}
\end{figure}

\subsection{\label{subsec:Consequences-and-other}Consequences and other results}
\begin{center}
\begin{figure}
\begin{centering}
\input{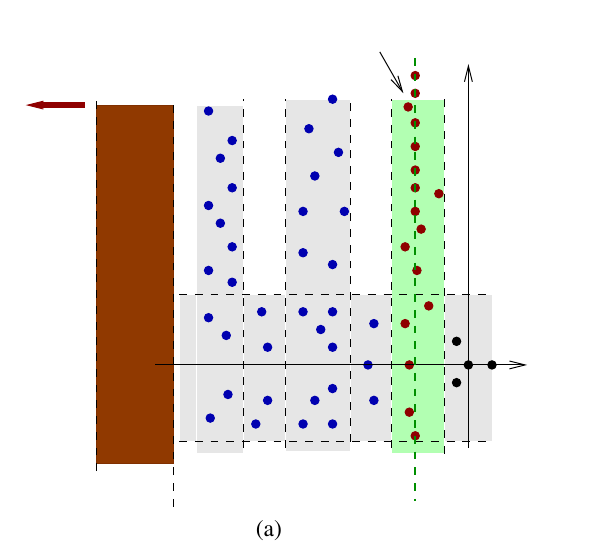tex_t}\input{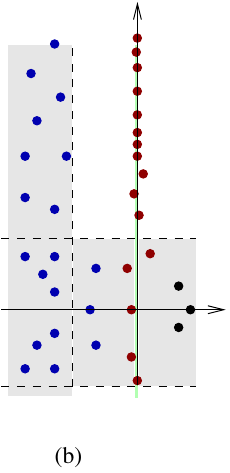tex_t}
\par\end{centering}
\caption{\label{fig:Band-structure-of}The dots represent the intrinsic Ruelle
discrete spectrum of the derivation $X_{F}$ in a Sobolev space $\mathcal{H}_{W}\left(M;F\right)$.
From \cite[thm 2.11]{faure_tsujii_Ruelle_resonances_density_2016},
the essential spectrum is in a (brown) vertical band that can be moved
arbitrarily far on the left by changing the weight $W$, and reveals
this intrinsic discrete spectrum. The right most eigenvalues in the
first band $B_{0}$ dominate the emerging behavior of $e^{tX_{F}}$
for $t\gg1$. On figure (b) for the special case of the bundle $F=\left|\mathrm{det}E_{s}\right|^{1/2}$,
the first band coincides with the imaginary axis.}
\end{figure}
\par\end{center}

There are many consequences of the effective description (\ref{eq:emerging-1})
of the dynamics. In this paper we describe a few of them that are
illustrated on Figure \ref{fig:Band-structure-of}. We will follow
general ideas from classical-quantum correspondence principles for
quantization in the case where the classical symbol is an operator
valued function on a symplectic manifold $\Sigma$ (this situation
is also present in physics for \href{https://en.wikipedia.org/wiki/Linear_elasticity\#Elastic_wave}{elastic waves},
\href{https://en.wikipedia.org/wiki/Electromagnetic_radiation}{electromagnetic waves},
\href{https://en.wikipedia.org/wiki/Dirac_equation}{Dirac equation}
etc), and we will derive the following results for the operator $X_{F}$.

\subsubsection{Discrete spectrum in vertical bands}

The generator of the classical dynamics (\ref{eq:classical_dyn})
is $X_{\mathcal{F}_{k}}$ and its spectrum (some essential spectrum
in fact) in $L^{2}\left(\Sigma;\mathcal{F}_{k}\right)$ is contained
in the vertical band $B_{k}:=\left[\gamma_{k}^{-},\gamma_{k}^{+}\right]\times i\mathbb{R}$
with $\gamma_{k}^{\pm}$ defined in (\ref{eq:def_gamma_-2}). We always
have $\gamma_{k+1}^{-}<\gamma_{k}^{-}$ and $\gamma_{k+1}^{+}<\gamma_{k}^{+}$
but this does not guarantee that the bands are separated by gaps i.e.
that $\gamma_{k+1}^{+}<\gamma_{k}^{-}$ , $B_{k+1}\cap B_{k}=\emptyset$. 

In the semi-classical limit, i.e. for $\omega=\mathrm{Im}\left(z\right)\rightarrow\pm\infty$
on the spectral plane $z\in\mathbb{C}$, Theorem \ref{Thm: bands}
below shows that the spectrum of $X_{F}$ is discrete and asymptotically
contained in these vertical bands $\bigcup_{k\in\mathbb{N}}B_{k}$
and the resolvent is uniformly bounded in the gaps between the bands
(if they exist). This is illustrated on Figure \ref{fig:Band-structure-of}.

\begin{cBoxB}{}
\begin{thm}[\textbf{Band structure of the Ruelle spectrum}]
\label{Thm: bands}For any $\epsilon>0$, $C>0$, there exists $C_{\epsilon}>0,\omega_{\epsilon}>0$
such that the Ruelle eigenvalues $\mathrm{spect}\left(X_{F}\right)$
are contained in the following spectral domain that consists of a
union of a \textbf{``low frequency horizontal band''} and \textbf{``vertical
bands''}:
\begin{equation}
\left(\mathrm{spect}\left(X_{F}\right)\cap\left\{ \mathrm{Re}\left(z\right)>-C\right\} \right)\subset\left\{ \left|\mathrm{Im}\left(z\right)\right|\leq\omega_{\epsilon}\right\} \cup\left(\bigcup_{k\in\mathbb{N}}\left\{ \mathrm{Re}\left(z\right)\in\left[\gamma_{k}^{-}-\epsilon,\gamma_{k}^{+}+\epsilon\right]\right\} \right),\label{eq:gaps-1}
\end{equation}
and the resolvent operator is uniformly bounded in the ``gaps'':
\begin{equation}
\left\Vert \left(z-X_{F}\right)^{-1}\right\Vert _{\mathcal{H}_{W}}\leq C_{\epsilon},\qquad\forall z\in\mathbb{C}\text{ s.t. }\left|\mathrm{Im}\left(z\right)\right|>\omega_{\epsilon},\mathrm{Re}\left(z\right)\in\left[\gamma_{k+1}^{+}+\epsilon,\gamma_{k}^{-}-\epsilon\right].\label{eq:resolvent-1}
\end{equation}
\end{thm}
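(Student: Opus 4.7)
The plan is to derive Theorem~\ref{Thm: bands} from the emergence formula~(\ref{eq:emerging-1}) of Theorem~\ref{thm:For-any-,-1}, which replaces $e^{tX_F}$, modulo a finite-rank operator and an exponentially small error, by a sum $\sum_{k=0}^{K}Q_k(t)$ of quantized classical propagators $Q_k(t):=\mathrm{Op}_{\sigma_2,\sigma_1}(e^{tX_{\mathcal{F}_k}}T_k)$. Two basic estimates drive the argument. First, boundedness of the quantization map on bounded symbols together with~(\ref{eq:def_gamma_-2}) gives the upper bound $\|Q_k(t)\|_{\mathcal{H}_W}\leq C_\epsilon\,e^{t(\gamma_k^++\epsilon)}$. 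Second, because $e^{tX_{\mathcal{F}_k}}$ is invertible on $\mathcal{F}_k$ with $\|e^{-tX_{\mathcal{F}_k}}\|_{L^\infty}\leq C_\epsilon\,e^{-t(\gamma_k^--\epsilon)}$, the semiclassical calculus of~\cite{faure_tsujii_Ruelle_resonances_density_2016} produces an approximate inverse $Q_k(t)^{-1}$ on the range of $\mathrm{Op}_{\sigma_2,\sigma_1}(T_k)$ with matching norm bound; this is where the approximate idempotence $\mathrm{Op}(T_k)^2\approx\mathrm{Op}(T_k)$ must be handled carefully by a Neumann series.

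For the spectral localization~(\ref{eq:gaps-1}), I would fix $K$ so large that $\gamma_{K+1}^++\epsilon<-C$, making the error term in~(\ref{eq:emerging-1}) negligible in the strip $\{\mathrm{Re}(z)>-C\}$. Assume $z_0\in\sigma(X_F)$ with $\mathrm{Re}(z_0)\in(\gamma_{k+1}^++\epsilon,\gamma_k^--\epsilon)$ and $|\mathrm{Im}(z_0)|>\omega_\epsilon$, and let $u$ be an associated eigenvector. Applying~(\ref{eq:emerging-1}) at time $T$ yields
\begin{equation*}
e^{Tz_0}u=\sum_{j=0}^{K}Q_j(T)u+R_\omega u+\mathcal{O}\bigl(e^{T(\gamma_{K+1}^++\epsilon)}\bigr)\|u\|.
\end{equation*}
The terms with $j\geq k+1$ are $O(e^{T(\gamma_{k+1}^++\epsilon)}\|u\|)$ by the upper bound above, exponentially smaller than $e^{Tz_0}\|u\|$. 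The terms with $j\leq k$, once projected by $\mathrm{Op}(T_j)$, must grow at least like $e^{T(\gamma_j^--\epsilon)}$ on their effective range by the approximate invertibility; since $\mathrm{Re}(z_0)<\gamma_j^--\epsilon$ this component must vanish. Only $R_\omega u$ can then balance $e^{Tz_0}u$, forcing $u$ into the finite-dimensional range of $R_\omega$. Hence eigenvalues outside the bands are finite in number on $\{\mathrm{Re}(z)>-C\}$ and can be absorbed into $\{|\mathrm{Im}(z)|\leq\omega_\epsilon\}$ by enlarging $\omega_\epsilon$.

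For the resolvent bound~(\ref{eq:resolvent-1}), I would start from the Laplace representation
\begin{equation*}
(z-X_F)^{-1}=-\int_0^{\infty}e^{-tz}\,e^{tX_F}\,dt,\qquad\mathrm{Re}(z)>\gamma_0^++\epsilon,
\end{equation*}
and construct its analytic continuation to the gap $\mathrm{Re}(z)\in[\gamma_{k+1}^++\epsilon,\gamma_k^--\epsilon]$ by substituting the decomposition. The $Q_j$-contributions with $j\geq k+1$ and the $\mathcal{O}$-remainder are integrated forward in $t$, converging because $\mathrm{Re}(z)>\gamma_j^++\epsilon$; the contributions with $j\leq k$ are integrated backward against the approximate inverse $Q_j(t)^{-1}$ constructed above, converging because $\mathrm{Re}(z)<\gamma_j^--\epsilon$. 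The two pieces glue into a bounded operator on $\mathcal{H}_W$, with norm depending only on $\epsilon$. The contribution of $R_\omega$ is a finite-rank meromorphic term, whose poles can be pushed into $\{|\mathrm{Im}(z)|\leq\omega_\epsilon\}$ by enlarging the cut-off frequency.

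The main obstacle is that the parameters $(\sigma_1,\sigma_2,\omega)$ in Theorem~\ref{thm:For-any-,-1} may depend on $t$, which is awkward when substituting~(\ref{eq:emerging-1}) inside an integral over $t$. I would circumvent this by first upgrading Theorem~\ref{thm:For-any-,-1} to a version with $(\sigma_1,\sigma_2,\omega)$ uniform for all $t\geq T_0$ (at the cost of a slightly larger $\epsilon$), and then working with the discrete semigroup $e^{nT_0 X_F}=(e^{T_0 X_F})^n$, whose spectrum is controlled by a spectral-radius argument on the quantized propagator. The subtle book-keeping is the band-to-band coupling at the operator level: since $\mathrm{Op}(T_j)\mathrm{Op}(T_k)\neq\delta_{jk}\mathrm{Op}(T_k)$ but only approximately, the inversion of $z-X_F$ band by band requires a Grushin-type reduction that simultaneously diagonalizes all bands up to errors controlled by the spectral gaps; this estimate is where the hypothesis $\mathrm{Re}(z)\in[\gamma_{k+1}^++\epsilon,\gamma_k^--\epsilon]$ is fully consumed.
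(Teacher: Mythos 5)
Your core mechanism is the paper's: split the Laplace representation of the resolvent by the Taylor projectors, integrate forward in time on the range of $T_{\geq(K+1)}$, where $\left\Vert e^{tX}\mathrm{Op}_{\sigma_{2}}\left(T_{\geq(K+1)}\right)\right\Vert \leq C_{\epsilon}e^{t\left(\gamma_{K+1}^{+}+\epsilon\right)}$ beats $e^{t\mathrm{Re}\left(z\right)}$, and backward in time on the range of $T_{\left[0,K\right]}$, where $\left\Vert e^{-tX}\mathrm{Op}_{\sigma_{2}}\left(T_{\left[0,K\right]}\right)\right\Vert \leq C_{\epsilon}e^{-t\left(\gamma_{K}^{-}-\epsilon\right)}$, treating low frequencies as a compact perturbation. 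These are exactly the paper's key estimates (\ref{eq:exp-tX}) and (\ref{eq:exp-tXK+1}) and its operators $R_{\mathrm{in}}^{\left(0\right)}$, $R_{\mathrm{out}}^{\left(0\right)}$. Two remarks on presentation: the paper derives these estimates directly from Theorems \ref{thm:Approximation-of-the}, \ref{thm:Composition-formula-For} and \ref{prop:We-have-,,} and proves the band structure \emph{before} Theorem \ref{thm:For-any-,-1} (whose proof reuses the same lemmas), so deducing the bands from the emergence theorem adds a layer you must check is not circular; and (\ref{eq:gaps-1}) should simply be read off from (\ref{eq:resolvent-1}) rather than argued separately through eigenvectors --- your step ``this component must vanish'' only gives smallness up to the errors of the calculus and is not needed.

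The genuine gap is your treatment of the $t$-dependence of the parameters. You propose to upgrade Theorem \ref{thm:For-any-,-1} to a version with $\left(\sigma_{1},\sigma_{2},\omega\right)$ uniform over all $t\geq T_{0}$; this step would fail. For fixed cutoffs the remainder in Theorem \ref{thm:propagation_singularities_2} and in the composition formula carries constants depending on $t$, and the requirement $\sigma_{2}\gg\sigma_{1}$ itself depends on $t$ because $d\tilde{\phi}^{t}$ expands the normal directions; there is no uniform-in-$t$ statement at fixed cutoffs, and your fallback via the discrete semigroup merely relocates the same difficulty. The resolution you actually need, and which the paper uses, is to truncate the time integrals at a finite $T$, so that only finitely much approximation error accumulates: one then gets $\left(z-X\right)R\left(z\right)=\mathrm{Id}-r\left(z\right)$ with $\left\Vert r\left(z\right)\right\Vert \leq\epsilon''+C_{\epsilon}e^{-\left(\epsilon'-\epsilon\right)T}+C/\delta<1$ after choosing $T$, the cutoffs, $\delta$ and finally $\omega_{\epsilon}$ in the right order, and inverts by a Neumann series. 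Relatedly, you omit the frequency localization: the norm estimates are uniform over all of $\Sigma$, but the resolvent at $z$ only sees frequencies near $\mathrm{Im}\left(z\right)$, so one must split off the region where $\boldsymbol{\omega}\left(\rho\right)$ lies outside $\left[\mathrm{Im}\left(z\right)-\delta,\mathrm{Im}\left(z\right)+\delta\right]$ and invert there explicitly with the wave-packet projectors (the paper's $R^{\left(2\right)}\left(z\right)$, contributing $O\left(1/\delta\right)$); without this piece the Neumann-series error cannot be made smaller than $1$. (Minor: the Laplace representation is $\left(z-X_{F}\right)^{-1}=\int_{0}^{\infty}e^{-tz}e^{tX_{F}}dt$, without the minus sign.)
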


\end{cBoxB}

The proof is given in Section \ref{subsec:Band-spectrum-of}.

Notice that if the gap does not exist, i.e. if the band overlap ($\gamma_{k+1}^{+}\geq\gamma_{k}^{-}$
for every $k\in\mathbb{N}$) then the content of Theorem \ref{Thm: bands}
is empty except for the right most limit $\gamma_{0}^{+}$.

From a general theorem in semi-group theory \cite[p.276, Thm1.18 p.307, prop 1.15p.305]{engel_1999},
Theorem \ref{Thm: bands} that concerns the generator $X_{F}$ is
equivalent to the following result that concerns the semi-group $\left(e^{tX_{F}}\right)_{t\geq0}$
and illustrated on Figure \ref{fig:Spectrum-of-annuli}.

\begin{figure}
\begin{centering}
\input{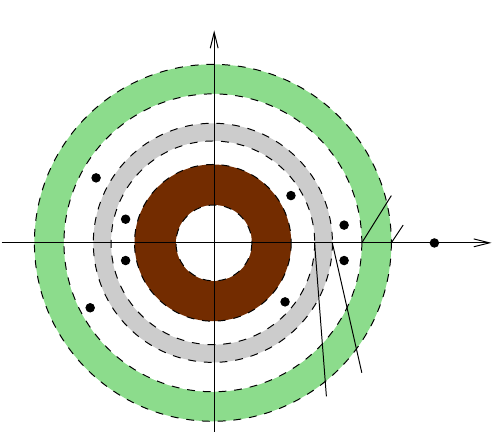tex_t}
\par\end{centering}
\caption{\label{fig:Spectrum-of-annuli}Band spectrum of $e^{tX_{F}}$ in $\mathcal{H}_{W}\left(M;F\right)$
for $t>0$, from Theorem \ref{thm:Discrete-band_spectrum-1}. For
$k\in\mathbb{N}$, and $\gamma_{k}^{\pm}$ defined in (\ref{eq:def_gamma_-2}),
the operator $e^{tX_{F}}$ has discrete spectrum in the annulus $e^{t\gamma_{k+1}^{+}}<\left|z\right|<e^{t\gamma_{k}^{-}}$
(if $\gamma_{k+1}^{+}<\gamma_{k}^{-}$) called \textquotedblleft spectral
gap\textquotedblright . It has also discrete spectrum on $\left|z\right|>e^{t\gamma_{0}^{+}}$
and possibly essential spectrum elsewhere (the colored annuli). This
picture is somehow the exponential of Figure \ref{fig:Band-structure-of}.}
\end{figure}
\begin{cBoxB}{}
\begin{thm}[\textbf{Ring spectrum}]
\label{thm:Discrete-band_spectrum-1}Let $k\in\mathbb{N}$. For any
$t>0$, the operator $e^{tX_{F}}:\mathcal{H}_{W}\left(M;F\right)\rightarrow\mathcal{H}_{W}\left(M;F\right)$
has discrete spectrum on the ring $\left\{ z\in\mathbb{C},\quad e^{t\gamma_{k+1}^{+}}<\left|z\right|<e^{t\gamma_{k}^{-}}\right\} $
(if non empty, i.e. if $\gamma_{k+1}^{+}<\gamma_{k}^{-}$) and on
$\left|z\right|>e^{t\gamma_{0}^{+}}$.
\end{thm}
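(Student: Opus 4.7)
The plan is to deduce the ring spectrum of the semigroup $e^{tX_{F}}$ from the band spectrum of the generator $X_{F}$ (Theorem \ref{Thm: bands}) using the Riesz-Dunford functional calculus combined with the Gearhart-Pr\"uss spectral mapping theorem for $C_{0}$-semigroups on Hilbert spaces, as signaled by the reference to \cite{engel_1999}. The key input is the uniform resolvent bound (\ref{eq:resolvent-1}) on vertical lines lying in the gaps between bands.

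Fix $k\in\mathbb{N}$ with $\gamma_{k+1}^{+}<\gamma_{k}^{-}$ and $\epsilon>0$ small enough that the interval $(\gamma_{k+1}^{+}+\epsilon,\gamma_{k}^{-}-\epsilon)$ is nonempty. Choose $a$ in this interval. By (\ref{eq:gaps-1}) the spectrum of $X_{F}$ in the closed half-plane $\{\mathrm{Re}(z)\geq a\}$ is contained in the union of the finitely many bands $B_{0},\dots,B_{k}$ and the bounded low-frequency region $\{|\mathrm{Im}(z)|\leq\omega_{\epsilon}\}$; this is a discrete set of isolated eigenvalues of finite multiplicity. By (\ref{eq:resolvent-1}), the resolvent $(z-X_{F})^{-1}$ is uniformly bounded on the vertical line $\mathrm{Re}(z)=a$ for $|\mathrm{Im}(z)|\geq\omega_{\epsilon}$.

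I then introduce the Riesz spectral projector
\[
P_{k}\;:=\;\frac{1}{2\pi i}\oint_{\Gamma}(z-X_{F})^{-1}\,dz,
\]
where $\Gamma$ is the oriented boundary of a vertical half-plane $\mathrm{Re}(z)>a$, slightly deformed around the finitely many low-frequency eigenvalues lying to the right of $\mathrm{Re}(z)=a$. The uniform bound of (\ref{eq:resolvent-1}), together with a standard regularization of the vertical integral, makes $P_{k}$ a bounded projection on $\mathcal{H}_{W}(M;F)$ commuting with $X_{F}$ and with $e^{tX_{F}}$. Denote $V_{+}:=\mathrm{Im}(P_{k})$ and $V_{-}:=\mathrm{Ker}(P_{k})$. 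On $V_{+}$, the operator $X_{F}$ has purely discrete spectrum (as listed above), and the spectral mapping for discrete point spectrum gives
\[
\sigma\bigl(e^{tX_{F}}|_{V_{+}}\bigr)\;=\;\bigl\{e^{tz}\,:\,z\in\sigma(X_{F}|_{V_{+}})\bigr\}\;\subset\;\{|w|\geq e^{ta}\},
\]
which is again discrete with finite multiplicities. On $V_{-}$, the resolvent of $X_{F}$ is uniformly bounded on and to the right of the line $\mathrm{Re}(z)=a$, so the Gearhart-Pr\"uss theorem in the Hilbert space $\mathcal{H}_{W}(M;F)$ yields $\|e^{tX_{F}}|_{V_{-}}\|\leq C_{\epsilon}\,e^{ta}$, and therefore $\sigma(e^{tX_{F}}|_{V_{-}})\subset\{|w|\leq e^{ta}\}$.

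Combining these two statements, the spectrum of $e^{tX_{F}}$ in the open region $\{|w|>e^{ta}\}$ is exactly the discrete set inherited from $V_{+}$. Letting $a$ range through $(\gamma_{k+1}^{+}+\epsilon,\gamma_{k}^{-}-\epsilon)$ and sending $\epsilon\to 0$ sweeps out the full ring $\{e^{t\gamma_{k+1}^{+}}<|w|<e^{t\gamma_{k}^{-}}\}$, proving discreteness there. The statement for $|w|>e^{t\gamma_{0}^{+}}$ follows identically by applying the same construction with $a=\gamma_{0}^{+}+\epsilon$, where $V_{+}$ then contains only the low-frequency bounded-imaginary-part eigenvalues. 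The main obstacle is the justification of the Gearhart-Pr\"uss step and of the spectral mapping on the (generally infinite-dimensional) invariant subspace $V_{+}$; both are textbook consequences of the quantitative resolvent bound (\ref{eq:resolvent-1}) together with the discreteness from (\ref{eq:gaps-1}), which is exactly what permits the direct invocation of the semigroup machinery of \cite{engel_1999}.
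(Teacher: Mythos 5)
Your strategy is the one the paper itself invokes: Theorem \ref{thm:Discrete-band_spectrum-1} is obtained from Theorem \ref{Thm: bands} by the Hilbert-space semigroup machinery of \cite{engel_1999} (Gearhart--Pr\"uss), with the uniform resolvent bound (\ref{eq:resolvent-1}) in the gaps as the key input; the paper gives no further detail, so your write-up is an expansion of exactly that step. Two points in the expansion need repair. First, $P_{k}$ cannot be defined as a contour integral of $(z-X_{F})^{-1}$ over the boundary of a half-plane: the spectral set $\sigma(X_{F})\cap\left\{ \mathrm{Re}\left(z\right)>a\right\}$ is unbounded and the integrand is merely bounded, not decaying, along the vertical line, so the integral does not converge and ``standard regularization'' is carrying the whole weight of the argument. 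The clean order of operations is the reverse of yours: apply Gearhart--Pr\"uss first to conclude that the entire circle $\left|w\right|=e^{ta}$ lies in the resolvent set of $e^{tX_{F}}$ (for $\mathrm{Re}\left(\mu\right)=a$ the logarithms $\mu+2\pi ij/t$ with $\left|j\right|$ large have uniformly bounded resolvents by (\ref{eq:resolvent-1}), and the finitely many remaining ones avoid the discrete low-frequency eigenvalues after a small perturbation of $a$), and only then take the Riesz projection of the bounded operator $e^{tX_{F}}$ over that circle, which is a compact contour around a compact spectral set; equivalently, quote the hyperbolic-dichotomy theorem for Hilbert-space semigroups, which is the form in which this appears in \cite{engel_1999}.

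Second, the claim that $\sigma\left(e^{tX_{F}}|_{V_{+}}\right)$ equals $\left\{ e^{tz}:z\in\sigma\left(X_{F}|_{V_{+}}\right)\right\}$ and ``is again discrete'' is false as stated: spectra are closed, and by the Weyl law (Theorem \ref{thm:Weyl-law-for}) the bands contain infinitely many eigenvalues with $\left|\mathrm{Im}\left(z_{j}\right)\right|\rightarrow\infty$ and real parts in a compact interval, so $e^{t\sigma\left(X_{F}|_{V_{+}}\right)}$ accumulates on circles $\left|w\right|=e^{tc}$, and these accumulation points belong to $\sigma\left(e^{tX_{F}}|_{V_{+}}\right)$ without being isolated. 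What rescues the theorem is a localization you must make explicit: applying (\ref{eq:gaps-1}) for every $\epsilon>0$ shows that the accumulation points $c$ of $\mathrm{Re}\left(\sigma\left(X_{F}\right)\right)$ lie in $\bigcup_{k'}\left[\gamma_{k'}^{-},\gamma_{k'}^{+}\right]$, so the corresponding circles miss the open ring $e^{t\gamma_{k+1}^{+}}<\left|w\right|<e^{t\gamma_{k}^{-}}$, and any compact subring contains only finitely many eigenvalue images. The correct conclusion is discreteness in the open ring (and in $\left|w\right|>e^{t\gamma_{0}^{+}}$), not in all of $\left\{ \left|w\right|\geq e^{ta}\right\}$.
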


\end{cBoxB}

\begin{rem}
An immediate consequence of Theorem \ref{thm:Discrete-band_spectrum-1},
if $\gamma_{k+1}^{+}<\gamma_{k}^{-}$ for some $k\in\mathbb{N}$,
is the possibility to perform a contour integral of the resolvent
of $e^{tX_{F}}$ at fixed value of $t>0$, on the circle $C_{t,\epsilon}:=\left\{ z\in\mathbb{C}\mathrm{\,s.t.\,}\left|z\right|=e^{t\left(\gamma_{k+1}^{+}+\epsilon\right)}\right\} $
with $\epsilon>0$ such that $\gamma_{k+1}^{+}+\epsilon<\gamma_{k}^{-}$
and such that there is no eigenvalue on this circle. Using \href{https://en.wikipedia.org/wiki/Resolvent_formalism}{Cauchy integral}
and \href{https://en.wikipedia.org/wiki/Holomorphic_functional_calculus}{holomorphic functional calculus},
we get a spectral projector $\Pi_{\left[0,k\right]}=\mathrm{Id}-\frac{1}{2\pi i}\varoint_{C_{t,\epsilon}}\left(z-e^{tX_{F}}\right)^{-1}dz$
on the first $k$ external bands and one can approximate $e^{tX_{F}}$
for $t>0$ by the spectral restriction $e^{tX_{F}}\Pi_{\left[0,k\right]}$
with the following estimate:
\begin{equation}
\left\Vert e^{tX_{F}}-e^{tX_{F}}\Pi_{\left[0,k\right]}\right\Vert _{\mathcal{H}_{W}\left(M;F\right)}\leq C_{\epsilon}e^{t\left(\gamma_{k+1}^{+}+\epsilon\right)},\label{eq:decay}
\end{equation}
where $e^{tX_{F}}\Pi_{\left[0,k\right]}$ is compared to a quantum
evolution operator in this paper. This operator can be used to describe
with great accuracy the \textbf{decay of correlations} $\langle v|e^{tX_{F}}u\rangle_{L^{2}\left(M;F\right)}$
as $t\rightarrow\infty$, that is the question\footnote{\label{fn:Remark-also-that}Remark that, thanks to the spectral gap
assumption $\gamma_{k+1}^{+}<\gamma_{k}^{-}$ and thanks to the abstract
definition of the spectral projector $\Pi_{\left[0,k\right]}$ we
obtain Eq.(\ref{eq:decay}) without being embarrassed by the finite
rank operator $R_{\sigma,t}$ described in section \ref{subsec:Few-comments-about},
whose rank may grows with $t$.} raised in the beginning of this paper.
\end{rem}

~
\begin{rem}
\label{rem:norm_resolvent_and_W}The discrete spectrum of $X_{F}$
(and eigenspace) are said ``intrinsic'' because they do not depend
on the space $\mathcal{H}_{W}\left(M;F\right)$, see \cite[Thm 1.5]{fred_flow_09}.
However more refined spectral properties as the norm of the resolvent
$\left\Vert \left(z-X_{F}\right)^{-1}\right\Vert _{\mathcal{H}_{W}\left(M;F\right)}$
depend on the choice of the space $\mathcal{H}_{W}\left(M;F\right)$
and play an important role in this paper. For example the uniform
boundness of the norm of the resolvent in the gaps given in Theorem
\ref{Thm: bands} (implying Theorem \ref{thm:Discrete-band_spectrum-1})
is obtained with the weight function $W$ and Sobolev space $\mathcal{H}_{W}\left(M;F\right)$
from \cite{faure_tsujii_Ruelle_resonances_density_2016} and defined
below in (\ref{eq:def_W}) because we have decay outside a parabolic
neighborhood of the trapped set $\Sigma$, saturating the uncertainty
principle. This property of uniform boundness is not true with the
weight function and Sobolev space defined in \cite{fred_flow_09}
where the decay takes place only outside a (much bigger) conical neighborhood
of the trapped set $\Sigma$. 
\end{rem}

\subsubsection{Weyl law}

Recall that $\mathrm{dim}M=2d+1$, with $d=\mathrm{dim}E_{s,u}=\mathrm{dim}N_{u,s}$.
For some given $k\in\mathbb{N}$, suppose $B_{k}\cap\left(\bigcup_{k'\neq k}B_{k'}\right)=\emptyset$,
i.e. that the band $B_{k}$ is isolated. Theorem \ref{thm:Weyl-law-for}
below shows that the density of discrete eigenvalues of $X_{F}$ in
band $B_{k}$ in the limit $\omega\rightarrow\infty$ converges to
$\mathrm{rank}\left(\mathcal{F}_{k}\right)\mathrm{Vol}\left(M\right)\frac{\omega^{d}}{\left(2\pi\right)^{d+1}}$
with 
\begin{equation}
\mathrm{rank}\left(\mathcal{F}_{k}\right)\eq{\ref{eq:def_Fk}}\binom{k+d-1}{d-1}\mathrm{rank}\left(F\right).\label{eq:rank_F_k}
\end{equation}
It is analogous to the usual \href{https://en.wikipedia.org/wiki/Weyl_law}{Weyl law}.

\begin{cBoxB}{}
\begin{thm}[\textbf{Weyl law for isolated bands}]
\label{thm:Weyl-law-for}For any $k\in\mathbb{N}$ and $\epsilon>0$,
such that $\gamma_{k+1}^{+}<\gamma_{k}^{-}-\epsilon$ and $\gamma_{k}^{+}+\epsilon<\gamma_{k-1}^{-}$
(this last condition is only for $k\geq1$)
\begin{align}
\lim_{\delta\rightarrow+\infty}\limsup_{\omega\rightarrow\pm\infty}\left|\frac{1}{\left|\omega\right|^{d}\delta}\sharp\left\{ \mathrm{spect}\left(X_{F}\right)\cap\mathrm{Box}_{\omega,\delta}\right\} -\frac{\mathrm{rank}\left(\mathcal{F}_{k}\right)\mathrm{Vol}\left(M\right)}{\left(2\pi\right)^{d+1}}\right| & =0\label{eq:Weyl_law}
\end{align}
with the spectral box
\[
\mathrm{Box}_{\omega,\delta}:=\left\{ z\in\left(\left[\gamma_{k}^{-}-\epsilon,\gamma_{k}^{+}+\epsilon\right]\times i\left[\omega,\omega+\delta\right]\right)\right\} .
\]
\end{thm}

\end{cBoxB}

The proof is obtained in Section \ref{sec:Proof-of-Theorem_Weyl}.
\begin{rem}
As the proof shows, the result holds true more generally for a group
of isolated bands indexed by $k\in\left[k_{1},k_{2}\right]$ with
$k_{1}\leq k_{2}$, i.e. assuming $\gamma_{k_{2}+1}^{+}<\gamma_{k_{2}}^{-}$
and $\gamma_{k_{1}}^{+}<\gamma_{k_{1}-1}^{-}$ . For the case $k_{1}=0$
we only have to assume $\gamma_{k_{2}+1}^{+}<\gamma_{k_{2}}^{-}$.
Then in (\ref{eq:Weyl_law}), the interval $\left[\gamma_{k}^{-}-\epsilon,\gamma_{k}^{+}+\epsilon\right]$
has to be replaced by $\left[\gamma_{k_{2}}^{-}-\epsilon,\gamma_{k_{1}}^{+}+\epsilon\right]$
and $\mathrm{rank}\left(\mathcal{F}_{k}\right)$ has to be replaced
by $\mathrm{rank}\left(\bigoplus_{k=k_{1}}^{k_{2}}\mathcal{F}_{k}\right)$.
\end{rem}

\subsubsection{Ergodic concentration of the spectrum}

We define in (\ref{eq:def_Lyapounov_exp}) the \textbf{maximal and
minimal exponents} $\check{\gamma}_{k}^{-}\leq\check{\gamma}_{k}^{+}$
of the bundle maps $e^{tX_{\mathcal{F}_{k}}}$ with respect to the
contact volume on $\Sigma$, satisfying $\left[\check{\gamma}_{k}^{-},\check{\gamma}_{k}^{+}\right]\subset\left[\gamma_{k}^{-},\gamma_{k}^{+}\right]$.
The following theorem in addition to Theorem \ref{thm:Weyl-law-for}
shows that most of the eigenvalues in band $B_{k}$ belong to the
narrower band $\mathrm{Re}\left(z\right)\in\left[\check{\gamma}_{k}^{-},\check{\gamma}_{k}^{+}\right]$.

\begin{cBoxB}{}
\begin{thm}[\textbf{Ergodic concentration of the spectrum}]
\textbf{\label{thm:Ergodic-concentration-of}}Let $k\in\mathbb{N}$.
Assume that $\gamma_{k+1}^{+}<\gamma_{k}^{-}$ and $\gamma_{k}^{+}<\gamma_{k-1}^{-}$
(this last condition is only for $k\geq1$). For any $\epsilon>0$
small enough, we have
\begin{equation}
\lim_{\delta\rightarrow+\infty}\limsup_{\omega\rightarrow\pm\infty}\frac{1}{\left|\omega\right|^{d}\delta}\sharp\left\{ \mathrm{spect}\left(X_{F}\right)\cap\mathrm{Strips}_{\omega,\delta}\right\} =0\label{eq:accumulation-1-1}
\end{equation}
with
\[
\mathrm{Strips}_{\omega,\delta}:=\left\{ z\in\left(\left[\gamma_{k}^{-}-\epsilon,\gamma_{k}^{+}+\epsilon\right]\backslash\left[\check{\gamma}_{k}^{-}-\epsilon,\check{\gamma}_{k}^{+}+\epsilon\right]\right)\times i\left[\omega,\omega+\delta\right]\right\} .
\]
\end{thm}

\end{cBoxB}

The proof is given in Section \ref{sec:Proof-of-Theorem-accumulation}.
For example, in the special case of band $k=0$ and $\mathrm{rank}\left(F\right)=1$
we have $\mathrm{rank}\left(\mathcal{F}_{0}\right)\eq{\ref{eq:rank_F_k}}1$,
and from ergodicity of $X$, most of eigenvalues accumulate on the
vertical line
\[
\check{\gamma}_{0}^{-}=\check{\gamma}_{0}^{+}=\frac{1}{\mathrm{Vol}\left(M\right)}\int_{M}Ddm
\]
being the space average of the ``damping function'' $D\in C\left(M;\mathbb{R}\right)$
defined by $D\left(m\right):=V\left(m\right)+\frac{1}{2}\mathrm{div}X_{/E_{s}}\left(m\right)$
with the potential function $V$ given in (\ref{eq:XF_X_V}), see
\cite{faure-tsujii_prequantum_maps_12}.

\subsubsection{Emerging quantum dynamics on vector bundles}

The band structure of the Ruelle spectrum and Weyl law described above
reflect in fact a deeper geometric phenomenon that can be explained
in terms of geometric quantization. Let us explain this.

\paragraph{Wave-packet transform $\mathcal{T}$:}

In paper \cite[def. 4.23]{faure_tsujii_Ruelle_resonances_density_2016},
see also (\ref{eq:def_T}), we introduce a wave-packet transform (here
$\mathcal{S}\left(T^{*}M;F\right)$ is the space of Schwartz sections
of the bundle $F\rightarrow T^{*}M$)
\begin{equation}
\mathcal{T}:C^{\infty}\left(M;F\right)\rightarrow\mathcal{S}\left(T^{*}M;F\right),\label{eq:Tau}
\end{equation}
satisfying $\mathcal{T}^{\dagger}\mathcal{T}=\mathrm{Id}_{L^{2}\left(M;F\right)}$
and that is the basic tool used for micro-local analysis. For example,
using the characteristic function $\chi_{\Sigma,\sigma}=\boldsymbol{1}_{\left\{ \left\Vert \rho_{u}+\rho_{s}\right\Vert _{g_{\rho}}\leq\sigma\right\} }:T^{*}M\rightarrow\left[0,1\right]$
defined in (\ref{eq:def_Lambda}), for the neighborhood of the set
$\Sigma=\mathbb{R}^{*}\mathcal{A}\subset T^{*}M$ at a distance $\sigma>0$
measured by a specific metric $g$ (given in Lemma \ref{lem:There-exists-a}),
we define the operator 
\begin{equation}
\mathrm{Op}\left(\chi_{\Sigma,\sigma}\right)\eq{\ref{eq:def_op_Lambda}}\mathcal{T}^{\dagger}\chi_{\Sigma,\sigma}\mathcal{T}:C^{\infty}\left(M;F\right)\rightarrow C^{\infty}\left(M;F\right)\label{eq:Op_Sigma_sigma}
\end{equation}
 that restricts functions to their micro-local components near $\Sigma$.

Theorem \ref{thm:decay} will show that the norm decays exponentially
fast outside the trapped set $\Sigma$ with an exponential rate, i.e.
we have $\left\Vert e^{tX_{F}}\left(\mathrm{Id}-\mathrm{Op}\left(\chi_{\Sigma,\sigma}\right)\right)\right\Vert _{\mathcal{H}_{W}\left(M\right)}\leq Ce^{-t\Lambda}$
for $t\geq0$, with a rate $\Lambda>0$ arbitrarily large if $\sigma$
is large enough (depending on $t$). This implies that for our study,
we can consider only the component of the dynamics $e^{tX}\mathrm{Op}\left(\chi_{\Sigma,\sigma}\right)$
near the trapped set.

\paragraph{Quantization:}

Recall the vector bundle $\mathcal{F}\left(E_{s}\right)\rightarrow M$
defined in (\ref{eq:def_F_cal}) and the lifted bundle $\mathcal{F}\left(N_{s}\right)\rightarrow\Sigma$.
For a map $a\in C^{\beta}\left(M;\mathrm{End}\left(\mathcal{F}\left(E_{s}\right)\right)\right)$
(or even $a\in C^{\beta}\left(M;L\left(\mathcal{F}\left(E_{s}\right),\mathcal{F}'\left(E_{s}\right)\right)\right)$)
and time $t\in\mathbb{R}$, we will define a quantum operator in Definition
\ref{def:The-quantization-of-1}
\[
\mathrm{Op}_{\Sigma}\left(e^{tX_{\mathcal{F}}}\tilde{a}\right):C^{\infty}\left(M;F\right)\rightarrow C^{\infty}\left(M;F\right)
\]
where $\tilde{a}\in C^{\beta}\left(\Sigma;L\left(\mathcal{F}\left(N_{s}\right)\right)\right)$
is the lifted map.

For two operators $A,B$, we define in (\ref{eq:def_approx}) an equivalence
relation $A\approx B$ if their difference in any $\sigma-$neighborhood
of the trapped set $\Sigma$ and for large enough frequencies becomes
``negligible''. A consequence is: if $A\approx B$ then for any
$\sigma>0$,
\begin{equation}
\left\Vert \mathrm{Op}\left(\chi_{\Sigma,\sigma}\right)\left(A-B\right)\mathrm{Op}\left(\chi_{\Sigma,\sigma}\right)\left(\mathrm{Id}-\mathrm{Op}\left(\chi_{\omega}\right)\right)\right\Vert _{\mathcal{H}_{W}\left(M\right)}\underset{\omega\rightarrow\infty}{\rightarrow}0,\label{eq:r_sigma-1}
\end{equation}
where $\mathrm{Op}\left(\chi_{\Sigma,\sigma}\right)$ is given in
(\ref{eq:Op_Sigma_sigma}) and for $\omega>0$, $\mathrm{Op}\left(\chi_{\omega}\right):\mathcal{S}'\left(M;F\right)\rightarrow\mathcal{S}\left(M;F\right)$
is a smoothing operator hence compact that removes frequencies larger
than $\omega$, defined as follows: let $g_{M}$ be an arbitrary smooth
metric on $M$. Let $\chi_{\omega}:T^{*}M\rightarrow\mathbb{R}^{+}$
given by 
\begin{equation}
\chi_{\omega}\left(.\right)=\boldsymbol{1}_{\left\{ \left\Vert .\right\Vert _{g_{M}}\leq\omega\right\} }\label{eq:def_Chi_omega}
\end{equation}
and let
\begin{equation}
\mathrm{Op}\left(\chi_{\omega}\right):=\mathcal{T}^{\dagger}\chi_{\omega}\mathcal{T}.\label{eq:def_Xi_low}
\end{equation}
In Theorem \ref{thm:Approximation-of-the} we will obtain

\begin{cBoxB}{}
\begin{thm}[\textbf{Approximation of the dynamics by quantum operator}]
\textbf{\label{thm:Approximation-of-the-1}}For any $t\in\mathbb{R}$,
\begin{align}
e^{tX_{F}} & \approx\mathrm{Op}_{\Sigma}\left(e^{tX_{\mathcal{F}}}\right).\label{eq:expression-1}
\end{align}
\end{thm}

\end{cBoxB}

In Theorem \ref{thm:Composition-formula-For} we will obtain\footnote{Recall from section \ref{subsec:Main-result.-Emergence} that a symbol
$a\in C^{\beta}\left(M;\mathrm{End}\left(\mathcal{F}_{\left[0,K\right]}\left(E_{s}\right)\right)\right)$can
be extended to $a\circ T_{\left[0,K\right]}\in C^{\beta}\left(M;L\left(\mathcal{F},\mathcal{F}'\right)\right)$
using Taylor projectors $T_{\left[0,K\right]}:\mathcal{F}\left(E_{s}\right)\rightarrow\mathcal{F}_{\left[0,K\right]}\left(E_{s}\right)\subset\mathcal{F}'\left(E_{s}\right)$.}

\begin{cBoxB}{}
\begin{thm}[\textbf{Composition formula}]
\textbf{\label{thm:Composition-formula-For-1}}For any symbols $a,b\in C^{\beta}\left(M;L\left(\mathcal{F}_{\left[0,K\right]}\left(E_{s}\right)\right)\right)$,
any $t,t'\in\mathbb{R}$, we have
\begin{equation}
\mathrm{Op}_{\Sigma}\left(e^{tX_{\mathcal{F}}}\tilde{a}\right)\mathrm{Op}_{\Sigma}\left(e^{t'X_{\mathcal{F}}}\tilde{b}\right)\approx\mathrm{Op}_{\Sigma}\left(e^{tX_{\mathcal{F}}}\tilde{a}e^{t'X_{\mathcal{F}}}\tilde{b}\right).\label{eq:compos_operators-1}
\end{equation}
\end{thm}

\end{cBoxB}

In particular, since for any $k,k'\in\mathbb{N}$ we have for the
Taylor projectors $\left[T_{k},T_{k'}\right]=\delta_{k=k'}T_{k}$
and $\left[T_{k},e^{tX_{\mathcal{F}}}\right]=0$, we will get in corollary
\ref{cor:As-particular-cases} that
\begin{equation}
\mathrm{Op}_{\Sigma}\left(e^{tX_{\mathcal{F}}}\tilde{T}_{k}\right)\mathrm{Op}_{\Sigma}\left(e^{t'X_{\mathcal{F}}}\tilde{T}_{k'}\right)\approx\delta_{k=k'}\mathrm{Op}_{\Sigma}\left(e^{\left(t+t'\right)X_{\mathcal{F}}}\tilde{T}_{k}\right).\label{eq:alg-1}
\end{equation}

We will get a general continuity Theorem \ref{thm:continuity_thm},

\begin{cBoxB}{}
\begin{thm}[continuity theorem for F.I.O.]
\label{thm:continuity_thm-1}There exists $C>0$, such that for any
symbol $a\in C^{\beta}\left(M;L\left(\mathcal{F}_{\left[0,K\right]}\left(E_{s}\right)\right)\right)$
and $t\in\mathbb{R}$,
\begin{equation}
\left\Vert \mathrm{Op}_{\Sigma}\left(e^{tX_{\mathcal{F}}}\tilde{a}\right)\right\Vert _{\mathcal{H}_{W}\left(M\right)}\leq C\left\Vert e^{tX_{\mathcal{F}}}\tilde{a}\right\Vert _{\mathcal{H}_{\mathcal{W}}\left(\mathcal{F}\right)}.\label{eq:bound_OP_Sigma-2}
\end{equation}
\end{thm}

\end{cBoxB}

As a corollary we will obtain in corollary \ref{Thm:Boundness} some
estimates that reflect Eq.(\ref{eq:def_gamma_-2}) at the quantum
level, such as

\begin{cBoxB}{}
\begin{cor}
For any $k\in\mathbb{N}$, $\forall\epsilon>0,\exists C_{k,\epsilon}>0$,$\forall t\geq0$,
\begin{equation}
\left\Vert \mathrm{Op}_{\Sigma}\left(e^{tX_{\mathcal{F}}}\tilde{T}_{k}\right)\right\Vert _{\mathcal{H}_{W}\left(M\right)}\leq C_{k,\epsilon}e^{t\left(\gamma_{k}^{+}+\epsilon\right)},\label{eq:bound-4-1-1}
\end{equation}
\begin{equation}
\left\Vert \mathrm{Op}_{\Sigma}\left(e^{-tX_{\mathcal{F}}}\tilde{T}_{k}\right)\right\Vert _{\mathcal{H}_{W}\left(M\right)}\leq C_{k,\epsilon}e^{-t\left(\gamma_{k}^{-}-\epsilon\right)}.\label{eq:bound-4-2}
\end{equation}
\end{cor}

\end{cBoxB}

The band spectrum of the Ruelle spectrum in Theorem \ref{Thm: bands}
and Theorem \ref{thm:Discrete-band_spectrum-1} is a direct manifestation
of these boundness estimates together with the algebraic structure
of (\ref{eq:alg-1}).

In Theorem \ref{thm:Take-.-There} we will get some trace formula
that will be used to prove Weyl law in section \ref{sec:Proof-of-Theorem_Weyl}.

\subsubsection{Horocycle operators}

In the special case of the geodesic flow on $M=\Gamma\backslash\mathrm{SL}_{2}\left(\mathbb{R}\right)$,
the Lie \href{https://en.wikipedia.org/wiki/Special_linear_Lie_algebra}{algebra}
of the $\mathrm{sl}_{2}\left(\mathbb{R}\right)$ provides left invariant
vector fields $u\in C^{\infty}\left(M;E_{u}\right)$, $s\in C^{\infty}\left(M;E_{s}\right)$
that satisfies
\begin{equation}
\left[s,u\right]=X\label{eq:sl2R}
\end{equation}
and called respectively unstable/stable \href{https://en.wikipedia.org/wiki/Horocycle}{horocycle}
vector fields (and more generally for the algebra of $\mathrm{so}\left(n,1\right)$),
see \cite{flaminio_forni_2003,dyatlov_faure_guillarmou_2014,guillarmou_weich_resonances_16}
where it is shown that these operators $s,u$ map respectively band
$B_{k}$ to band $B_{k-1},B_{k+1}$.

In the general case of a contact Anosov vector field $X$, we propose
below some \textbf{``approximate horocycle operators}'' $\mathrm{Op}_{\Sigma}\left(\iota_{s}\right),\mathrm{Op}_{\Sigma}\left(u\varodot\right)$
that satisfy some commutator relation similar to (\ref{eq:sl2R})
but in the semiclassical limit $\omega\gg1$. They also respectively
map band $B_{k}$ to $B_{k\mp1}$ (at first order).

If $u\in C^{\beta}\left(M;E_{u}\right)$, $s\in C^{\beta}\left(M;E_{s}\right)$
are respectively Hölder continuous sections of the unstable/stable
bundles of the Anosov dynamics, we naturally associate to them the
following ``operator-valued symbols'' (see Section \ref{sec:Horocycle-operators}
for a more detailed definition) 
\begin{equation}
\iota_{s}:C^{\beta}\left(\Sigma;\mathcal{F}_{k}\left(N_{s}\right)\right)\rightarrow C^{\beta}\left(\Sigma;\mathcal{F}_{k-1}\left(N_{s}\right)\right)\label{eq:iota_s}
\end{equation}
by point-wise tensor-contraction with the polynomial in (\ref{eq:def_Fk})
and
\begin{equation}
u\varodot:C^{\beta}\left(\Sigma;\mathcal{F}_{k}\left(N_{s}\right)\right)\rightarrow C^{\beta}\left(\Sigma;\mathcal{F}_{k+1}\left(N_{s}\right)\right)\label{eq:ext_u}
\end{equation}
by point-wise symmetric tensor product. We will see in Lemma \ref{lem:Weyl-algebra.-For}
that they satisfy the point-wise \href{https://en.wikipedia.org/wiki/Weyl_algebra}{Weyl algebra}
(also called symplectic Clifford algebra)
\begin{equation}
\left[\iota_{s},u\varodot\right]\tilde{T}_{k}=\boldsymbol{\omega}\left(.\right)\left(\left(d\mathcal{A}\right)\left(s,u\right)\right)\tilde{T}_{k}.\label{eq:weyl_algebra}
\end{equation}
where $\boldsymbol{\omega}\left(.\right):\omega\mathcal{A}\left(m\right)\in\Sigma\rightarrow\omega\in\mathbb{R}$
is the frequency map, $d\mathcal{A}$ is the symplectic form on $E_{u}\oplus E_{s}$
and $\tilde{T}_{k}$ the projector on $\mathcal{F}_{k}$.

\begin{cBoxB}{}
\begin{thm}[\textbf{Quantized Weyl algebra}]
\label{thm:Quantization-of-()}Let $K\in\mathbb{N}$ and $\tilde{T}_{\left[0,K\right]}$
the projector on $\mathcal{F}_{\left[0,K\right]}$. For $k<K$ we
have
\begin{equation}
\left[\mathrm{Op}_{\Sigma}\left(\iota_{s}\tilde{T}_{\left[0,K\right]}\right),\mathrm{Op}_{\Sigma}\left(\left(u\varodot\right)\tilde{T}_{\left[0,K\right]}\right)\right]\mathrm{Op}_{\Sigma}\left(\tilde{T}_{k}\right)\approx\mathrm{Op}_{\Sigma}\left(\boldsymbol{\omega}\left(.\right)\left(d\mathcal{A}\right)\left(s,u\right)\tilde{T}_{k}\right)\label{eq:weyl_algebra_quantized}
\end{equation}
and for any $t\in\mathbb{R}$, 
\begin{equation}
e^{tX_{F}}\mathrm{Op}_{\Sigma}\left(\iota_{s}\tilde{T}_{k}\right)\approx\mathrm{Op}_{\Sigma}\left(\iota_{d\phi^{t}s}\tilde{T}_{k}\right)e^{tX_{F}}\label{eq:commut-1-1-2}
\end{equation}
\begin{equation}
e^{tX_{F}}\mathrm{Op}_{\Sigma}\left(\left(u\varodot\right)\tilde{T}_{k}\right)\approx\mathrm{Op}_{\Sigma}\left(\left(\left(d\phi^{t}u\right)\varodot\right)\tilde{T}_{k}\right)e^{tX_{F}}\label{eq:commut-1-1-1-1}
\end{equation}
\end{thm}

\end{cBoxB}

The proof is obtained in Section \ref{sec:Horocycle-operators}.
\begin{rem}
To compare (\ref{eq:weyl_algebra_quantized}) with (\ref{eq:sl2R}),
notice that $\boldsymbol{\omega}\left(.\right)$ is the principal
symbol of $X$, see \cite[prop.4.42]{faure_tsujii_Ruelle_resonances_density_2016}.
\end{rem}

\subsection{Organization of the paper}

\paragraph{First part, setup of a symbolic calculus for contact Anosov vector
field. }

The first part of paper is organized by starting with a general model
and then adding more and more assumptions to arrive at the model we
are interested in and get more specific results.

In section \ref{sec:Micro-local-analysis-of} we present the micro-local
analysis of a general non vanishing vector field $X$ on a manifold
$M$. The strategy is to lift the analysis of the pullback operator
$e^{tX}$ on the cotangent bundle $T^{*}M$ using the wave-packet
transform $\mathcal{T}:C^{\infty}\left(M\right)\rightarrow\mathcal{S}\left(T^{*}M\right)$.
In the high frequency limit (semi-classical limit), the linearization
of the flow $\tilde{\phi}^{t}$ induced on $T^{*}M$, gives a good
description of the pullback operator $e^{tX}$ in term of the differential
map $d\tilde{\phi}^{t}$ on $TT^{*}M$. This is obtained in Theorem
\ref{thm:propagation_singularities_2}.

In section \ref{sec:Micro-local-analysis-of-1} we assume furthermore
that $X$ is a \href{https://en.wikipedia.org/wiki/Reeb_vector_field}{Reeb vector field},
i.e. defined from smooth contact one-form $\mathcal{A}$ on $M$.
The purpose of this section is to obtain a precise description of
the dynamics $e^{tX}$ not on all $T^{*}M$ but only in a vicinity
of the symplectization subset $\Sigma:=\left(\mathbb{R}\backslash\left\{ 0\right\} \right)\mathcal{A}\subset T^{*}M$.
This subset $\Sigma$ is an invariant set for the dynamics $\tilde{\phi}^{t}$
induced on $T^{*}M$. Our description of the operator $e^{tX}$ in
Theorem \ref{Thm:For-any-} uses the differential $d\tilde{\phi}^{t}$
on the vector bundle $TT^{*}M$ restricted to the base $\Sigma\subset T^{*}M$
and denoted $T_{\Sigma}T^{*}M$. We get an alternative description
in Theorem \ref{Thm:approx_exptX_from_N} in terms of the normal bundle
$N:=\left(T\Sigma\right)^{\perp}\rightarrow\Sigma$. Both descriptions
will be used later.

In section \ref{sec:Micro-local-analysis-of-1-1} we assume furthermore
that $X$ is a contact Anosov vector field on $M,$ the main subject
of this paper. First, from this assumption, we have that the outside
of the set $\Sigma$ is negligible for the description of $e^{tX}$
for large time. This is Theorem \ref{thm:decay} and obtained using
escape functions, anisotropic Sobolev norm, results from previous
papers. Secondly, the normal bundle over $\Sigma$ splits as $N=N_{s}\oplus N_{u}$
with stable and unstable invariant sub-bundles. It is there that we
introduce the vector bundles $\mathcal{F}_{k}\left(N_{s}\right)=\left|\mathrm{det}N_{s}\right|^{-1/2}\otimes\mathrm{Pol}_{k}\left(N_{s}\right)\otimes F\rightarrow\Sigma$
for every $k\in\mathbb{N}$. In Theorem \ref{Thm:approx_exptX_from_Ns}
we obtain a description of the operator $e^{tX}$ in terms of the
dynamics induced on these bundles $\mathcal{F}_{k}$. We show that
this description is a ``geometric quantization framework'' with
some adapted symbolic calculus. We introduce symbols in Definition
\ref{def:symbols} and quantization in Definition \ref{def:The-quantization-of-1}
giving PDO and FIO. The rest of the section is to obtain classical
and useful results in symbolic calculus, as continuity Theorem \ref{thm:continuity_thm},
composition Theorem \ref{thm:Composition-formula-For}, Egorov Theorem
\ref{cor:As-particular-cases} Trace formula \ref{thm:Take-.-There}.

\paragraph{Second part, some consequences.}

In the second part of the paper we use the setup of the first part
to deduce some specific spectral properties of a contact Anosov vector
field $X$. In section \ref{subsec:Band-spectrum-of}, we show how
to use these properties to deduce the band structure of the Ruelle
spectrum of the vector field $X$. In section \ref{subsec:Proof-of-Theorem-2},
we deduce the Theorem \ref{thm:1_emergence_of_QM} about emergence
of quantum dynamics. Section \ref{sec:Proof-of-Theorem_Weyl} gives
the proof of the Weyl law and section \ref{sec:Proof-of-Theorem-accumulation}
explains the concentration in narrower bands. Section \ref{sec:Horocycle-operators}
gives definitions and proofs of Theorem \ref{thm:Quantization-of-()}
for approximate horocycle operators.

\paragraph{Appendices:}

Appendix \ref{sec:General-notations-used} gives some convention of
notations used in this paper. Appendix \ref{sec:More-informations-on}
contains additional comments about flows. Appendices \ref{sec:Bargmann-transform-and}
and \ref{sec:Linear-expanding-maps} are quite important and sustain
the main argument used in this paper: for the analysis of the differential
$d\tilde{\phi}^{t}$ on $T_{\Sigma}T^{*}M$, we use Bargmann transform,
i.e. wave-packet transform on an Euclidean vector space with metaplectic
operators $\mathrm{Op}\left(d\tilde{\phi}^{t}\right)$ obtained by
quantization of the bundle map of linear symplectic maps $d\tilde{\phi}^{t}:TT^{*}M\rightarrow TT^{*}M$.
All the details of this are given in appendix \ref{sec:Bargmann-transform-and}.
We also use results for linear expanding maps given in appendix \ref{sec:Linear-expanding-maps}.
\begin{acknowledgement*}
F. Faure acknowledges Claude Gignoux, Colin Guillarmou, Victor Maucout,
Malik Mezzadri, Stéphane Nonnenmacher for their support, for interesting
and motivating discussions. F. Faure acknowledges M.S.R.I. and organizers
of the \href{https://www.msri.org/programs/315}{micro-local semester 2020}
where a part of this work has been developed. F. Faure acknowledges
partially supported by French \href{https://www.math.sciences.univ-nantes.fr/~riviere-g/ANR-adyct.html}{ANR Adyct},
Grant Number ANR-20-CE40-0017 during this work. M. Tsujii acknowledges
partially supported by JSPS KAKENHI Grant Number 15H03627 and 22340035
during this work.
\end{acknowledgement*}

\section{\label{sec:Micro-local-analysis-of}Micro-local analysis of a non
vanishing vector field $X$}

In this section we consider a general smooth and non vanishing vector
field $X$ (not necessarily Anosov) on a smooth closed manifold $M$
(not necessarily contact). We review some results given in \cite{faure_tsujii_Ruelle_resonances_density_2016}.
As in (\ref{eq:Tau}), we define the wave packet transform that is
a linear operator $\mathcal{T}:C^{\infty}\left(M\right)\rightarrow\mathcal{S}\left(T^{*}M\right)$
and an isometry from\footnote{Here $dm$ is an arbitrary smooth measure on $M$ and $d\rho$ the
canonical Liouville measure on $T^{*}M$.} $L^{2}\left(M;dm\right)$ to $L^{2}\left(T^{*}M;\frac{d\rho}{\left(2\pi\right)^{\mathrm{dimM}}}\right)$,
i.e. satisfies $\mathcal{T}^{\dagger}\mathcal{T}=\mathrm{Id}$ where
$\mathcal{T}^{\dagger}$ is the $L^{2}$-adjoint operator. Then we
recall the Theorem of propagation of singularities \ref{thm:Microlocality-of-the_TO},
saying that the Schwartz kernel of the lifted pull back operator $\mathcal{T}e^{tX}\mathcal{T}^{\dagger}$
on $\mathcal{S}\left(T^{*}M\right)$, with $t\in\mathbb{R}$, decays
very fast outside the graph of the flow $\tilde{\phi}^{t}:=\left(d\phi^{t}\right)^{*}:T^{*}M\rightarrow T^{*}M$
acting on the cotangent space $T^{*}M$. Moreover by linearization,
we give in Theorem \ref{thm:propagation_singularities_2} an approximate
description of this Schwartz kernel in the neighborhood of the graph
of $\tilde{\phi}^{t}$.

\subsection{\label{subsec:Anosov_def}Vector field $X$ and pull back operator
$e^{tX}$}

Let $M$ be a $C^{\infty}$ closed connected manifold. Let $X\in C^{\infty}\left(M;TM\right)$
be a $C^{\infty}$ vector field on $M$, considered as a first order
differential operator acting on smooth functions $C^{\infty}\left(M\right)$,
i.e. in local coordinates $y=\left(y_{1},y_{2},\ldots,y_{\mathrm{dim}M}\right)$,
$X=\sum_{j=1}^{\mathrm{dim}M}X_{j}\left(y\right)\frac{\partial}{\partial y_{j}}.$
For $t\in\mathbb{R}$, let
\begin{equation}
\phi^{t}:M\rightarrow M\label{eq:flow_map}
\end{equation}
be the $C^{\infty}$ flow defined by $\frac{d\left(u\circ\phi^{t}\right)}{dt}=Xu$,
$\forall u\in C^{\infty}\left(M\right)$, i.e.
\begin{equation}
e^{tX}u:=u\circ\phi^{t}=\left(\phi^{t}\right)^{\circ}u,\label{eq:def_exptX}
\end{equation}
where $\left(\phi^{t}\right)^{\circ}=e^{tX}$ denotes the \textbf{\href{https://en.wikipedia.org/wiki/Pullback_(differential_geometry)}{pull back operator}}
acting on functions (following notations in Appendix \ref{sec:General-notations-used}).
See Appendix \ref{subsec:Transfer-operator-1} for additional remarks
about the transfer operator.

\subsection{\label{subsec:Transfer-operator}More general pull back operator
$e^{tX_{F}}$ with generator $X_{F}$}

It will be interesting to consider the following more general situation.
Let $\pi:F\rightarrow M$ be a smooth complex \href{https://en.wikipedia.org/wiki/Vector_bundle}{vector bundle}
of finite rank over $M$. Let $X_{F}$ a \textbf{\href{https://en.wikipedia.org/wiki/Derivation_(differential_algebra)}{derivation}
over $X$}, i.e. $X_{F}$ is a first order differential operator acting
on \href{https://en.wikipedia.org/wiki/Vector_bundle\#Sections_and_locally_free_sheaves}{sections}
of $F$
\begin{equation}
X_{F}:C^{\infty}\left(M;F\right)\rightarrow C^{\infty}\left(M;F\right)\label{eq:def_X_F}
\end{equation}
satisfying the Leibniz rule
\begin{equation}
X_{F}\left(fu\right)=X\left(f\right)u+fX_{F}\left(u\right),\quad\forall f\in C^{\infty}\left(M;\mathbb{C}\right),\forall u\in C^{\infty}\left(M;F\right).\label{eq:def_XF}
\end{equation}

\begin{example}
If $F$ is the bundle of \href{https://en.wikipedia.org/wiki/Differential_form}{differential forms}
$F=\Lambda^{\bullet}\left(TM\right)$ (or a more general \href{https://en.wikipedia.org/wiki/Tensor_field}{tensor field}).
The \href{https://en.wikipedia.org/wiki/Lie_derivative}{Lie derivative}
$X_{F}$ (obtained from the differential $d\phi^{t}$) satisfies (\ref{eq:def_XF}),
so $X_{F}$ is a derivation over $X$.
\end{example}

~
\begin{example}
A simple but useful example is the trivial rank one bundle $F=M\times\mathbb{C}$.
Let $V\in C^{\infty}\left(M;\mathbb{C}\right)$ called the \textbf{potential}
function. The operator 
\begin{equation}
X_{F}=X+V\label{eq:XF_X_V}
\end{equation}
satisfies (\ref{eq:def_XF}), where $V$ is seen as the multiplication
operator by the function $V$. By integration we get, with $u\in C^{\infty}\left(M\right)$,
\begin{equation}
e^{tX_{F}}u=\underbrace{e^{\int_{0}^{t}V\circ\phi^{s}ds}}_{\mathrm{amplitude}}\underbrace{u\circ\phi^{t}}_{\mathrm{transport}}.\label{eq:transport_amplitude}
\end{equation}
More remarks about general pull back operators $e^{tX_{F}}$ are given
in Section \ref{subsec:More-general-pull}.
\end{example}

\subsection{Lifted flow $\tilde{\phi}^{t}:T^{*}M\rightarrow T^{*}M$ and its
generator $\tilde{X}$}

In this section we define and describe the flow $\tilde{\phi}^{t}=\left(d\phi^{t}\right)^{*}:T^{*}M\rightarrow T^{*}M$
induced from the flow $\phi^{t}$ in (\ref{eq:flow_map}). It will
appear in Theorem \ref{thm:Microlocality-of-the_TO} below, that for
the analysis of $e^{tX}$, this flow $\tilde{\phi}^{t}$ on $T^{*}M$
plays a major role.

\subsubsection{Frequency function $\boldsymbol{\omega}$}

We define the \textbf{frequency function} $\boldsymbol{\omega}\in C^{\infty}\left(T^{*}M;\mathbb{R}\right)$
by
\begin{equation}
\forall\rho\in T^{*}M,\quad\boldsymbol{\omega}\left(\rho\right):=\rho\left(X\right).\label{eq:omega_function}
\end{equation}
that measures the oscillations of $\rho$ along the flow direction
$X$.
\begin{rem}
In terms of micro-local analysis, the function $i\boldsymbol{\omega}$
is the \textbf{\href{https://en.wikipedia.org/wiki/Symbol_of_a_differential_operator}{principal symbol}}
of the vector field $X$, see \cite[footnote page 332.]{fred_flow_09}.
\end{rem}

\subsubsection{The lifted flow $\tilde{\phi}^{t}:T^{*}M\rightarrow T^{*}M$}

For $m\in M$, we denote $d\phi^{t}:T_{m}M\rightarrow T_{\phi^{t}\left(m\right)}M$,
the differential of the flow map $\phi^{t}:M\rightarrow M$. By duality
we get a map
\begin{equation}
\tilde{\phi}^{t}:=\left(d\phi^{t}\right)^{*}\quad:T_{m}^{*}M\rightarrow T_{\phi^{-t}\left(m\right)}^{*}M.\label{eq:lifted_flow}
\end{equation}
defined by 
\begin{equation}
\langle v|\left(d\phi^{t}\right)^{*}\left(\rho\right)\rangle_{T_{m}M\times T_{m}^{*}M}=\langle d\phi^{t}v|\rho\rangle_{T_{\phi^{t}\left(m\right)}M\times T_{\phi^{t}\left(m\right)}^{*}M},\qquad\forall\rho\in T_{\phi^{t}\left(m\right)}^{*}M,\quad v\in T_{m}M,\quad m\in M.\label{eq:def_flow}
\end{equation}
Notice that $\tilde{\phi}^{t}$ is a lift of the inverse map $\phi^{-t}$
map, i.e. 
\begin{equation}
\pi\left(\tilde{\phi}^{t}\left(\rho\right)\right)=\phi^{-t}\left(\pi\left(\rho\right)\right),\label{eq:project_phi}
\end{equation}
with the bundle projection map $\pi:T^{*}M\rightarrow M$.

\subsubsection{The canonical Liouville one form $\theta$ and the symplectic two
form $\Omega$}

Let $\theta$ be the \href{https://en.wikipedia.org/wiki/Tautological_one-form}{canonical Liouville one form}
on the vector bundle $\pi:T^{*}M\rightarrow M$ \cite[p.90]{mac_duff_98}\cite[p.7]{da_silva_01}\cite[p.56]{duistermaat_FIO_96}:
at any point $\rho\in T^{*}M$, $\theta_{\rho}$ is defined by 
\begin{equation}
\forall V\in T_{\rho}T^{*}M,\quad\theta_{\rho}\left(V\right):=\rho\left(d\pi\left(V\right)\right).\label{eq:def_theta}
\end{equation}
Using the local coordinates $\left(y_{j}\right)_{j=1,\ldots,\mathrm{dim}M}$
on $M$ and dual coordinates $\left(\eta_{j}\right)_{j}$ on $T^{*}M$,
this gives
\begin{equation}
\theta=\sum_{j=1}^{\mathrm{dim}M}\eta_{j}dy_{j}.\label{eq:canonical_Liouville}
\end{equation}
Let $\Omega$ be the canonical symplectic form
\begin{equation}
\Omega:=d\theta=\sum_{j=1}^{\mathrm{dim}M}d\eta_{j}\wedge dy_{j},\label{eq:def_Omega-1}
\end{equation}
that gives a bundle isomorphism $\check{\Omega}:TT^{*}M\rightarrow T^{*}T^{*}M$
defined by
\[
\forall V\in TT^{*}M,\quad\check{\Omega}\left(V\right):=\Omega\left(V,.\right)\in T^{*}T^{*}M.
\]
Using notation in section \ref{sec:General-notations-used} for the
pullback, the Definition (\ref{eq:def_theta}) of $\theta$ on $T^{*}M$
is equivalent to the fact that for any smooth one form $\alpha\in C^{\infty}\left(M;T^{*}M\right)$,
considered as a map $\alpha:M\rightarrow T^{*}M$, considering its
differential $d\alpha:TM\rightarrow TT^{*}M$, we have
\begin{equation}
\left(d\alpha\right)^{\circ}\theta=\alpha,\label{eq:alpha_theta}
\end{equation}
and $\mathrm{Ker}\theta=\mathrm{Ker}\left(d\pi\right)$ away from
the zero section. Taking the differential we get
\begin{equation}
\left(d\alpha\right)^{\circ}\Omega=d\alpha.\label{eq:dalpha_Omega}
\end{equation}
We define the \textbf{Euler (or Liouville) vector field} on $T^{*}M$
\begin{equation}
\mathcal{E}:=\check{\Omega}^{-1}\left(\theta\right).\label{eq:def_Euler}
\end{equation}
In coordinates, $\mathcal{E}=\sum_{j=1}^{\mathrm{dim}M}\eta_{j}\frac{\partial}{\partial\eta_{j}}$.
Using \href{https://en.wikipedia.org/wiki/Lie_derivative}{Cartan formula}
we get for the Lie derivatives
\[
\mathcal{L}_{\mathcal{E}}\theta=\theta,\quad\mathcal{L}_{\mathcal{E}}\Omega=\Omega.
\]

\subsubsection{\label{subsec:The-vector-field}The vector field $\tilde{X}$ on
$T^{*}M$}

Let us write $\tilde{X}$ for the vector field on $T^{*}M$ that generates
the flow $\tilde{\phi}^{t}$ (\ref{eq:lifted_flow}). Notice from
(\ref{eq:project_phi}) that
\begin{equation}
\left(d\pi\right)\left(\tilde{X}\right)=-X.\label{eq:dp_Xtilde}
\end{equation}

\begin{cBoxB}{}
\begin{lem}
\label{lem:one_form_theta}\cite[Thm 2.124 p.112, in coordinates]{gallot1990riemannian},\cite[prop. 1.21 p.15]{paternain2012geodesic}.
The Lie derivative of $\theta$ by the vector field $\tilde{X}$ vanishes:
\begin{equation}
\mathcal{L}_{\tilde{X}}\theta=0,\label{eq:derivative_one_theta}
\end{equation}
and $\tilde{X}$ is determined by
\begin{equation}
\tilde{X}=\check{\Omega}^{-1}\left(d\boldsymbol{\omega}\right),\label{eq:eq5}
\end{equation}
i.e. $\tilde{X}$ is the Hamiltonian vector field on $\left(T^{*}M,\Omega\right)$,
with Hamiltonian function $\boldsymbol{\omega}$, Eq.(\ref{eq:omega_function}).
\end{lem}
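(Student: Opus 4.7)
The plan is to first establish invariance of the tautological one-form $\theta$ under the lifted flow $\tilde{\phi}^{t}$, differentiate in $t$ to obtain (\ref{eq:derivative_one_theta}), and then extract (\ref{eq:eq5}) from Cartan's magic formula. Both conclusions are standard facts about cotangent lifts, so the only care needed is to track the sign induced by the convention $\tilde{\phi}^{t}=(d\phi^{t})^{*}$ covering $\phi^{-t}$ rather than $\phi^{t}$ (see (\ref{eq:project_phi})-(\ref{eq:dp_Xtilde})).

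For the first step, I would pick $\rho\in T^{*}M$ and $V\in T_{\rho}(T^{*}M)$, and compute directly from (\ref{eq:def_theta}):
\[
\bigl((\tilde{\phi}^{t})^{*}\theta\bigr)_{\rho}(V)=\theta_{\tilde{\phi}^{t}(\rho)}\bigl(d\tilde{\phi}^{t}(V)\bigr)=\tilde{\phi}^{t}(\rho)\Bigl(d\pi\bigl(d\tilde{\phi}^{t}(V)\bigr)\Bigr).
\]
Using $\pi\circ\tilde{\phi}^{t}=\phi^{-t}\circ\pi$ (from (\ref{eq:project_phi})), the inner factor becomes $d\phi^{-t}(d\pi(V))$. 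Then I would invoke the defining identity (\ref{eq:def_flow}), which for $w=d\phi^{-t}(d\pi V)\in T_{\phi^{-t}(m)}M$ reads $\tilde{\phi}^{t}(\rho)(w)=\rho(d\phi^{t}w)$, so the whole expression collapses to $\rho(d\pi(V))=\theta_{\rho}(V)$. Hence $(\tilde{\phi}^{t})^{*}\theta=\theta$ for every $t\in\mathbb{R}$, and differentiating at $t=0$ yields $\mathcal{L}_{\tilde{X}}\theta=0$, proving (\ref{eq:derivative_one_theta}).

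For the second step, I would expand with Cartan's formula using $d\theta=\Omega$ (\ref{eq:def_Omega-1}):
\[
0=\mathcal{L}_{\tilde{X}}\theta=\iota_{\tilde{X}}\Omega+d\bigl(\iota_{\tilde{X}}\theta\bigr).
\]
By (\ref{eq:def_theta}) and (\ref{eq:dp_Xtilde}) we have, at every $\rho\in T^{*}M$,
\[
\iota_{\tilde{X}}\theta(\rho)=\theta_{\rho}\bigl(\tilde{X}(\rho)\bigr)=\rho\bigl(d\pi(\tilde{X}(\rho))\bigr)=\rho\bigl(-X(\pi(\rho))\bigr)=-\boldsymbol{\omega}(\rho),
\]
using (\ref{eq:omega_function}) in the last equality. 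Therefore $\iota_{\tilde{X}}\Omega=d\boldsymbol{\omega}$, i.e.\ $\check{\Omega}(\tilde{X})=d\boldsymbol{\omega}$, which inverts to (\ref{eq:eq5}).

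There is no real obstacle here; the only pitfall is the sign in $d\pi(\tilde{X})=-X$, which must be used consistently in computing $\iota_{\tilde{X}}\theta$. Once this is in place, Cartan's formula gives both statements simultaneously, since $\mathcal{L}_{\tilde{X}}\theta=0$ is equivalent to $\iota_{\tilde{X}}\Omega=-d(\iota_{\tilde{X}}\theta)=d\boldsymbol{\omega}$.
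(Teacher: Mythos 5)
Your proof is correct. The paper does not prove this lemma itself but delegates it to the cited references (one of which argues in coordinates); your coordinate-free argument — showing $(\tilde{\phi}^{t})^{\circ}\theta=\theta$ directly from the definitions (\ref{eq:def_theta}), (\ref{eq:def_flow}) and (\ref{eq:project_phi}), then applying Cartan's formula with $\iota_{\tilde{X}}\theta=-\boldsymbol{\omega}$ from (\ref{eq:dp_Xtilde}) — is the standard one, and you have tracked the sign coming from the convention that $\tilde{\phi}^{t}$ covers $\phi^{-t}$ consistently with the paper's conclusion $\check{\Omega}(\tilde{X})=d\boldsymbol{\omega}$.
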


\end{cBoxB}

\begin{proof}
\cite[Thm 2.124 p.112, for a proof using  coordinates]{gallot1990riemannian},\cite[Prop. 1.21 p.15]{paternain2012geodesic}
Let $\rho\in T^{*}M$ and $V\in T_{\rho}\left(T^{*}M\right)$. For
any $t\in\mathbb{R}$ we have
\begin{align*}
\left(\left(d\tilde{\phi}^{t}\right)^{*}\theta_{\tilde{\phi}^{t}\left(\rho\right)}\right)\left(V\right) & =\theta_{\tilde{\phi}^{t}\left(\rho\right)}\left(\left(d\tilde{\phi}^{t}\right)V\right)\underset{(\ref{eq:def_theta})}{=}\tilde{\phi}^{t}\left(\rho\right)\left(d\pi\left(d\tilde{\phi}^{t}\left(V\right)\right)\right)\\
 & \eq{\ref{eq:lifted_flow}}\langle\left(d\phi^{t}\right)^{*}\left(\rho\right)|d\phi^{-t}\left(d\pi\left(V\right)\right)\rangle\underset{(\ref{eq:def_flow})}{=}\rho\left(d\pi\left(V\right)\right)=\theta_{\rho}\left(V\right)
\end{align*}
\[
\Leftrightarrow\left(d\tilde{\phi}^{t}\right)^{*}\theta_{\tilde{\phi}^{t}\left(\rho\right)}=\theta_{\rho}\underset{(\ref{eq:evol_XF})}{\Leftrightarrow}e^{t\mathcal{L}_{\tilde{X}}}\theta=\theta\Leftrightarrow\mathcal{L}_{\tilde{X}}\theta=0.
\]
This gives (\ref{eq:derivative_one_theta}). Notice that
\begin{equation}
\boldsymbol{\omega}\left(\rho\right)\eq{\ref{eq:omega_function}}\rho\left(X\right)\eq{\ref{eq:dp_Xtilde}}-\rho\left(d\pi\left(\tilde{X}\right)\right)\eq{\ref{eq:def_theta}}-\theta_{\rho}\left(\tilde{X}\right)=-\iota_{\tilde{X}}\theta_{\rho}.\label{eq:omega}
\end{equation}
Hence, using \href{https://en.wikipedia.org/wiki/Lie_derivative}{Cartan formula}
$\mathcal{L}_{\tilde{X}}\theta=d\left(\iota_{\tilde{X}}\theta\right)+\iota_{\tilde{X}}d\theta$
we get
\[
d\boldsymbol{\omega}\eq{\ref{eq:omega}}-d\left(\iota_{\tilde{X}}\theta\right)=-\mathcal{L}_{\tilde{X}}\theta+\iota_{\tilde{X}}d\theta=\Omega\left(\tilde{X},.\right),
\]
that gives (\ref{eq:eq5}).
\end{proof}
\begin{rem}
As a consequence of (\ref{eq:eq5}) we have $\tilde{X}\left(\boldsymbol{\omega}\right)\eq{\ref{eq:eq5}}\Omega\left(\tilde{X},\tilde{X}\right)=0$,
i.e. for every $\omega\in\mathbb{R}$, the frequency level
\begin{equation}
\Sigma_{\omega}:=\boldsymbol{\omega}^{-1}\left(\omega\right)=\left\{ \rho\in T^{*}M,\quad X\left(\rho\right)=\omega\right\} \subset T^{*}M\label{eq:def_Sigma_omega}
\end{equation}
that is a affine sub-bundle of $T^{*}M$ is preserved by $\tilde{\phi}^{t}$.
\end{rem}

~
\begin{rem}
\label{rem:For-,-the}For $\omega\neq0$, the one form $-\frac{1}{\omega}\theta$
is a contact one form on the frequency level $\Sigma_{\omega}$ and
$\tilde{X}$ is its \href{https://en.wikipedia.org/wiki/Reeb_vector_field}{Reeb vector field},
because $\left(-\frac{1}{\omega}\theta\right)\left(\tilde{X}\right)\eq{\ref{eq:omega}}1$
and $d\left(-\frac{1}{\omega}\theta\right)\left(\tilde{X},.\right)=-\frac{1}{\omega}\Omega\left(\tilde{X},.\right)=-\frac{1}{\omega}d\boldsymbol{\omega}\underset{(\ref{eq:def_Sigma_omega})}{=}0$.
\end{rem}

\subsection{\label{subsec:Metric--on}Metric $g$ on $T^{*}M$}

Let us suppose that we have an atlas $M=\bigcup_{j=1}^{J}U_{j}$ where
$U_{j}$ are open subset on $M$ such that for $j=1,\ldots J$,
\begin{equation}
\kappa_{j}:m\in U_{j}\rightarrow y=\left(x,z\right)\in V_{j}\subset\mathbb{R}_{x}^{\mathrm{dim}M-1}\times\mathbb{R}_{z}\label{eq:kappa_j}
\end{equation}
are flow box coordinate, i.e. $\left(d\kappa_{j}\right)\left(X\right)=\frac{\partial}{\partial z}$.
On each chart $U_{j}$, let us denote $\eta=\left(\xi,\omega\right)\in\mathbb{R}^{\mathrm{dim}M-1}\times\mathbb{R}$
the dual coordinates associated to $\left(x,z\right)$. Hence $\left(y,\eta\right)=\left(x,z,\xi,\omega\right)$
are coordinates on $T^{*}\left(\mathbb{R}_{x}^{\mathrm{dim}M-1}\times\mathbb{R}_{z}\right)$.

Let us recall the definition and properties of the metric $g$ on
$T^{*}M$ introduced in \cite[section 4.1.2]{faure_tsujii_Ruelle_resonances_density_2016}
with parameters that we take here $\alpha^{\perp}=\frac{1}{2}$ and
$\alpha^{\parallel}=0$. For $s\in\mathbb{R}$, we set
\begin{equation}
\langle s\rangle:=\left(1+s^{2}\right)^{1/2}\underset{\left|s\right|\gg1}{\sim}\left|s\right|.\label{eq:def_japanese_bracket}
\end{equation}
For $\eta\in\mathbb{R}^{\mathrm{dim}M}$, let
\begin{equation}
\delta^{\perp}\left(\eta\right):=\left\langle \left|\eta\right|\right\rangle ^{-1/2}.\label{eq:def_delta}
\end{equation}
In coordinates, we define the metric $g$ on $T^{*}\mathbb{R}^{\mathrm{dim}M}=\mathbb{R}_{\varrho}^{2\mathrm{dim}M}$
at a point $\varrho=\left(y,\eta\right)\in\mathbb{R}^{\mathrm{dim}M}\times\mathbb{R}^{\mathrm{dim}M}$
by
\begin{equation}
g\left(\varrho\right):=\left(\frac{dx}{\delta^{\perp}\left(\eta\right)}\right)^{2}+\left(\frac{d\xi}{\delta^{\perp}\left(\eta\right)^{-1}}\right)^{2}+dz^{2}+d\omega^{2}.\label{eq:metric_g_tilde_in_coordinates}
\end{equation}
This gives a metric $g_{j}:=\left(d\kappa_{j}\right)^{*}g$ on $T^{*}U_{j}$.
It is shown in \cite[Lemma 4.6]{faure_tsujii_Ruelle_resonances_density_2016}
that on each intersection $U_{j}\cap U_{j'}$, the metric $g_{j}$
and $g_{j'}$ are relatively bounded by each other uniformly in $\eta$.
Each metric $g_{j}$ is compatible with the canonical symplectic form
$\Omega$ on $T^{*}M$ given in (\ref{eq:def_Omega-1}).
\begin{rem}
\label{rem:an-important-property}An important property of the metric
$g$ that will be essential in our analysis is that the size of a
unit ball projects on the coordinates $x$ as a set of size of order
$\delta^{\perp}\left(\eta\right)=\left\langle \left|\eta\right|\right\rangle ^{-1/2}$
that goes to zero as $\left|\eta\right|\rightarrow+\infty$. See \cite[Figure 2.3]{faure_tsujii_Ruelle_resonances_density_2016}.
\end{rem}

We recall the definition that $\Omega,g$ are compatible in Definition
\ref{def:Compatible}. 

\begin{cBoxB}{}
\begin{lem}
\label{lem:There-exists-a}There exists a global smooth metric $g$
on $T^{*}M$ that is also compatible with $\Omega$ and relatively
bounded with respect to every metric $g_{j}$ on any chart $T^{*}U_{j}$. 
\end{lem}

\end{cBoxB}

\begin{proof}
Let $G$ be a global metric on $T^{*}M$ defined by
\begin{equation}
G:=\sum_{j=1}^{J}\left|\mathrm{det}d\kappa_{j}\right|\left(\chi_{j}^{2}\circ\kappa_{j}\circ\pi\right)\cdot g_{j}\label{eq:global_metric}
\end{equation}
where $\pi:T^{*}M\to M$ denotes the bundle projection and $\chi_{j}\in C_{0}^{\infty}\left(\kappa_{j}\left(U_{j}\right);\mathbb{R}^{+}\right)$
form a quadratic partition of unity on $M$, i.e. $\sum_{j}\chi_{j}^{2}\circ\kappa_{j}\left|\mathrm{det}d\kappa_{j}\right|=\boldsymbol{1}$,
see \cite[Lemma 4.2]{faure_tsujii_Ruelle_resonances_density_2016}.
Recall that on each intersection $U_{j}\cap U_{j'}$, the metric $g_{j}$
and $g_{j'}$ are uniformly equivalent (i.e. relatively bounded) that
we write $g_{j}\asymp g_{j'}$. Consequently, from finitness of the
sum (\ref{eq:global_metric}), on any chart $T^{*}U_{j}$, we have
\begin{equation}
G\asymp g_{j}.\label{eq:G_equiv}
\end{equation}
We define
\begin{equation}
K:=\check{\Omega}^{-1}\check{G}\quad:TT^{*}M\rightarrow TT^{*}M,\label{eq:def_K-1}
\end{equation}
where $\check{G},\check{\Omega}:TT^{*}M\rightarrow T^{*}T^{*}M$ are
defined from $G,\Omega$ as in (\ref{eq:def_Omega_check}). We consider
the \href{https://en.wikipedia.org/wiki/Polar_decomposition}{polar decomposition}
of $K$ written $K=J\left|K\right|$ where $\left|K\right|=\sqrt{K^{\dagger_{g}}K}$
is positive definite Hermitian (bundle maps over $T^{*}M$) and $J$
is an \href{https://en.wikipedia.org/wiki/Almost_complex_manifold}{almost complex structure}
because, using that $\check{G}^{*}=\check{G}$ and $\check{\Omega}^{*}=-\check{\Omega}$
we get $K^{\dagger_{g}}\eq{\ref{eq:adjoint}}\check{G}^{-1}K^{*}\check{G}\eq{\ref{eq:def_K-1}}\check{G}^{-1}\check{G}^{*}\check{\Omega}^{*-1}\check{G}=-\check{\Omega}^{-1}\check{G}=-K$,
hence $J^{2}=\frac{K^{2}}{\left(K^{\dagger_{g}}K\right)}=-\mathrm{Id}$.
Since $g_{j}$ is compatible with $\Omega$ then $\check{g}_{j}=\check{\Omega}J_{j}$
with $J_{j}^{2}=-\mathrm{Id}$. Thus $\check{\Omega}K\eq{\ref{eq:def_K-1}}\check{G}\underset{(\ref{eq:G_equiv})}{\asymp}g_{j}=\check{\Omega}J_{j}$
gives $K\asymp J_{j}$ then 
\begin{equation}
\left|K\right|\asymp\mathrm{Id}.\label{eq:K_equiv}
\end{equation}
We define the metric $g$ by
\begin{equation}
\check{g}:=\check{\Omega}J\quad:TT^{*}M\rightarrow T^{*}T^{*}M.\label{eq:def_g-global}
\end{equation}
By construction, the global metric $g$ is smooth, compatible with
$\Omega$ and relatively bounded with respect to every $g_{j}$ on
any chart $T^{*}U_{j}$ because $\check{g}=\check{\Omega}J\underset{(\ref{eq:K_equiv})}{\asymp}\check{\Omega}J\left|K\right|=\check{\Omega}K\eq{\ref{eq:def_K-1}}\check{G}\asymp\check{g}_{j}$.
\end{proof}
We will denote $\mathrm{dist}_{g}\left(\rho',\rho\right)$ the distance
between two points $\rho,\rho'\in T^{*}M$ according to the metric
$g$. It is shown in \cite[Lemma 4.5]{faure_tsujii_Ruelle_resonances_density_2016}
that the metric $g$ is \textbf{geodesically complete}.

We will also use a property of slow variation of the metric $g$ given
in \cite[Lemma 4.12]{faure_tsujii_Ruelle_resonances_density_2016}
and recalled later in (\ref{eq:g_moderate and temperate}). Finally
we have from \cite[Lemma 4.6]{faure_tsujii_Ruelle_resonances_density_2016}
the ``Lipschitz property of $g$'', namely, for any $t\in\mathbb{R}$,
there exists a constant $C_{t}>0$ such that for any $\rho,\rho'\in T^{*}M$,
\begin{equation}
\mathrm{dist}_{g}\left(\tilde{\phi}^{t}(\rho),\tilde{\phi}^{t}(\rho')\right)\le C_{t}\mathrm{dist}_{g}\left(\rho,\rho'\right).\label{eq:invariance_g}
\end{equation}
where $\tilde{\phi}^{t}:T^{*}M\rightarrow T^{*}M$ is defined in (\ref{eq:lifted_flow}).

\subsection{\label{subsec:Wave-packet-transform}Wave-packet transform $\mathcal{T}$}

As in \cite[def. 4.23]{faure_tsujii_Ruelle_resonances_density_2016},
for every coordinate chart $U_{j}\subset M$ with $j\in\left\{ 1,\ldots,J\right\} $
and $\rho\in T^{*}U_{j}$ we define a wave packet\footnote{We refer to \cite[def. 4.23]{faure_tsujii_Ruelle_resonances_density_2016}
for the precise expression of a wave packet $\Phi_{j,\rho}$. Here
it is enough to say that on a chart $U_{j}$, for a given $\varrho=\left(y,\eta\right)\in T^{*}\mathbb{R}^{n+1}$
and in local coordinates $y'\in\mathbb{R}^{n+1}$, then for large
$\left|\eta\right|\gg1$, the function $\Phi_{j,\rho}$ is equivalent
to a \textbf{Gaussian wave packet} in ``vertical Gauge'':
\begin{equation}
\Phi_{j,\varrho}\left(y'\right)\sim a_{\varrho}\chi\left(y'-y\right)\exp\left(i\eta.\left(y'-y\right)-\left\Vert y'-y\right\Vert _{g_{\varrho}}^{2}\right)\label{eq:wave_packet_1}
\end{equation}
with $y'=\left(x',z'\right)\in\mathbb{R}^{n+1}$ and where $\chi\in C_{0}^{\infty}\left(\mathbb{R}^{n+1}\right)$
is some cut-off function with $\chi\equiv1$ near the origin and $a_{\varrho}>0$
is such that $\left\Vert \Phi_{j,\varrho}\right\Vert _{L^{2}\left(\mathbb{R}^{n+1}\right)}=1$
and $\left\Vert y'-y\right\Vert _{g_{\varrho}}^{2}=\left|z'-z\right|^{2}+\left|\frac{\left(x'-x\right)}{\delta^{\perp}\left(\eta\right)}\right|^{2}$
is obtained from the metric (\ref{eq:metric_g_tilde_in_coordinates}).} $\Phi_{j,\rho}\in C^{\infty}\left(M;\mathbb{C}\right)$ and the \textbf{``wave
packet transform''}
\begin{equation}
\mathcal{T}:\begin{cases}
C^{\infty}\left(M;\mathbb{C}\right) & \rightarrow\mathcal{S}\left(T^{*}M;\mathbb{C}^{J}\right)\\
u & \rightarrow\left(\langle\Phi_{j,\rho}|u\rangle_{L^{2}\left(M\right)}\right)_{\rho\in T^{*}M,j\in\left\{ 1\ldots J\right\} }
\end{cases},\label{eq:def_T}
\end{equation}
that satisfies the ''\textbf{resolution of identity on $C^{\infty}\left(M\right)$}''
\cite[prop. 4.24]{faure_tsujii_Ruelle_resonances_density_2016}
\begin{align}
\mathrm{Id}_{/C^{\infty}\left(M\right)} & =\mathcal{T}^{\dagger}\mathcal{T}\label{eq:resol_ident_Pi_rho}
\end{align}
where 
\[
\mathcal{T}^{\dagger}:L^{2}\left(T^{*}M\otimes\mathbb{C}^{J};\frac{d\rho}{\left(2\pi\right)^{\mathrm{dim}M}}\right)\rightarrow L^{2}\left(M;dm\right)
\]
is the $L^{2}$-adjoint of $\mathcal{T}$ where $d\rho\equiv dyd\eta$
is the canonical measure on $T^{*}M$ and $dm$ an arbitray smooth
measure on $M$. We will introduce the \textbf{wave packet projector
}onto the image of $\mathcal{T}$, see \cite[prop. 4.27]{faure_tsujii_Ruelle_resonances_density_2016}:
\begin{equation}
\mathcal{P}:=\mathcal{T}\mathcal{T}^{\dagger}.\label{eq:def_P_wave_packet_projector}
\end{equation}

\begin{rem}
Below for simplicity, we will ignore $\otimes\mathbb{C}^{J}$ in the
notation, i.e. ignore chart components, though keeping it in mind.
The operator $\mathcal{T}^{\dagger}:\mathcal{S}'\left(T^{*}M\right)\rightarrow C^{\infty}\left(M\right)$
is smoothing. 
\end{rem}

\subsection{Description of the operator $e^{tX}$ with the bundle $TT^{*}M$}
\begin{rem}
From now on we will often use the knowledge about Bargmann transform
that is summarized in Section \ref{sec:Bargmann-transform-and} of
the appendix and we ask the readers to check the notation and their
knowledge. We will often refer to this Section \ref{sec:Bargmann-transform-and}.
\end{rem}

We first introduce a definition that will be used to express that
some operator $R$ is ``under control'' or ``negligible'' in our
analysis. It will mean that the Schwartz kernel of $\mathcal{T}R\mathcal{T}^{\dagger}$
decays very fast outside the graph of $\tilde{\phi}^{t}$ defined
in (\ref{eq:lifted_flow}) and moreover that on the graph, the Schwartz
kernel is bounded by the polynomial growth $\left\langle \left|\rho\right|\right\rangle ^{m}$.

\begin{cBoxA}{}
\begin{defn}
\label{def:Let--be}Let $t\in\mathbb{R}$ and $m\in\mathbb{R}$. We
define $\Psi_{\tilde{\phi}^{t}}^{m}$ as the set of operators $R:C^{\infty}\left(M\right)\rightarrow\mathcal{S}'\left(M\right)$
such that for any $N>0$, there exists a constant $C_{N,t}>0$ such
that for any $\rho,\rho'\in T^{*}M$
\begin{align}
\left|\langle\delta_{\rho'}|\mathcal{T}R\mathcal{T}^{\dagger}\delta_{\rho}\rangle_{L^{2}\left(T^{*}M\right)}\right| & \leq C_{N,t}\left\langle \mathrm{dist}_{g}\left(\rho',\tilde{\phi}^{t}\left(\rho\right)\right)\right\rangle ^{-N}\left\langle \left|\rho\right|\right\rangle ^{m}.\label{eq:microl_estimate-2}
\end{align}
\end{defn}

\end{cBoxA}

A useful consequence of (\ref{eq:microl_estimate-2}) is the following
Lemma that uses the truncation operator $\mathrm{Op}\left(\chi_{\omega}\right)$
defined in (\ref{eq:def_Xi_low}).

\begin{cBoxB}{}
\begin{lem}
\label{Lem:A-useful-consequence}If $R\in\Psi_{\tilde{\phi}^{t}}^{m}$
with some $m<0$, then $\exists C_{t}>0$,
\begin{equation}
\left\Vert R\left(\mathrm{Id}-\mathrm{Op}\left(\chi_{\omega}\right)\right)\right\Vert _{L^{2}\left(M\right)}\leq C_{t}\omega^{m}\underset{\omega\rightarrow+\infty}{\rightarrow}0.\label{eq:result_of_Shur}
\end{equation}
\end{lem}

\end{cBoxB}

\begin{proof}
We follow the same notations and techniques as in \cite[section 4.2]{faure_tsujii_Ruelle_resonances_density_2016}.
We have that
\begin{align*}
\left\Vert R\left(\mathrm{Id}-\mathrm{Op}\left(\chi_{\omega}\right)\right)\right\Vert _{L^{2}\left(M\right)} & \eq{\ref{eq:resol_ident_Pi_rho}}\left\Vert \mathcal{T}R\left(\mathrm{Id}-\mathrm{Op}\left(\chi_{\omega}\right)\right)\mathcal{T}^{\dagger}\right\Vert _{L^{2}\left(T^{*}M\right)}\\
 & \eq{\ref{eq:def_P_wave_packet_projector}}\left\Vert \mathcal{T}R\mathcal{T}^{\dagger}\left(\mathcal{P}-\mathcal{P}\chi_{\omega}\mathcal{P}\right)\right\Vert _{L^{2}\left(T^{*}M\right)}.
\end{align*}
From \cite[Prop. 4.27]{faure_tsujii_Ruelle_resonances_density_2016},
we have $\forall N\geq0,\exists C_{N}>0,\forall\rho,\rho'\in T^{*}M$,
\begin{equation}
\left|\langle\delta_{\rho'}|\mathcal{P}\delta_{\rho}\rangle_{L^{2}\left(T^{*}M\right)}\right|\leq C_{N}\left\langle \mathrm{dist}_{g}\left(\rho',\rho\right)\right\rangle ^{-N},\label{eq:estimate_Bergman_kernel}
\end{equation}
i.e. the Schwartz kernel of \emph{$\mathcal{P}$ }decays fast outside
the diagonal and is uniformly bounded on the diagonal. Using definition
of $\chi_{\omega}$ in (\ref{eq:def_Chi_omega}) we have (the exponent
$1/2$ comes from the metric $g$ in (\ref{eq:metric_g_tilde_in_coordinates})),
$\forall N\geq0,\exists C_{N}>0,\forall\rho,\rho'\in T^{*}M$,
\[
\left|\langle\delta_{\rho'}|\mathcal{P}\chi_{\omega}\mathcal{P}\delta_{\rho}\rangle_{L^{2}\left(T^{*}M\right)}\right|\leq C_{N}\left\langle \mathrm{dist}_{g}\left(\rho',\rho\right)\right\rangle ^{-N}\left\langle \mathrm{max}\left(0,\omega^{-1/2}\left(\omega-\left\Vert \rho\right\Vert _{g_{M}}\right)\right)\right\rangle ^{-N}.
\]
Together with (\ref{eq:microl_estimate-2}) we deduce that $\forall N\geq0,\exists C_{N,t}>0,\forall\rho,\rho'\in T^{*}M$,
\begin{align*}
\left|\langle\delta_{\rho'}|\mathcal{T}R\left(\mathrm{Id}-\mathrm{Op}\left(\chi_{\omega}\right)\right)\mathcal{T}^{\dagger}\delta_{\rho}\rangle_{L^{2}\left(T^{*}M\right)}\right| & \leq C_{N,t}\left\langle \mathrm{dist}_{g}\left(\rho',\tilde{\phi}^{t}\left(\rho\right)\right)\right\rangle ^{-N}\left\langle \left|\rho\right|\right\rangle ^{m}\\
 & \qquad\left\langle \mathrm{max}\left(0,\omega^{-1/2}\left(\omega-\left\Vert \rho\right\Vert _{g_{M}}\right)\right)\right\rangle ^{-N}.
\end{align*}
Using Shur Lemma \cite[Lemma 4.38]{faure_tsujii_Ruelle_resonances_density_2016},
that estimates the $L^{2}$ norm operator from the Schwartz kernel,
we deduce that if $m<0$, then $\exists C_{t}>0$,
\[
\left\Vert R\left(\mathrm{Id}-\mathrm{Op}\left(\chi_{\omega}\right)\right)\right\Vert _{L^{2}\left(M\right)}\leq C_{t}\omega^{m}\underset{\omega\rightarrow+\infty}{\rightarrow}0.
\]
\end{proof}

\subsubsection{Propagation of singularities. First estimate for the wave front set.}

The next theorem is similar to the description of evolution of the
\textbf{\href{https://en.wikipedia.org/wiki/Wave_front_set}{wave-front set}}
in micro-local analysis as in \cite[Prop. 9.5, page 29.]{taylor_tome2}
and often called ``\href{https://fr.wikipedia.org/wiki/Th\%C3\%A9or\%C3\%A8me_de_propagation_des_singularit\%C3\%A9s}{propagation of singularities}''.
It gives some first description of the pull back operator $e^{tX}$
that we will need to improve later. Recall the notation $\Psi_{\tilde{\phi}^{t}}^{0}$
from definition \ref{def:Let--be}. 

\begin{cBoxB}{}
\begin{thm}[\textbf{Propagation of singularities}]
\label{thm:Microlocality-of-the_TO}\cite[Thm 4.51]{faure_tsujii_Ruelle_resonances_density_2016}.
For each $t\in\mathbb{R}$,
\begin{equation}
e^{tX}\in\Psi_{\tilde{\phi}^{t}}^{0}.\label{eq:Psi_tilde}
\end{equation}
\end{thm}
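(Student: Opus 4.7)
My plan is to estimate the Schwartz kernel
$$K_t(\rho',\rho) := \langle \delta_{\rho'} | \mathcal{T}\, e^{tX}\, \mathcal{T}^{\dagger}\delta_{\rho}\rangle = \langle \Phi_{j',\rho'} | \Phi_{j,\rho}\circ \phi^t\rangle_{L^2(M)}$$
directly from the explicit wave-packet form in (\ref{eq:wave_packet_1}). In flow-box coordinates, write $\rho=(y,\eta)$, $\rho'=(y',\eta')$, $m_0 := \pi(\tilde\phi^t(\rho)) = \phi^{-t}(y)$, and $\xi_0 := (d\phi^t_{m_0})^{*}\eta$, so that $\tilde\phi^t(\rho) = (m_0,\xi_0)$. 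The composition $\Phi_{j,\rho}\circ\phi^t$ is, to leading order, a wave packet localised near $m_0$ with oscillating phase $m\mapsto \eta\cdot(\phi^t(m)-y)$. Taylor expanding $\phi^t(m) = y + (d\phi^t)_{m_0}(m-m_0) + O(|m-m_0|^2)$ converts this phase into $\xi_0\cdot(m-m_0) + O(|\eta|\,|m-m_0|^2)$, so that up to a bounded quadratic correction absorbed in the amplitude, $\Phi_{j,\rho}\circ\phi^t$ behaves as a wave packet at $\tilde\phi^t(\rho)$. The kernel $K_t(\rho',\rho)$ is then the overlap of two wave packets, one at $\rho'$ and one at $\tilde\phi^t(\rho)$.

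The next step is to split into two regimes governed by (\ref{eq:metric_g_tilde_in_coordinates}). In the \emph{base-point regime}, where $y'$ is separated from $m_0$ at the parabolic scale $\delta^{\perp}(\eta)$ transversely or $\delta_0$ along the flow, the product of the two Gaussian damping factors in (\ref{eq:wave_packet_1}) already produces rapid decay in $\mathrm{dist}_g(\rho',\tilde\phi^t(\rho))$. In the complementary \emph{fibre regime}, where the base points are close but the fibre coordinates differ, I introduce the full phase
$$\Psi(m) := \eta\cdot(\phi^t(m)-y) - \eta'\cdot(m-y'),$$
which satisfies $\nabla_m \Psi(m_0) = \xi_0 - \eta'$, and perform iterated integration by parts with the first-order differential operator $L = |\nabla\Psi|^{-2}\nabla\Psi\cdot\nabla$. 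Each such step produces a factor $|\xi_0-\eta'|^{-1}$; balancing the number of integrations by parts against the anisotropic scales $\delta^{\perp}$ and $\delta_0$ in (\ref{eq:metric_g_tilde_in_coordinates}) yields polynomial decay to any order $N$ in $\mathrm{dist}_g(\rho',\tilde\phi^t(\rho))$, which is precisely the bound (\ref{eq:microl_estimate-2}) with amplitude $a\equiv 1$ (using that the wave packets are $L^2$-normalised).

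The principal technical difficulty is the parabolic bookkeeping in the transverse direction. At the critical Gaussian scale $\delta^{\perp}(\eta)\asymp|\eta|^{-1/2}$, the Taylor remainder $O(|\eta|\,|m-m_0|^2)$ of $\phi^t$ is itself of order $O(1)$, so one must extract it from the phase without spoiling the non-stationary phase estimate; each derivative of this remainder picks up a factor $|\eta|^{1/2}$, which must be controlled uniformly. A second subtlety is the compatibility of the two fibre scales: one needs $\delta^{\perp}(\eta)\asymp\delta^{\perp}(\eta')$ on the region where $\mathrm{dist}_g(\rho',\tilde\phi^t(\rho))$ is bounded. This follows because the frequency $\boldsymbol\omega$ from (\ref{eq:omega_function}) is conserved by $\tilde\phi^t$ (Lemma \ref{lem:one_form_theta}) and the component $\delta_0\, d\omega$ of $g$ in (\ref{eq:metric_g_tilde_in_coordinates}) controls $|\omega-\omega'|$ uniformly, forcing $|\eta|\asymp|\eta'|$ on any $g$-bounded set. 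Finally, since all estimates are local and the orbit segment $\phi^{[0,t]}(\pi(\rho))$ is covered by finitely many flow-box charts, a partition-of-unity argument combined with the uniform equivalence of the local metrics $g_j$ from \cite[Lemma 4.8]{faure_tsujii_Ruelle_resonances_density_2016} gives the estimate on all of $T^*M$.
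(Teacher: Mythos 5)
The paper does not prove this statement here; it is imported verbatim from \cite[Lemma 4.38]{faure_tsujii_Ruelle_resonances_density_2016}, and your sketch reconstructs essentially the argument of that lemma: write the kernel as an overlap of a wave packet at $\rho'$ with the pulled-back wave packet $\Phi_{j,\rho}\circ\phi^{t}$, linearize the flow at the base point, and run an anisotropic non-stationary phase argument calibrated to the metric (\ref{eq:metric_g_tilde_in_coordinates}), using the trivial Cauchy--Schwarz bound $|K_t(\rho',\rho)|\leq C_t$ when $\mathrm{dist}_g(\rho',\tilde{\phi}^t(\rho))$ is bounded. You also correctly isolate the key point that at the parabolic scale the quadratic phase error is only $O(1)$, not $o(1)$, and must be treated as a bounded perturbation of the phase rather than discarded.

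There is, however, one step that fails as literally written. You bound the Taylor remainder of the phase by $O(|\eta|\,|m-m_0|^2)$ and declare it $O(1)$ because $|m-m_0|\lesssim\delta^{\perp}(\eta)\asymp|\eta|^{-1/2}$. But by (\ref{eq:wave_packet_1}) and (\ref{eq:metric_g_tilde_in_coordinates}) the wave packet is localized at the \emph{fixed} scale $\delta_0$ in the flow direction $z$, not at the parabolic scale, so the naive remainder in that direction is $O(|\eta|\,\delta_0^{2})$, which is unbounded as $|\eta|\to\infty$. The estimate is saved only by the flow-box structure you set up at the start but then do not use: in the charts with $(d\kappa_j)(X)=\partial/\partial z$, the time-$t$ map is a translation in $z$ (and the chart transitions have the form $(x,z)\mapsto(\psi(x),z+\tau(x))$), so the phase $\eta\cdot(\phi^t(m)-y)$ is exactly affine in $z$ and all genuine nonlinearity lives in the transverse variable $x$, where the $\delta^{\perp}\asymp|\eta|^{-1/2}$ localization does make the quadratic remainder $O(1)$. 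This is precisely why the metric is built with $\alpha^{\parallel}=0$ and $\alpha^{\perp}=\tfrac12$; your proof should state this reduction explicitly, since without it the non-stationary phase estimate in the flow direction is not justified. A second, smaller imprecision: the comparability $|\eta|\asymp|\eta'|$ on $g$-bounded sets does not follow from conservation of $\boldsymbol{\omega}$ alone (that only controls the $\omega$-components); you also need the $(\delta^{\perp}d\xi)^2$ part of the metric, i.e.\ $|\xi'-\xi_0|\lesssim C/\delta^{\perp}\asymp C|\eta|^{1/2}$ together with $|\xi_0|\asymp_t|\eta|$, which is the moderateness property of $g$ recorded in \cite[Lemma 4.13]{faure_tsujii_Ruelle_resonances_density_2016}.
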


\end{cBoxB}

\begin{rem}
If $\rho'$ and $\tilde{\phi}^{t}\left(\rho\right)$ are in bounded
distance from each other, Theorem \ref{thm:Microlocality-of-the_TO}
says nothing very informative for the value of the Schwartz kernel
$\langle\delta_{\rho'}|\mathcal{T}e^{tX}\mathcal{T}^{\dagger}\delta_{\rho}\rangle_{L^{2}\left(T^{*}M\right)}$.
The next Theorem \ref{thm:propagation_singularities_2} below will
complete this lack of information by giving an approximate expression
for the Schwartz kernel of $\mathcal{T}e^{tX}\mathcal{T}^{\dagger}$
in the neighborhood of the graph of $\tilde{\phi}^{t}$, and this
will be useful later to get Theorem \ref{Thm:approx_exptX_from_N}.
We first need to introduce some operators.
\end{rem}

\subsubsection{\label{subsec:Definition-of-some}Definition of some operators}

Below we view the (usual) \href{https://en.wikipedia.org/wiki/Schwartz_space}{space of Schwartz functions}
$\mathcal{S}\left(TT^{*}M\right)$ as the set of functions $u:\rho\in T^{*}M\rightarrow u_{\rho}\in\mathcal{S}\left(T_{\rho}T^{*}M\right)$,
i.e. we have a natural identification, with induced topology,
\begin{equation}
\mathcal{S}\left(TT^{*}M\right)\equiv\mathcal{S}\left(\rho\in T^{*}M;\mathcal{S}\left(T_{\rho}T^{*}M\right)\right).\label{eq:def_S-1}
\end{equation}

\paragraph{The operator $\widetilde{\exp^{\circ}}$.}

The Riemannian manifold $\left(T^{*}M,g\right)$ is geodesically complete
\cite[Lemma 4.5]{faure_tsujii_Ruelle_resonances_density_2016}. Let
us write
\begin{equation}
\exp:TT^{*}M\rightarrow T^{*}M\label{eq:def_exp}
\end{equation}
for the \href{https://en.wikipedia.org/wiki/Exponential_map_(Riemannian_geometry)}{exponential map}
associated to the metric $g$ on $T^{*}M$ of Lemma \ref{lem:There-exists-a}.
From this map we have the pull back operator $\exp^{\circ}:\mathcal{S}\left(T^{*}M\right)\rightarrow\mathcal{S}\left(TT^{*}M\right)$
but we consider instead the \textbf{twisted pull back operator }defined
as follows
\begin{equation}
\widetilde{\exp^{\circ}}:=e^{i\varphi}\exp^{\circ}\quad:C^{\infty}\left(T^{*}M\right)\rightarrow C^{\infty}\left(TT^{*}M\right),\label{eq:def_twisted_pull_back}
\end{equation}
with a phase function $\varphi:TT^{*}M\rightarrow\mathbb{R}$ that
corresponds to a change of trivialization to pass from the vertical
gauge given by the wave-packet transform (\ref{eq:wave_packet_1})
to the local radial gauge as explained in section \ref{subsec:Bargmann-Transform}.
Explicitly, this phase function $\varphi$ is given by, for $\rho\in T^{*}M$,
$v\in T_{\rho}T^{*}M$,
\begin{equation}
\varphi\left(v\right):=-\theta\left(v\right)-\frac{1}{2}D^{2}f\left(v\right)\label{eq:def_phase_phi}
\end{equation}
with the Liouville one form $\theta\left(v\right)=\rho\left(d\pi\left(v\right)\right)$
given in (\ref{eq:def_theta}) and in the second part the function
$f:=\theta\circ\left(d\exp\right)_{\rho}-\theta_{\rho}:T_{\rho}T^{*}M\rightarrow\mathbb{R}$
made with the differential $\left(d\exp\right)_{\rho}:T_{\rho}T^{*}M\rightarrow TT^{*}M$
(with the canonical identification of vector spaces $TT_{\rho}T^{*}M\equiv T_{\rho}T^{*}M$).
One has $f\left(0\right)=0$ and $Df\left(0\right)=0$, so the quadratic
form $D^{2}f:T_{\rho}T^{*}M\rightarrow\mathbb{R}$ called \href{https://en.wikipedia.org/wiki/Hessian_matrix\#Generalizations_to_Riemannian_manifolds}{Hessian},
is well defined.
\begin{rem}
Geometrically the phase $\varphi$ reflects the existence of a (trivial)
complex line bundle $L$ over $T^{*}M$ with connection given by the
canonical Liouville one form with respect to a global trivialization.
This line bundle is usually called the prequantum line bundle. As
mentioned before in Remark \ref{rem:The-symplectic-form}, we can
ignore this line bundle since it is trivial so we can use a global
section, giving phases like in (\ref{eq:def_twisted_pull_back}).To
understand better the phase $\varphi$ in (\ref{eq:def_twisted_pull_back})
it may be useful to express it in local coordinates. Let $y\in\mathbb{R}^{\mathrm{dim}M}$
be local coordinates on $M$ as in (\ref{eq:kappa_j}) and $\eta\in\mathbb{R}^{\mathrm{dim}M}$
be dual coordinates on $T^{*}M$. For a (local) flat metric $g$ on
$T^{*}M$ one has $\rho'=\exp_{\rho}\left(v\right)=\left(y_{\rho}+y_{v},\eta_{\rho}+\eta_{v}\right)$
hence $v'=\left(d\exp\right)_{\rho}\left(v\right)=\left(y_{v},\eta_{v}\right)$.
The Liouville one-form is $\theta_{\rho}\left(v'\right)=\eta_{\rho}y_{v}$
and $\theta_{\rho'}\left(v'\right)=\eta_{\rho'}y_{v}=\left(\eta_{\rho}+\eta_{v}\right)y_{v}$
giving the function $f\left(v\right)=\eta_{v}y_{v}$ which is quadratic,
i.e. $D^{2}f=f$ (due to absence of non linearities in this simple
setting of flat metric). The first term in (\ref{eq:def_phase_phi})
is $e^{-i\theta_{\rho}\left(v\right)}=e^{-i\eta_{\rho}y_{v}}$ and
is there to remove high oscillations coming from wave-packet transform
at point $\rho\in T^{*}M$. The second term in (\ref{eq:def_phase_phi})
is $e^{-\frac{i}{2}D^{2}f\left(v\right)}=e^{-\frac{i}{2}\eta_{v}y_{v}}$
and is used to pass from vertical gauge to radial gauge in accordance
with formula (\ref{eq:Gaussian_wave_packet-V-R}). In summary, for
Euclidean metric the phase function is
\begin{equation}
\varphi_{\rho}\left(y_{v},\eta_{v}\right)=-\eta_{\rho}y_{v}-\frac{1}{2}\eta_{v}y_{v}.\label{eq:phase_phi}
\end{equation}
\end{rem}

\paragraph{Truncation operator $\chi^{\lambda}$ .}

Let $0<\lambda<1/2$ that is fixed\footnote{From the result (\ref{eq:Rt}) and estimate (\ref{eq:result_of_Shur}),
we see that $\lambda$ close to zero gives better results, but in
this paper it is enough to take $\lambda=1/4$.} in this paper. The operator $\chi^{\lambda}$ truncates functions
in a ball of radius\footnote{Here and after, $\left|\rho\right|=\left\Vert \rho\right\Vert _{g_{M}}$
is measured with respect to an arbitrary metric $g_{M}$ on $M$.} $\left\langle \left\Vert \rho\right\Vert _{g_{M}}\right\rangle ^{\lambda/2}$
centered on the zero section of $TT^{*}M$:

\begin{equation}
\chi^{\lambda}:=\boldsymbol{1}_{\left\{ \left\Vert .\right\Vert _{g_{\rho}}\leq\left\langle \left\Vert \rho\right\Vert _{g_{M}}\right\rangle ^{\lambda/2}\right\} }:\mathcal{S}\left(TT^{*}M\right)\rightarrow\mathcal{S}'\left(TT^{*}M\right),\label{eq:def_Chi_sigma-2}
\end{equation}
i.e. for $u\in\mathcal{S}\left(TT^{*}M\right)$, $\rho\in T^{*}M$,
$v\in T_{\rho}T^{*}M$,
\begin{equation}
\left(\chi^{\lambda}u\right)\left(v\right)=\begin{cases}
u\left(v\right) & \text{ if }\left\Vert v\right\Vert _{g_{\rho}}\leq\left\langle \left\Vert \rho\right\Vert _{g_{M}}\right\rangle ^{\lambda/2}\\
0 & \text{ otherwise.}
\end{cases}.\label{eq:def_Chi_sigma}
\end{equation}

\paragraph{The restriction operator $r_{0}$.}

Let
\[
r_{0}:\mathcal{S}\left(TT^{*}M\right)\rightarrow\mathcal{S}\left(T^{*}M\right)
\]
be the map that restricts a function $u\in\mathcal{S}\left(TT^{*}M\right)$
(written $u\left(\rho,v\right)$, $\rho\in T^{*}M$, $v\in T_{\rho}T^{*}M$),
to its value at the zero section $\left(r_{0}u\right)\left(\rho\right):=u\left(\rho,0\right)$.

\paragraph{The metaplectic operator $\tilde{\mathrm{Op}}\left(d\tilde{\phi}^{t}\right)$.}

Recall the map $\tilde{\phi}^{t}:T^{*}M\rightarrow T^{*}M$ defined
in (\ref{eq:lifted_flow}). Its differential $d\tilde{\phi}^{t}:TT^{*}M\rightarrow TT^{*}M$
gives a push-forward operator that we denote $\left(d\tilde{\phi}^{t}\right)^{-\circ}:\mathcal{S}\left(TT^{*}M\right)\rightarrow\mathcal{S}\left(TT^{*}M\right)$.
As in (\ref{eq:def_d}) we define the ``metaplectic correction''

\[
\Upsilon\left(d\tilde{\phi}^{t}\right):=\left(\mathrm{det}\left(\frac{1}{2}\left(\mathrm{Id}+\left(\left(d\tilde{\phi}^{t}\right)^{-1}\right)^{\dagger}\left(d\tilde{\phi}^{t}\right)^{-1}\right)\right)\right)^{1/2},
\]
that is a positive function on $T^{*}M$. We also denote $\mathcal{P}:\mathcal{S}\left(TT^{*}M\right)\rightarrow\mathcal{S}\left(TT^{*}M\right)$
the Bergman projector in radial gauge as defined in (\ref{eq:def_P}),
that is here a fiber-wise operator. As in Definition \ref{eq:def_op_tilde},
we define the fiber-wise metaplectic operator over the map $\tilde{\phi}^{t}$:
\begin{align}
\tilde{\mathrm{Op}}\left(d\tilde{\phi}^{t}\right) & \eq{\ref{eq:def_op_tilde}}\left(\Upsilon\left(d\tilde{\phi}^{t}\right)\right)^{1/2}\,\mathcal{P}\left(d\tilde{\phi}^{t}\right)^{-\circ}\mathcal{P}.\label{eq:Op_tilde}
\end{align}

\subsubsection{Linear approximation of the kernel}

In the next theorem we use and compose the operators defined in the
previous section \ref{subsec:Definition-of-some} with the operator
$\mathcal{T}$ in (\ref{eq:def_T}), and obtain a good approximation
of the pull back operator $e^{tX}$ that improves the information
given in Theorem \ref{thm:Microlocality-of-the_TO}. The rough idea
of this approximation is based on two properties: first, for $\rho\in T^{*}M$,
with $\left\Vert \rho\right\Vert _{g_{M}}\gg1$, the metric is slowly
varying hence the metric $g$ can be approximated by a Euclidean metric
in a neighborhood of $\rho$. Second, the unit balls of the metric
$g$, transversely to the flow, have very small size projected on
the base $M$ (see Remark \ref{rem:an-important-property}), hence
the dynamics can be approximated by its linearization i.e. by the
action of the differential $d\tilde{\phi}^{t}$.

\begin{cBoxB}{}
\begin{thm}[\textbf{``More precise expression for the propagation of singularities''}]
\label{thm:propagation_singularities_2} For any $0<\lambda<\frac{1}{2}$,
for any $t\in\mathbb{R}$, we have
\begin{equation}
e^{tX}=\left(\mathcal{T}^{\dagger}r_{0}\right)\tilde{\mathrm{Op}}\left(d\tilde{\phi}^{t}\right)\left(\chi^{\lambda}\widetilde{\exp^{\circ}}\mathcal{T}\right)+R_{t}\label{eq:etX}
\end{equation}

with some operator $R_{t}$ that satisfies
\begin{equation}
R_{t}\in\Psi_{\tilde{\phi}^{t}}^{-\frac{1}{2}+\lambda}.\label{eq:Rt}
\end{equation}
\end{thm}

\end{cBoxB}

\begin{rem}
Since $-\frac{1}{2}+\lambda<0$, we can apply Lemma \ref{Lem:A-useful-consequence}
that guaranties that the remainder operator $R_{t}$ restricted to
high frequencies is arbitrary small in operator norm.
\end{rem}

~
\begin{rem}
\label{rem:T*M_or_TT*M}The proof below shows that we also have the
simpler expression
\begin{equation}
e^{tX}=\mathcal{T}^{\dagger}\left(\Upsilon\left(d\tilde{\phi}^{t}\right)\right)^{1/2}\left(\tilde{\phi}^{t}\right)^{-\circ}\mathcal{T}+R_{t}\label{eq:etX2}
\end{equation}
with the push-forward operator $\left(\tilde{\phi}^{t}\right)^{-\circ}:\mathcal{S}\left(T^{*}M\right)\rightarrow\mathcal{S}\left(T^{*}M\right)$
and the multiplication operator by the metaplectic correction $\left(\Upsilon\left(d\tilde{\phi}^{t}\right)\right)^{1/2}:\mathcal{S}\left(T^{*}M\right)\rightarrow\mathcal{S}\left(T^{*}M\right)$.
We already have commented this expression in (\ref{eq:def_OIF}).
One reason for us to later use (\ref{eq:etX}) instead of (\ref{eq:etX2})
is that (\ref{eq:etX}) does not contains $\left(\Upsilon\left(d\tilde{\phi}^{t}\right)\right)^{1/2}$,
but contains instead the operator $\tilde{\mathrm{Op}}\left(d\tilde{\phi}^{t}\right)$
that is fiberwise unitary in $L^{2}$, see appendix \ref{subsec:Metaplectic-decomposition-of}
(in fact $\tilde{\mathrm{Op}}\left(d\tilde{\phi}^{t}\right)$ contains
$\left(\Upsilon\left(d\tilde{\phi}^{t}\right)\right)^{1/2}$). On
the other hand, one advantage of (\ref{eq:etX2}) compare to (\ref{eq:etX})
is that it deals directly with function in $\mathcal{S}\left(T^{*}M\right)$
instead of the more elaborate space $\mathcal{S}\left(TT^{*}M\right)$.
Later we will meet a similar situation, see Remark \ref{rem:One-advantage-of}.
\end{rem}

\begin{proof}
Let $0<\lambda<\frac{1}{2}$ and $t\in\mathbb{R}$. We consider the
operator $R_{t}$ defined in (\ref{eq:etX}), lifted to the cotangent
space:

\begin{align}
\tilde{R}_{t} & :=\mathcal{T}R_{t}\mathcal{T}^{\dagger}\nonumber \\
 & \eq{\ref{eq:etX}}\mathcal{T}e^{tX}\mathcal{T}^{\dagger}-\left(\mathcal{T}\mathcal{T}^{\dagger}\right)r_{0}\tilde{\mathrm{Op}}\left(d\tilde{\phi}^{t}\right)\left(\chi^{\lambda}\widetilde{\exp^{\circ}}\right)\left(\mathcal{T}\mathcal{T}^{\dagger}\right).\label{eq:Rtilde_t}
\end{align}
According to Definition \ref{def:Let--be}, to get (\ref{eq:Rt}),
we have to show the following estimate for the Schwartz kernel of
$\tilde{R}_{t}$, that for any $N>0$, there exists a constant $C_{N,t}>0$
such that for any $\rho,\rho'\in T^{*}M$
\begin{align}
\left|\langle\delta_{\rho'}|\tilde{R}_{t}\delta_{\rho}\rangle\right| & \leq C_{N,t}\left\langle \mathrm{dist}_{g}\left(\rho',\tilde{\phi}^{t}\left(\rho\right)\right)\right\rangle ^{-N}\left\langle \left|\rho\right|\right\rangle ^{-\frac{1}{2}+\lambda}.\label{eq:microl_estimate-2-1}
\end{align}
We will split the computation in two parts: (A) far from the graph
of $\tilde{\phi}^{t}$ and (B) near the graph.

\paragraph{Step (A), far from the graph of $\tilde{\phi}^{t}$.}

Let us show that $\forall\lambda'>\lambda$, $\exists C_{t,\lambda'}$,
$\forall\rho,\rho'\in T^{*}M$, 
\begin{equation}
\mathrm{dist}_{g}\left(\rho',\tilde{\phi}^{t}\left(\rho\right)\right)>C_{t,\lambda'}\left\langle \left|\rho\right|\right\rangle ^{\lambda'/2}\Rightarrow\langle\delta_{\rho'}|r_{0}\tilde{\mathrm{Op}}\left(d\tilde{\phi}^{t}\right)\left(\chi^{\lambda}\widetilde{\exp^{\circ}}\right)\delta_{\rho}\rangle=0.\label{eq:impliq}
\end{equation}
From definition of the exponential map we have $\forall\rho''\in T^{*}M,v''\in T_{\rho''}T^{*}M$,
\begin{equation}
\mathrm{dist}_{g}\left(\rho'',\text{exp}\left(\left(\rho'',v''\right)\right)\right)=\left\Vert v''\right\Vert _{g_{\rho''}}.\label{eq:dist_g}
\end{equation}
From definitions of $\chi^{\lambda}$ and $\widetilde{\exp^{\circ}}$,
we have that $\langle\delta_{\rho'',v''}|\left(\chi^{\lambda}\widetilde{\exp^{\circ}}\right)\delta_{\rho}\rangle\neq0$
iff $\text{exp}\left(\left(\rho'',v''\right)\right)=\rho$ and
\begin{equation}
\text{\textrm{dist}}_{g}\left(\rho'',\rho\right)\eq{\ref{eq:dist_g}}\left\Vert v''\right\Vert _{g_{\rho''}}\underset{(\ref{eq:def_Chi_sigma-2})}{\leq}\left\langle \left|\rho''\right|\right\rangle ^{\lambda/2}.\label{eq:dist_}
\end{equation}
The operator $\tilde{\mathrm{Op}}\left(d\tilde{\phi}^{t}\right)$
is a bundle map over the map $\tilde{\phi}^{t}$ hence $\langle\delta_{\rho',v'}|\tilde{\mathrm{Op}}\left(d\tilde{\phi}^{t}\right)\delta_{\rho'',v''}\rangle$
vanishes if $\rho'\neq\tilde{\phi}^{t}\left(\rho''\right)$. Hence,
setting $\rho''=\tilde{\phi}^{-t}\left(\rho'\right)$, we have that
\begin{equation}
\langle\delta_{\rho'}|r_{0}\tilde{\mathrm{Op}}\left(d\tilde{\phi}^{t}\right)\left(\chi^{\lambda}\widetilde{\exp^{\circ}}\right)\delta_{\rho}\rangle\neq0\quad\Rightarrow\quad\text{\textrm{dist}}_{g}\left(\rho',\tilde{\phi}^{t}\left(\rho\right)\right)\underset{(\ref{eq:invariance_g})}{\asymp}\text{\textrm{dist}}_{g}\left(\tilde{\phi}^{-t}\left(\rho'\right),\rho\right)\ineq{\ref{eq:dist_}}\left\langle \left|\rho''\right|\right\rangle ^{\lambda/2}.\label{eq:bound-5}
\end{equation}
From \cite[Lemma 4.14]{faure_tsujii_Ruelle_resonances_density_2016}
we have that $\forall0<\lambda<1$, $\forall\epsilon>0,\exists C_{\epsilon}>0$,
$\forall\rho,\rho'$,
\[
\text{\textrm{dist}}_{g}\left(\rho'',\rho\right)\leq\left\langle \left|\rho''\right|\right\rangle ^{\lambda/2}\Rightarrow\left\langle \left|\rho''\right|\right\rangle \leq C_{\epsilon}\left\langle \left|\rho\right|\right\rangle ^{1+\epsilon},
\]
hence the right hand side of (\ref{eq:bound-5}) becomes $\text{\textrm{dist}}_{g}\left(\rho',\tilde{\phi}^{t}\left(\rho\right)\right)\leq C_{t,\lambda'}\left\langle \left|\rho\right|\right\rangle ^{\lambda'/2}$
for any $\lambda'>\lambda$ and we get (\ref{eq:impliq}).

We assume that $\mathrm{dist}_{g}\left(\rho',\tilde{\phi}^{t}\left(\rho\right)\right)>C_{t,\lambda'}\left\langle \left|\rho\right|\right\rangle ^{\lambda'/2}$
i.e. points $\left(\rho,\rho'\right)\in T^{*}M\times T^{*}M$ are
``far'' from the graph of $\tilde{\phi}^{t}$. We have that $e^{tX}\in\Psi_{\tilde{\phi}^{t}}^{0}$
and $\left(\mathcal{T}^{\dagger}r_{0}\right)\tilde{\mathrm{Op}}\left(d\tilde{\phi}^{t}\right)\left(\chi^{\lambda}\widetilde{\exp^{\circ}}\mathcal{T}\right)\in\Psi_{\tilde{\phi}^{t}}^{0}$.
This implies that $R_{t}\in\Psi_{\tilde{\phi}^{t}}^{0}$. For any
$N>0$, take $M>\frac{1}{\lambda'}$ and $N'=N+M$. We have
\begin{align*}
\left|\langle\delta_{\rho'}|\tilde{R}_{t}\delta_{\rho}\rangle\right| & \underset{(\ref{eq:Psi_tilde})}{\leq}C'_{N',t}\left\langle \mathrm{dist}_{g}\left(\rho',\tilde{\phi}^{t}\left(\rho\right)\right)\right\rangle ^{-N'}\leq C''_{N',t}\left\langle \mathrm{dist}_{g}\left(\rho',\tilde{\phi}^{t}\left(\rho\right)\right)\right\rangle ^{-N}\left\langle \mathrm{dist}_{g}\left(\rho',\tilde{\phi}^{t}\left(\rho\right)\right)\right\rangle ^{-M}\\
 & \ineq{\mathrm{hyp.}}C''_{N',t}\left\langle \mathrm{dist}_{g}\left(\rho',\tilde{\phi}^{t}\left(\rho\right)\right)\right\rangle ^{-N}\left(C_{t,\lambda'}\left\langle \left|\rho\right|\right\rangle ^{\lambda'/2}\right)^{-M}\\
 & \leq C'''_{N',t,\lambda'}\left\langle \mathrm{dist}_{g}\left(\rho',\tilde{\phi}^{t}\left(\rho\right)\right)\right\rangle ^{-N}\left\langle \left|\rho\right|\right\rangle ^{-M\lambda'/2}\\
 & \leq C_{N,t}\left\langle \mathrm{dist}_{g}\left(\rho',\tilde{\phi}^{t}\left(\rho\right)\right)\right\rangle ^{-N}\left\langle \left|\rho\right|\right\rangle ^{-1/2}
\end{align*}
that gives (\ref{eq:microl_estimate-2-1}).

\paragraph{Step (B), near the graph of $\tilde{\phi}^{t}$.}

We assume now that 
\begin{equation}
\mathrm{dist}_{g}\left(\rho',\tilde{\phi}^{t}\left(\rho\right)\right)\leq C_{t,\lambda'}\left\langle \left|\rho\right|\right\rangle ^{\lambda'/2}\label{eq:near_the_graph}
\end{equation}
with $\lambda<\lambda'<1$, i.e. points $\left(\rho,\rho'\right)\in T^{*}M\times T^{*}M$
are ``near'' the graph of $\tilde{\phi}^{t}$. Let us explain the
strategy that we will pursue. Using the slow variation of the metric
$g$ given in \cite[Lemma 4.12]{faure_tsujii_Ruelle_resonances_density_2016},
we will (1) approximate the metric $g_{\rho'}$ by the Euclidean metric
at point $\tilde{\phi}^{t}\left(\rho\right)$. From (\ref{eq:metric_g_tilde_in_coordinates}),
the projected points $\pi\left(\rho'\right),\pi\left(\tilde{\phi}^{t}\left(\rho\right)\right)$
on $M$ are at distance from each other that is $O\left(\left\langle \left|\rho\right|\right\rangle ^{\lambda'/2-1/2}\right)$
transversely to the flow direction and this distance goes to zero
if $\left|\rho\right|\rightarrow\infty$. So we will (2) approximate
the map $\tilde{\phi}^{t}$ by its differential, i.e. neglect non-linear
terms. Finally (3), even for a linear map on an Euclidean space, we
need to show that the effect of the cutoff $\chi^{\lambda}$ is negligible.

Having these three approximations in mind we write
\begin{align*}
\left|\langle\delta_{\rho'}|\tilde{R}_{t}\delta_{\rho}\rangle\right| & \underset{(\ref{eq:Rtilde_t})}{\leq}R_{1}+R_{2}+R_{3},
\end{align*}
with
\[
R_{1}=\left|\langle\delta_{\rho'}|\left(\mathcal{T}e^{tX}\mathcal{T}^{\dagger}-\mathcal{B}_{\tilde{\phi}^{t}\left(\rho\right)}e^{tX}\mathcal{B}_{\rho}^{\dagger}\right)\delta_{\rho}\rangle\right|
\]
\begin{equation}
R_{2}=\left|\langle\delta_{\rho'}|\left(\mathcal{B}_{\tilde{\phi}^{t}\left(\rho\right)}e^{tX}\mathcal{B}_{\rho}^{\dagger}-e^{-i\varphi_{\tilde{\phi}^{t}\left(\rho\right)}}\tilde{\mathrm{Op}}_{\rho}\left(d\tilde{\phi}_{\rho}^{t}\right)e^{i\varphi_{\rho}}\right)\delta_{\rho}\rangle\right|\label{eq:def_R2}
\end{equation}
\[
R_{3}=\left|\langle\delta_{\rho'}|\left(e^{-i\varphi_{\tilde{\phi}^{t}\left(\rho\right)}}\tilde{\mathrm{Op}}_{\rho}\left(d\tilde{\phi}_{\rho}^{t}\right)e^{i\varphi_{\rho}}-\left(\mathcal{T}\mathcal{T}^{\dagger}\right)r_{0}\tilde{\mathrm{Op}}\left(d\tilde{\phi}^{t}\right)\left(\chi^{\lambda}\widetilde{\exp^{\circ}}\right)\left(\mathcal{T}\mathcal{T}^{\dagger}\right)\right)\delta_{\rho}\rangle\right|,
\]
where $\mathcal{B}_{\rho}=\mathcal{B}_{\left(V\right),\rho}:\mathcal{S}\left(\mathbb{R}^{\mathrm{dim}M}\right)\rightarrow\mathcal{S}\left(\mathbb{R}^{2\mathrm{dim}M}\right)$
is the Bargman transform in vertical gauge (\ref{eq:def_Bargman_B})
using local charts on $M$ and defined from the Euclidean metric $g_{\rho}$.
We have the metaplectic operator $\tilde{\mathrm{Op}}_{\rho}\left(d\tilde{\phi}_{\rho}^{t}\right)\eq{\ref{eq:def_op_tilde}}\left(\Upsilon\left(d\tilde{\phi}_{\rho}^{t}\right)\right)^{1/2}\,\mathcal{P}_{\tilde{\phi}^{t}\left(\rho\right)}\left(d\tilde{\phi}_{\rho}^{t}\right)^{-\circ}\mathcal{P}_{\rho}$
with the Bergman projector in radial gauge $\mathcal{P}_{\rho}\eq{\ref{eq:Bergman_projector}}\mathcal{B}_{\left(R\right)\rho}\mathcal{B}_{\left(R\right),\rho}^{\dagger}$.
We have the phase function $\varphi_{\rho}$ defined in (\ref{eq:def_phase_phi})
and expressed in (\ref{eq:phase_phi}) for the Euclidean metric, that
is to pass from vertical gauge to radial gauge. 
\begin{enumerate}
\item In the term $R_{1}$, the operator $\mathcal{T}$ is constructed from
the metric $g$ whereas the operator $\mathcal{B}_{g}$ is constructed
from the local Euclidean metric $g_{\rho}$ in $\mathcal{B}_{\rho}$.
We use \cite[Lemma 4.12]{faure_tsujii_Ruelle_resonances_density_2016}
that the metric $g$ varies slowly on $T^{*}M$: for any $0\leq\gamma<1$,
there exist $N>0$ and $C>0$ such that for any $\varrho,\varrho'\in\mathbb{R}^{2\left(n+1\right)}$
and $v\in\mathbb{R}^{2\left(n+1\right)}$,
\begin{align}
\max\left\{ \frac{\|v\|_{g_{\varrho'}}}{\|v\|_{g_{\varrho}}},\frac{\|v\|_{g_{\varrho}}}{\|v\|_{g_{\varrho'}}}\right\} \le1+C\left\langle \left|\rho\right|\right\rangle ^{-\left(1-\gamma\right)/2}\left\langle \left\langle \left|\varrho\right|\right\rangle ^{-\gamma/2}\left\Vert \varrho'-\varrho\right\Vert _{g_{\varrho}}\right\rangle ^{N}.\label{eq:g_moderate and temperate}
\end{align}
If we take $\gamma=\lambda+\epsilon$ with $\epsilon>0$, we get that
for $\left\Vert \varrho'-\varrho\right\Vert _{g_{\varrho}}<\left\langle \left|\varrho\right|\right\rangle ^{\frac{1}{2}\left(\lambda+\epsilon\right)}$
then
\[
\left|\frac{\|v\|_{g_{\varrho'}}}{\|v\|_{g_{\varrho}}}-1\right|\leq C\left\langle \left|\rho\right|\right\rangle ^{-\frac{1}{2}\left(1-\lambda-\epsilon\right)},
\]
hence $\left|R_{1}\right|\leq C_{N,t}\left\langle \mathrm{dist}_{g}\left(\rho',\tilde{\phi}^{t}\left(\rho\right)\right)\right\rangle ^{-N}\left\langle \left|\rho\right|\right\rangle ^{-\frac{1}{2}+\lambda}$.
\item For the term $R_{2}$, we start from
\[
\mathcal{B}_{\tilde{\phi}^{t}\left(\rho\right)}e^{tX}\mathcal{B}_{\rho}^{\dagger}\eq{\ref{eq:def_exptX}}\mathcal{B}_{\tilde{\phi}^{t}\left(\rho\right)}\left(\phi^{t}\right)^{\circ}\mathcal{B}_{\rho}^{\dagger}
\]
and in local chart in a small neighborhood of $\pi\left(\rho\right),\pi\left(\tilde{\phi}^{t}\left(\rho\right)\right)\in M$
of size 
\begin{equation}
O_{t}\left(\left\langle \left|\rho\right|\right\rangle ^{\frac{\lambda'}{2}}\delta^{\perp}\left(\rho\right)\right)\eq{\ref{eq:def_delta}}O_{t}\left(\left\langle \left|\rho\right|\right\rangle ^{-\left(1-\lambda'\right)/2}\right),\label{eq:size}
\end{equation}
we approximate $\phi^{t}$ by its differential $d\phi_{\rho}^{t}$
at $\rho$, replace $\left(\phi^{t}\right)^{\circ}$ by $\left(d\phi^{t}\right)_{\rho}^{\circ}$.
In the next lines, $\left(V\right),\left(R\right)$ denote respectively
vertical and radial gauge defined in section \ref{subsec:Bargmann-Transform}.
In the first equality, we substract the main oscillatory term $e^{-i\theta_{\rho}\left(.\right)}$.
In the last equality we use that $\mathrm{det}\left(d\phi^{t}\right)\eq{\ref{eq:volume_form_M}}1$.
We write
\begin{align*}
\mathcal{B}_{\tilde{\phi}^{t}\left(\rho\right)}\left(d\phi^{t}\right)_{\rho}^{\circ}\mathcal{B}_{\rho}^{\dagger} & =e^{i\theta_{\tilde{\phi}^{t}\left(\rho\right)}\left(.\right)}\mathcal{B}_{\tilde{\phi}^{t}\left(\rho\right)}^{\left(V\right)}\left(d\phi^{t}\right)_{\rho}^{\circ}\mathcal{B}_{\rho}^{\dagger\left(V\right)}e^{-i\theta_{\rho}\left(.\right)}\eq{\ref{eq:Gaussian_wave_packet-V-R},\ref{eq:def_Bargman_B},\ref{eq:phase_phi}}e^{-i\varphi_{\tilde{\phi}^{t}\left(\rho\right)}}\mathcal{B}_{\tilde{\phi}^{t}\left(\rho\right)}^{\left(R\right)}\left(d\phi^{t}\right)_{\rho}^{\circ}\mathcal{B}_{\rho}^{\dagger\left(R\right)}e^{i\varphi_{\rho}}\\
 & \eq{\ref{eq:Op_tilde_PHI}}e^{-i\varphi_{\tilde{\phi}^{t}\left(\rho\right)}}\tilde{\mathrm{Op}}_{\rho}\left(d\tilde{\phi}_{\rho}^{t}\right)e^{i\varphi_{\rho}}
\end{align*}
In the approximation replacing $\mathcal{B}_{\tilde{\phi}^{t}\left(\rho\right)}e^{tX}\mathcal{B}_{\rho}^{\dagger}$
by $e^{-i\varphi_{\tilde{\phi}^{t}\left(\rho\right)}}\tilde{\mathrm{Op}}_{\rho}\left(d\tilde{\phi}_{\rho}^{t}\right)e^{i\varphi_{\rho}}$,
we have neglected the non linearity of the map $\phi^{t}$. From Taylor
expansion this gives that 
\[
\left|R_{2}\right|\ineq{\ref{eq:size}}C_{N,t}\left\langle \mathrm{dist}_{g}\left(\rho',\tilde{\phi}^{t}\left(\rho\right)\right)\right\rangle ^{-N}\left(\left\langle \left|\rho\right|\right\rangle ^{-\left(1-\lambda'\right)/2}\right)^{2},
\]
smaller than previous terms. 
\item For the term $R_{3}$, one first operation is to pass to the tangent
bundle with $\widetilde{\exp^{\circ}}$ and with a truncation $\chi^{\lambda}$
and past restriction $r_{0}$. For this, in the linear setting, Lemma
\ref{lem:Let-us-consider} shows that in case of a linear map $A:E\rightarrow E$
and Euclidean metric $g$ on $E\oplus E^{*}$, a cutoff $\chi_{\sigma}$
gives an error term bounded by $C_{N}\sigma^{-N},\forall N$. Here
$\sigma=\left\langle \left|\rho\right|\right\rangle ^{\lambda/2}$,
so this gives $O_{N,t}\left(\left\langle \mathrm{dist}_{g}\left(\rho',\tilde{\phi}^{t}\left(\rho\right)\right)\right\rangle ^{-N}\left\langle \left|\rho\right|\right\rangle ^{-N}\right)$,
smaller than previous terms. Another operation is to pass again to
the manifold with the metric $g$ as in step 1. We have seen that
it gives $\left|R_{3}\right|\leq C_{N,t}\left\langle \mathrm{dist}_{g}\left(\rho',\tilde{\phi}^{t}\left(\rho\right)\right)\right\rangle ^{-N}\left\langle \left|\rho\right|\right\rangle ^{-\frac{1}{2}+\lambda}$.
\end{enumerate}
We have checked (\ref{eq:microl_estimate-2-1}) that gives (\ref{eq:Rt}).
\end{proof}

\section{\label{sec:Micro-local-analysis-of-1}Micro-local analysis of a contact
vector field $X$ on $\left(M,\mathcal{A}\right)$ near $\Sigma=\mathbb{R}\mathcal{A}\backslash\left\{ 0\right\} $}

In this section we pursue the analysis of a smooth non vanishing vector
field $X$ on a closed manifold $M$ done in section \ref{sec:Micro-local-analysis-of})
and furthermore we assume that $X$ is a \textbf{\href{https://en.wikipedia.org/wiki/Reeb_vector_field}{Reeb vector field}},
i.e. there is a smooth contact one form $\mathcal{A}$ on $M$ such
that for every $m\in M$, the linear space $\mathrm{Ker}\left(\mathcal{A}\left(m\right)\right)$
endowed with the two form $\left(d\mathcal{A}\right)\left(m\right)$
is a linear symplectic space and 
\begin{equation}
\mathcal{A}\left(X\right)=1,\qquad d\mathcal{A}\left(X,.\right)=0.\label{eq:Ker_A}
\end{equation}
This implies that $\mathcal{A}$ is invariant under the flow $\phi^{t}$
generated by X. Indeed the Lie derivative vanishes: $\mathcal{L}_{X}\mathcal{A}=\iota_{X}d\mathcal{A}+d\iota_{X}\mathcal{A}=d\mathcal{A}\left(X,.\right)+d1=0$.
Also $\mathrm{dim}M=2d+1$, with $2d=\mathrm{dim}\left(\mathrm{Ker}\mathcal{A}\right)$.
We will write
\begin{equation}
dm:=\frac{1}{d!}\mathcal{A}\wedge\left(d\mathcal{A}\right)^{\wedge d}\label{eq:volume_form_M}
\end{equation}
for the corresponding smooth and non-degenerate volume form on $M$
invariant by the flow $\phi^{t}$.
\begin{rem}
For the moment we do not assume that $X$ is Anosov, but later in
section \ref{sec:Micro-local-analysis-of-1-1}, this one form $\mathcal{A}$
will be determined by $\mathrm{Ker}\mathcal{A}=E_{u}\oplus E_{s}$
in (\ref{eq:one_form}). A typical example of contact vector field
$X$ on $\left(M,\mathcal{A}\right)$ is a general geodesic vector
field on the unit cotangent bundle $M=\left(T^{*}\mathcal{N}\right)_{1}$
of a Riemannian manifold $\mathcal{N}$ with $\mathcal{A}$ being
the \href{https://en.wikipedia.org/wiki/Tautological_one-form}{Liouville one-form}.

In $T^{*}M$ we define
\begin{align}
E_{0}^{*} & :=\left(\mathrm{Ker}\mathcal{A}\right)^{\perp}=\left\{ \rho\in T^{*}M,\quad\mathrm{Ker}\rho\supset\mathrm{Ker}\mathcal{A}\right\} \label{eq:def_E_0*}\\
 & \underset{(\ref{eq:Ker_A})}{=}\mathbb{R}\mathcal{A}=\left\{ \omega\mathcal{A}\left(m\right),\quad m\in M,\omega\in\mathbb{R}\right\} .
\end{align}
$E_{0}^{*}$ is a rank $1$ sub-bundle of $T^{*}M$ over $M$. We
have
\begin{equation}
T^{*}M=\mathrm{Ker}X\oplus E_{0}^{*},\label{eq:decomp_KerX_E0*}
\end{equation}
where 
\begin{equation}
\mathrm{Ker}X=\left\{ \rho\in T^{*}M,\quad\boldsymbol{\omega}\left(\rho\right)\eq{\ref{eq:omega_function}}\rho\left(X\right)=X\left(\rho\right)=0\right\} \label{eq:def_Ker_X}
\end{equation}
is a rank $2d$ sub-bundle of $T^{*}M$ over $M$.
\end{rem}

\subsection{The symplectization $\Sigma=\mathbb{R}\mathcal{A}\backslash\left\{ 0\right\} $}

We first recall the important following lemma. It shows that $\Sigma:=\mathbb{R}\mathcal{A}\backslash\left\{ 0\right\} =E_{0}^{*}\backslash\left\{ 0\right\} $
is a symplectic sub-manifold of $T^{*}M$, called the \href{https://en.wikipedia.org/wiki/Symplectization}{symplectization}
of $M$ \cite[Section 11.2]{da_silva_01}\cite[Appendix 4.]{arnold-mmmc}.
We will denote $\pi:T^{*}M\rightarrow M$ the projection  and $\pi^{\circ}\left(dm\right)$
the form $dm$ (\ref{eq:volume_form_M}) pulled back on $T^{*}M$.

\begin{cBoxB}{}
\begin{lem}
\label{lem:The-canonical-symplectic}The set 
\begin{equation}
\Sigma:=\mathbb{R}\mathcal{A}\backslash\left\{ 0\right\} :=\left\{ \omega\mathcal{A}\left(m\right)\in T^{*}M\,\mid\quad m\in M,\omega\in\mathbb{R}\backslash\left\{ 0\right\} \right\} \label{eq:def_Sigma}
\end{equation}
is a smooth \textbf{symplectic sub-manifold} of $T^{*}M$ with $\mathrm{dim}\Sigma=2\left(d+1\right)$.
The induced volume form on $\Sigma$ is given by
\begin{equation}
d\varrho:=\frac{1}{\left(d+1\right)!}\left(d\theta\right)^{\wedge\left(d+1\right)}=\omega^{d}\left(d\omega\right)\wedge\pi^{\circ}\left(dm\right)\label{eq:dvol_E0*}
\end{equation}
where $\theta$ is the Liouville form (\ref{eq:def_theta}).
\end{lem}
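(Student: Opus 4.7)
The claim has three parts: (i) $\Sigma$ is a smooth submanifold of dimension $2(d+1)$, (ii) the restriction of $\Omega=d\theta$ to $\Sigma$ is non-degenerate, and (iii) the induced volume form equals $\omega^{d}(d\omega)\wedge(\pi^{\circ}dm)$. My approach is to parametrize $\Sigma$ explicitly and to pull $\theta$ back along this parametrization; everything then reduces to a short computation exploiting the contact condition and $\dim M=2d+1$.

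\textbf{Step 1 (smoothness).} Because $X$ is a \emph{contact} Anosov vector field, the one form $\mathscr{A}$ is smooth and nowhere vanishing. Consider the map
\begin{equation*}
\Phi:M\times\mathbb{R}^{*}\longrightarrow T^{*}M,\qquad \Phi(m,\omega):=\omega\mathscr{A}(m).
\end{equation*}
Since $\mathscr{A}(m)\ne 0$ for every $m$, $\Phi$ is a smooth immersion with image exactly $\Sigma$; since $\Phi(m,\omega)=\Phi(m',\omega')$ forces $m=m'$ (apply $\pi$) and then $\omega=\omega'$, it is a smooth embedding. Hence $\Sigma$ is a smooth submanifold of $T^{*}M$ with $\dim\Sigma=\dim M+1=2(d+1)$.

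\textbf{Step 2 (pull-back of $\theta$).} I compute $\Phi^{\circ}\theta$. At $\rho=\omega\mathscr{A}(m)\in\Sigma$, the tautological one form satisfies $\theta_{\rho}(V)=\rho(d\pi(V))$ by (\ref{eq:def_theta}). Since $\pi\circ\Phi=\mathrm{pr}_{M}$, for any tangent vector $(v,\dot\omega)\in T_{(m,\omega)}(M\times\mathbb{R}^{*})$ one finds $d\pi(d\Phi(v,\dot\omega))=v$, so that
\begin{equation*}
(\Phi^{\circ}\theta)_{(m,\omega)}(v,\dot\omega)=\omega\mathscr{A}(m)(v)=\omega\,(\mathrm{pr}_{M}^{\circ}\mathscr{A})(v,\dot\omega).
\end{equation*}
Therefore $\Phi^{\circ}\theta=\omega\,\mathrm{pr}_{M}^{\circ}\mathscr{A}$, and differentiating gives
\begin{equation*}
\Phi^{\circ}(d\theta)=d\omega\wedge\mathrm{pr}_{M}^{\circ}\mathscr{A}+\omega\,\mathrm{pr}_{M}^{\circ}(d\mathscr{A}).
\end{equation*}

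\textbf{Step 3 (top power and non-degeneracy).} I raise this two form to the power $d+1$. Both $d\omega\wedge\mathrm{pr}_{M}^{\circ}\mathscr{A}$ and $\mathrm{pr}_{M}^{\circ}(d\mathscr{A})$ are closed, and $(d\omega\wedge\mathrm{pr}_{M}^{\circ}\mathscr{A})^{2}=0$ because $d\omega\wedge d\omega=0$. Moreover $(d\mathscr{A})^{d+1}=0$ on $M$ for dimensional reasons, since $\dim M=2d+1$. Hence only the cross term survives in the binomial expansion:
\begin{equation*}
(\Phi^{\circ}d\theta)^{\wedge(d+1)}=(d+1)\,\omega^{d}\,d\omega\wedge\mathrm{pr}_{M}^{\circ}\bigl(\mathscr{A}\wedge(d\mathscr{A})^{\wedge d}\bigr)=(d+1)!\,\omega^{d}\,d\omega\wedge\mathrm{pr}_{M}^{\circ}(dm),
\end{equation*}
using the definition (\ref{eq:volume_form_M}) of $dm$. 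This expression is nowhere zero on $M\times\mathbb{R}^{*}$ because $\omega\ne 0$ and $dm$ is a volume form; thus $d\theta|_{\Sigma}$ is a non-degenerate closed two form on $\Sigma$, i.e.\ $\Sigma$ is symplectic. Pulling back along $\Phi^{-1}$ yields exactly formula (\ref{eq:dvol_E0*}).

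\textbf{Main obstacle.} The argument is essentially a two-line manipulation once the right parametrization is chosen; the only subtle point is keeping track of the identification $\Sigma\simeq M\times\mathbb{R}^{*}$ and properly using $\pi\circ\Phi=\mathrm{pr}_{M}$ to evaluate $\Phi^{\circ}\theta$. I anticipate no serious technical difficulty.
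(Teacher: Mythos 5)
Your proof is correct and follows essentially the same route as the paper: both identify $\theta|_{\Sigma}=\omega\,\pi^{*}\mathscr{A}$, differentiate to get $d\theta|_{\Sigma}=d\omega\wedge\pi^{*}\mathscr{A}+\omega\,\pi^{*}d\mathscr{A}$, and observe that in the $(d+1)$-st wedge power only the cross term survives, yielding $\omega^{d}\,d\omega\wedge\pi^{*}dm$. Your explicit parametrization $\Phi:M\times\mathbb{R}^{*}\rightarrow T^{*}M$ and the smoothness/embedding check in Step 1 just make precise what the paper leaves implicit; the content is identical.
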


\end{cBoxB}

\begin{proof}
For $m\in M$, $\omega\in\mathbb{R}\backslash\left\{ 0\right\} $,
at point $\rho=\omega\mathcal{A}\left(m\right)\in\Sigma\subset T^{*}M$
we have
\begin{equation}
\theta_{\rho}\eq{\ref{eq:def_theta}}\omega\left(\rho\right)\cdot\pi^{\circ}\mathcal{A}\label{eq:theta_A}
\end{equation}
 where $\pi^{\circ}$ is the pull back map on forms. Hence $d\theta=d\left(\omega\left(\pi^{\circ}\mathcal{A}\right)\right)=d\omega\wedge\left(\pi^{\circ}\mathcal{A}\right)+\omega\left(\pi^{\circ}d\mathcal{A}\right)$
giving the following volume form on $E_{0}^{*}$
\begin{align*}
d\varrho: & =\frac{1}{\left(d+1\right)!}\left(d\theta\right)^{\wedge\left(d+1\right)}=\frac{1}{\left(d+1\right)!}\left(d+1\right)\omega^{d}\cdot d\omega\wedge\left(\left(\pi^{\circ}\mathcal{A}\right)\wedge\left(\pi^{\circ}d\mathcal{A}\right)^{\wedge d}\right)\\
 & \underset{(\ref{eq:volume_form_M})}{=}\omega^{d}\left(d\omega\right)\wedge\pi^{\circ}\left(dm\right)
\end{align*}
which does not vanish on $\Sigma=E_{0}^{*}\backslash\left\{ 0\right\} $
since for the line bundle $E_{0}^{*}\rightarrow M$, $dm$ is a measure
on the base $M$ and $d\omega$ is a measure on the fibers. Consequently
$d\theta$ restricted to $\Sigma$ is \href{https://en.wikipedia.org/wiki/Symplectic_vector_space\#Volume_form}{non degenerate},
$\Sigma$ is a smooth sub-manifold of $T^{*}M$.
\end{proof}
Since the one form $\mathcal{A}$ is invariant under the flow $\phi^{t}$,
the set $\Sigma$ is invariant under the lifted flow $\tilde{\phi}^{t}$
in $T^{*}M$ defined in (\ref{eq:lifted_flow}). The purpose of this
section \ref{sec:Micro-local-analysis-of-1} is to provide a useful
approximation of the pullback operator $e^{tX}$ restricted to a micro-local
neighborhood of $\Sigma$ (i.e. vicinity of $\Sigma$ in $T^{*}M$)
in terms of the differential map $d\tilde{\phi}_{N}^{t}$ that is
$d\tilde{\phi}^{t}$ restricted to the symplectic normal bundle $N$
of $\Sigma$.
\begin{rem}
Later in section \ref{sec:Micro-local-analysis-of-1-1} we will consider
a contact Anosov flow and see in Lemma \ref{lem:trapped_set} that
$\Sigma$ is the trapped set or non wandering set for the lifted flow
$\tilde{\phi}^{t}$. Consequently we will obtain in Theorem \ref{thm:decay}
that the dynamics $e^{tX}$ outside $\Sigma$ is negligible in some
sense, so the only important part is indeed a micro-local neighborhood
of $\Sigma$ that we consider in this section.
\end{rem}

\subsection{The symplectic normal bundle $N\rightarrow\Sigma$}

\subsubsection{Metaplectic decomposition $K\protect\overset{\perp_{\Omega}}{\oplus}N$}

We have seen in Lemma \ref{lem:The-canonical-symplectic} that $\Sigma:=E_{0}^{*}\backslash\left\{ 0\right\} $
is symplectic. We will denote
\[
T_{\Sigma}T^{*}M:=\left\{ T_{\rho}\left(T^{*}M\right)\,\mid\quad\rho\in\Sigma\right\} 
\]
that is the tangent bundle $T\left(T^{*}M\right)$ restricted to the
base space $\Sigma$. The following Lemma provides a decomposition
of $T_{\Sigma}T^{*}M$ into symplectic sub-bundles, invariant under
the flow map $d\tilde{\phi}^{t}:T_{\Sigma}T^{*}M\rightarrow T_{\Sigma}T^{*}M$
with $\tilde{\phi}^{t}$ defined in (\ref{eq:lifted_flow}). For the
one form $\mathcal{A}$ seen as a map $\mathcal{A}:M\rightarrow T^{*}M$,
we will use its differential map $d\mathcal{A}:TM\rightarrow TT^{*}M$.
Recall that $\tilde{X}$ was defined in section \ref{subsec:The-vector-field}.

\begin{cBoxB}{}
\begin{lem}
\label{def:KNA}Let $\tilde{E}_{0}:=\mathbb{R}\tilde{X}$ and $\tilde{E}_{0}^{*}:=T\mathbb{R}\mathcal{A}$
be rank 1 sub-bundles of $T_{\Sigma}T^{*}M$. Then
\begin{equation}
K_{0}:=\tilde{E}_{0}\oplus\tilde{E}_{0}^{*}\label{eq:def_K0}
\end{equation}
is a symplectic sub-bundle of $T_{\Sigma}T^{*}M$ and we have a decomposition
of $T_{\Sigma}T^{*}M$ as an orthogonal sum of symplectic spaces $K,K_{0},N$:
\begin{equation}
T_{\Sigma}T^{*}M=\underbrace{K\overset{\perp_{\Omega}}{\oplus}K_{0}}_{T\Sigma}\overset{\perp_{\Omega}}{\oplus}N\label{eq:decomp_K_K0_N}
\end{equation}
\[
\mathrm{dim}K=2d,\quad\mathrm{dim}K_{0}=2,\quad\mathrm{dim}N=2d,
\]
invariant under the flow map $d\tilde{\phi}^{t}:T_{\Sigma}T^{*}M\rightarrow T_{\Sigma}T^{*}M$.
\end{lem}

\end{cBoxB}

\begin{rem}
Notice that $N=\left(T\Sigma\right){}^{\perp_{\Omega}}$ is the $\Omega$-symplectic
orthogonal to $T\Sigma=K\overset{\perp_{\Omega}}{\oplus}K_{0}$ in
$T_{\Sigma}T^{*}M$. To explain the notation of $\tilde{E}_{0}^{*}$,
notice that $K_{0}=\tilde{E}_{0}\oplus\tilde{E}_{0}^{*}$ is $\Omega$-symplectic,
hence $\tilde{E}_{0}^{*}$ is isomorphic to the dual of $\tilde{E}_{0}$.
\end{rem}

\begin{proof}
For fixed $\omega\in\mathbb{R}\backslash\left\{ 0\right\} $, let
$T\left(\omega\mathcal{A}\right)\subset TT^{*}M$ be the tangent space
to the graph of $\omega\mathcal{A}$:
\[
T\left(\omega\mathcal{A}\right):=\left(d\left(\omega\mathcal{A}\right)\right)\left(TM\right).
\]
For any $\rho=\omega\mathcal{A}\left(m\right)\in\Sigma$, $m=\pi\left(\rho\right)\in M$,
we have that $d\pi:T\left(\omega\mathcal{A}\right)\left(\rho\right)\rightarrow TM\left(m\right)$
is an isomorphism. We decompose
\begin{equation}
T\left(\omega\mathcal{A}\right)=K\oplus\tilde{E}_{0}\label{eq:TomegaA}
\end{equation}
with $\tilde{E}_{0}=\mathbb{R}\tilde{X}=T\left(\omega\mathcal{A}\right)\cap d\pi^{-1}\left(\mathbb{R}X\right),$
$\mathrm{dim}\tilde{E}_{0}=1,$ and $K:=T\left(\omega\mathcal{A}\right)\cap d\pi^{-1}\left(\mathrm{Ker}\mathcal{A}\right)$,
$\mathrm{dim}K=2d$. We have $\tilde{E}_{0}^{*}=\mathbb{R}\mathcal{A}=T\Sigma\cap d\pi^{-1}\left(\left\{ 0\right\} \right)$,
$\mathrm{dim}\tilde{E}_{0}^{*}=1$ and 
\begin{equation}
K_{0}=\tilde{E}_{0}\oplus\tilde{E}_{0}^{*}=T\Sigma\cap d\pi^{-1}\left(\mathbb{R}X\right).\label{eq:def_K0-1}
\end{equation}
We have $T\Sigma=T\left(\omega\mathcal{A}\right)\oplus\tilde{E}_{0}^{*}=K\oplus K_{0}.$
We have to prove that $K,K_{0}$ are symplectic and that $K\perp_{\Omega}K_{0}$.
We first show the following Lemma, where for fixed $\omega$, the
one form $\omega\mathcal{A}$ is seen as a map $\omega\mathcal{A}:M\rightarrow T^{*}M$,
with differential $d\left(\omega\mathcal{A}\right):TM\rightarrow TT^{*}M$.

\begin{cBoxB}{}
\begin{lem}
At point $\omega\mathcal{A}\left(m\right)\in\Sigma$, the linear map
\begin{equation}
p:=\left(d\left(\omega\mathcal{A}\right)\right)\circ\left(d\pi\right)\qquad:T_{\Sigma}T^{*}M\rightarrow T_{\Sigma}T^{*}M\label{eq:def_projector_p}
\end{equation}
is a projector with 
\begin{equation}
\mathrm{Ker}p=\mathrm{Ker}\left(d\pi\right),\quad\mathrm{Im}p=T\left(\mathrm{Im}\left(\omega\mathcal{A}\right)\right).\label{eq:Ker_p_Ker_dpi}
\end{equation}
We have
\begin{equation}
\left(d\pi\right)^{-1}\left(\mathrm{Ker}\mathcal{A}\right)=\left(T\mathbb{R}\mathcal{A}\right)^{\perp_{\Omega}}\eq{\mathrm{def}}\left(\tilde{E}_{0}^{*}\right)^{\perp_{\Omega}}.\label{eq:lemma_KerA}
\end{equation}
\end{lem}

\end{cBoxB}

\begin{proof}
We have $\pi\circ\mathcal{A}=\mathrm{Id}_{M}$ hence $\left(d\pi\right)\circ\left(d\mathcal{A}\right)=\mathrm{Id}_{TM}$
and
\[
p^{2}=\left(d\left(\omega\mathcal{A}\right)\right)\underbrace{\left(d\pi\right)\left(d\left(\omega\mathcal{A}\right)\right)}_{\mathrm{Id}_{TM}}\left(d\pi\right)=p.
\]
We have
\begin{equation}
\Omega_{/\mathrm{Ker}\left(d\pi\right)}\eq{\ref{eq:def_Omega-1}}0.\label{eq:Omega_ker_dpi}
\end{equation}
For any $U\in T_{\omega\mathcal{A}}T^{*}M$,
\begin{align*}
\omega\mathcal{A}\left(\left(d\pi\right)\left(U\right)\right) & \eq{\ref{eq:alpha_theta}}\theta\left(\left(d\left(\omega\mathcal{A}\right)\right)\circ\left(d\pi\right)\left(U\right)\right)\eq{\ref{eq:def_Euler}}\Omega\left(\mathcal{E},\left(d\left(\omega\mathcal{A}\right)\right)\circ\left(d\pi\right)\left(U\right)\right)\\
 & \eq{\ref{eq:def_projector_p}}\Omega\left(\mathcal{E},p\left(U\right)\right)=\Omega\left(\mathcal{E},\left(p\left(U\right)-U\right)+U\right)\eq{\ref{eq:Ker_p_Ker_dpi},\ref{eq:Omega_ker_dpi}}\Omega\left(\mathcal{E},U\right).
\end{align*}
On $\Sigma=\mathbb{R}\mathcal{A}$ we have $\mathbb{R}\mathcal{E}=\mathbb{R}\mathcal{A}$,
we deduce (\ref{eq:lemma_KerA}).
\end{proof}
We have an isomorphism
\[
d\left(\omega\mathcal{A}\right):TM=\mathrm{Ker}\mathcal{A}\oplus\mathbb{R}X\rightarrow T\left(\omega\mathcal{A}\right)\eq{\ref{eq:TomegaA}}K\oplus\tilde{E}_{0},
\]
and
\begin{equation}
\left(d\left(\omega\mathcal{A}\right)\right)^{\circ}\Omega\eq{\ref{eq:dalpha_Omega}}d\left(\omega\mathcal{A}\right).\label{eq:dA_Omega}
\end{equation}
Hence
\[
d\left(\omega\mathcal{A}\right):\left(\mathrm{Ker}\mathcal{A},\omega d\mathcal{A}\right)\rightarrow\left(K,\Omega\right)
\]
is a symplectomorphism for the respective symplectic structures. Since
$\mathrm{Ker}\mathcal{A}$ is $d\mathcal{A}-$symplectic, this implies
that $K$ is $\Omega-$symplectic. This also gives that $\Omega\left(K,\tilde{E}_{0}\right)=\omega d\mathcal{A}\left(\mathrm{Ker}\mathcal{A},\mathbb{R}X\right)=0$.
By definition we have $\tilde{E}_{0}^{*}=\mathbb{R}\mathcal{A}$ and
$K=T\left(\omega\mathcal{A}\right)\cap d\pi^{-1}\left(\mathrm{Ker}\mathcal{A}\right)$.
From (\ref{eq:lemma_KerA}) we have that $\Omega\left(K,\tilde{E}_{0}^{*}\right)=0$.
Since $K_{0}=\tilde{E}_{0}\oplus\tilde{E}_{0}^{*}$, we have obtained
that $K\perp_{\Omega}K_{0}$ and that $K_{0}$ is $\Omega-$symplectic.
This complete the proof of Lemma \ref{def:KNA}.
\end{proof}
The next Lemma concerns specifically the subspace $K\overset{\perp_{\Omega}}{\oplus}N$
in (\ref{eq:decomp_K_K0_N}). We show that this space is also the
sum of canonical Lagrangian spaces $H\oplus V$ (horizontal and vertical).
This is illustrated on figure \ref{fig:N_K_V_H}. Recall the frequency
function $\boldsymbol{\omega}:\rho\in T^{*}M\rightarrow\boldsymbol{\omega}\left(\rho\right)=\rho\left(X\right)\in\mathbb{R}$
defined in (\ref{eq:omega_function}) and that $\left(\mathrm{Ker}\mathcal{A},d\mathcal{A}\right)$
is a linear symplectic space from section \ref{subsec:Contact-Anosov-flow}.

\begin{cBoxA}{}
\begin{defn}
For $u\in\mathrm{Ker}\mathcal{A}$, we define the maps
\begin{align}
\overline{K}\left(u\right) & :=\left(\left(d\pi^{-1}\right)\left(u\right)\right)\cap K,\quad\overline{N}\left(u\right):=\left(\left(d\pi^{-1}\right)\left(u\right)\right)\cap N\nonumber \\
\overline{H}\left(u\right) & :=\frac{1}{2}\left(\overline{K}\left(u\right)+\overline{N}\left(u\right)\right),\quad\overline{V}\left(u\right):=\frac{1}{2}\left(\overline{K}\left(u\right)+\overline{N}\left(-u\right)\right),\label{eq:def_H_V}
\end{align}
that define the horizontal and the vertical spaces:
\begin{align*}
H & :=\left\{ \overline{H}\left(u\right),u\in\mathrm{Ker}\mathcal{A}\right\} ,\\
V & :=\left\{ \overline{V}\left(u\right),u\in\mathrm{Ker}\mathcal{A}\right\} 
\end{align*}
that are subspaces of $K\overset{\perp_{\Omega}}{\oplus}N$.
\end{defn}

\end{cBoxA}
\begin{cBoxB}{}
\begin{lem}[Metaplectic decomposition]
\label{lem:KN}At any point $\omega\mathcal{A}\in\Sigma$ with $\omega\neq0$,
let
\begin{equation}
\widetilde{\mathrm{Ker}\mathcal{A}}:=\left(d\pi^{-1}\left(\mathrm{Ker}\mathcal{A}\right)\right)\cap\mathrm{Ker}\left(d\boldsymbol{\omega}\right).\label{eq:Ker_tilde_A}
\end{equation}
$\widetilde{\mathrm{Ker}\mathcal{A}}$ is a $\Omega-$symplectic subspace
of $T_{\Sigma}T^{*}M$ and we have the decompositions
\begin{equation}
\widetilde{\mathrm{Ker}\mathcal{A}}=K\overset{\perp_{\Omega}}{\oplus}N=H\oplus V,\label{eq:K_N_HA_VA}
\end{equation}
with $\Omega-$symplectic and orthogonal subspaces $K,N$ and $\Omega-$Lagrangian
subspaces $H,V$ with $\mathrm{dim}V=\mathrm{dim}H=2d$.

We have $\left(d\pi\right)\left(H\right)=\mathrm{Ker}\mathcal{A}$
and $\left(d\pi\right)\left(V\right)=\left\{ 0\right\} $. The linear
maps
\begin{equation}
\sqrt{\omega}d\pi:\left(K,\Omega\right)\rightarrow\left(\mathrm{Ker}\mathcal{A},d\mathcal{A}\right)\label{eq:dpi_K}
\end{equation}
\begin{equation}
\sqrt{\omega}d\pi:\left(N,\Omega\right)\rightarrow\left(\mathrm{Ker}\mathcal{A},-d\mathcal{A}\right)\label{eq:dpi_N}
\end{equation}
are symplectomorphism for the respective symplectic structures.
\end{lem}

\end{cBoxB}

\begin{figure}
\centering{}\input{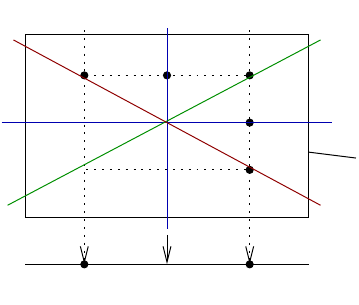tex_t}\caption{\label{fig:N_K_V_H}Illustration of Lemma \ref{lem:KN} about the
vector bundle $\widetilde{\mathrm{Ker}\mathcal{A}}\subset T_{\Sigma}T^{*}M\protect\overset{\pi}{\rightarrow}M$.
We have the linear map $\widetilde{\mathrm{Ker}\mathcal{A}}\protect\overset{d\pi}{\rightarrow}\mathrm{Ker}\mathcal{A}\subset TM$
and $\widetilde{\mathrm{Ker}\mathcal{A}}$ is a sum of two symplectic
subspaces $K\oplus N$ and two Lagrangian subspaces $H\oplus V$.
This structure is preserved by the dynamics.}
\end{figure}

\begin{proof}
We have
\begin{align}
\widetilde{\mathrm{Ker}\mathcal{A}} & \eq{\ref{eq:Ker_tilde_A}}\left(d\pi^{-1}\left(\mathrm{Ker}\mathcal{A}\right)\right)\cap\mathrm{Ker}\left(d\boldsymbol{\omega}\right)\eq{\ref{eq:lemma_KerA}}\left(\tilde{E}_{0}^{*}\right)^{\perp_{\Omega}}\cap\mathrm{Ker}\left(d\boldsymbol{\omega}\right)\nonumber \\
 & \eq{\ref{eq:decomp_K_K0_N}}\left(K\oplus\tilde{E}_{0}^{*}\oplus N\right)\cap\mathrm{Ker}\left(d\boldsymbol{\omega}\right)\label{eq:sum}
\end{align}
We have $d\boldsymbol{\omega}\left(N\right)\eq{\ref{eq:eq5}}\Omega\left(\tilde{X},N\right)\eq{\mathrm{def}}\Omega\left(\tilde{E}_{0},N\right)\eq{\ref{eq:decomp_K_K0_N}}0$.
This gives $N\subset\mathrm{Ker}\left(d\boldsymbol{\omega}\right)$
and similarly $K\subset\mathrm{Ker}\left(d\boldsymbol{\omega}\right)$.
We have $\tilde{E}_{0}^{*}=\mathbb{R}\mathcal{E}$ and 
\[
d\boldsymbol{\omega}\left(\mathcal{E}\right)\eq{\ref{eq:eq5}}\Omega\left(\tilde{X},\mathcal{E}\right)\eq{\ref{eq:def_Euler}}\theta\left(\tilde{X}\right)\eq{\ref{eq:omega}}-\omega\neq0,
\]
hence $\tilde{E}_{0}^{*}\cap\mathrm{Ker}\left(d\boldsymbol{\omega}\right)=0$.
We deduce the decomposition $\widetilde{\mathrm{Ker}\mathcal{A}}=K\overset{\perp_{\Omega}}{\oplus}N$
in (\ref{eq:K_N_HA_VA}).

On the space $K$ we have
\begin{align*}
\Omega_{/K} & \eq{\ref{eq:def_projector_p},\ref{eq:TomegaA}}p^{\circ}\Omega_{/K}\eq{\ref{eq:def_projector_p}}\left(\left(d\left(\omega\mathcal{A}\right)\right)\circ\left(d\pi\right)\right)^{\circ}\Omega_{/K}\\
 & =\left(d\pi\right)^{\circ}\left(d\left(\omega\mathcal{A}\right)\right)^{\circ}\Omega_{/K}\eq{\ref{eq:dA_Omega}}\left(d\pi\right)^{\circ}d\left(\omega\mathcal{A}\right)=\omega\left(d\pi\right)^{\circ}\left(d\mathcal{A}\right).
\end{align*}
This gives (\ref{eq:dpi_K}) since $d\mathcal{A}$ is bilinear. For
$u\in\mathrm{Ker}\mathcal{A}$, we have $\left(d\pi\right)\left(\overline{V}\left(u\right)\right)\eq{\ref{eq:def_H_V}}\frac{1}{2}\left(u-u\right)=0$,
hence $V$ is vertical hence $\Omega-$Lagrangian. For the map $\overline{V}:\mathrm{Ker}\mathcal{A}\rightarrow V$,
we have $\overline{V}\circ d\pi_{V\rightarrow\mathrm{Ker}\mathcal{A}}=\mathrm{Id}_{/V}$
and $V$ is $\Omega-$Lagrangian hence
\[
0=\Omega_{/V}=\left(d\pi_{V\rightarrow\mathrm{Ker}\mathcal{A}}\right)^{\circ}\circ\left(\overline{V}\right)^{\circ}\Omega\eq{\ref{eq:def_H_V}}\frac{1}{2}\left(d\pi_{V\rightarrow\mathrm{Ker}\mathcal{A}}\right)^{\circ}\circ\left(\left(\overline{K}\right)^{\circ}\left(\Omega\right)+\left(-\mathrm{Id}\right)^{\circ}\circ\left(\overline{N}\right)^{\circ}\left(\Omega\right)\right).
\]
Since $\overline{K}=\left(d\pi\right)_{\mathrm{Ker}\mathcal{A}\rightarrow K}^{-1}$
and $\left(-\mathrm{Id}\right)^{\circ}=\mathrm{Id}$ on bilinear forms
we deduce
\[
\left(\overline{N}\right)^{\circ}\left(\Omega\right)=-\left(\overline{K}\right)^{\circ}\left(\Omega\right)=-\left(d\pi\right)_{\mathrm{Ker}\mathcal{A}\rightarrow K}^{-1}\left(\Omega\right)\eq{\ref{eq:dpi_K}}-\omega d\mathcal{A},
\]
giving (\ref{eq:dpi_N}) because $\overline{N}=\left(d\pi\right)_{\mathrm{Ker}\mathcal{A}\rightarrow N}^{-1}$.
For the map $\overline{H}:\mathrm{Ker}\mathcal{A}\rightarrow H$,
we have $\overline{H}\circ d\pi_{H\rightarrow\mathrm{Ker}\mathcal{A}}=\mathrm{Id}_{/H}$
hence
\begin{align*}
\Omega_{/H} & =\left(d\pi_{H\rightarrow\mathrm{Ker}\mathcal{A}}\right)^{\circ}\circ\left(\overline{H}\right)^{\circ}\Omega\eq{\ref{eq:def_H_V}}\frac{1}{2}\left(d\pi_{H\rightarrow\mathrm{Ker}\mathcal{A}}\right)^{\circ}\circ\left(\left(\overline{K}\right)^{\circ}\left(\Omega\right)+\left(\overline{N}\right)^{\circ}\left(\Omega\right)\right)\\
 & \eq{\ref{eq:dpi_K},\ref{eq:dpi_N}}\frac{1}{2}\left(d\pi_{H\rightarrow\mathrm{Ker}\mathcal{A}}\right)^{\circ}\left(\omega d\mathcal{A}-\omega d\mathcal{A}\right)=0.
\end{align*}
Hence $H$ is $\Omega-$Lagrangian.
\end{proof}

\subsubsection{Modified metric $\tilde{g}$ on the set $\Sigma$}

In (\ref{eq:decomp_K_K0_N}), the decomposition $K\overset{\perp_{\Omega}}{\oplus}K_{0}\overset{\perp_{\Omega}}{\oplus}N$
is orthogonal for the symplectic form $\Omega$. But a priori it is
not orthogonal for the metric $g$ given in (\ref{eq:metric_g_tilde_in_coordinates}).
Since this last property would be useful later, we will modify the
metric $g$ according to the next lemma to get $g-$orthogonality
as well.

\begin{cBoxB}{}
\begin{lem}
\label{thm:For-every-point}On the  set $\Sigma$, we construct a
modified metric $\tilde{g}$ from the existing metric $g$ as follows.
Let $\tilde{g}_{/N}=g_{/N}$, $\tilde{g}_{/K}=g_{/K}$, $\tilde{g}_{/K_{0}}=g_{/K_{0}}$
and assume $K\overset{\perp_{\tilde{g}}}{\oplus}K_{0}\overset{\perp_{\tilde{g}}}{\oplus}N$.
Then
\begin{enumerate}
\item $\tilde{g}$ is compatible with $\Omega$.
\item The decomposition (\ref{eq:decomp_K_K0_N}) is orthogonal with respect
to the metric $\tilde{g}$ (and w.r.t. $\Omega$ as before):
\begin{equation}
T_{\Sigma}T^{*}M=K\overset{\perp_{\Omega,\tilde{g}}}{\oplus}K_{0}\overset{\perp_{\Omega,\tilde{g}}}{\oplus}N\label{eq:decomp_K_N}
\end{equation}
\item The modified metric $\tilde{g}$ is equivalent to the metric $g$,
uniformly with respect to $\rho\in\Sigma$ for $\left|\omega\right|\geq1$.
\end{enumerate}
\end{lem}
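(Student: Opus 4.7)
The plan is to define $\tilde{g}$ explicitly by
\[
\tilde{g}(v,w) := g(v_K, w_K) + g(v_N, w_N), \qquad v = v_K + v_N,\ w = w_K + w_N,
\]
according to the splitting $T_\Sigma T^*M = K \oplus N$ of (\ref{eq:decomp_E_0*}). With this definition, property (2) is immediate: $K \perp_{\tilde g} N$ by construction, and $K \perp_\Omega N$ was already the defining property of $N$ in (\ref{eq:def_K}). Note also that $\tilde g_{/K}=g_{/K}$ and $\tilde g_{/N}=g_{/N}$ automatically.

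For property (1), I would first observe that $(K,\Omega_{/K},g_{/K})$ and $(N,\Omega_{/N},g_{/N})$ are symplectic vector bundles on which the restriction of the ambient $g$ is compatible with the restriction of $\Omega$ (after, if necessary, absorbing the bounded conformal factor that relates $g_{/K\oplus N}$ to an exactly compatible metric — using the standard polar decomposition $J = -G^{-1}\check\Omega$ on each factor and the square-root procedure of Section \ref{sec:Bargmann-transform-and}). Let $J_K$ and $J_N$ denote the resulting almost complex structures on $K$ and $N$. Since $\Omega$ and $\tilde g$ are block diagonal with respect to the splitting, the almost complex structure $\tilde J := J_K \oplus J_N$ satisfies $\tilde J^2 = -\mathrm{Id}$ and $\tilde g(v,w) = \Omega(v, \tilde J w)$. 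This is exactly the meaning of compatibility.

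The main work, and the step where uniformity in $\rho \in \Sigma$ with $|\omega| \geq 1$ matters, is property (3). By Cauchy–Schwarz applied to $g$,
\[
g(v,v) = g(v_K,v_K) + 2 g(v_K,v_N) + g(v_N,v_N),
\]
so the only way $\tilde g$ can fail to be equivalent to $g$ is if the $g$-angle between $K$ and $N$ collapses to $0$. I would therefore show the uniform transversality statement: there exists $c>0$ such that for all $\rho \in \Sigma$ with $|\omega(\rho)| \geq 1$ and all nonzero $v_K \in K(\rho)$, $v_N \in N(\rho)$,
\[
g(v_K, v_N)^2 \leq (1-c)\, g(v_K,v_K)\, g(v_N,v_N).
\]
This reduces, by the anisotropic definition (\ref{eq:metric_g_tilde_in_coordinates}) of $g$ and the explicit description of $K$ and $N$ from Definition \ref{lem:For-every-point} (with $K_0 = \mathbb R \tilde X \oplus E_n^*$ spanned essentially by the flow and the $d\omega$-direction, $K_{u/s}, N_{u/s} \subset d\pi^{-1}(E_{u/s})$), to a block-wise computation: for each matched pair of subspaces one verifies that the angles are controlled uniformly in $\omega$ because the anisotropic rescaling factors $\delta^\perp(\eta) \asymp |\omega|^{-1/2}$ and $\delta_0$ act diagonally on the horizontal/vertical components of $K$ and $N$, and the Anosov splitting $E_u \oplus E_s \oplus E_0$ is uniformly transverse on the compact manifold $M$.

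The hard part will be the last step — keeping track of how the non-smoothness (merely Hölder) of $E_u, E_s$ and the $\omega$-dependent rescaling of $g$ interact when estimating $g(v_K,v_N)$. Once this uniform transversality bound is in hand, property (3) follows by elementary linear algebra, and the lemma is proved.
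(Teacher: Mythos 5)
Your proposal is correct and follows essentially the same route as the paper: claims (1) and (2) are immediate from the block-diagonal construction, and the whole content is claim (3), which you reduce — exactly as the paper does — to a uniform angle bound $\left|g_{\rho}\left(v_{K},v_{N}\right)\right|\leq\left(1-c\right)^{1/2}\left\Vert v_{K}\right\Vert _{g}\left\Vert v_{N}\right\Vert _{g}$ obtained from the $\omega^{1/2}$-rescaling of the anisotropic metric (\ref{eq:metric_g_tilde_in_coordinates}) plus uniform transversality of the Anosov splitting on the compact $M$. The only difference is cosmetic: you flag the compatibility of the restricted metrics in claim (1) more explicitly than the paper, which simply asserts it by construction.
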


\end{cBoxB}

\begin{proof}
By construction we have claim 1,2. In local coordinates $\left(x,z,\xi,\omega\right)$
used in section \ref{subsec:Metric--on}, and for some $\omega_{0}\gg1$,
consider the change of coordinates $x'=\omega_{0}^{1/2}x,z'=z$, $\xi'=\omega_{0}^{-1/2}\xi$,
$\omega'=\omega$ so that $\Omega=dx'\wedge d\xi'+dz'\wedge d\omega'$.
Let $\rho=\omega\mathcal{A}\left(m\right)\in\Sigma$ with $\left|\omega\right|\asymp\omega_{0}$.
From (\ref{eq:def_delta}) we have $\delta^{\perp}\left(\rho\right)\asymp\left|\rho\right|^{-1/2}\asymp\left|\omega\right|^{-1/2}\asymp\omega_{0}^{-1/2}$.
Then $g\underset{(\ref{eq:metric_g_tilde_in_coordinates})}{\asymp}dx'^{2}+d\xi'^{2}+dz'^{2}+d\omega'^{2}$,
where the equivalence are uniform w.r.t. $\left|\omega\right|\geq1$.
Using this new system of coordinates we get that the subspaces $K,K_{0},N$
are uniformly apart form each over with respect to $\rho\in\Sigma$,
i.e. for the pair of spaces $\left(K,N\right)$, we have $\exists0<C<1,$
$\forall\rho\in\Sigma$,$\forall u\in K_{\rho}$,$\forall v\in N_{\rho}$,
\begin{equation}
\left|g_{\rho}\left(u,v\right)\right|<C\left\Vert u\right\Vert _{g_{\rho}}\left\Vert v\right\Vert _{g_{\rho}},\label{eq:g_C}
\end{equation}
and similarly for other pair of space $\left(K,K_{0}\right),\left(K_{0},N\right)$.
This gives Claim 3.
\end{proof}
For simplicity of notations, from now on, \textbf{we assume that the
metric $g$ on $T^{*}M$ has the properties 1,2,3 }of Lemma \ref{thm:For-every-point}
on $\Sigma$.

\subsubsection{Factorization formula for $\tilde{\mathrm{Op}}\left(\left(d\tilde{\phi}^{t}\right)_{/\Sigma}\right)$}

We consider now the operator $\tilde{\mathrm{Op}}\left(d\tilde{\phi}^{t}\right)$
that appears on the right hand side of (\ref{eq:etX}). Since $\Sigma$
in invariant by $\tilde{\phi}^{t}$ and $\tilde{\mathrm{Op}}\left(d\tilde{\phi}^{t}\right)$
is fiber-wise, we can consider its restriction to $\mathcal{S}\left(T_{\Sigma}T^{*}M\right)$.
It is denoted and given by
\[
\tilde{\mathrm{Op}}\left(\left(d\tilde{\phi}^{t}\right)_{/\Sigma}\right)\eq{\ref{eq:Op_tilde}}\left(\Upsilon\left(d\tilde{\phi}^{t}\right)\right)^{1/2}\,\mathcal{P}\left(d\tilde{\phi}^{t}\right)^{-\circ}\mathcal{P}\quad:\mathcal{S}\left(T_{\Sigma}T^{*}M\right)\rightarrow\mathcal{S}\left(T_{\Sigma}T^{*}M\right)
\]
The decomposition (\ref{eq:decomp_K_K0_N}) is orthogonal for both
the symplectic form and the modified metric $\tilde{g}$, and preserved
by the differential of the flow, i.e.
\begin{equation}
\left(d\tilde{\phi}^{t}\right)_{/\Sigma}=\underbrace{d\tilde{\phi}^{t}{}_{K}+d\tilde{\phi}^{t}{}_{K_{0}}}_{d\tilde{\phi}_{T\Sigma}^{t}}+d\tilde{\phi}^{t}{}_{N}.\label{eq:dPhi_Sigma}
\end{equation}

\begin{rem}
\label{rem:Eq.-()-implies}Eq. (\ref{eq:decomp_K_K0_N}) implies that
on the  set $\Sigma$ we have a natural identification
\begin{equation}
\mathcal{S}\left(T_{\Sigma}T^{*}M\right)=\mathcal{S}\left(K\right)\otimes_{\Sigma}\mathcal{S}\left(K_{0}\right)\otimes_{\Sigma}\mathcal{S}\left(N\right).\label{eq:K_N}
\end{equation}
where the right hand side is a notation for the space of sections
$\mathcal{S}\left(\Sigma;\mathcal{S}\left(K\right)\otimes\mathcal{S}\left(K_{0}\right)\otimes\mathcal{S}\left(N\right)\right)$,
i.e. the tensor product is fiber-wise over $\Sigma$ and not global.
\end{rem}

From invariant Lagrangian decomposition (\ref{eq:K_N_HA_VA}), (\ref{eq:def_K0})
and Lemma \ref{lem:We-have-} we have
\[
\mathcal{P}=\mathcal{P}_{K}\otimes\mathcal{P}_{K_{0}}\otimes\mathcal{P}_{N}.
\]
Then (\ref{eq:def_op_tilde}) is used to define $\tilde{\mathrm{Op}}\left(d\tilde{\phi}_{K}^{t}\right)\eq{\ref{eq:def_op_tilde}}\left(\Upsilon\left(d\tilde{\phi}_{K}^{t}\right)\right)^{1/2}\,\mathcal{P}_{K}\left(d\tilde{\phi}_{K}^{t}\right)^{-\circ}\mathcal{P}_{K}$
and similarly $\tilde{\mathrm{Op}}\left(d\tilde{\phi}_{K_{0}}^{t}\right),\tilde{\mathrm{Op}}\left(d\tilde{\phi}_{N}^{t}\right)$.

\begin{cBoxB}{}
\begin{thm}[\textbf{Factorization formula}]
\textbf{\label{thm:On-the-set}}On the set $\Sigma$, with respect
to the identification (\ref{eq:K_N}) we have
\begin{equation}
\tilde{\mathrm{Op}}\left(\left(d\tilde{\phi}^{t}\right)_{/\Sigma}\right)=\underbrace{\tilde{\mathrm{Op}}\left(d\tilde{\phi}_{K}^{t}\right)\otimes_{\Sigma}\tilde{\mathrm{Op}}\left(d\tilde{\phi}_{K_{0}}^{t}\right)}_{\tilde{\mathrm{Op}}\left(d\tilde{\phi}_{T\Sigma}^{t}\right)}\otimes_{\Sigma}\tilde{\mathrm{Op}}\left(d\tilde{\phi}_{N}^{t}\right).\label{eq:fact}
\end{equation}
\end{thm}

\end{cBoxB}

\begin{proof}
Use previous factorization and (\ref{eq:product}) to get (\ref{eq:fact}).
\end{proof}

\subsection{Equivalent family of operators}

As we have said, we want to describe only the restriction of the operator
$e^{tX}$ to a small neighborhood of the set $\Sigma\subset T^{*}M$,
i.e. to consider Schwartz kernel of operators in this neighborhood
and not on the whole manifold $T^{*}M$. For this purpose, we introduce
here an equivalence relation between operators denoted $\approx$
that we will often use later. The equivalence $A\approx B$ will mean
roughly that the two operators $A,B$ are approximately the same at
high frequency and near the set $\Sigma$.

Let $\sigma>0$ that will be chosen large enough later. Define the
following characteristic function $\chi_{\Sigma,\sigma}:T^{*}M\rightarrow\mathbb{R}^{+}$
as follows. We write $\rho=\rho_{*}+\rho_{0}\in T^{*}M$ with transverse
component $\rho_{*}\in\mathrm{Ker}X$ and frequency component $\rho_{0}\in E_{0}^{*}$,
see (\ref{eq:decomp_KerX_E0*}). We set
\begin{align}
\chi_{\Sigma,\sigma}\left(\rho\right) & :=\boldsymbol{1}_{\left\{ \left\Vert \rho_{*}\right\Vert _{g_{\rho}}\leq\sigma\right\} },\label{eq:def_Lambda}
\end{align}
and
\begin{equation}
\mathrm{Op}\left(\chi_{\Sigma,\sigma}\right):=\mathcal{T}^{\dagger}\chi_{\Sigma,\sigma}\mathcal{T}\quad:C^{\infty}\left(M\right)\rightarrow C^{\infty}\left(M\right)\label{eq:def_op_Lambda}
\end{equation}
be the corresponding P.D.O. operator, as defined in \cite[def. 4.28]{faure_tsujii_Ruelle_resonances_density_2016}
that extracts components at distance less than $\sigma$ from the
set $\Sigma$.

Recall $\Psi_{\tilde{\phi}^{t}}^{m}$ that has been defined in Definition
\ref{def:Let--be}.

\begin{cBoxA}{}
\begin{defn}
\label{def:For-two-family}For two operators $A,B:C^{\infty}\left(M\right)\rightarrow\mathcal{S}'\left(M\right)$
we write
\begin{equation}
A\approx B\label{eq:def_approx}
\end{equation}
if there exists $m<0$, $t\in\mathbb{R}$, such that for any $\sigma>0$,
\[
\mathrm{Op}\left(\chi_{\Sigma,\sigma}\right)\left(A-B\right)\mathrm{Op}\left(\chi_{\Sigma,\sigma}\right)\in\Psi_{\tilde{\phi}^{t}}^{m}.
\]
\end{defn}

\end{cBoxA}

\begin{rem}
From $m<0$, we recall that consequently we have the estimate (\ref{eq:result_of_Shur})
for $R=\mathrm{Op}\left(\chi_{\Sigma,\sigma}\right)\left(A-B\right)\mathrm{Op}\left(\chi_{\Sigma,\sigma}\right)$,
where the constant $C_{t}$ depends on $\sigma$ also.
\end{rem}

\subsection{Description of the operator $e^{tX}$ near the  set $\Sigma$ with
the bundle $N$}

We first define some operators. The construction that we pursue below
is represented on Figure \ref{fig:N_Sigma}. Using the metric $g$
on $T^{*}M$, and considering the sub-bundle $N\subset T_{\Sigma}T^{*}M$,
we have the exponential map 
\begin{equation}
\exp_{N}:N\rightarrow T^{*}M.\label{eq:exp_N}
\end{equation}
Due to the slow variation of the metric $g$ at high frequencies,
given in (\ref{eq:g_moderate and temperate}), we have that for any
$0\leq\mu<1$, and any frequency $\left|\omega\right|\geq\omega_{0}$
with $\omega_{0}$ large enough, $\exp_{N}$ is a diffeomorphism on
the neighborhood $\left\Vert v\right\Vert _{g}\leq\left\langle \omega\right\rangle ^{\mu/2}$.
To express this, let $0<\mu<1$ and $\omega_{0}\geq1$ for this property.
We introduce the following cut-off function in frequency $\chi_{\Sigma}^{\mu}:N\rightarrow\left[0,1\right]$
defined as follows. For $\rho\in\Sigma$, $v\in N_{\rho}$ and $\omega=\boldsymbol{\omega}\left(\rho\right)$,
\begin{equation}
\chi_{\Sigma}^{\mu}\left(v\right)=\boldsymbol{1}_{\left\{ \left\Vert v\right\Vert _{g}\leq\left\langle \omega\right\rangle ^{\mu/2},\left|\omega\right|\geq\omega_{0}\right\} }.\label{eq:def_Chi_mu}
\end{equation}
The multiplication operator by $\chi_{\Sigma}^{\mu}$ is also denoted
$\chi_{\Sigma}^{\mu}:\mathcal{S}\left(N\right)\rightarrow\mathcal{S}'\left(N\right)$.
Later, in Theorem \ref{thm:Composition-formula-For}, we will need
to take $\mu$ small enough depending in particular on the Hölder
exponents of the Anosov dynamics.

Similarly to (\ref{eq:def_twisted_pull_back}), we define the twisted
pull back and push forward operators
\[
\chi_{\Sigma}^{\mu}\widetilde{\left(\exp_{N}\right)^{\circ}}:\mathcal{S}\left(T^{*}M\right)\rightarrow\mathcal{S}'\left(N\right),
\]
\[
\widetilde{\left(\exp_{N}^{-1}\right)^{\circ}}\chi_{\Sigma}^{\mu}:\mathcal{S}\left(N\right)\rightarrow\mathcal{S}'\left(T^{*}M\right).
\]
Namely, for $u\in\mathcal{S}\left(T^{*}M\right)$, $w\in\mathcal{S}\left(N\right)$,
let $\rho\in T^{*}M$ with $\rho=\exp_{N}\left(v_{2}\right)$, $v_{2}\in N_{\rho_{2}}$,
$\rho_{2}\in\Sigma$, $\left|\boldsymbol{\omega}\left(\rho_{2}\right)\right|\geq\omega_{0}$,
and $\left\Vert v_{2}\right\Vert _{g}\leq\left\langle \boldsymbol{\omega}\left(\rho_{2}\right)\right\rangle ^{\mu/2}$,
\begin{equation}
\left(\widetilde{\left(\exp_{N}\right)^{\circ}}u\right)\left(v_{2}\right):=e^{i\varphi\left(v_{2}\right)}u\left(\rho\right),\label{eq:def_twisted_push_forward-1}
\end{equation}
\begin{equation}
\left(\widetilde{\left(\exp_{N}^{-1}\right)^{\circ}}w\right)\left(\rho\right):=e^{-i\varphi\left(v_{2}\right)}w\left(v_{2}\right),\label{eq:def_twisted_push_forward}
\end{equation}
with phase function $\varphi$ defined in (\ref{eq:def_phase_phi}),
to pass from vertical gauge to radial gauge. In the next definition,
we combine these previous operators together with $\mathcal{T}:C^{\infty}\left(M\right)\rightarrow\mathcal{S}\left(T^{*}M\right)$
in (\ref{eq:def_T}).

\begin{cBoxA}{}
\begin{defn}
\label{def:We-define-the}We define the operators

\begin{equation}
\mathcal{T}_{N}:=\chi_{\Sigma}^{\mu}\widetilde{\left(\exp_{N}\right)^{\circ}}\mathcal{T}\quad:C^{\infty}\left(M\right)\rightarrow\mathcal{S}'\left(N\right)\label{eq:def_T_N}
\end{equation}
\begin{equation}
\mathcal{T}_{N}^{\Delta}:=\mathcal{T}^{\dagger}\widetilde{\left(\exp_{N}^{-1}\right)^{\circ}}\chi_{\Sigma}^{\mu}\quad:\mathcal{S}\left(N\right)\rightarrow C^{\infty}\left(M\right).\label{eq:def_T_N_Delta}
\end{equation}
\end{defn}

\end{cBoxA}

Using the operators (\ref{eq:def_T_N}),(\ref{eq:def_T_N_Delta}),
the next theorem gives a good approximation of the operator $e^{tX}$
in a neighborhood of the set $\Sigma$ in terms of the differential
map $d\tilde{\phi}_{N}^{t}$ on the normal bundle $N$. We will use
the notation $\approx$ defined in (\ref{eq:def_approx}) that compare
operators near $\Sigma$. We also use the shortened notation
\begin{equation}
\Upsilon_{t}:=\Upsilon\left(d\tilde{\phi}_{T\Sigma}^{t}\right),\label{eq:def_Upsilon_t}
\end{equation}
that is the metaplectic correction (\ref{eq:def_d}) of the differential
restricted to $T\Sigma$, $d\tilde{\phi}_{T\Sigma}^{t}=d\tilde{\phi}^{t}{}_{K}+d\tilde{\phi}^{t}{}_{K_{0}}$
that appears in (\ref{eq:dPhi_Sigma}). In the next theorem we view
$\Upsilon_{t}^{1/2}>0$ as a multiplication operator over $\Sigma$.

\begin{cBoxB}{}
\begin{thm}
\label{Thm:approx_exptX_from_N}For any $t\in\mathbb{R}$, we have
\begin{equation}
e^{tX}\approx\mathcal{T}_{N}^{\Delta}\,\Upsilon_{t}^{1/2}\tilde{\mathrm{Op}}\left(d\tilde{\phi}_{N}^{t}\right)\mathcal{T}_{N}\label{eq:exp_tX_N}
\end{equation}
\end{thm}

\end{cBoxB}

\begin{rem}
Let us compare formula (\ref{eq:exp_tX_N}) with (\ref{eq:etX}).
Both formula express the pull back operator $e^{tX}$ from some quantum
(metaplectic) operator $\tilde{\mathrm{Op}}\left(d\tilde{\phi}^{t}\right)$
or $\tilde{\mathrm{Op}}\left(d\tilde{\phi}_{N}^{t}\right)$ acting
on some vector bundle over $T^{*}M$. The difference is that (\ref{eq:etX})
is valid microlocally on the whole $T^{*}M$ (but at high frequencies)
and general to any non vanishing vector fields $X$, whereas (\ref{eq:exp_tX_N})
is specific to contact (or Reeb) vector field and valid microlocally
near $\Sigma\subset T^{*}M$ (and high frequencies).
\end{rem}

~
\begin{rem}
As a particular case, taking $t=0$ in (\ref{eq:exp_tX_N}) gives
that
\begin{equation}
\mathcal{T}_{N}^{\Delta}\mathcal{T}_{N}\approx\mathrm{Id}.\label{eq:TDelta_T_Id}
\end{equation}
\end{rem}

\begin{proof}
As we already did in the proof of Theorem \ref{thm:propagation_singularities_2},
we use $\mathcal{T}:C^{\infty}\left(M\right)\rightarrow\mathcal{S}\left(T^{*}M\right)$
to lift the analysis to $T^{*}M$ and consider the Schwartz kernel
\[
\tilde{R}:=\left|\langle\delta_{\rho'}|\mathcal{T}\left(e^{tX}-\mathcal{T}_{N}^{\Delta}\Upsilon_{t}^{1/2}\tilde{\mathrm{Op}}\left(d\tilde{\phi}_{N}^{t}\right)\mathcal{T}_{N}\right)\mathcal{T}^{\dagger}\delta_{\rho}\rangle\right|,
\]
that we will estimate for points $\rho',\rho\in T^{*}M$ close to
the set $\Sigma$ as needed by Definition \ref{def:For-two-family}.
For this, let $\sigma>0$. We write $\rho=\rho_{*}+\rho_{0}$ with
transverse component $\rho_{*}\in\mathrm{Ker}X$ and frequency component
$\rho_{0}\in E_{0}^{*}$, see (\ref{eq:decomp_KerX_E0*}), similarly
for $\rho'$ and we assume that
\begin{equation}
\left\Vert \rho_{*}\right\Vert _{g_{\rho}}\leq\sigma,\quad\left\Vert \rho'_{*}\right\Vert _{g_{\rho}}\leq\sigma.\label{eq:assumpt_rho}
\end{equation}
We also assume that points $\left(\rho,\rho'\right)\in T^{*}M\times T^{*}M$
are ``near'' the graph of $\tilde{\phi}^{t}$, i.e. that (\ref{eq:near_the_graph})
holds true. Otherwise the proof of Theorem \ref{thm:propagation_singularities_2},
(step A), shows that far from the graph, the Schwartz kernel is totally
negligible.

As before, the strategy is to approximate the dynamical operator $e^{tX}$
to its linear part, approximate the metric to an Euclidean metric,
and then check that formula holds true exactly in this linear and
Euclidean setting. For this, we use local coordinates in $\mathbb{R}^{2\left(2d+1\right)}$
on $T^{*}M$ and use the operator $\tilde{\mathrm{Op}}_{\rho}\left(d\tilde{\phi}_{\rho}^{t}\right)$
and phase $\varphi_{\rho}$ that have already been defined in (\ref{eq:def_R2}).
We introduce intermediary operators

\[
O_{1}:=\mathcal{T}e^{tX}\mathcal{T}^{\dagger}\qquad O_{2}:=e^{-i\varphi_{\tilde{\phi}^{t}\left(\rho\right)}}\tilde{\mathrm{Op}}_{\rho}\left(d\tilde{\phi}_{\rho}^{t}\right)e^{i\varphi_{\rho}},
\]
\[
O_{3}:=e^{-i\varphi_{\tilde{\phi}^{t}\left(\rho\right)}}\mathcal{P}_{\tilde{\phi}^{t}\left(\rho\right)}\widetilde{\left(\exp_{N}^{-1}\right)^{\circ}}\Upsilon_{t}^{1/2}\tilde{\mathrm{Op}}_{\rho}\left(d\tilde{\phi}_{N}^{t}\right)\widetilde{\left(\exp_{N}\right)^{\circ}}\mathcal{P}_{\rho}e^{i\varphi_{\rho}},
\]
\begin{align*}
O_{4} & :=\mathcal{T}\mathcal{T}^{\dagger}\widetilde{\left(\exp_{N}^{-1}\right)^{\circ}}\chi_{\Sigma}^{\mu}\Upsilon_{t}^{1/2}\tilde{\mathrm{Op}}\left(d\tilde{\phi}_{N}^{t}\right)\chi_{\Sigma}^{\mu}\widetilde{\left(\exp_{N}\right)^{\circ}}\mathcal{T}\mathcal{T}^{\dagger}\\
 & \eq{\ref{eq:def_T_N},\ref{eq:def_T_N_Delta}}\mathcal{T}\mathcal{T}_{N}^{\Delta}\,\Upsilon_{t}^{1/2}\tilde{\mathrm{Op}}\left(d\tilde{\phi}_{N}^{t}\right)\mathcal{T}_{N}\mathcal{T}^{\dagger},
\end{align*}
and their differences
\begin{equation}
R_{1}:=\left|\langle\delta_{\rho'}|\left(O_{1}-O_{2}\right)\delta_{\rho}\rangle\right|,\qquad R_{2}:=\left|\langle\delta_{\rho'}|\left(O_{2}-O_{3}\right)\delta_{\rho}\rangle\right|,\label{eq:def_R2-1}
\end{equation}
\[
R_{3}:=\left|\langle\delta_{\rho'}|\left(O_{3}-O_{4}\right)\delta_{\rho}\rangle\right|.
\]
So we have
\[
\tilde{R}\leq R_{1}+R_{2}+R_{3}.
\]
For the term $R_{1}$, in the proof of Theorem \ref{thm:propagation_singularities_2},
(step B), we have already shown that for any $0<\lambda<\frac{1}{2}$,
\[
R_{1}\leq C_{N,t}\left\langle \mathrm{dist}_{g}\left(\rho',\tilde{\phi}^{t}\left(\rho\right)\right)\right\rangle ^{-N}\left\langle \left|\rho\right|\right\rangle ^{-\frac{1}{2}+\lambda}.
\]
For the term $R_{2}$, we decompose in $O_{2}$ the term
\[
\tilde{\mathrm{Op}}_{\rho}\left(d\tilde{\phi}_{\rho}^{t}\right)\eq{\ref{eq:fact}}\left(\tilde{\mathrm{Op}}_{\rho}\left(d\tilde{\phi}_{K}^{t}\right)\otimes\tilde{\mathrm{Op}}_{\rho}\left(d\tilde{\phi}_{N}^{t}\right)\right)\otimes\tilde{\mathrm{Op}}_{\rho}\left(d\tilde{\phi}_{K_{0}}^{t}\right).
\]
For the transverse part $\tilde{\Phi}=d\tilde{\phi}_{K}^{t}+d\tilde{\phi}_{N}^{t}$,
Lemma \ref{lem:decomp} in the appendix gives that (and this is the
main relation that underlies Theorem \ref{Thm:approx_exptX_from_N})
\[
\tilde{\mathrm{Op}}_{\rho}\left(d\tilde{\phi}_{K}^{t}\right)\otimes\tilde{\mathrm{Op}}_{\rho}\left(d\tilde{\phi}_{N}^{t}\right)=\tilde{\mathrm{Op}}_{\rho}\left(\tilde{\Phi}\right)\eq{\ref{eq:expression_Phi_circ-1}}\mathcal{P}_{\tilde{\phi}^{t}\left(\rho\right)}\widetilde{\left(\exp_{N}^{-1}\right)^{\circ}}\left(\Upsilon\left(d\tilde{\phi}_{K}^{t}\right)\right)^{1/2}\tilde{\mathrm{Op}}_{\rho}\left(d\tilde{\phi}_{N}^{t}\right)\widetilde{\left(\exp_{N}\right)^{\circ}}\mathcal{P}_{\rho}.
\]
If we add the neutral component $\tilde{\mathrm{Op}}_{\rho}\left(d\tilde{\phi}_{K_{0}}^{t}\right)$
that has no effect this gives
\[
\tilde{\mathrm{Op}}_{\rho}\left(d\tilde{\phi}_{\rho}^{t}\right)=\mathcal{P}_{\tilde{\phi}^{t}\left(\rho\right)}\widetilde{\left(\exp_{N}^{-1}\right)^{\circ}}\Upsilon_{t}^{1/2}\tilde{\mathrm{Op}}_{\rho}\left(d\tilde{\phi}_{N}^{t}\right)\widetilde{\left(\exp_{N}\right)^{\circ}}\mathcal{P}_{\rho}.
\]
This shows that $R_{2}\eq{\ref{eq:def_R2-1}}0$.Finally for the term
$R_{3}$, we add the cut-off $\chi_{\Sigma}^{\mu}$ that has no effect
from our assumptions on points $\rho,\rho'$, we pass from the Euclidean
metric to the metric $g$ repeating arguments with the slow variation
of the metric (\ref{eq:g_moderate and temperate}) and get that $R_{3}\leq C_{N,t}\left\langle \mathrm{dist}_{g}\left(\rho',\tilde{\phi}^{t}\left(\rho\right)\right)\right\rangle ^{-N}\left\langle \left|\rho\right|\right\rangle ^{-\frac{1}{2}+\lambda}$
for any $0<\lambda<\frac{1}{2}$, as well. 

From these estimates and assumption (\ref{eq:assumpt_rho}) on $\rho,\rho'$,
we can add the cut-off $\mathrm{Op}\left(\chi_{\Sigma,\sigma}\right)$
and deduce that for any $0<\lambda<\frac{1}{2}$, 
\[
\mathrm{Op}\left(\chi_{\Sigma,\sigma}\right)\left(e^{tX}-\mathcal{T}_{N}^{\Delta}\,\Upsilon_{t}^{1/2}\tilde{\mathrm{Op}}\left(d\tilde{\phi}_{N}^{t}\right)\mathcal{T}_{N}\right)\mathrm{Op}\left(\chi_{\Sigma,\sigma}\right)\in\Psi_{\tilde{\phi}^{t}}^{-\frac{1}{2}+\lambda}.
\]
From Definition \ref{def:For-two-family}, this gives (\ref{eq:exp_tX_N}). 
\end{proof}

\subsection{Description of the operator $e^{tX}$ near the  set $\Sigma$ with
the bundle $T_{\Sigma}T^{*}M$}

In this section we provide another expression for the operator $e^{tX}$
similar to (\ref{eq:exp_tX_N}) but sometimes more convenient, see
Remark \ref{rem:One-advantage-of} below. The little difference with
(\ref{eq:exp_tX_N}) is that we will use the bundle $T_{\Sigma}T^{*}M\eq{\ref{eq:decomp_K_K0_N}}T\Sigma\oplus N\rightarrow\Sigma$
instead of the sub-bundle $N\rightarrow\Sigma$.

Similarly to (\ref{eq:def_Chi_mu}), we introduce the following cut-off
function in frequency $\chi_{\Sigma}^{\mu}:T_{\Sigma}T^{*}M\rightarrow\left[0,1\right]$
defined as follows. For $v\in T_{\Sigma}T^{*}M$, and $\omega=\boldsymbol{\omega}\left(\pi\left(v\right)\right)$,
\begin{equation}
\chi_{\Sigma}^{\mu}\left(v\right)=\boldsymbol{1}_{\left\{ \left\Vert v\right\Vert _{g}\leq\left\langle \omega\right\rangle ^{\mu/2},\left|\omega\right|\geq\omega_{0}\right\} }.\label{eq:def_Chi_mu-1}
\end{equation}
The multiplication operator by $\chi_{\Sigma}^{\mu}$ is also denoted
$\chi_{\Sigma}^{\mu}:\mathcal{S}\left(T_{\Sigma}T^{*}M\right)\rightarrow\mathcal{S}'\left(T_{\Sigma}T^{*}M\right)$. 

We denote $r_{/\Sigma}:\mathcal{S}\left(TT^{*}M\right)\rightarrow\mathcal{S}\left(T_{\Sigma}T^{*}M\right)$
the restriction operator to the base space $\Sigma\subset T^{*}M$.
We denote 
\begin{equation}
r_{N}:\mathcal{S}\left(T_{\Sigma}T^{*}M\right)\rightarrow\mathcal{S}\left(N\right)\label{eq:def_rN}
\end{equation}
the restriction operator to the sub-bundle $N\subset T_{\Sigma}T^{*}M$.

\begin{cBoxA}{}
\begin{defn}
\label{def:We-define-the-2}We define the operators

\begin{equation}
\mathcal{T}_{\Sigma}:=\chi_{\Sigma}^{\mu}r_{/\Sigma}\left(\widetilde{\exp^{\circ}}\right)\mathcal{T}\quad:C^{\infty}\left(M\right)\rightarrow\mathcal{S}'\left(T_{\Sigma}T^{*}M\right)\label{eq:def_T_Sigma}
\end{equation}
\begin{equation}
\mathcal{T}_{\Sigma}^{\Delta}:=\mathcal{T}^{\dagger}\widetilde{\left(\exp_{N}^{-1}\right)^{\circ}}r_{N}\chi_{\Sigma}^{\mu}\quad:\mathcal{S}\left(T_{\Sigma}T^{*}M\right)\rightarrow C^{\infty}\left(M\right).\label{eq:def_T_sigma_Delta}
\end{equation}
\end{defn}

\end{cBoxA}

\begin{cBoxB}{}
\begin{thm}
\label{Thm:For-any-}For any $t\in\mathbb{R}$, we have
\begin{equation}
e^{tX}\approx\mathcal{T}_{\Sigma}^{\Delta}\tilde{\mathrm{Op}}\left(\left(d\tilde{\phi}^{t}\right)_{/\Sigma}\right)\mathcal{T}_{\Sigma}\label{eq:exp_tX_Lamda}
\end{equation}
\end{thm}

\end{cBoxB}

\begin{rem}
\label{rem:One-advantage-of}This remark is very similar to the previous
Remark \ref{rem:T*M_or_TT*M}. One advantage of (\ref{eq:exp_tX_Lamda})
compared to (\ref{eq:exp_tX_N}) is that it does not contain the time
dependent metaplectic correction $\Upsilon_{t}^{1/2}$, but contains
instead, through the decomposition (\ref{eq:fact}), the operator
$\tilde{\mathrm{Op}}\left(d\tilde{\phi}_{T\Sigma}^{t}\right)$ that
itself contains $\Upsilon_{t}^{1/2}$. Later in the proof of Theorem
\ref{thm:continuity_thm}, we will use and take advantage of some
unitary properties of this operator $\tilde{\mathrm{Op}}\left(d\tilde{\phi}_{T\Sigma}^{t}\right)$.
On the other hand, one advantage of (\ref{eq:exp_tX_N}) compare to
(\ref{eq:exp_tX_Lamda}) is that it deals directly with the normal
sub-bundle $N$ on which we aim to work, so it is easier at first
sight.
\end{rem}

\begin{proof}
The proof follows the same lines as the proof of Theorem \ref{Thm:approx_exptX_from_N}.
The only difference is that instead of using Lemma \ref{lem:decomp},
we use Lemma \ref{lem:For-the-symplectic} in the appendix for the
Euclidean and linear case.
\end{proof}

\subsection{\label{subsec:Discussion-about-emergence}Emergence of quantum mechanics
near the set $\Sigma$ for a general contact vector field}

A direct consequence of Theorem \ref{Thm:approx_exptX_from_N} is
the following theorem that shows that at any time $t$ the dynamics
of the pull back operator $e^{tX_{F}}$ is well approximated microlocally
near the set $\Sigma$ (at distance less than $\sigma$) and at high
frequencies $\omega$, by a quantum operator $\mathcal{T}_{N}^{\Delta}\,\Upsilon_{t}^{1/2}\tilde{\mathrm{Op}}\left(d\tilde{\phi}_{N}^{t}\right)\mathcal{T}_{N}$
(this is a F.I.O., compare with (\ref{eq:def_OIF})).

This result is less precise than Theorem \ref{thm:1_emergence_of_QM}
that gives an approximation of the full operator $e^{tX_{F}}$, whereas
here we have an approximation for a restriction to a micro-local neighborhood
of $\Sigma$. However this may be interesting because $\Sigma$ is
an invariant set for the dynamics. We may wonder if with some additional
generic assumptions, this result can be improved towards Theorem \ref{thm:1_emergence_of_QM}.

\begin{cBoxB}{}
\begin{thm}
\label{cor:emergence_contact}$\forall t\in\mathbb{R}$, $\exists m<0,$
$\forall\sigma>0$,$\exists C_{t,\sigma}>0$ $\forall\omega>0$,{\small{}
\[
\left\Vert \mathrm{Op}\left(\chi_{\Sigma,\sigma}\right)\left(e^{tX_{F}}-\mathcal{T}_{N}^{\Delta}\,\Upsilon_{t}^{1/2}\tilde{\mathrm{Op}}\left(d\tilde{\phi}_{N}^{t}\right)\mathcal{T}_{N}\right)\mathrm{Op}\left(\chi_{\Sigma,\sigma}\right)\left(\mathrm{Id}-\mathrm{Op}\left(\chi_{\omega}\right)\right)\right\Vert _{L^{2}\left(M;F\right)}\leq C_{t}\omega^{m}\underset{\omega\rightarrow+\infty}{\rightarrow}0.
\]
}
\end{thm}

\end{cBoxB}

\begin{proof}
From Theorem \ref{Thm:approx_exptX_from_N}, Definition \ref{def:For-two-family}
and Lemma \ref{Lem:A-useful-consequence}.
\end{proof}

\section{\label{sec:Micro-local-analysis-of-1-1}Micro-local analysis of a
contact Anosov vector field $X$ near $\Sigma=\mathbb{R}\mathcal{A}\backslash\left\{ 0\right\} $}

In this section, as a last step, we consider the model of interest
for this paper: a smooth contact Anosov vector field $X$ on a closed
manifold $M$. In other words, we add the hypothesis of Anosov dynamics
to the previous section \ref{sec:Micro-local-analysis-of-1}. We first
recall the precise definitions and notations.

\subsection{Definitions and notations}

\subsubsection{\label{subsec:Anosov-vector-field}Anosov vector field}

We make the hypothesis that $X$ is a smooth \textbf{Anosov vector
field} on a closed manifold $M$. This means that for every point
$m\in M$ the tangent vector $X\left(m\right)$ is non zero and we
have a continuous splitting of the tangent space
\begin{equation}
T_{m}M=E_{u}\left(m\right)\oplus E_{s}\left(m\right)\oplus\underbrace{E_{0}\left(m\right)}_{\mathbb{R}X\left(m\right)}\label{eq:decomp_TM}
\end{equation}
that is invariant under the action of the differential of the flow
map $d\phi^{t}:TM\rightarrow TM$ and there exist $\lambda_{\mathrm{min}}>0$,
$C>0$ and a smooth metric $g_{M}$ on $M$ such that

\begin{equation}
\forall t\geq0,\forall m\in M,\quad\left\Vert d\phi_{/E_{u}\left(m\right)}^{-t}\right\Vert _{g_{M}}\leq Ce^{-\lambda_{\mathrm{min}}t},\quad\left\Vert d\phi_{/E_{s}\left(m\right)}^{t}\right\Vert _{g_{M}}\leq Ce^{-\lambda_{\mathrm{min}}t}.\label{eq:hyperbolicity}
\end{equation}
See Figure \ref{fig:Anosov-flow.}. The linear subspace $E_{u}\left(m\right),E_{s}\left(m\right)\subset T_{m}M$
are called the unstable/stable spaces and the one dimensional space
$E_{0}\left(m\right):=\mathbb{R}X\left(m\right)$ is called the neutral
direction or flow direction. For a general Anosov vector field, the
maps $m\rightarrow E_{u}\left(m\right)$, $m\rightarrow E_{s}\left(m\right)$
and $m\rightarrow E_{u}\left(m\right)\oplus E_{s}\left(m\right)$
are only Hölder continuous with (a priori different) Hölder exponents
respectively \cite{hurder-90}: 
\begin{equation}
\beta_{u},\beta_{s},\beta_{0}\in]0,1],\quad\beta:=\mathrm{min}\left(\beta_{u},\beta_{s}\right).\label{eq:Holder_exp}
\end{equation}

\begin{figure}
\centering{}\input{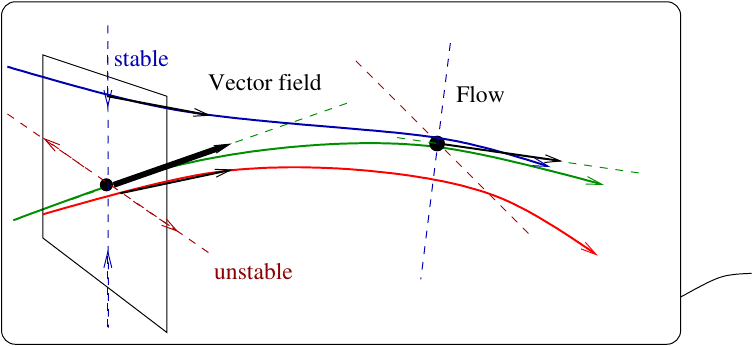tex_t}\caption{\label{fig:Anosov-flow.}Anosov flow $\phi^{t}$ (in solid line) generated
by a vector field $X$ on a compact manifold $M$.}
\end{figure}
Let $\mathcal{A}\in C^{0}\left(M;T^{*}M\right)$ be the continuous
one form on $M$ called \textbf{Anosov one form }defined for every
$m\in M$ by the conditions
\begin{equation}
\mathcal{A}\left(m\right)\left(X\left(m\right)\right)=1\quad\text{and}\quad\mathrm{Ker}\left(\mathcal{A}\left(m\right)\right)=E_{u}\left(m\right)\oplus E_{s}\left(m\right).\label{eq:one_form}
\end{equation}
From this definition, $\mathcal{A}$ is preserved\footnote{Notice from (\ref{eq:hyperbolicity}), that $\mathcal{A}$ is the
unique continuous one form preserved by the flow with the normalization
condition $\mathcal{A}\left(X\right)=1$.} by the flow $\phi^{t}$ and the map $m\rightarrow\mathcal{A}\left(m\right)$
is Hölder continuous with exponent $\beta_{0}$.

\subsubsection{\label{subsec:Contact-Anosov-flow}Contact Anosov vector field}

In addition to section \ref{subsec:Anosov-vector-field} we assume
in this paper that the Anosov vector field $X$ is \textbf{\href{https://en.wikipedia.org/wiki/Contact_geometry}{contact}},
i.e. that the Anosov one form $\mathcal{A}$ defined in (\ref{eq:one_form}),
is a \textbf{smooth contact one form} on $M$. This means that the
distribution $E_{u}\left(m\right)\oplus E_{s}\left(m\right)$ is smooth
(i.e. $C^{\infty}$) w.r.t. $m\in M$ (hence with Hölder exponent
$\beta_{0}=1$) and that the linear space $\mathrm{Ker}\left(\mathcal{A}\left(m\right)\right)=E_{u}\left(m\right)\oplus E_{s}\left(m\right)$
endowed with the two form $d\mathcal{A}\left(m\right)$ is a linear
symplectic space for every $m\in M$.

Since $\mathcal{A}$ is invariant by the flow, for any $u_{1},u_{2}\in E_{s}\left(m\right)$
we have $\forall t\in\mathbb{R}$, $\left(d\mathcal{A}\right)\left(u_{1},u_{2}\right)=\left(d\left(\left(\phi^{t}\right)^{*}\mathcal{A}\right)\right)\left(u_{1},u_{2}\right)=\left(d\mathcal{A}\right)\left(d\phi^{t}\left(u_{1}\right),d\phi^{t}\left(u_{2}\right)\right)\underset{t\rightarrow+\infty}{\rightarrow}0$
hence $d\mathcal{A}_{/E_{s}}=0$ meaning that $E_{s}\left(m\right)$
is $d\mathcal{A}-$isotropic. Similarly $E_{u}\left(m\right)$ is
$d\mathcal{A}-$isotropic hence $E_{u}\left(m\right),E_{s}\left(m\right)$
are $d\mathcal{A}-$Lagrangian and
\begin{equation}
d:=\mathrm{dim}E_{u}\left(m\right)=\mathrm{dim}E_{s}\left(m\right),\label{eq:dim_d}
\end{equation}
hence $\mathrm{dim}M=2d+1$ is odd.
\begin{rem}
The vector field $X$ is the \textbf{\href{https://en.wikipedia.org/wiki/Contact_geometry}{Reeb vector field}}
of $\mathcal{A}$, i.e. $X$ is determined from $\mathcal{A}$ by
$\mathcal{A}\left(X\right)=1$ and\footnote{From Cartan formula and invariance of $\mathcal{A}$: $0=\mathcal{L}_{X}\mathcal{A}=\iota_{X}d\mathcal{A}+d\iota_{X}\mathcal{A}$
and $\iota_{X}\mathcal{A}=1$ give $\left(d\mathcal{A}\right)\left(X,.\right)=0$.} 
\begin{equation}
\left(d\mathcal{A}\right)\left(X,.\right)=0,\label{eq:dalpha_X}
\end{equation}
as the setting of section \ref{sec:Micro-local-analysis-of-1}. 
\end{rem}

\subsubsection{Dual decomposition}

The dual bundle decomposition of (\ref{eq:decomp_TM}) is
\begin{equation}
T^{*}M=E_{u}^{*}\oplus E_{s}^{*}\oplus E_{0}^{*}\label{eq:decomp*}
\end{equation}
with
\begin{align}
E_{u}^{*} & :=\left(E_{s}\oplus E_{0}\right)^{\perp}=\left\{ \rho\in T^{*}M,\quad\mathrm{Ker}\rho\supset E_{s}\oplus E_{0}\right\} ,\label{eq:def_Eu*_Es*}\\
E_{s}^{*} & :=\left(E_{u}\oplus E_{0}\right)^{\perp}=\left\{ \rho\in T^{*}M,\quad\mathrm{Ker}\rho\supset E_{u}\oplus E_{0}\right\} ,
\end{align}

\begin{align}
E_{0}^{*} & :=\left(E_{u}\oplus E_{s}\right)^{\perp}=\left\{ \rho\in T^{*}M,\quad\mathrm{Ker}\rho\supset E_{u}\oplus E_{s}\right\} \label{eq:def_E_0*-1}\\
 & \underset{(\ref{eq:one_form})}{=}\mathbb{R}\mathcal{A}=\left\{ \omega\mathcal{A}\left(m\right),\quad m\in M,\omega\in\mathbb{R}\right\} .
\end{align}
Hence
\[
\mathrm{dim}E_{u}^{*}\left(m\right)\eq{\ref{eq:dim_d}}\mathrm{dim}E_{s}^{*}\left(m\right)=d,\quad\mathrm{dim}E_{0}^{*}\left(m\right)=1.
\]
From (\ref{eq:Holder_exp}), the map $m\in M\rightarrow E_{u}^{*}\left(m\right)$
is $\beta_{s}$-Hölder continuous and $m\in M\rightarrow E_{s}^{*}\left(m\right)$
is $\beta_{u}$-Hölder continuous. However the maps $m\in M\rightarrow E_{0}^{*}\left(m\right)=\mathbb{R}\mathcal{A}\left(m\right)$
and $m\in M\rightarrow\left(E_{u}^{*}\oplus E_{s}^{*}\right)\left(m\right)=E_{0}^{\perp}\left(m\right)=\mathrm{Ker}\left(X\right)\left(m\right)$
are smooth and have already been defined in (\ref{eq:def_E_0*}) and
(\ref{eq:def_Ker_X}).

A cotangent vector $\rho\in T^{*}M$ is decomposed accordingly to
the dual decomposition (\ref{eq:decomp*}) as
\begin{equation}
\rho=\rho_{u}+\rho_{s}+\rho_{0}\label{eq:decomp_Xi}
\end{equation}
with components
\[
\rho_{u}\in E_{u}^{*},\rho_{s}\in E_{s}^{*},\quad\rho_{0}=\omega\mathcal{A}\left(m\right)\in E_{0}^{*}
\]
with $m=\pi\left(\rho\right)$ and the frequency $\omega\eq{\ref{eq:omega_function}}\boldsymbol{\omega}\left(\rho\right)\in\mathbb{R}$.

By duality, the hyperbolicity assumption (\ref{eq:hyperbolicity})
gives that the components (\ref{eq:decomp_Xi}) of $\rho\left(t\right)=\tilde{\phi}^{t}\left(\rho\right)$
satisfy 
\begin{align}
\exists C & >0,\forall\rho\left(0\right)\in T^{*}M,\forall t\geq0,\nonumber \\
 & \left\Vert \rho_{u}\left(t\right)\right\Vert _{g_{M}}\geq\frac{1}{C}e^{\lambda_{\mathrm{min}}t}\left\Vert \rho_{u}\left(0\right)\right\Vert _{g_{M}},\qquad\left\Vert \rho_{s}\left(t\right)\right\Vert _{g_{M}}\leq Ce^{-\lambda_{\mathrm{min}}t}\left\Vert \rho_{s}\left(0\right)\right\Vert _{g_{M}},\quad\omega\left(t\right)=\omega\left(0\right).\label{hyperbolicity_xi}
\end{align}
See figure \ref{fig:The-Anosov-flow-on_T*M}.

\begin{figure}
\centering{}\input{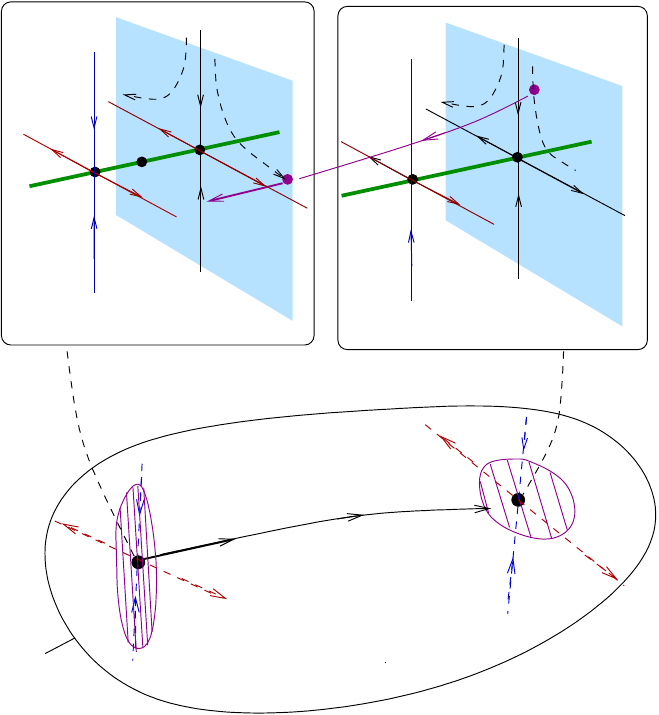tex_t}\caption{\label{fig:The-Anosov-flow-on_T*M}The Anosov flow $\phi^{t}=\exp\left(tX\right)$
on $M$ induces a Hamiltonian flow $\tilde{\phi}^{t}=\exp\left(t\tilde{X}\right)$
in cotangent space $T^{*}M$. The magenta lines on $M$ represent
\textquotedblleft internal oscillations\textquotedblright{} of a function
$u$, that is a wave packet and is supposed here to have a small support
near $\phi^{t}\left(m\right)$. These oscillations correspond to a
cotangent vector $\rho\in T_{\phi^{t}\left(m\right)}^{*}M$. Transported
by the flow the oscillations of $e^{tX}u=u\circ\phi^{t}$ increase
and the wave front of these oscillations become parallel to $E_{s}\oplus E_{0}$
equivalently $\rho\left(t\right)=\tilde{\phi^{t}}\left(\rho\right)$
converges to the direction of $E_{u}^{*}\subset T^{*}M$ and remains
in the frequency level $\Sigma_{\omega}:=\boldsymbol{\omega}^{-1}\left(\omega\right)$.
The \textbf{trapped set} of the lifted flow $\tilde{\phi^{t}}$ is
the rank one vector bundle $E_{0}^{*}=\mathbb{R}\mathcal{A}$ (green
line) where $\mathcal{A}$ is the Anosov one form. From Lemma \ref{lem:The-canonical-symplectic},
$\Sigma:=E_{0}^{*}\backslash\left\{ 0\right\} $ is a symplectic sub-manifold
of $T^{*}M$, with $\mathrm{dim}\Sigma=2\left(d+1\right)$.}
\end{figure}

\subsubsection{Trapped set}

From (\ref{hyperbolicity_xi}) we deduce the following Lemma.

\begin{cBoxB}{}
\begin{lem}
\label{lem:trapped_set}The set $E_{0}^{*}\eq{\ref{eq:def_E_0*-1}}\mathbb{R}\mathcal{A}$
is the\textbf{ trapped set} (or \href{https://en.wikipedia.org/wiki/Wandering_set\#Non-wandering_points}{non wandering set})
of the flow $\tilde{\phi}^{t}$ in the sense that
\begin{align}
E_{0}^{*}\eq{\ref{eq:def_E_0*-1}}\mathbb{R}\mathcal{A}\eq{\ref{eq:def_Sigma}}\Sigma\cup\left\{ 0\right\}  & =\left\{ \rho\in T^{*}M\,\,\mid\quad\exists\mathcal{K}\subset T^{*}M\mbox{ compact, s.t. }\tilde{\phi}^{t}\left(\rho\right)\in\mathcal{K},\quad\forall t\in\mathbb{R}\right\} ,\label{eq:e_0*}
\end{align}
and $E_{0}^{*}$ is \textbf{transversely hyperbolic}.
\end{lem}

\end{cBoxB}

\begin{proof}
From (\ref{eq:decomp_Xi}), the trapped set $E_{0}^{*}$ is characterized
by $\rho_{u}=0,\rho_{s}=0$ and from (\ref{hyperbolicity_xi}), $E_{0}^{*}$
is transversely hyperbolic.
\end{proof}

\subsection{\label{sec:Anisotropic-Sobolev-space}Anisotropic Sobolev space,
decay of norm outside the trapped set and discrete Ruelle spectrum}

In Lemma \ref{lem:trapped_set} we have observed that the trapped
set $E_{0}^{*}$ is transversely hyperbolic. This will permit us to
define a weight function or escape function $W:T^{*}M\rightarrow\mathbb{R}^{+}$
that decays outside the trapped set (i.e. that is a \href{https://en.wikipedia.org/wiki/Lyapunov_function}{Lyapounov function}
for the flow $\tilde{\phi}^{t}$), following the constructions in
\cite{fred_flow_09} or \cite[Def 5.6]{faure_tsujii_Ruelle_resonances_density_2016}.
Then we define the anisotropic Sobolev space $\mathcal{H}_{W}\left(M\right)$
and obtain that the norm of the pull back operator $e^{tX}$ restricted
to the outside of $E_{0}^{*}$ decays exponentially fast with an arbitrarily
large rate. This is the important property that will enable us to
restrict the analysis of the operator $e^{tX}$ for $t>0$, to a neighborhood
of the trapped set $E_{0}^{*}$ in the forthcoming sections. In this
section we recall all these results.

\subsubsection{\label{subsec:Properties-of-the}Definition and properties of the
escape function $W$}

Following \cite[Def 5.6]{faure_tsujii_Ruelle_resonances_density_2016},
we consider the positive valued continuous function $W\in C\left(T^{*}M;\mathbb{R}^{+}\backslash\left\{ 0\right\} \right)$
called \textbf{weight function} defined as follows. Let $R\geq0$,
$\gamma\in[1-\beta,1[$ with $\beta$ given in (\ref{eq:Holder_exp}).
For $\rho\in T^{*}M$, with stable/unstable components $\rho_{s}\in E_{s}^{*},\rho_{u}\in E_{u}^{*}$
given in (\ref{eq:decomp_Xi}), we define

\begin{equation}
W\left(\rho\right):=\frac{\left\langle h_{\gamma}\left(\rho\right)\left\Vert \rho_{s}\right\Vert _{g_{\rho}}\right\rangle ^{R}}{\left\langle h_{\gamma}\left(\rho\right)\left\Vert \rho_{u}\right\Vert _{g_{\rho}}\right\rangle ^{R}}\label{eq:def_W}
\end{equation}
with
\begin{equation}
h_{\gamma}\left(\rho\right):=\left\langle \left(\left\Vert \rho_{u}\right\Vert _{g_{\rho}}^{2}+\left\Vert \rho_{s}\right\Vert _{g_{\rho}}^{2}\right)^{1/2}\right\rangle ^{-\gamma}\label{eq:def_h}
\end{equation}

\begin{rem}
In the previous expressions, the vertical vectors $\rho_{u},\rho_{s}\in T_{m}^{*}M$
are naturally identified with vectors in the tangent space $T_{\rho}\left(T^{*}M\right)$
of $T^{*}M$ at point $\rho\in T^{*}M$, so we can get their norm,
$\left\Vert \rho_{u}\right\Vert _{g_{\rho}},\left\Vert \rho_{s}\right\Vert _{g_{\rho}}$. 
\end{rem}

The function $W:\rho\in T^{*}M\rightarrow W\left(\rho\right)\in\mathbb{R}$
in (\ref{eq:def_W}) is positive and Hölder continuous, since the
decomposition (\ref{eq:decomp_Xi}) is Hölder continuous. $W$ has
a few important properties given in \cite[Thm 5.9]{faure_tsujii_Ruelle_resonances_density_2016}
as \textbf{decay property} and \textbf{slow variation properties}.

\subsubsection{\label{subsec:Anisotropic-spaces-}Anisotropic spaces $\mathcal{H}_{W}\left(M;F\right)$}

We will define the anisotropic Sobolev spaces $\mathcal{H}_{W}\left(M;F\right)$
used in this paper, in order to get Theorem \ref{thm:If--is} below,
saying that the operator $X_{F}$ has \textbf{intrinsic discrete spectrum}
in $\mathcal{H}_{W}\left(M;F\right)$.
\begin{rem}
\label{subsec:Essential-spectrum-of}Let us remark why to get discrete
spectrum, we cannot consider the Hilbert space $L^{2}\left(M;F\right)$.
If we choose an arbitrary smooth Hermitian metric on the vector bundle
$F\rightarrow M$ and using the measure $dm$ on $M$, (\ref{eq:volume_form_M}),
we can define the Hilbert space $L^{2}\left(M;F\right)$ of sections
of $F$. One can show that the operator $X_{F}$, Eq.(\ref{eq:def_X_F}),
acting in $L^{2}\left(M;F\right)$ has only \href{https://en.wikipedia.org/wiki/Essential_spectrum}{essential spectrum}
contained in some vertical band (the detailed argument will be explained
in a forthcoming paper):
\[
\mathrm{spec}\left(X_{F}:L^{2}\left(M;F\right)\rightarrow L^{2}\left(M;F\right)\right)=\left\{ z\in\mathbb{C},\mathrm{Re}\left(z\right)\in\left[\gamma_{L^{2}}^{-},\gamma_{L^{2}}^{+}\right]\right\} ,
\]
with
\[
\gamma_{L^{2}}^{\pm}=\lim_{t\rightarrow\pm\infty}\log\left\Vert e^{tX_{F}}\right\Vert _{L^{\infty}}^{1/t}.
\]
For example, considering the vector field $X$ itself and the trivial
bundle $F=M\times\mathbb{C}$, since $\mathrm{div}_{dm}X=0$, we have
that $X=-X^{\dagger}$ is skew symmetric in $L^{2}\left(M\right)$
and $X$ has continuous spectrum on the imaginary axis $i\mathbb{R}$.
\end{rem}

For simplicity of notations, we will forget the vector bundle $F$,
that plays no role for this construction, i.e. we first consider only
a trivial bundle $F=M\times\mathbb{C}$. For any $u\in C^{\infty}\left(M\right)$
we define its $\mathcal{H}_{W}\left(M\right)$-norm
\begin{equation}
\left\Vert u\right\Vert _{\mathcal{H}_{W}\left(M\right)}:=\left\Vert W\mathcal{T}u\right\Vert _{L^{2}\left(T^{*}M\right)}\label{eq:def_Sobolev_norm}
\end{equation}
where $W$ is used as a multiplication operator on $\mathcal{S}\left(T^{*}M\right)$.
Let $\mathcal{H}_{W}\left(M\right)$ be the Hilbert space obtained
by completion of $C^{\infty}\left(M\right)$ with respect to this
norm and called \textbf{anisotropic Sobolev space}: 
\begin{equation}
\mathcal{H}_{W}\left(M\right):=\overline{\left\{ u\in C^{\infty}\left(M\right),\quad\left\Vert u\right\Vert _{\mathcal{H}_{W}\left(M\right)}<\infty\right\} }.\label{eq:def_H_W}
\end{equation}
In other words $W\mathcal{T}:\mathcal{H}_{W}\left(M\right)\rightarrow L^{2}\left(T^{*}M\right)$
is an isometry. To define $\mathcal{H}_{W}\left(M;F\right)$ we proceed
similarly by completion of the space of smooth sections $C^{\infty}\left(M;F\right)$.
For more details or information, we refer to \cite[def 4.37]{faure_tsujii_Ruelle_resonances_density_2016}. 
\begin{rem}
The function $W$ in (\ref{eq:def_W}) satisfies
\begin{equation}
\exists C>0,\forall\rho\in T^{*}M,\,C^{-1}\left\langle \left|\rho\right|_{g_{M}}\right\rangle ^{-\left|r\right|}\leq W\left(\rho\right)\leq C\left\langle \left|\rho\right|_{g_{M}}\right\rangle ^{\left|r\right|}.\label{eq:order_W}
\end{equation}
with
\begin{equation}
r=\frac{R}{2}\left(1-\gamma\right)>0.\label{eq:order_r}
\end{equation}
Consequently
\begin{equation}
H^{r}\left(M\right)\subset\mathcal{H}_{W}\left(M\right)\subset H^{-r}\left(M\right).\label{eq:H_W}
\end{equation}
where $H^{r}\left(M\right):=\mathrm{Op}\left(\left\langle \left|\xi\right|_{g_{M}}\right\rangle ^{-r}\right)\left(L^{2}\left(M\right)\right)$
is the standard \href{https://en.wikipedia.org/wiki/Sobolev_space}{Sobolev space}
of order $r$.
\end{rem}

\subsubsection{\label{subsec:Restriction-in-the}Decay of norm outside the trapped
set}

Recall the truncation operator $\mathrm{Op}\left(\chi_{\Sigma,\sigma}\right)$
at distance $\sigma$ from the set $\Sigma$ defined in (\ref{eq:def_op_Lambda}).
\begin{cBoxB}{}
\begin{thm}[Decay outside the trapped set]
\label{thm:decay}\cite[Thm 5.13]{faure_tsujii_Ruelle_resonances_density_2016}For
any $\Lambda>0$, we can choose $R\gg1$ large enough as a parameter
of the function $W$ in (\ref{eq:def_W}), such that $\exists C>0,\forall t\geq0,$
$\exists\sigma_{t},\forall\sigma>\sigma_{t}$ we have
\begin{equation}
\left\Vert e^{tX}\left(\mathrm{Id}-\mathrm{Op}\left(\chi_{\Sigma,\sigma}\right)\right)\right\Vert _{\mathcal{H}_{W}\left(M;F\right)}\leq Ce^{-\Lambda t},\label{eq:decay_outside}
\end{equation}
\begin{equation}
\left\Vert \left(\mathrm{Id}-\mathrm{Op}\left(\chi_{\Sigma,\sigma}\right)\right)e^{tX}\right\Vert _{\mathcal{H}_{W}\left(M;F\right)}\leq Ce^{-\Lambda t}.\label{eq:decay_outside-1}
\end{equation}
\end{thm}

\end{cBoxB}

\begin{rem}
\label{rem:Consider-the-decomposition}Consider the decomposition
$e^{tX}=e^{tX}\left(\mathrm{Id}-\mathrm{Op}\left(\chi_{\Sigma,\sigma}\right)\right)+e^{tX}\mathrm{Op}\left(\chi_{\Sigma,\sigma}\right)$.
Theorem \ref{thm:decay} means that the first component $e^{tX}\left(\mathrm{Id}-\mathrm{Op}\left(\chi_{\Sigma,\sigma}\right)\right)$
decays exponentially fast as $O\left(e^{-\Lambda t}\right)$ with
an arbitrarily large rate $\Lambda\gg1$. We deduce that in order
to describe the dominant effect of the operator $e^{tX}$ and Ruelle
eigenvalues on the spectral domain $\mathrm{Re}\left(z\right)>-\Lambda$,
one has to study the second component $e^{tX}\mathrm{Op}\left(\chi_{\Sigma,\sigma}\right)$
namely the dynamics $e^{tX}$ restricted to a neighborhood of the
trapped set $\Sigma=E_{0}^{*}\backslash\left\{ 0\right\} $. Then,
from Theorem \ref{Thm:approx_exptX_from_N}, we only need to study
the operator $\tilde{\mathrm{Op}}\left(d\tilde{\phi}_{N}^{t}\right)$
acting on the normal bundle over the trapped set $\Sigma$. This is
the subject of the section \ref{subsec:Description-of-the} below.
\end{rem}

\subsubsection{Discrete Ruelle spectrum in anisotropic Sobolev spaces $\mathcal{H}_{W}\left(M;F\right)$}

In paper \cite[thm 5.14]{faure_tsujii_Ruelle_resonances_density_2016},
Theorem \ref{thm:decay} is used to prove the following theorem. Recall
that the function $W$ in (\ref{eq:def_W}) depends on the exponent
$R$ and that the order $r=\frac{R}{2}\left(1-\gamma\right)$ is defined
from $R$ in (\ref{eq:order_r}).

\begin{cBoxB}{}
\begin{thm}
\label{thm:If--is}\textbf{``Group property and discrete spectrum''}\cite[thm 2.11]{faure_tsujii_Ruelle_resonances_density_2016}\textbf{.}
For a general Anosov vector field $X$ on $M$, the family of operators
\[
e^{tX_{F}}:\mathcal{H}_{W}\left(M;F\right)\rightarrow\mathcal{H}_{W}\left(M;F\right),\qquad t\in\mathbb{R},
\]
form a \textbf{strongly continuous group} and the generator $X_{F}$
has\textbf{ discrete spectrum} denoted $\mathrm{spect}\left(X_{F}\right)$,
on $C-r\lambda_{\mathrm{min}}\leq\mathrm{Re}\left(z\right)\leq C'$
with $C,C'$ independent of the parameter $R$. This spectrum is \textbf{intrinsic}
(i.e. does not depend on $W$) and called \textbf{future Ruelle spectrum}
in \cite[Thm 2.11]{faure_tsujii_Ruelle_resonances_density_2016}.
\end{thm}

\end{cBoxB}

See Figure \ref{fig:Band-structure-of}.

\subsection{\label{subsec:Description-of-the}Description of the operator $e^{tX}$
near the set $\Sigma$ with the bundle $N_{s}$}

\subsubsection{Decomposition $N=N_{u}\oplus N_{s}$}

From the Anosov property, we have the decomposition $\mathrm{Ker}\mathcal{A}\eq{\ref{eq:one_form}}E_{u}\oplus E_{s}$
into Hölder continuous sub-bundles, that gives a refined decomposition
of (\ref{eq:decomp_K_K0_N}) into isotropic subspaces:

\begin{cBoxB}{}
\begin{lem}
\label{lem:LetEach-component-}Let
\[
N_{u}:=N\cap d\pi^{-1}\left(E_{u}\right),\quad N_{s}:=N\cap d\pi^{-1}\left(E_{s}\right),
\]
\begin{equation}
N=N_{u}\oplus N_{s},\label{eq:N_Nu_Ns}
\end{equation}
\begin{equation}
K_{u}:=K\cap d\pi^{-1}\left(E_{u}\right),\quad K_{s}:=K\cap d\pi^{-1}\left(E_{s}\right),\quad K=K_{u}\oplus K_{s},\label{eq:def_Nus}
\end{equation}
\[
\mathrm{dim}K_{u}=\mathrm{dim}K_{s}=\mathrm{dim}N_{u}=\mathrm{dim}N_{s}=d,\quad\mathrm{dim}K_{0}=2,
\]
Each component $K_{u},K_{s},\tilde{E}_{0},\tilde{E}_{0}^{*},N_{u},N_{s}$
is a $\Omega-$isotropic vector space, invariant under the dynamics
$d\tilde{\phi}^{t}$. The indices $u,s,0$ denotes respectively instability,
stability or neutrality. See Figure \ref{fig:N_Sigma}.
\end{lem}

\end{cBoxB}

\begin{rem}
\label{rem:unstable}~
\begin{itemize}
\item Notice that due to (\ref{eq:project_phi}) that reverses the direction
of dynamics, $N_{u}$ is stable for $\left(d\tilde{\phi}^{t}\right)_{t\geq0}$
and $N_{s}$ is unstable.
\item The symplectic normal bundle $N\rightarrow\Sigma$ is smooth, but
the isotropic sub-bundle $N_{s}\rightarrow\Sigma$ is only $\beta_{s}$-Hölder
continuous.
\end{itemize}
\end{rem}

\begin{proof}
From the fact explained in section \ref{subsec:Contact-Anosov-flow}
that $E_{s},E_{u}$ are $d\mathcal{A}-$isotropic and that $d\pi$
is a symplectomorphism in (\ref{eq:dpi_K}) and (\ref{eq:dpi_N}),
we deduce that $N_{u},N_{s},K_{u},K_{s}$ are $\Omega-$isotropic.
\end{proof}

\subsubsection{\label{subsec:Taylor-expansion-along}Transfer operators on $N_{s}$}

We start from Theorem \ref{Thm:approx_exptX_from_N}. We have the
splitting $N=N_{s}\oplus N_{u}$ in (\ref{eq:N_Nu_Ns}) of the symplectic
bundle $N$ over $\Sigma$ into Lagrangian sub-bundles. We consider
the operator $\tilde{\mathrm{Op}}\left(d\tilde{\phi}_{N}^{t}\right)$
on the right hand side of (\ref{eq:exp_tX_N}) and our purpose is
to express it as an operator acting only on the Lagrangian sub-bundle
$N_{s}$. The bundles $N_{u},N_{s}$ are invariant under the dynamics
$d\tilde{\phi}^{t}$, hence the flow map splits accordingly (recall
Remark \ref{rem:unstable})
\[
d\tilde{\phi}_{N}^{t}=d\tilde{\phi}_{N_{u}}^{t}+d\tilde{\phi}_{N_{s}}^{t}\quad:N\rightarrow N.
\]

\begin{notation}
\label{rem:The-bundle-}The bundle $N_{s}\rightarrow\Sigma$ defined
in (\ref{eq:def_Nus}) is Hölder continuous with exponent $\beta$.
For simplicity we denote
\begin{align}
\mathcal{S}_{\beta}\left(N_{s}\right) & :=C^{\beta}\left(\Sigma;\mathcal{S}\left(N_{s}\left(.\right)\right)\right)\label{eq:notation}
\end{align}
as the set of functions $f\in C^{\beta}\left(\Sigma;\mathcal{S}\left(N_{s}\left(.\right)\right)\right)$,
$\beta$-Hölder continuous on $\Sigma$ with fast decay in frequency
$\omega$, valued in the \href{https://en.wikipedia.org/wiki/Schwartz_space}{Schwartz space}
of functions $\mathcal{S}\left(N_{s}\left(\rho\right)\right)$ for
each $\rho\in\Sigma$. This notation as already been introduced in
(\ref{eq:def_S_beta}) and will also concerns later other space of
sections over $\Sigma$. Notice that $\mathcal{S}\left(N_{s}\left(.\right)\right)\rightarrow\Sigma$
is an infinite rank bundle vector over $\Sigma$.
\end{notation}

\subsubsection{The bundle $\mathcal{F}\left(N_{s}\right)\rightarrow\Sigma$}

Let us denote $\left|\mathrm{det}N_{s}\right|^{-1/2}$ the dual of
the half density bundle in $N_{s}$ \cite[p.509]{guillemin_77}.

\begin{cBoxA}{}
\begin{defn}
Let
\begin{equation}
\mathcal{F}\left(N_{s}\right):=\left|\mathrm{det}N_{s}\right|^{-1/2}\otimes\mathcal{S}_{\beta}\left(N_{s}\right)\label{eq:def_F-1}
\end{equation}
and
\begin{equation}
X_{\mathcal{F}}:C^{\beta}\left(\Sigma;\mathcal{F}\left(N_{s}\right)\right)\rightarrow C^{\beta}\left(\Sigma;\mathcal{F}\left(N_{s}\right)\right)\label{eq:def_X_Fk}
\end{equation}
be the derivation of sections of $\mathcal{F}$ over the vector field
$\tilde{X}$, that is generator of the group of operators, $\forall t\in\mathbb{R}$,
\begin{equation}
e^{tX_{\mathcal{F}}}:=\left|\mathrm{det}\left(d\tilde{\phi}_{N_{s}}^{t}\right)\right|^{-1/2}\left(d\tilde{\phi}_{N_{s}}^{t}\right)^{-\circ}:C^{\beta}\left(\Sigma;\mathcal{F}\left(N_{s}\right)\right)\rightarrow C^{\beta}\left(\Sigma;\mathcal{F}\left(N_{s}\right)\right).\label{eq:def_XF-1}
\end{equation}
\end{defn}

\end{cBoxA}

\begin{rem}
The derivation $X_{\mathcal{F}}$ over $\tilde{X}$ in (\ref{eq:def_X_Fk})
is well defined since the bundle $\mathcal{F}\left(N_{s}\right)$
is constructed from the bundle $N_{s}$ and the given vector field
$X$ on $M$ induces a natural derivation on $N_{s}$ hence $X_{\mathcal{F}}$
on $\mathcal{F}\left(N_{s}\right)$. Recall from Remark \ref{rem:unstable}
that the linear map $d\tilde{\phi}_{N_{s}}^{t}$ is expanding on $N_{s}$,
hence $\left|\mathrm{det}\left(d\tilde{\phi}_{N_{s}}^{t}\right)\right|>1$.
The operator $X_{\mathcal{F}}$ has a domain that includes sections
smooth along the flow direction. Recall that for simplicity of notation
we have ignored the bundle $F$ in (\ref{eq:def_F-1}).

Now we will relate the operator $\tilde{\mathrm{Op}}\left(d\tilde{\phi}_{N}^{t}\right)$
on the right hand side of (\ref{eq:exp_tX_N}) to the operator $e^{tX_{\mathcal{F}}}$
defined in (\ref{eq:def_XF-1}). In (\ref{eq:def_Bargman_B}) we define
the Bargman transform 
\[
\mathcal{B}_{N_{s}}:\mathcal{S}_{\beta}\left(N_{s}\right)\rightarrow\mathcal{S}\left(N_{s}\oplus N_{s}^{*}\right).
\]
As explained in Lemma \ref{lem:If--is}, we have an $\left(\Omega,g\right)-$isomorphism
\[
\Psi:N=N_{s}\oplus N_{s}^{\perp_{g}}\rightarrow N_{s}\oplus N_{s}^{*}.
\]
But $N_{u}\neq N_{s}^{\perp_{g}}$ in general, so we introduce $S_{N_{s},N_{u}}:N\rightarrow N$
the linear (shear) symplectic map defined as identity on $N_{s}$
and any vector of $N_{s}^{\perp_{g}}$ is mapped to its projection
on $N_{u}$ parallel to $N_{s}$:
\[
S_{N_{s},N_{u}}\left(N_{s}\right)=N_{s},\quad S_{N_{s},N_{u}}\left(N_{s}^{\perp_{g}}\right)=N_{u}.
\]
Notice that the bundle of maps $S_{N_{s},N_{u}},\Psi$ is uniformly
bounded over $\Sigma$. The operator $\tilde{\mathrm{Op}}\left(S_{N_{s},N_{u}}\Psi^{-1}\right):\mathcal{S}\left(N_{s}\oplus N_{s}^{*}\right)\rightarrow\mathcal{S}\left(N\right)$
is defined as in (\ref{eq:def_op_tilde}) and we set
\end{rem}

\begin{equation}
\mathcal{B}_{N_{s},N_{u}}:=\tilde{\mathrm{Op}}\left(S_{N_{s},N_{u}}\Psi^{-1}\right)\mathcal{B}_{N_{s}}\quad:\mathcal{S}_{\beta}\left(N_{s}\right)\rightarrow\mathcal{S}\left(N\right).\label{eq:def_B_Ns_Nu}
\end{equation}

\begin{cBoxB}{}
\begin{lem}
We have
\begin{equation}
\tilde{\mathrm{Op}}\left(d\tilde{\phi}_{N}^{t}\right)=\mathcal{B}_{N_{s},N_{u}}e^{tX_{\mathcal{F}}}\mathcal{B}_{N_{s},N_{u}}^{\dagger}.\label{eq:B_exp_B}
\end{equation}
 
\end{lem}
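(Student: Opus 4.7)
The plan is to unwind the definition $\mathcal{B}_{N_{s},N_{u}}=\tilde{\mathrm{Op}}(S_{N_{s},N_{u}})\mathcal{B}_{N_{s}}$ on both sides of the desired identity and reduce everything to two facts proved in the appendix: the homomorphism property of the metaplectic quantization (namely $\tilde{\mathrm{Op}}(A)\tilde{\mathrm{Op}}(B)=\tilde{\mathrm{Op}}(AB)$ and $\tilde{\mathrm{Op}}(S)^{\dagger}=\tilde{\mathrm{Op}}(S^{-1})$ for a linear symplectic $S$), and the identification, via the vertical Bargmann transform, of the (unitarily normalised) pull-back by a linear expanding map on a Lagrangian with the metaplectic quantization of its natural symplectic extension.

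First I would substitute the definition of $\mathcal{B}_{N_{s},N_{u}}$ and get
\begin{equation*}
\mathcal{B}_{N_{s},N_{u}}e^{tX_{\mathcal{F}}}\mathcal{B}_{N_{s},N_{u}}^{\dagger}
=\tilde{\mathrm{Op}}(S_{N_{s},N_{u}})\bigl(\mathcal{B}_{N_{s}}e^{tX_{\mathcal{F}}}\mathcal{B}_{N_{s}}^{\dagger}\bigr)\tilde{\mathrm{Op}}(S_{N_{s},N_{u}})^{\dagger}.
\end{equation*}
Under the isomorphism $N=N_{s}\oplus N_{s}^{\perp_{g}}$ of (\ref{eq:H_H*}), the linear expanding map $d\tilde{\phi}_{N_{s}}^{t}:N_{s}\to N_{s}$ admits a canonical symplectic extension, namely the block-diagonal map
\begin{equation*}
\tilde{A}_{t}:=\begin{pmatrix} d\tilde{\phi}_{N_{s}}^{t} & 0 \\ 0 & \bigl((d\tilde{\phi}_{N_{s}}^{t})^{\dagger}\bigr)^{-1}\end{pmatrix},
\end{equation*}
which preserves both Lagrangians $N_{s}$ and $N_{s}^{\perp_{g}}$. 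By the linear results of Section \ref{sec:Linear-expanding-maps} together with the quantization formulas of Section \ref{sec:Bargmann-transform-and}, conjugation of $|\det(d\tilde{\phi}_{N_{s}}^{t})|^{-1/2}(d\tilde{\phi}_{N_{s}}^{t})^{-\circ}$ by $\mathcal{B}_{N_{s}}$ yields precisely $\tilde{\mathrm{Op}}(\tilde{A}_{t})$; combined with the definition (\ref{eq:def_XF-1}) of $e^{tX_{\mathcal{F}}}$ this gives
\begin{equation*}
\mathcal{B}_{N_{s}}e^{tX_{\mathcal{F}}}\mathcal{B}_{N_{s}}^{\dagger}=\tilde{\mathrm{Op}}(\tilde{A}_{t}).
\end{equation*}

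Next I would invoke the composition rule for the metaplectic quantization to obtain
\begin{equation*}
\mathcal{B}_{N_{s},N_{u}}e^{tX_{\mathcal{F}}}\mathcal{B}_{N_{s},N_{u}}^{\dagger}=\tilde{\mathrm{Op}}\bigl(S_{N_{s},N_{u}}\,\tilde{A}_{t}\,S_{N_{s},N_{u}}^{-1}\bigr),
\end{equation*}
and it remains to verify the purely linear identity $S_{N_{s},N_{u}}\tilde{A}_{t}S_{N_{s},N_{u}}^{-1}=d\tilde{\phi}_{N}^{t}$ on the symplectic space $N=N_{u}\oplus N_{s}$. By the defining property (\ref{eq:def_S}), $S_{N_{s},N_{u}}$ fixes $N_{s}$ and sends $N_{s}^{\perp_{g}}$ onto $N_{u}$, so conjugating a block-diagonal map on $N_{s}\oplus N_{s}^{\perp_{g}}$ produces a block-diagonal map on $N_{s}\oplus N_{u}$ with the same restriction $d\tilde{\phi}_{N_{s}}^{t}$ on $N_{s}$; since $d\tilde{\phi}_{N}^{t}$ is symplectic and preserves the transverse Lagrangian $N_{u}$ (by (\ref{eq:def_Nus}) and invariance of $N_{u},N_{s}$ under the flow), the $N_{u}$-block is forced to equal $d\tilde{\phi}_{N_{u}}^{t}=\bigl((d\tilde{\phi}_{N_{s}}^{t})^{*}\bigr)^{-1}$ by the duality induced by $\Omega$, matching the $N_{s}^{\perp_{g}}$-block of $\tilde{A}_{t}$ under $S_{N_{s},N_{u}}$.

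The main obstacle I anticipate is Step 2: establishing the Bargmann-conjugation formula $\mathcal{B}_{N_{s}}e^{tX_{\mathcal{F}}}\mathcal{B}_{N_{s}}^{\dagger}=\tilde{\mathrm{Op}}(\tilde{A}_{t})$ with the correct half-density normalisation. This is the non-trivial analytic content of the statement; the remaining steps reduce to bookkeeping with the metaplectic homomorphism and a finite-dimensional linear-algebraic verification of the symplectic conjugation formula.
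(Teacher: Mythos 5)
Your proposal is correct and follows essentially the same route as the paper: the paper's proof is a one-line application of Lemma \ref{lem:Let-,-be} with $\phi=\left(d\tilde{\phi}_{N_{s}}^{t}\right)^{-1}$, which is exactly your Step 2 (the Bargmann-conjugation identity $\mathcal{B}_{N_{s}}e^{tX_{\mathcal{F}}}\mathcal{B}_{N_{s}}^{\dagger}=\tilde{\mathrm{Op}}(\tilde{A}_{t})$ with the $\left|\mathrm{det}\right|^{-1/2}$ normalisation), combined with the definition (\ref{eq:def_XF-1}). Your extra unwinding of $\mathcal{B}_{N_{s},N_{u}}=\tilde{\mathrm{Op}}(S_{N_{s},N_{u}})\mathcal{B}_{N_{s}}$ and the linear verification that conjugation by $S_{N_{s},N_{u}}$ carries $\tilde{A}_{t}$ to $d\tilde{\phi}_{N}^{t}$ merely makes explicit the identification $N\cong N_{s}\oplus N_{s}^{*}$ that the paper's invocation of the lemma uses implicitly.
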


\end{cBoxB}

\begin{proof}
Let $\Phi_{N_{s}}:=\left(d\tilde{\phi}_{N_{s}}^{t}\right)\oplus\left(d\tilde{\phi}_{N_{s}}^{t}\right)^{*-1}:N_{s}\oplus N_{s}^{*}\rightarrow N_{s}\oplus N_{s}^{*}$.
From Lemma \ref{lem:Let-,-be} with the setting $\phi=\left(d\tilde{\phi}_{N_{s}}^{t}\right)^{-1}$,
we have
\begin{align}
\tilde{\mathrm{Op}}\left(\Phi_{N_{s}}\right) & \eq{\ref{eq:Op_tilde_PHI}}\left|\mathrm{det}\left(d\tilde{\phi}_{N_{s}}^{t}\right)\right|^{-1/2}\mathcal{B}_{N_{s}}\left(d\tilde{\phi}_{N_{s}}^{t}\right)^{-\circ}\mathcal{B}_{N_{s}}^{\dagger}\nonumber \\
 & \eq{\ref{eq:def_XF-1}}\mathcal{B}_{N_{s}}e^{tX_{\mathcal{F}}}\mathcal{B}_{N_{s}}^{\dagger}\quad:\mathcal{S}\left(N_{s}\oplus N_{s}^{*}\right)\rightarrow\mathcal{S}\left(N_{s}\oplus N_{s}^{*}\right)\label{eq:op_tilde_Phi_Ns}
\end{align}
By construction, we have the conjugation
\[
d\tilde{\phi}_{N}^{t}=\left(S_{N_{s},N_{u}}\Psi^{-1}\right)\circ\Phi_{N_{s}}\circ\left(S_{N_{s},N_{u}}\Psi^{-1}\right)^{-1}\quad:N\rightarrow N
\]
that gives
\begin{align}
\tilde{\mathrm{Op}}\left(d\tilde{\phi}_{N}^{t}\right) & \eq{\ref{eq:homom},\ref{eq:op_tilde_Phi_Ns}}\tilde{\mathrm{Op}}\left(S_{N_{s},N_{u}}\Psi^{-1}\right)\mathcal{B}_{N_{s}}e^{tX_{\mathcal{F}}}\mathcal{B}_{N_{s}}^{\dagger}\tilde{\mathrm{Op}}\left(S_{N_{s},N_{u}}\Psi^{-1}\right)^{\dagger}\nonumber \\
 & \eq{\ref{eq:def_B_Ns_Nu}}\mathcal{B}_{N_{s},N_{u}}e^{tX_{\mathcal{F}}}\mathcal{B}_{N_{s},N_{u}}^{\dagger}.\label{eq:Op_tilde_dPhi_N}
\end{align}
\end{proof}
We finish this section with the description of the operator $e^{tX}$
near the  set $\Sigma$ using the bundle $N_{s}$.

\begin{cBoxA}{}
\begin{defn}
\label{def:We-define-the-1}We define the operators

\begin{equation}
\mathcal{T}_{N_{s}}:=\mathcal{B}_{N_{s},N_{u}}^{\dagger}\mathcal{T}_{N}\eq{\ref{eq:def_T_N}}\mathcal{B}_{N_{s},N_{u}}^{\dagger}\chi_{\Sigma}^{\mu}\widetilde{\left(\exp_{N}\right)^{\circ}}\mathcal{T}\quad:C^{\infty}\left(M\right)\rightarrow C^{\beta}\left(\Sigma;\mathcal{F}\left(N_{s}\right)\right)\label{eq:def_T_Ns}
\end{equation}
\begin{equation}
\mathcal{T}_{N_{s}}^{\Delta}:=\mathcal{T}_{N}^{\Delta}\mathcal{B}_{N_{s},N_{u}}\eq{\ref{eq:def_T_N_Delta}}\mathcal{T}^{\dagger}\widetilde{\left(\exp_{N}^{-1}\right)^{\circ}}\chi_{\Sigma}^{\mu}\mathcal{B}_{N_{s},N_{u}}\quad:C^{\beta}\left(\Sigma;\mathcal{F}\left(N_{s}\right)\right)\rightarrow C^{\infty}\left(M\right).\label{eq:def_T_Ns_Delta}
\end{equation}
\end{defn}

\end{cBoxA}

\begin{cBoxB}{}
\begin{thm}
\label{Thm:approx_exptX_from_Ns}For any fixed $t\in\mathbb{R}$,
we have
\begin{equation}
e^{tX}\approx\mathcal{T}_{N_{s}}^{\Delta}\,\Upsilon_{t}^{1/2}e^{tX_{\mathcal{F}}}\mathcal{T}_{N_{s}}\label{eq:exp_tX_N-1}
\end{equation}
\end{thm}

\end{cBoxB}

Recall that the metaplectic correction $\Upsilon_{t}^{1/2}>0$ defined
in (\ref{eq:def_Upsilon_t}) is a function on $\Sigma$ considered
as a multiplication operator.
\begin{proof}
We write
\begin{align}
e^{tX} & \underset{(\ref{eq:exp_tX_N})}{\approx}\mathcal{T}_{N}^{\Delta}\,\Upsilon_{t}^{1/2}\tilde{\mathrm{Op}}\left(d\tilde{\phi}_{N}^{t}\right)\mathcal{T}_{N}\label{eq:exp_tX_Ns}\\
 & \eq{\ref{eq:B_exp_B},\ref{eq:def_T_Ns},\ref{eq:def_T_Ns_Delta}}\mathcal{T}_{N_{s}}^{\Delta}\,\Upsilon_{t}^{1/2}e^{tX_{\mathcal{F}}}\mathcal{T}_{N_{s}}.\nonumber 
\end{align}
\end{proof}

\subsection{The bundle map $e^{tX_{\mathcal{F}}}$ and sub-bundle $\mathcal{F}_{k}\left(N_{s}\right)$}

The expression (\ref{eq:exp_tX_N-1}) will be convenient to get some
properties of the dynamical operator $e^{tX}$ from the study of properties
of the operator $e^{tX_{\mathcal{F}}}$. For example, one important
property that we will first exploit is that $d\tilde{\phi}_{N_{s}}^{t}:N_{s}\rightarrow N_{s}$
is a linear map hence the operators $\left(d\tilde{\phi}_{N_{s}}^{t}\right)^{-\circ}$
and $e^{tX_{\mathcal{F}}}$ in (\ref{eq:def_XF-1}) preserve the finite
rank sub-bundle of homogeneous polynomials of degree $k\in\mathbb{N}$.
This property is responsible for the band structure of the spectrum.

\subsubsection{Taylor projectors $T_{k}$}

Since $d\tilde{\phi}_{N_{s}}^{t}:N_{s}\rightarrow N_{s}$ is a vector
bundle homomorphism, the push-forward operator $\left(d\tilde{\phi}_{N_{s}}^{t}\right)^{-\circ}$
in (\ref{eq:def_XF-1}) keeps invariant the space of homogeneous polynomial
functions of order $k\in\mathbb{N}$ denoted $\mathrm{Pol}_{k}\left(N_{s}\right)\subset\mathcal{S}'_{\beta}\left(N_{s}\right)$.
To express this property, we first consider the vector bundle $E_{s}\rightarrow M$
and the bundle map of finite rank ``Taylor projectors'' over $\mathrm{Id}$,
from Definition \ref{def:T_k}, denoted
\begin{equation}
T_{k}:\mathcal{S}_{\beta}\left(E_{s}\right)\rightarrow\mathrm{Pol}_{k}\left(E_{s}\right)\subset\mathcal{S}'_{\beta}\left(E_{s}\right)\label{eq:def_Tk}
\end{equation}
and
\begin{equation}
T_{\left[0,K\right]}:=\sum_{k=0}^{K}T_{k},\qquad T_{\geq\left(K+1\right)}=\mathrm{Id}-T_{\left[0,K\right]},\label{eq:def_TK+1}
\end{equation}
satisfying
\begin{equation}
T_{k}T_{k'}=\delta_{k=k'}T_{k}.\label{eq:delta_T}
\end{equation}
Using the projection map $d\pi:N_{s}\rightarrow E_{s}$ in (\ref{eq:dpi_N}),
we lift these operators to the vector bundle $\mathcal{S}_{\beta}\left(N_{s}\right)\rightarrow\Sigma$:
\begin{equation}
\tilde{T}_{k}:\mathcal{S}_{\beta}\left(N_{s}\right)\rightarrow\mathrm{Pol}_{k}\left(N_{s}\right)\subset\mathcal{S}'_{\beta}\left(N_{s}\right).\label{eq:def_Tk-1}
\end{equation}
We define $\tilde{T}_{\left[0,K\right]}$ and $\tilde{T}_{\geq\left(K+1\right)}$
similarly to (\ref{eq:def_TK+1}). We have (in the next equation and
after, a strict notation should be $\mathrm{Id}_{\left|\mathrm{det}N_{s}\right|^{-1/2}}\otimes\tilde{T}_{k}$
instead of $\tilde{T}_{k}$)
\begin{equation}
\left[e^{tX_{\mathcal{F}}},\tilde{T}_{k}\right]=0.\label{eq:commut_T}
\end{equation}

\begin{cBoxA}{}
\begin{defn}
We denote
\begin{equation}
\mathcal{F}_{k}\left(N_{s}\right):=\left|\mathrm{det}N_{s}\right|^{-1/2}\otimes\mathcal{\mathrm{Pol}}_{k}\left(N_{s}\right)\label{eq:def_F-1-1}
\end{equation}
that is a finite rank bundle over $\Sigma$. The space of sections
$C^{\beta}\left(\Sigma;\mathcal{F}_{k}\left(N_{s}\right)\right)$
is the image of the bundle map $\tilde{T}_{k}:C^{\beta}\left(\Sigma;\mathcal{F}\left(N_{s}\right)\right)\rightarrow C^{\beta}\left(\Sigma;\mathcal{F}_{k}\left(N_{s}\right)\right)$.

We denote $X_{\mathcal{F}_{k}}$ the generator $X_{\mathcal{F}}$
in (\ref{eq:def_X_Fk}) restricted to the invariant space of sections
$C^{\beta}\left(\Sigma;\mathcal{F}_{k}\left(N_{s}\right)\right)$.
\end{defn}

\end{cBoxA}

More generally, for $0\leq k_{1}\leq k_{2}$, we denote the direct
sum bundle
\begin{equation}
\mathcal{F}_{\left[k_{1},k_{2}\right]}\left(N_{s}\right):=\bigoplus_{k=k_{1}}^{k_{2}}\mathcal{F}_{k}\left(N_{s}\right),\label{eq:def_F_0K}
\end{equation}
and denote $X_{\mathcal{F}_{\left[k_{1},k_{2}\right]}}$the generator
in $C^{\beta}\left(\Sigma;\mathcal{F}_{\left[k_{1},k_{2}\right]}\left(N_{s}\right)\right)$.

\subsubsection{Spectrum of the derivation $X_{\mathcal{F}_{k}}$ in $L^{2}\left(\Sigma;\mathcal{F}_{k}\left(N_{s}\right)\right)$}

The operators $\left(e^{tX_{\mathcal{F}_{k}}}\right)_{t\in\mathbb{R}}$
in (\ref{eq:def_XF-1}) form a group of point-wise linear bundle maps
over $\tilde{\phi}^{t}:\Sigma\rightarrow\Sigma$ that preserves the
volume. Consequently all $L^{p}$ norms $\left\Vert e^{tX_{\mathcal{F}_{k}}}\right\Vert _{L^{p}\left(\Sigma;\mathcal{F}_{k}\right)}$
for $p\in\left[1,\infty\right]$ are equal, as it is for general multiplication
operators.
\begin{cBoxA}{}
\begin{defn}
\label{lem:For-any-,}For any $k\in\mathbb{N}$, let
\begin{equation}
\gamma_{k}^{\pm}:=\lim_{t\rightarrow\pm\infty}\log\left\Vert e^{tX_{\mathcal{F}_{k}}}\right\Vert _{L^{2}\left(\Sigma;\mathcal{F}_{k}\left(N_{s}\right)\right)}^{1/t}\label{eq:def_gamma_}
\end{equation}
\end{defn}

\end{cBoxA}

\begin{rem}
The values $\gamma_{k}^{\pm}$ defined in (\ref{eq:def_gamma_}) with
$L^{2}\left(\Sigma;\mathcal{F}_{k}\left(N_{s}\right)\right)$ coincide
with the Definition (\ref{eq:def_gamma_-2}) using $L^{\infty}\left(M;\mathcal{F}_{k}\left(E_{s}\right)\right)$.
\end{rem}

\begin{cBoxB}{}
\begin{lem}
\label{lem:We-have-the}We have the estimates $\forall\epsilon>0,\exists C_{\epsilon}>0,\forall t\geq0,$
\begin{equation}
\left\Vert e^{tX_{\mathcal{F}_{k}}}\right\Vert _{L^{2}\left(\Sigma;\mathcal{F}_{k}\left(N_{s}\right)\right)}\leq C_{\epsilon}e^{t\left(\gamma_{k}^{+}+\epsilon\right)},\qquad\left\Vert e^{-tX_{\mathcal{F}_{k}}}\right\Vert _{L^{2}\left(\Sigma;\mathcal{F}_{k}\left(N_{s}\right)\right)}\leq C_{\epsilon}e^{-t\left(\gamma_{k}^{-}-\epsilon\right)}.\label{eq:bound-2}
\end{equation}
Equivalently $\forall\epsilon>0,\exists C_{\epsilon}>0$,$\forall z\in\mathbb{C}$,
$\mathrm{Re}\left(z\right)>\gamma_{k}^{+}+\epsilon$ or $\mathrm{Re}\left(z\right)<\gamma_{k}^{-}-\epsilon$,
\begin{equation}
\left\Vert \left(z-X_{\mathcal{F}_{k}}\right)^{-1}\right\Vert _{L^{2}\left(\Sigma;\mathcal{F}_{k}\left(N_{s}\right)\right)}\leq C_{\epsilon}.\label{eq:bound-3}
\end{equation}
Consequently the spectrum of the operator $X_{\mathcal{F}_{k}}$ in
$L^{2}\left(\Sigma;\mathcal{F}_{k}\left(N_{s}\right)\right)$ is contained
in the vertical band
\begin{equation}
B_{k}:=\left\{ \mathrm{Re}\left(z\right)\in\left[\gamma_{k}^{-},\gamma_{k}^{+}\right]\right\} .\label{eq:spect}
\end{equation}
\end{lem}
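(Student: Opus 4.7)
The plan is to derive all three assertions from the identity \eqref{eq:equal_norms} together with the very definition \eqref{eq:def_gamma_} of $\gamma_k^{\pm}$, followed by a routine Laplace transform argument for the resolvent and a standard semigroup-theoretic implication for the spectrum.

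First I would observe that because $\bigl(e^{tX_{\mathcal{F}_k}}\bigr)_{t\in\mathbb{R}}$ acts fiber-wise over the flow $\tilde\phi^{t}\colon\Sigma\to\Sigma$ and the contact volume $d\varrho$ in \eqref{eq:dvol_E0*} is invariant, one has $\|e^{tX_{\mathcal{F}_k}}\|_{L^{2}}=\|e^{tX_{\mathcal{F}_k}}\|_{L^{\infty}}$ as recorded in \eqref{eq:equal_norms}. Moreover, by the group property, $t\mapsto \log\|e^{tX_{\mathcal{F}_k}}\|_{L^{\infty}}$ is subadditive on $[0,\infty)$, so Fekete's lemma guarantees the existence of the limits defining $\gamma_k^{+}$ and $\gamma_k^{-}$. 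From the definition, for every $\epsilon>0$ there is $T_\epsilon>0$ with $\|e^{tX_{\mathcal{F}_k}}\|_{L^{\infty}}\leq e^{t(\gamma_k^{+}+\epsilon)}$ for all $t\geq T_\epsilon$; on the compact interval $[0,T_\epsilon]$ the norm is bounded by a constant $M_\epsilon$ by strong continuity, so enlarging $C_\epsilon:=\max\bigl(M_\epsilon e^{T_\epsilon|\gamma_k^{+}+\epsilon|},\,1\bigr)$ yields the first inequality of \eqref{eq:bound-2} uniformly for $t\geq 0$. The same argument applied to the reversed-time semigroup $e^{-tX_{\mathcal{F}_k}}$ gives the second inequality.

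Next, to obtain \eqref{eq:bound-3}, I would use the Laplace transform representation
\begin{equation}
(z-X_{\mathcal{F}_k})^{-1}=\int_{0}^{\infty}e^{-zt}\,e^{tX_{\mathcal{F}_k}}\,dt\quad\text{when }\mathrm{Re}(z)>\gamma_k^{+},
\end{equation}
which converges in operator norm on $L^{2}$ thanks to \eqref{eq:bound-2}. If $\mathrm{Re}(z)>\gamma_k^{+}+\epsilon$ then, applying \eqref{eq:bound-2} with $\epsilon/2$ in place of $\epsilon$,
\begin{equation}
\bigl\|(z-X_{\mathcal{F}_k})^{-1}\bigr\|_{L^{2}}\leq C_{\epsilon/2}\int_{0}^{\infty}e^{-(\mathrm{Re}(z)-\gamma_k^{+}-\epsilon/2)t}\,dt\leq \frac{2C_{\epsilon/2}}{\epsilon}.
\end{equation}
The symmetric representation $(z-X_{\mathcal{F}_k})^{-1}=-\int_{0}^{\infty}e^{zs}\,e^{-sX_{\mathcal{F}_k}}\,ds$ valid for $\mathrm{Re}(z)<\gamma_k^{-}$ handles the other half-plane identically. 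This establishes the resolvent bound \eqref{eq:bound-3} with a constant depending only on $\epsilon$.

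Finally, the spectral localization \eqref{eq:spect} is an immediate consequence: since $(z-X_{\mathcal{F}_k})^{-1}$ exists as a bounded operator on $L^{2}(\Sigma;\mathcal{F}_k)$ for every $z$ with $\mathrm{Re}(z)\notin[\gamma_k^{-},\gamma_k^{+}]$, the spectrum of $X_{\mathcal{F}_k}$ is contained in the vertical band $B_k$. There is no real obstacle in this argument; the only point requiring some care is to ensure that the Laplace integrals converge in the operator norm on $L^{2}$ rather than merely on a dense subspace, and that is exactly why the identity \eqref{eq:equal_norms} is invoked at the outset so that the $L^{\infty}$-type definition of $\gamma_k^{\pm}$ transfers directly to the $L^{2}$ setting.
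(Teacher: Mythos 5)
Your proposal is correct and follows essentially the same route as the paper: the definition \eqref{eq:def_gamma_} gives the exponential bounds \eqref{eq:bound-2} (via the norm identity \eqref{eq:equal_norms}), the Laplace-transform representation of the resolvent then yields \eqref{eq:bound-3}, and the spectral localization \eqref{eq:spect} follows. The paper's proof is just a terser version of the same argument; your added remarks on subadditivity and on the reversed-time representation for $\mathrm{Re}(z)<\gamma_k^{-}-\epsilon$ are consistent with what the paper leaves implicit.
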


\end{cBoxB}

\begin{rem}
We will explain in a forthcoming paper that the spectrum of $X_{\mathcal{F}_{k}}$
in $L^{2}\left(\Sigma;\mathcal{F}_{k}\left(N_{s}\right)\right)$ consists
only of \href{https://en.wikipedia.org/wiki/Essential_spectrum}{essential spectrum}
that fills the whole band $B_{k}$.
\end{rem}

\begin{proof}
Eq.(\ref{eq:def_gamma_}) implies (\ref{eq:bound-2}) and also (\ref{eq:bound-3})
and claim for (\ref{eq:spect}) by writing the convergent expression
for $\mathrm{Re}\left(z\right)>\gamma_{k}^{+}+\epsilon$:
\[
\left(z-X_{\mathcal{F}_{k}}\right)^{-1}=\int_{0}^{\infty}e^{t\left(X_{\mathcal{F}_{k}}-z\right)}dt
\]
that gives, if $\epsilon'<\epsilon$, 
\begin{align*}
\left\Vert \left(z-X_{\mathcal{F}_{k}}\right)^{-1}\right\Vert  & \leq\int_{0}^{\infty}e^{-\mathrm{Re}\left(z\right)t}\left\Vert e^{tX_{\mathcal{F}_{k}}}\right\Vert dt\leq\int_{0}^{\infty}e^{-\mathrm{Re}\left(z\right)t}C_{\epsilon'}e^{t\left(\gamma_{k}^{+}+\epsilon'\right)}dt\leq C_{\epsilon}.
\end{align*}
Similarly for $\mathrm{Re}\left(z\right)<\gamma_{k}^{-}-\epsilon$,
we have $\left(z-X_{\mathcal{F}_{k}}\right)^{-1}=-\int_{-\infty}^{0}e^{t\left(X_{\mathcal{F}_{k}}-z\right)}dt$,
and get also (\ref{eq:bound-3}).
\end{proof}

\subsubsection{\label{subsec:Sobolev-space-}Sobolev space $\mathcal{H}_{\mathcal{W}}\left(N_{s}\right)$
and remainder term}

The operator $e^{tX_{\mathcal{F}}}$ acts in the infinite rank bundle
$\mathcal{F}\left(N_{s}\right)$ in (\ref{eq:def_F-1}). We explain
here how to ``extract'' the finite codimension part $e^{tX_{\mathcal{F}}}\tilde{T}_{\geq\left(K+1\right)}$.
We define a weight function $\mathcal{W}:N\rightarrow\mathbb{R}^{+}$
similar to $W:T^{*}M\rightarrow\mathbb{R}^{+}$ in (\ref{eq:def_W}),
with the same parameters $0<\gamma<1$, $R>0$, as follows. For $\rho\in\Sigma$,
$v=v_{u}+v_{s}\in N\left(\rho\right)=N_{u}\left(\rho\right)\oplus N_{s}\left(\rho\right)$
with $v_{u}\in N_{u}\left(\rho\right)$, $v_{s}\in N_{s}\left(\rho\right)$,
let
\begin{equation}
\mathcal{W}\left(v\right):=\frac{\left\langle h_{\gamma}\left(v\right)\left\Vert v_{s}\right\Vert _{g_{\rho}}\right\rangle ^{R}}{\left\langle h_{\gamma}\left(v\right)\left\Vert v_{u}\right\Vert _{g_{\rho}}\right\rangle ^{R}}\label{eq:def_W-1}
\end{equation}
with
\[
h_{\gamma}\left(v\right)=\left\langle \left\Vert v\right\Vert _{g_{\rho}}\right\rangle ^{-\gamma}.
\]
The metric $g$ on $T^{*}M$ is moderate \cite[Lemma 4.12]{faure_tsujii_Ruelle_resonances_density_2016}.
This implies that under the exponential map $\exp_{N}:N\rightarrow T^{*}M$
defined in (\ref{eq:exp_N}), the functions $\mathcal{W}$ and $W$
are equivalent in the neighborhood of the trapped set defined in (\ref{eq:def_Chi_mu}),
i.e. $\exists C\geq1,\exists\omega_{0}>0,\forall v\in N$ with $\left\Vert v\right\Vert _{g}\leq\left\langle \omega\right\rangle ^{\mu/2}$,
$\omega=\boldsymbol{\omega}\left(\pi\left(v\right)\right)$, $\left|\omega\right|\geq\omega_{0}$,
we have
\begin{align}
\frac{1}{C}\mathcal{W}\left(v\right)\leq W\left(\exp_{N}\left(v\right)\right)\leq C\mathcal{W}\left(v\right).\label{eq:equiv_W}
\end{align}

We define the Sobolev norm of $u\in\mathcal{S}_{\beta}\left(N_{s}\right)\eq{\ref{eq:notation}}C^{\beta}\left(\Sigma;\mathcal{S}\left(N_{s}\right)\right)$
by
\[
\left\Vert u\right\Vert _{\mathcal{H}_{\mathcal{W}}\left(N_{s}\right)}:=\left\Vert \mathcal{W}\mathcal{B}_{N_{s},N_{u}}u\right\Vert _{L^{2}\left(N\right)},
\]
and the anisotropic Sobolev space bundle by completion
\begin{align}
\mathcal{H}_{\mathcal{W}}\left(N_{s}\right) & :=\overline{\left\{ u\in\mathcal{S}_{\beta}\left(N_{s}\right),\quad\left\Vert u\right\Vert _{\mathcal{H}_{\mathcal{W}}\left(N_{s}\right)}<\infty\right\} }.\label{eq:def_HW_Ns}
\end{align}

\begin{cBoxB}{}
\begin{lem}
\label{lem:For--and}For any $\epsilon>0$, $K\in\mathbb{N}$, we
can choose $R$ in (\ref{eq:def_W-1}) large enough and $\omega_{0}>0$
so that $\exists C_{K,\epsilon}>0,\forall t\geq0$,
\begin{equation}
\left\Vert e^{tX_{\mathcal{F}}}\tilde{T}_{\geq\left(K+1\right)}\chi_{\left|\omega\right|\geq\omega_{0}}\right\Vert _{\mathcal{H}_{\mathcal{W}}\left(N_{s}\right)}\leq C_{K,\epsilon}e^{\left(\gamma_{K+1}^{+}+\epsilon\right)t}.\label{eq:norm_Lw_out}
\end{equation}
\end{lem}

\end{cBoxB}

\begin{proof}
Let $K\in\mathbb{N}$, $\epsilon>0$ and $t\geq0$. Recall that $e^{tX_{\mathcal{F}}}$
is a bundle map over $\tilde{\phi}^{t}:\Sigma\rightarrow\Sigma$.
Let $\rho\in\Sigma$ with $\boldsymbol{\omega}\left(\rho\right)\geq\omega_{0}$
and $m=\pi\left(\rho\right)\in M$. We first observe that the operator
\[
e^{tX_{\mathcal{F}}}\tilde{T}_{\geq\left(K+1\right)}:\mathcal{S}\left(N_{s}\left(\rho\right)\right)\rightarrow\mathcal{S}\left(N_{s}\left(\tilde{\phi}^{t}\left(\rho\right)\right)\right)
\]
is isomorphic to
\[
e^{tX_{\mathcal{F}}}\tilde{T}_{\geq\left(K+1\right)}:\mathcal{S}\left(E_{s}\left(m\right)\right)\rightarrow\mathcal{S}\left(E_{s}\left(\phi^{t}\left(m\right)\right)\right).
\]
with $m\in M$ on a compact space providing uniformity over $\Sigma$.
We apply Proposition \ref{prop:For-any-} in the appendix giving that
$\forall\epsilon>0,$$\exists C_{K,\epsilon}>0,\forall t\geq0$, $\forall u\in\mathcal{S}_{\beta}\left(N_{s}\right)$,
\begin{align*}
\left\Vert e^{tX_{\mathcal{F}}}\tilde{T}_{\geq\left(K+1\right)}\chi_{\left|\omega\right|\geq\omega_{0}}u\right\Vert _{\mathcal{H}_{\mathcal{W}}\left(N_{s}\right)}^{2} & =\int_{\Sigma}\left\Vert \left(e^{tX_{\mathcal{F}}}\tilde{T}_{\geq\left(K+1\right)}\chi_{\left|\omega\right|\geq\omega_{0}}u\right)\left(\rho\right)\right\Vert _{\mathcal{H}_{\mathcal{W}}\left(N_{s}\right)}^{2}d\rho\\
 & \ineq{\ref{eq:norm_Lw_out-1}}\left(C_{K,\epsilon}e^{\left(\gamma_{K+1}^{+}+\epsilon\right)t}\right)^{2}\int_{\Sigma}\left\Vert \left(\chi_{\left|\omega\right|\geq\omega_{0}}u\right)\left(\rho\right)\right\Vert _{\mathcal{H}_{\mathcal{W}}\left(N_{s}\right)}^{2}d\rho\\
 & \leq\left(C_{K,\epsilon}e^{\left(\gamma_{K+1}^{+}+\epsilon\right)t}\right)^{2}\left\Vert u\right\Vert _{\mathcal{H}_{\mathcal{W}}\left(N_{s}\right)}^{2},
\end{align*}
giving (\ref{eq:norm_Lw_out}).
\end{proof}
\begin{cBoxB}{}
\begin{prop}
For any $K\in\mathbb{N}$, we can choose $R$ in (\ref{eq:def_W-1})
large enough so that for every $k\in\left[0,K\right]$, there exists
$C_{k}$ such that
\begin{equation}
\left\Vert \tilde{T}_{k}\right\Vert _{\mathcal{H}_{\mathcal{W}}\left(N_{s}\right)}\leq C_{k}.\label{eq:T_k_bounded}
\end{equation}
\end{prop}

\end{cBoxB}

\begin{proof}
As in the proof of Lemma \ref{lem:For--and}, we use the fact that
we have a bundle map and at each point of the base, we use Lemma \ref{lem:Let-.-If}
that is uniform over $\Sigma$. 
\end{proof}

\subsection{Symbols and F.I.O. on $\Sigma$}

Recall $\mathcal{F}_{k}\left(E_{s}\right)\eq{\ref{eq:def_Fk}}\left|\mathrm{det}E_{s}\right|^{-1/2}\otimes\mathrm{Pol}_{k}\left(E_{s}\right)$
that is a $C^{\beta}$ continuous bundle over $M$ and more generally
$\mathcal{F}_{[k_{1},k_{2}]}\left(E_{s}\right)$ in (\ref{eq:def_F_0K}).

\begin{cBoxA}{}
\begin{defn}
\label{def:symbols}A \textbf{symbol} is a function on $M$ operator
valued in $L\left(\mathcal{F}_{\left[0,K\right]}\left(E_{s}\right)\right)$
(or $L\left(\mathcal{F}\left(E_{s}\right)\right)$):
\[
a\in C^{\beta}\left(M;L\left(\mathcal{F}_{\left[0,K\right]}\left(E_{s}\right)\right)\right).
\]
Using the projection map $\sqrt{\omega}d\pi:N_{s}\rightarrow E_{s}$
in (\ref{eq:dpi_N}) that is invertible, we obtain a lifted function
denoted
\[
\tilde{a}\in C^{\beta}\left(\Sigma;L\left(\mathcal{F}_{\left[0,K\right]}\left(N_{s}\right)\right)\right),
\]
considered below as a multiplicative operator.
\end{defn}

\end{cBoxA}

\begin{rem}
Here is precisely how we get $\tilde{a}$ from $a$: for $\rho\in\Sigma$,
$m=\pi\left(\rho\right)\in M$, one has $\left(\sqrt{\omega}d\pi_{\rho}\right)^{\circ}:\mathcal{S}\left(E_{s}\left(m\right)\right)\rightarrow\mathcal{S}\left(N_{s}\left(\rho\right)\right)$
and set
\[
\tilde{a}:\rho\in\Sigma\rightarrow\tilde{a}\left(\rho\right):=\left(\sqrt{\omega}d\pi_{\rho}\right)^{\circ}a\left(\sqrt{\omega}d\pi_{\rho}\right)^{-\circ}.
\]
\end{rem}

\begin{example}
Later we will use Definition (\ref{eq:def_Op_a}) only for symbols
of the form 
\begin{equation}
a=T_{k}\text{ (giving \ensuremath{\tilde{a}=\tilde{T}_{k}}),}\text{ or }a=T_{\left[0,K\right]}\text{ or }a\in\mathrm{End}\left(\mathcal{F}_{\left[0,K\right]}\right)\text{ or }a=T_{\geq\left(K+1\right)}\text{ or }a=\mathrm{Id}_{\mathcal{F}}.\label{eq:list_symbols}
\end{equation}
\end{example}

Recall the operators $\mathcal{T}_{N_{s}}:C^{\infty}\left(M\right)\rightarrow C^{\beta}\left(\Sigma;\mathcal{F}\left(N_{s}\right)\right)$
and $\mathcal{T}_{N_{s}}^{\Delta}:C^{\beta}\left(\Sigma;\mathcal{F}\left(N_{s}\right)\right)\rightarrow C^{\infty}\left(M\right)$
defined in (\ref{eq:def_T_Ns}) and (\ref{eq:def_T_Ns_Delta}).

\begin{cBoxA}{}
\begin{defn}[F.I.O., quantum operator]
\label{def:The-quantization-of-1}For a symbol $a\in C^{\beta}\left(M;L\left(\mathcal{F}\left(E_{s}\right)\right)\right)$
and $t\in\mathbb{R}$, the \textbf{quantization} of the symplectic
bundle map $e^{tX_{\mathcal{F}}}\tilde{a}:C^{\beta}\left(\Sigma;\mathcal{F}\left(N_{s}\right)\right)\rightarrow C^{\beta}\left(\Sigma;\mathcal{F}\left(N_{s}\right)\right)$
is the F.I.O. operator
\begin{align}
\mathrm{Op}_{\Sigma}\left(e^{tX_{\mathcal{F}}}\tilde{a}\right) & :=\mathcal{T}_{N_{s}}^{\Delta}\Upsilon_{t}^{1/2}e^{tX_{\mathcal{F}}}\tilde{a}\mathcal{T}_{N_{s}}\quad:C^{\infty}\left(M\right)\rightarrow C^{\infty}\left(M\right)\label{eq:def_Op_a}
\end{align}
\end{defn}

\end{cBoxA}

\begin{rem}
The term F.I.O. stands for \href{https://en.wikipedia.org/wiki/Fourier_integral_operator}{Fourier integral operator},
that is the usual name for this kind of operator. 
\end{rem}

Using the new notation (\ref{eq:def_Op_a}), previous Theorem \ref{Thm:approx_exptX_from_Ns}
can be rephrased as follows.

\begin{cBoxB}{}
\begin{thm}[\textbf{Approximation of the dynamics by a quantum operator}]
\textbf{\label{thm:Approximation-of-the}}For any $t\in\mathbb{R}$,
\begin{align}
e^{tX} & \approx\mathrm{Op}_{\Sigma}\left(e^{tX_{\mathcal{F}}}\right).\label{eq:expression}
\end{align}
\end{thm}

\end{cBoxB}

\subsubsection{Continuity theorem, boundness estimates with respect to time}
\begin{rem}
In the rest of this section we take any $K\in\mathbb{N}$ and fix
it. Then we choose a weight function $W$ with exponent $R$ such
that (\ref{eq:norm_Lw_out}) holds true and consider symbols $a\in C^{\beta}\left(M;L\left(\mathcal{F}_{\left[0,K\right]}\left(E_{s}\right)\right)\right)$.
\end{rem}

\begin{cBoxB}{}
\begin{thm}[continuity theorem for F.I.O.]
\label{thm:continuity_thm}There exists $C>0$, such that for any
symbol $a\in C^{\beta}\left(M;L\left(\mathcal{F}_{\left[0,K\right]}\left(E_{s}\right)\right)\right)$
and any $t\in\mathbb{R}$,
\begin{equation}
\left\Vert \mathrm{Op}_{\Sigma}\left(e^{tX_{\mathcal{F}}}\tilde{a}\right)\right\Vert _{\mathcal{H}_{W}\left(M\right)}\leq C\left\Vert e^{tX_{\mathcal{F}}}\tilde{a}\right\Vert _{\mathcal{H}_{\mathcal{W}}\left(\mathcal{F}\right)}.\label{eq:bound_OP_Sigma}
\end{equation}
\end{thm}

\end{cBoxB}

\begin{proof}
We first give an equivalent expression for the operator as we already
saw in Theorem \ref{Thm:For-any-}. This expression is more complicated
in appearance but more useful for the proof to work uniformly w.r.t.
time $t$.
\begin{lem}[Equivalent expression]
We have
\begin{equation}
\mathrm{Op}_{\Sigma}\left(e^{tX_{\mathcal{F}}}\tilde{a}\right)\eq{\ref{eq:def_Op_a}}\mathcal{T}_{N_{s}}^{\Delta}\Upsilon_{t}^{1/2}e^{tX_{\mathcal{F}}}\tilde{a}\mathcal{T}_{N_{s}}\approx\mathcal{T}_{\Sigma}^{\Delta}\left(\tilde{\mathrm{Op}}\left(d\tilde{\phi}_{T\Sigma}^{t}\right)\otimes\left(\mathcal{B}_{N_{s},N_{u}}\left(e^{tX_{\mathcal{F}}}\tilde{a}\right)\mathcal{B}_{N_{s},N_{u}}^{\dagger}\right)\right)\mathcal{T}_{\Sigma}\label{eq:equiv}
\end{equation}
\end{lem}

\begin{proof}
We have already obtained 
\begin{align*}
\mathcal{T}_{N_{s}}^{\Delta}\,\Upsilon_{t}^{1/2}e^{tX_{\mathcal{F}}}\mathcal{T}_{N_{s}} & \underset{(\ref{eq:exp_tX_N-1},\ref{eq:exp_tX_Lamda})}{\approx}\mathcal{T}_{\Sigma}^{\Delta}\tilde{\mathrm{Op}}\left(\left(d\tilde{\phi}^{t}\right)_{/\Sigma}\right)\mathcal{T}_{\Sigma}\\
 & \eq{\ref{eq:fact}}\mathcal{T}_{\Sigma}^{\Delta}\left(\tilde{\mathrm{Op}}\left(d\tilde{\phi}_{T\Sigma}^{t}\right)\otimes_{\Sigma}\tilde{\mathrm{Op}}\left(d\tilde{\phi}_{N}^{t}\right)\right)\mathcal{T}_{\Sigma}\\
 & \eq{\ref{eq:Op_tilde_dPhi_N}}\mathcal{T}_{\Sigma}^{\Delta}\left(\tilde{\mathrm{Op}}\left(d\tilde{\phi}_{T\Sigma}^{t}\right)\otimes\left(\mathcal{B}_{N_{s},N_{u}}e^{tX_{\mathcal{F}}}\mathcal{B}_{N_{s},N_{u}}^{\dagger}\right)\right)\mathcal{T}_{\Sigma}.
\end{align*}
By inserting the bundle map $\tilde{a}$ we obtain (\ref{eq:equiv}).
\end{proof}
We have defined $\mathcal{H}_{\mathcal{W}}\left(N_{s}\right)$ in
(\ref{eq:def_HW_Ns}). Similarly, we can define $\mathcal{H}_{\mathcal{W}}\left(N\right):=L^{2}\left(N;\mathcal{W}^{2}\right)$
with the weight $\mathcal{W}$ on $N$ in (\ref{eq:def_W-1}) and
define $\mathcal{H}_{\mathcal{W}}\left(T_{\Sigma}T^{*}M\right)$ by
the weight $\mathcal{W}$ extended trivially to $T_{\Sigma}T^{*}M\eq{\ref{eq:decomp_K_K0_N}}T\Sigma\oplus N$
by $\mathcal{W}\left(u,v\right)=\mathcal{W}\left(v\right)$ for any
$u\in T_{\rho}\Sigma$, $v\in N_{\rho}$.
\begin{lem}
\label{Bound_Check-1}$\exists C>0$,
\begin{equation}
\left\Vert \mathcal{T}_{\Sigma}\right\Vert _{\mathcal{H}_{W}\left(M\right)\rightarrow\mathcal{H}_{\mathcal{W}}\left(T_{\Sigma}T^{*}M\right)}\leq C,\qquad\left\Vert \mathcal{T}_{\Sigma}^{\Delta}\right\Vert _{\mathcal{H}_{\mathcal{W}}\left(T_{\Sigma}T^{*}M\right)\rightarrow\mathcal{H}_{W}\left(M\right)}\leq C.\label{eq:bound_T_check-1}
\end{equation}
\end{lem}

\begin{proof}
Recall the expression $\mathcal{T}_{\Sigma}\eq{\ref{eq:def_T_Sigma}}\chi_{\Sigma}^{\mu}r_{/\Sigma}\left(\widetilde{\exp^{\circ}}\right)\mathcal{T}$
where, from (\ref{eq:equiv_W}), the operator $\chi_{\Sigma}^{\mu}r_{/\Sigma}\left(\widetilde{\exp^{\circ}}\right)$
satisfies $\exists C>0$,
\[
\left\Vert \chi_{\Sigma}^{\mu}r_{/\Sigma}\left(\widetilde{\exp^{\circ}}\right)\right\Vert _{L^{2}\left(T^{*}M;W\right)\rightarrow\mathcal{H}_{\mathcal{W}}\left(T_{\Sigma}T^{*}M\right)}\leq C.
\]
Since $\mathcal{T}:\mathcal{H}_{W}\left(M\right)\rightarrow L^{2}\left(T^{*}M;W\right)$
is an isometry, we deduce the first bound in (\ref{eq:bound_T_check-1}).
Similarly from the expression $\mathcal{T}_{\Sigma}^{\Delta}\eq{\ref{eq:def_T_sigma_Delta}}\mathcal{T}^{\dagger}\widetilde{\left(\exp_{N}^{-1}\right)^{\circ}}r_{N}\chi_{\Sigma}^{\mu}$,
we deduce the second claim.
\end{proof}
\begin{lem}
We have
\begin{equation}
\left\Vert \tilde{\mathrm{Op}}\left(d\tilde{\phi}_{T\Sigma}^{t}\right)\otimes\left(\mathcal{B}_{N_{s},N_{u}}\left(e^{tX_{\mathcal{F}}}\tilde{a}\right)\mathcal{B}_{N_{s},N_{u}}^{\dagger}\right)\right\Vert _{\mathcal{H}_{\mathcal{W}}\left(T_{\Sigma}T^{*}M\right)}=\left\Vert e^{tX_{\mathcal{F}}}\tilde{a}\right\Vert _{\mathcal{H}_{\mathcal{W}}\left(\mathcal{F}\right)}.\label{eq:equiv2}
\end{equation}
\end{lem}

\begin{proof}
From the definition of the norms above, we have
\begin{align*}
\left\Vert \tilde{\mathrm{Op}}\left(d\tilde{\phi}_{T\Sigma}^{t}\right)\otimes\left(\mathcal{B}_{N_{s},N_{u}}\left(e^{tX_{\mathcal{F}}}\tilde{a}\right)\mathcal{B}_{N_{s},N_{u}}^{\dagger}\right)\right\Vert _{\mathcal{H}_{\mathcal{W}}\left(T_{\Sigma}T^{*}M\right)} & =\left\Vert \tilde{\mathrm{Op}}\left(d\tilde{\phi}_{T\Sigma}^{t}\right)\right\Vert _{L^{2}}\left\Vert \left(\mathcal{B}_{N_{s},N_{u}}\left(e^{tX_{\mathcal{F}}}\tilde{a}\right)\mathcal{B}_{N_{s},N_{u}}^{\dagger}\right)\right\Vert _{\mathcal{H}_{\mathcal{W}}\left(N\right)}\\
 & =\left\Vert e^{tX_{\mathcal{F}}}\tilde{a}\right\Vert _{\mathcal{H}_{\mathcal{W}}\left(\mathcal{F}\right)}
\end{align*}
\end{proof}
We come back to the proof of Theorem \ref{thm:continuity_thm}. Then
\[
\left\Vert \mathrm{Op}_{\Sigma}\left(e^{tX_{\mathcal{F}}}\tilde{a}\right)\right\Vert _{\mathcal{H}_{W}\left(M\right)}\ineq{\ref{eq:equiv},\ref{eq:bound_T_check-1},\ref{eq:equiv2}}C\left\Vert e^{tX_{\mathcal{F}}}\tilde{a}\right\Vert _{\mathcal{H}_{\mathcal{W}}\left(\mathcal{F}\right)}
\]
giving (\ref{eq:bound_OP_Sigma}).
\end{proof}
Here are some example of Theorem \ref{thm:continuity_thm} that we
will use later.

\begin{cBoxB}{}
\begin{cor}
\label{Thm:Boundness}We have $\forall\epsilon>0,\exists C_{K,\epsilon}>0$,$\forall t\geq0$,
(positive time)
\begin{equation}
\left\Vert \mathrm{Op}_{\Sigma}\left(e^{tX_{\mathcal{F}}}\tilde{T}_{\geq\left(K+1\right)}\right)\right\Vert _{\mathcal{H}_{W}\left(M\right)}\leq C_{K,\epsilon}e^{t\left(\gamma_{K+1}^{+}+\epsilon\right)}\label{eq:bound-1}
\end{equation}
and for any $0\leq k_{1}\leq k_{2}\leq K$, $\forall\epsilon>0,\exists C_{K,\epsilon}>0$,$\forall t\geq0$,
\begin{equation}
\left\Vert \mathrm{Op}_{\Sigma}\left(e^{tX_{\mathcal{F}}}\tilde{T}_{\left[k_{1},k_{2}\right]}\right)\right\Vert _{\mathcal{H}_{W}\left(M\right)}\leq C_{K,\epsilon}e^{t\left(\gamma_{k_{1}}^{+}+\epsilon\right)}.\label{eq:bound-4-1}
\end{equation}
Also $\forall t\leq0$, (negative time)
\begin{equation}
\left\Vert \mathrm{Op}_{\Sigma}\left(e^{tX_{\mathcal{F}}}\tilde{T}_{\left[k_{1},k_{2}\right]}\right)\right\Vert _{\mathcal{H}_{W}\left(M\right)}\leq C_{K,\epsilon}e^{t\left(\gamma_{k_{2}}^{-}-\epsilon\right)}.\label{eq:bound-4}
\end{equation}
\end{cor}

\end{cBoxB}

\begin{proof}
To get (\ref{eq:bound-1}) we have that $\exists C,\forall\epsilon>0,\exists C_{K,\epsilon}>0$,$\forall t\geq0$,

\begin{align*}
\left\Vert \mathrm{Op}_{\Sigma}\left(e^{tX_{\mathcal{F}}}\tilde{T}_{\geq\left(K+1\right)}\right)\right\Vert _{\mathcal{H}_{W}\left(M\right)} & \ineq{\ref{eq:bound_OP_Sigma}}C\left\Vert e^{tX_{\mathcal{F}}}\tilde{T}_{\geq\left(K+1\right)}\right\Vert _{\mathcal{H}_{\mathcal{W}}\left(\mathcal{F}\right)}\\
 & \ineq{\ref{eq:norm_Lw_out}}C_{K,\epsilon}e^{\left(\gamma_{K+1}^{+}+\epsilon\right)t}.
\end{align*}
To get (\ref{eq:bound-4-1}), in the equations below we will use ({*})
that $\gamma_{k+1}^{+}\leq\gamma_{k}^{+},\forall k\in\mathbb{N}$.
We have that $\exists C,\forall\epsilon>0,\exists C_{K},C_{K,\epsilon}>0$,$\forall t\geq0$,

\begin{align*}
\left\Vert \mathrm{Op}_{\Sigma}\left(e^{tX_{\mathcal{F}}}\tilde{T}_{\left[k_{1},k_{2}\right]}\right)\right\Vert _{\mathcal{H}_{W}\left(M\right)} & \ineq{\ref{eq:bound_OP_Sigma}}C\left\Vert e^{tX_{\mathcal{F}}}\tilde{T}_{\left[k_{1},k_{2}\right]}\right\Vert _{\mathcal{H}_{\mathcal{W}}\left(\mathcal{F}\right)}\leq C\left\Vert e^{tX_{\mathcal{F}_{\left[k_{1},k_{2}\right]}}}\right\Vert _{L^{2}\left(\Sigma;\mathcal{F}_{\left[k_{1},k_{2}\right]}\right)}\\
 & \ineq{\ref{eq:bound-2}}C_{K,\epsilon}\max_{k\in\left[k_{1},k_{2}\right]}e^{t\left(\gamma_{k}^{+}+\epsilon\right)}\eq *C_{K,\epsilon}e^{t\left(\gamma_{k_{1}}^{+}+\epsilon\right)}
\end{align*}
Finally to get (\ref{eq:bound-4}), we repeat the same first lines
as above and write $\forall\epsilon>0,\exists C_{K,\epsilon}>0$,$\forall t\geq0$,
\begin{align*}
\left\Vert \mathrm{Op}_{\Sigma}\left(e^{-tX_{\mathcal{F}}}\tilde{T}_{\left[k_{1},k_{2}\right]}\right)\right\Vert _{\mathcal{H}_{W}\left(M\right)}\leq & C\left\Vert e^{-tX_{\mathcal{F}_{\left[k_{1},k_{2}\right]}}}\right\Vert _{L^{2}\left(\Sigma;\mathcal{F}_{\left[k_{1},k_{2}\right]}\right)}\\
\underset{(\ref{eq:bound-2})}{\leq}C_{K,\epsilon}\max_{k\in\left[k_{1},k_{2}\right]}e^{-t\left(\gamma_{k}^{-}-\epsilon\right)}\eq * & C_{K,\epsilon}e^{-t\left(\gamma_{k_{2}}^{-}-\epsilon\right)}.
\end{align*}
\end{proof}
\begin{rem}
In particular, taking $t=0$ we get for any $k\in\left[0,K\right]$,
\begin{equation}
\left\Vert \mathrm{Op}_{\Sigma}\left(\tilde{T}_{k}\right)\right\Vert _{\mathcal{H}_{W}\left(M\right)}\underset{(\ref{eq:bound-4-1})}{\leq}C_{K}.\label{eq:-3}
\end{equation}
\end{rem}

\subsubsection{Composition formula}

Recall $\mathcal{F}_{\left[0,K\right]}$ defined in (\ref{eq:def_F_0K}).
Recall that the operators $\mathcal{T}_{N_{s}},\mathcal{T}_{N_{s}}^{\Delta}$
in Definition \ref{def:We-define-the} depends on the exponent $0<\mu<1$
introduced in the cut-off (\ref{eq:def_Chi_mu}). Consequently the
quantization $\mathrm{Op}_{\Sigma}\left(.\right)$ defined in (\ref{eq:def_Op_a})
also depends on $\mu$. Recall that $\beta$ is the Hölder exponent
of $E_{u},E_{s}$ in (\ref{eq:Holder_exp}).

\begin{cBoxB}{}
\begin{thm}[\textbf{Composition formula}]
\textbf{\label{thm:Composition-formula-For}}Take $\mu<\beta$. For
any symbols $a,b\in C^{\beta}\left(M;L\left(\mathcal{F}_{\left[0,K\right]}\left(E_{s}\right)\right)\right)$,
any $t,t'\in\mathbb{R}$, we have
\begin{equation}
\mathrm{Op}_{\Sigma}\left(e^{tX_{\mathcal{F}}}\tilde{a}\right)\mathrm{Op}_{\Sigma}\left(e^{t'X_{\mathcal{F}}}\tilde{b}\right)\approx\mathrm{Op}_{\Sigma}\left(e^{tX_{\mathcal{F}}}\tilde{a}e^{t'X_{\mathcal{F}}}\tilde{b}\right).\label{eq:compos_operators}
\end{equation}
(where the right hand side term has the factor $\Upsilon_{t+t'}^{1/2}$
in its Definition (\ref{eq:def_Op_a})).
\end{thm}

\end{cBoxB}

\begin{proof}
For convenience, we first naturally extend the Definition \ref{def:For-two-family}
of the equivalence $\approx$ in (\ref{eq:def_approx}) to operators
on $\mathcal{S}\left(T^{*}M\right)$ and also $C^{\beta}\left(\Sigma;\mathcal{F}\right)$
(instead of $C^{\infty}\left(M\right)$ only). We define
\begin{align}
\check{\mathcal{T}}_{N_{s}} & :=\mathcal{B}_{N_{s},N_{u}}^{\dagger}\chi_{\Sigma}^{\mu}\widetilde{\left(\exp_{N}\right)^{\circ}}\quad:\mathcal{S}\left(T^{*}M\right)\rightarrow C^{\beta}\left(\Sigma;\mathcal{F}\right),\label{eq:def_T_check_Ns}\\
\check{\mathcal{T}}_{N_{s}}^{\Delta} & :=\widetilde{\left(\exp_{N}^{-1}\right)^{\circ}}\chi_{\Sigma}^{\mu}\mathcal{B}_{N_{s},N_{u}}\quad:C^{\beta}\left(\Sigma;\mathcal{F}\right)\rightarrow C\left(T^{*}M\right).\nonumber 
\end{align}
These operators enter in the following expressions
\begin{equation}
\mathcal{T}_{N_{s}}\eq{\ref{eq:def_T_Ns}}\check{\mathcal{T}}_{N_{s}}\mathcal{T},\qquad\mathcal{T}_{N_{s}}^{\Delta}\eq{\ref{eq:def_T_Ns_Delta}}\mathcal{T}^{\dagger}\check{\mathcal{T}}_{N_{s}}^{\Delta}.\label{eq:_expressions}
\end{equation}
For an operators $A:C\left(\Sigma;\mathcal{F}\right)\rightarrow C\left(\Sigma;\mathcal{F}\right)$,
we denote 
\begin{equation}
\tilde{\tilde{A}}:=\check{\mathcal{T}}_{N_{s}}^{\Delta}A\check{\mathcal{T}}_{N_{s}}:\mathcal{S}\left(T^{*}M\right)\rightarrow\mathcal{S}'\left(T^{*}M\right)\label{eq:def_tilde2}
\end{equation}
.
\begin{defn}
\label{def:For-two-family-1}For two operators $A,B:\mathcal{S}\left(T^{*}M\right)\rightarrow\mathcal{S}'\left(T^{*}M\right)$
we write
\begin{equation}
A\approx B\label{eq:def_approx-1}
\end{equation}
if there exists $m<0$, $t\in\mathbb{R}$, for any $N>0$, any $\sigma>0$,
there exists a constant $C_{\sigma,N,t}>0$ such that for any $\rho,\rho'\in T^{*}M$,
\[
\left|\langle\delta_{\rho'}|\chi_{\Sigma,\sigma}\left(A-B\right)\chi_{\Sigma,\sigma}\delta_{\rho}\rangle\right|\leq C_{\sigma,N,t}\left\langle \mathrm{dist}_{g}\left(\rho',\tilde{\phi}^{t}\left(\rho\right)\right)\right\rangle ^{-N}\left\langle \left|\rho\right|\right\rangle ^{m}.
\]
For two operators $A,B:C\left(\Sigma;\mathcal{F}\right)\rightarrow C\left(\Sigma;\mathcal{F}\right)$
we write $A\approx B$ if $\tilde{\tilde{A}}\approx\tilde{\tilde{B}}$. 
\end{defn}

Using operators $\mathcal{T}_{N_{s}},\mathcal{T}_{N_{s}}^{\Delta}$
defined in (\ref{eq:def_T_Ns},\ref{eq:def_T_Ns_Delta}), let
\begin{equation}
\mathcal{P}_{N_{s}}:=\mathcal{T}_{N_{s}}\mathcal{T}_{N_{s}}^{\Delta}\quad:C^{\beta}\left(\Sigma;\mathcal{F}\right)\rightarrow C^{\beta}\left(\Sigma;\mathcal{F}\right).\label{eq:def_P_Ns}
\end{equation}
\begin{lem}
We have
\begin{equation}
\mathcal{P}_{N_{s}}^{2}\approx\mathcal{P}_{N_{s}},\qquad\mathcal{P}_{N_{s}}\mathcal{T}_{N_{s}}\approx\mathcal{T}_{N_{s}}.\label{eq:equiv6}
\end{equation}
\end{lem}

\begin{proof}
Taking $t=0$ in (\ref{eq:exp_tX_N-1}) gives
\begin{equation}
\mathcal{T}_{N_{s}}^{\Delta}\mathcal{T}_{N_{s}}\approx\mathrm{Id}.\label{eq:equiv_Id}
\end{equation}
Then
\[
\mathcal{P}_{N_{s}}^{2}\eq{\ref{eq:def_P_Ns}}\mathcal{T}_{N_{s}}\left(\mathcal{T}_{N_{s}}^{\Delta}\mathcal{T}_{N_{s}}\right)\mathcal{T}_{N_{s}}^{\Delta}\underset{(\ref{eq:equiv_Id})}{\approx}\mathcal{T}_{N_{s}}\mathcal{T}_{N_{s}}^{\Delta}\eq{\ref{eq:def_P_Ns}}\mathcal{P}_{N_{s}}.
\]
\[
\mathcal{P}_{N_{s}}\mathcal{T}_{N_{s}}\eq{\ref{eq:def_P_Ns}}\mathcal{T}_{N_{s}}\left(\mathcal{T}_{N_{s}}^{\Delta}\mathcal{T}_{N_{s}}\right)\underset{(\ref{eq:equiv_Id})}{\approx}\mathcal{T}_{N_{s}}.
\]
\end{proof}
We show the following Lemma (similar to \cite[Lemma 4.47]{faure_tsujii_Ruelle_resonances_density_2016})
\begin{lem}[Basic Lemma]
For any symbol $a\in C^{\beta}\left(M;L\left(\mathcal{F}_{\left[0,K\right]}\left(E_{s}\right)\right)\right)$,
we have for $\mu<\beta$ that
\begin{equation}
\left[\tilde{a},\mathcal{P}_{N_{s}}\right]\approx0.\label{eq:commut-a_P_Ns}
\end{equation}
\end{lem}

\begin{proof}
Observe that
\begin{equation}
\check{\mathcal{T}}_{N_{s}}\check{\mathcal{T}}_{N_{s}}^{\Delta}\underset{(\ref{eq:def_T_check_Ns})}{\approx}\mathrm{Id},\qquad\check{\mathcal{T}}_{N_{s}}^{\Delta}\check{\mathcal{T}}_{N_{s}}\underset{(\ref{eq:def_T_check_Ns})}{\approx}\mathrm{\mathcal{P}}_{N}.\label{eq:approx_Id}
\end{equation}
The notation $\approx$ has been defined for operators. This definition
can been understood also for Schwartz kernel of operators lifted on
$T^{*}M$ and we use it below. We have $\forall N\geq0,\exists C_{N}>0,\forall\rho,\rho'\in T^{*}M$,
\begin{align}
\left|\langle\delta_{\rho'}|\tilde{\tilde{\mathcal{P}}}_{N_{s}}\delta_{\rho}\rangle\right| & \eq{\ref{eq:def_tilde2}}\left|\langle\delta_{\rho'}|\check{\mathcal{T}}_{N_{s}}^{\Delta}\mathcal{P}_{N_{s}}\check{\mathcal{T}}_{N_{s}}\delta_{\rho}\rangle\right|\eq{\ref{eq:def_P_Ns},\ref{eq:_expressions}}\left|\langle\delta_{\rho'}|\check{\mathcal{T}}_{N_{s}}^{\Delta}\check{\mathcal{T}}_{N_{s}}\mathcal{T}\mathcal{T}^{\dagger}\check{\mathcal{T}}_{N_{s}}^{\Delta}\check{\mathcal{T}}_{N_{s}}\delta_{\rho}\rangle\right|\label{eq:kernel_P_tild2}\\
 & \underset{(\ref{eq:approx_Id},\ref{eq:def_P_wave_packet_projector})}{\approx}\left|\langle\delta_{\rho'}|\mathcal{P}\delta_{\rho}\rangle\right|\ineq{\ref{eq:estimate_Bergman_kernel}}C_{N}\left\langle \mathrm{dist}_{g}\left(\rho',\rho\right)\right\rangle ^{-N}.\nonumber 
\end{align}
Then
\begin{align}
\left|\langle\delta_{\rho'}|\check{\mathcal{T}}_{N_{s}}^{\Delta}\left[\tilde{a},\mathcal{P}_{N_{s}}\right]\check{\mathcal{T}}_{N_{s}}\delta_{\rho}\rangle\right| & \underset{(\ref{eq:approx_Id},\ref{eq:def_tilde2})}{\approx}\left|\langle\delta_{\rho'}|\left[\tilde{\tilde{a}},\tilde{\tilde{\mathcal{P}}}_{N_{s}}\right]\delta_{\rho}\rangle\right|\label{eq:commut_a_PNs}\\
 & =\left|\int_{\rho''\in T^{*}M}\langle\delta_{\rho'}|\tilde{\tilde{a}}\delta_{\rho''}\rangle\langle\delta_{\rho''}|\tilde{\tilde{\mathcal{P}}}_{N_{s}}\delta_{\rho}\rangle-\langle\delta_{\rho'}|\tilde{\tilde{\mathcal{P}}}_{N_{s}}\delta_{\rho''}\rangle\langle\delta_{\rho''}|\tilde{\tilde{a}}\delta_{\rho}\rangle\right|.\nonumber 
\end{align}
The bundle $E_{s}$ is Hölder continuous with some exponent $\beta>0$
and consequently $\tilde{a}$ satisfies the following property (called
slowly varying property in \cite[def 4.40]{faure_tsujii_Ruelle_resonances_density_2016}),
using any local trivialization, for any $\varrho,\varrho'\in\Sigma$,
setting $x=\pi\left(\varrho\right),x'=\pi\left(\varrho'\right)\in M$,
we assume $\omega=\boldsymbol{\omega}\left(\varrho\right)\asymp\boldsymbol{\omega}\left(\varrho'\right)$,
we use $\left|x'-x\right|\underset{(\ref{eq:metric_g_tilde_in_coordinates})}{\lesssim}\left\langle \omega\right\rangle ^{-1/2}\mathrm{dist}_{g}\left(\varrho',\varrho\right)$,
and we have
\begin{equation}
\left\Vert \tilde{a}\left(\varrho'\right)-\tilde{a}\left(\varrho\right)\right\Vert \asymp\left\Vert a\left(x'\right)-a\left(x\right)\right\Vert \lesssim\left|x'-x\right|^{\beta}\asymp\left\langle \omega\right\rangle ^{-\beta/2}\left(\mathrm{dist}_{g}\left(\varrho',\varrho\right)\right)^{\beta}.\label{eq:ineq2}
\end{equation}
Hence for $\rho,\rho'\in T^{*}M$, due to the cutoff $\chi_{\Sigma}^{\mu}$
in (\ref{eq:def_Chi_mu-1}) at distance $\left\langle \omega\right\rangle ^{\mu/2}$
from $\Sigma$, writing $\rho\equiv\left(\varrho,v\right)\in T^{*}M$
with $\varrho\in\Sigma$, $v\in N_{\varrho}$, $\exp_{N}\left(v\right)=\rho,$
we have for $\mathrm{dist}_{g}\left(\rho',\rho\right)\lesssim1$ and
$\mathrm{dist}_{g}\left(\rho'',\rho\right)\lesssim1$ that 
\begin{equation}
\left|\langle\delta_{\rho'}|\tilde{\tilde{a}}\delta_{\rho''}\rangle-\langle\delta_{\rho''}|\tilde{\tilde{a}}\delta_{\rho}\rangle\right|\leq\left\langle \omega\right\rangle ^{\mu/2}\left|\tilde{a}\left(\varrho'\right)-\tilde{a}\left(\varrho\right)\right|\ineq{\ref{eq:ineq2}}\left\langle \omega\right\rangle ^{\mu/2}\left\langle \omega\right\rangle ^{-\beta/2}.\label{eq:Holder_irreg}
\end{equation}
We deduce
\[
\left|\langle\delta_{\rho'}|\check{\mathcal{T}}_{N_{s}}^{\Delta}\left[\tilde{a},\mathcal{P}_{N_{s}}\right]\check{\mathcal{T}}_{N_{s}}\delta_{\rho}\rangle\right|\ineq{\ref{eq:commut-a_P_Ns},\ref{eq:kernel_P_tild2}}C\left\langle \omega\right\rangle ^{-\left(\beta-\mu\right)/2}C_{N}\left\langle \mathrm{dist}_{g}\left(\rho',\rho\right)\right\rangle ^{-N}.
\]
From Definition \ref{def:For-two-family-1}, we get that $\left[\tilde{a},\mathcal{P}_{N_{s}}\right]\approx0$
if $\beta>\mu$.
\end{proof}
Observe that
\begin{equation}
\mathrm{Op}_{\Sigma}\left(e^{tX_{\mathcal{F}}}\right)\mathrm{Op}_{\Sigma}\left(e^{t'X_{\mathcal{F}}}\right)\underset{(\ref{eq:expression})}{\approx}e^{tX_{\mathcal{F}}}e^{t'X_{\mathcal{F}}}=e^{\left(t+t'\right)X_{\mathcal{F}}}\underset{(\ref{eq:expression})}{\approx}\mathrm{Op}_{\Sigma}\left(e^{\left(t+t'\right)X_{\mathcal{F}}}\right).\label{eq:equiv5}
\end{equation}
Also
\begin{equation}
\mathrm{Op}_{\Sigma}\left(\tilde{a}\right)\mathrm{Op}_{\Sigma}\left(\tilde{b}\right)\eq{\ref{eq:def_Op_a},\ref{eq:def_P_Ns}}\mathcal{T}_{N_{s}}^{\Delta}\tilde{a}\mathcal{P}_{N_{s}}\tilde{b}\mathcal{T}_{N_{s}}\underset{(\ref{eq:commut-a_P_Ns},\ref{eq:equiv6})}{\approx}\mathcal{T}_{N_{s}}^{\Delta}\tilde{a}\tilde{b}\mathcal{T}_{N_{s}}=\mathrm{Op}_{\Sigma}\left(\tilde{a}\tilde{b}\right).\label{eq:equiv4}
\end{equation}
and similarly
\begin{equation}
\mathrm{Op}_{\Sigma}\left(e^{tX_{\mathcal{F}}}\right)\mathrm{Op}_{\Sigma}\left(\tilde{a}\right)\approx\mathrm{Op}_{\Sigma}\left(e^{tX_{\mathcal{F}}}\tilde{a}\right)\label{eq:equiv1}
\end{equation}
\begin{equation}
\mathrm{Op}_{\Sigma}\left(\tilde{a}\right)\mathrm{Op}_{\Sigma}\left(e^{tX_{\mathcal{F}}}\right)\approx\mathrm{Op}_{\Sigma}\left(\tilde{a}e^{tX_{\mathcal{F}}}\right).\label{eq:equiv3}
\end{equation}
From these relations, we deduce (\ref{eq:compos_operators}) as follows.
We set $\tilde{a}'=e^{-t'X_{\mathcal{F}}}\tilde{a}e^{t'X_{\mathcal{F}}}$.
\begin{align*}
\mathrm{Op}_{\Sigma}\left(e^{tX_{\mathcal{F}}}\tilde{a}\right)\mathrm{Op}_{\Sigma}\left(e^{t'X_{\mathcal{F}}}\tilde{b}\right) & \underset{(\ref{eq:equiv1},\ref{eq:equiv3},\ref{eq:equiv4})}{\approx}\mathrm{Op}_{\Sigma}\left(e^{tX_{\mathcal{F}}}\right)\mathrm{Op}_{\Sigma}\left(e^{t'X_{\mathcal{F}}}\right)\mathrm{Op}_{\Sigma}\left(\tilde{a}'\tilde{b}\right)\\
 & \underset{(\ref{eq:equiv5})}{\approx}\mathrm{Op}_{\Sigma}\left(e^{tX_{\mathcal{F}}}e^{t'X_{\mathcal{F}}}\right)\mathrm{Op}_{\Sigma}\left(\tilde{a}'\tilde{b}\right)\underset{(\ref{eq:equiv1},\ref{eq:equiv3},\ref{eq:equiv4})}{\approx}\mathrm{Op}_{\Sigma}\left(e^{tX_{\mathcal{F}}}\tilde{a}e^{t'X_{\mathcal{F}}}\tilde{b}\right).
\end{align*}
\end{proof}
The next corollary follows as special cases of (\ref{eq:compos_operators}).

\begin{cBoxB}{}
\begin{cor}
\label{cor:As-particular-cases}Take $\mu<\beta$. We have
\begin{itemize}
\item \textbf{Egorov formula:} for a symbol $a\in C^{\beta}\left(M;\mathrm{End}\left(\mathcal{F}_{\left[0,K\right]}\left(E_{s}\right)\right)\right),$
$t\in\mathbb{R}$,
\begin{equation}
\mathrm{Op}_{\Sigma}\left(e^{tX_{\mathcal{F}}}\right)\mathrm{Op}_{\Sigma}\left(\tilde{a}\right)\approx\mathrm{Op}_{\Sigma}\left(e^{t\mathrm{ad}_{X_{\mathcal{F}}}}\tilde{a}\right)\mathrm{Op}_{\Sigma}\left(e^{tX_{\mathcal{F}}}\right).\label{eq:Egorov_formula}
\end{equation}
with the natural action $e^{t\mathrm{ad}_{X_{\mathcal{F}}}}\tilde{a}=e^{t\left[X_{\mathcal{F}},.\right]}\tilde{a}:=e^{tX_{\mathcal{F}}}\tilde{a}e^{-tX_{\mathcal{F}}}$.
\item \textbf{Approximate projectors}: for any $k,k'\leq K$,
\begin{equation}
\mathrm{Op}_{\Sigma}\left(\tilde{T}_{k}\right)\mathrm{Op}_{\Sigma}\left(\tilde{T}_{k'}\right)\approx\delta_{k=k'}\mathrm{Op}_{\Sigma}\left(\tilde{T}_{k}\right).\label{eq:-2}
\end{equation}
\item \textbf{Decomposition of the dynamics in components}: for any $k,k'\leq K$,
\begin{equation}
\mathrm{Op}_{\Sigma}\left(e^{tX_{\mathcal{F}}}\tilde{T}_{k}\right)\mathrm{Op}_{\Sigma}\left(e^{t'X_{\mathcal{F}}}\tilde{T}_{k'}\right)\approx\delta_{k=k'}\mathrm{Op}_{\Sigma}\left(e^{\left(t+t'\right)X_{\mathcal{F}}}\tilde{T}_{k}\right).\label{eq:alg}
\end{equation}
\end{itemize}
\end{cor}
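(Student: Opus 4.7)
The plan is to derive all three identities directly from the composition formula of Theorem \ref{thm:Composition-formula-For}, combined with the basic algebraic identities $[T_k,T_{k'}] = \delta_{k=k'}T_k$, $[e^{tX_{\mathcal{F}}},T_k]=0$, and the defining relation $e^{tX_{\mathcal{F}}}a e^{-tX_{\mathcal{F}}} = e^{t\mathrm{ad}_{X_{\mathcal{F}}}}a$. Each claim reduces to massaging the ``inner symbol'' under the composition law after the relation $\approx_{t+t'}$ is invoked.

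For the approximate projector identity (\ref{eq:-2}), I would apply (\ref{eq:compos_operators}) at $t=t'=0$ with $a=T_k$ and $b=T_{k'}$ (both slowly varying symbols in $\mathcal{S}_{\flat}(\Sigma;\mathrm{End}(\mathcal{F}_{[0,K]}))$ for $K\geq\max(k,k')$, because the bundle $\mathcal{F}_{[0,K]}\to\Sigma$ is H\"older continuous and the $T_k$ arise point-wise from the decomposition). This gives $\mathrm{Op}_{\sigma}(T_k)\mathrm{Op}_{\sigma}(T_{k'}) \approx_{0} \mathrm{Op}_{\sigma}(T_k T_{k'}) = \delta_{k=k'}\mathrm{Op}_{\sigma}(T_k)$. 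For the decomposition formula (\ref{eq:alg}), I would apply (\ref{eq:compos_operators}) again with symbols $a=T_k$ and $b=T_{k'}$, using commutation with $e^{tX_{\mathcal{F}}}$: $\mathrm{Op}_{\sigma}(e^{tX_{\mathcal{F}}}T_k)\mathrm{Op}_{\sigma}(e^{t'X_{\mathcal{F}}}T_{k'}) \approx_{t+t'} \mathrm{Op}_{\sigma}(e^{tX_{\mathcal{F}}}T_k e^{t'X_{\mathcal{F}}}T_{k'}) = \mathrm{Op}_{\sigma}(e^{(t+t')X_{\mathcal{F}}}T_k T_{k'})$, and the point-wise algebra of Taylor projectors closes the computation.

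The Egorov formula (\ref{eq:Egorov_formula}) is obtained by expanding both sides via the composition formula. The left side (\ref{eq:compos_operators-2-1-1}) with $t'=0, b=a$ yields $\mathrm{Op}_{\sigma_{4},\sigma_{3}}(e^{tX_{\mathcal{F}}})\mathrm{Op}_{\sigma_{2},\sigma_{1}}(a) \approx_{t} \mathrm{Op}_{\sigma_{4},\sigma_{1}}(e^{tX_{\mathcal{F}}}a)$. The right side (\ref{eq:compos_operators-2-2}) with symbol $e^{t\mathrm{ad}_{X_{\mathcal{F}}}}a$ on the left and the pure propagator on the right yields $\mathrm{Op}_{\sigma_{4},\sigma_{2}}(e^{t\mathrm{ad}_{X_{\mathcal{F}}}}a)\mathrm{Op}_{\sigma_{2},\sigma_{1}}(e^{tX_{\mathcal{F}}}) \approx_{t} \mathrm{Op}_{\sigma_{4},\sigma_{1}}((e^{tX_{\mathcal{F}}}ae^{-tX_{\mathcal{F}}})\,e^{tX_{\mathcal{F}}}) = \mathrm{Op}_{\sigma_{4},\sigma_{1}}(e^{tX_{\mathcal{F}}}a)$. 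Since $\approx_{t}$ is an equivalence relation (this follows from Definition \ref{def:For-two-family}: the remainder classes $\Psi_{\tilde{\phi}^{t},g/\sigma^{2}}(a_{N,\sigma,\sigma_{2}})$ are stable under sums), combining the two gives (\ref{eq:Egorov_formula}).

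The only genuine verification required, which is the main (mild) obstacle, is that the symbols appearing on the right of the Egorov identity remain slowly varying in the sense of Definition \ref{def:A-symbol-} so that Theorem \ref{thm:Composition-formula-For} is applicable. For $a\in\mathcal{S}_{\flat}(\Sigma;\mathrm{End}(\mathcal{F}_{[0,K]}))$ slowly varying and $t$ fixed, $e^{t\mathrm{ad}_{X_{\mathcal{F}}}}a = e^{tX_{\mathcal{F}}}ae^{-tX_{\mathcal{F}}}$ is obtained by conjugating the point-wise linear map $a(\rho)$ with the bundle map $e^{tX_{\mathcal{F}_{[0,K]}}}$ along the orbit $\tilde{\phi}^{-t}(\rho)$. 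Since the norm of this bundle map grows only like $e^{t(\gamma^+_{[0,K]}+\epsilon)}$ (uniformly in $\rho$ by (\ref{eq:bound-2})), and the flow $\tilde{\phi}^{t}$ distorts the metric $g$ on $\Sigma$ in a controlled (temperate) way, the H\"older-type modulus of $e^{t\mathrm{ad}_{X_{\mathcal{F}}}}a$ still decays as $\boldsymbol{\omega}\to\infty$, possibly with a worse constant $h(\omega)$ and polynomial degree $N_0$ but preserving the defining property. Once this is checked, the three claims are immediate consequences of Theorem \ref{thm:Composition-formula-For}.
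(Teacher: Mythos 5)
Your proposal is correct and follows essentially the same route as the paper: all three identities are read off from the composition formula of Theorem \ref{thm:Composition-formula-For} together with the point-wise algebra of the Taylor projectors, and your Egorov argument (reducing both sides to $\mathrm{Op}_{\sigma_{4},\sigma_{1}}\left(e^{tX_{\mathcal{F}}}a\right)$ and using transitivity of $\approx_{t}$) is just the paper's two-step chain read in both directions. Your added remark that $e^{t\mathrm{ad}_{X_{\mathcal{F}}}}a$ stays slowly varying is a worthwhile check the paper leaves implicit; the only blemish is a swapped citation of (\ref{eq:compos_operators-2-1-1}) versus (\ref{eq:compos_operators-2-2}) in the first Egorov step, which does not affect the argument.
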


\end{cBoxB}

\begin{rem}
The algebraic structure of (\ref{eq:alg}) will manifest itself later
in the band structure of the Ruelle spectrum. 
\end{rem}

\begin{proof}
To get (\ref{eq:Egorov_formula}), we write
\begin{align*}
\mathrm{Op}_{\Sigma}\left(e^{tX_{\mathcal{F}}}\right)\mathrm{Op}_{\Sigma}\left(\tilde{a}\right) & \underset{(\ref{eq:compos_operators})}{\approx}\mathrm{Op}_{\Sigma}\left(e^{tX_{\mathcal{F}}}\tilde{a}\right)=\mathrm{Op}_{\Sigma}\left(\left(e^{t\mathrm{ad}_{X_{\mathcal{F}}}}\tilde{a}\right)e^{tX_{\mathcal{F}}}\right)\\
 & \underset{(\ref{eq:compos_operators})}{\approx}\mathrm{Op}_{\Sigma}\left(e^{t\mathrm{ad}_{X_{\mathcal{F}}}}\tilde{a}\right)\mathrm{Op}_{\Sigma}\left(e^{tX_{\mathcal{F}}}\right)
\end{align*}
To get (\ref{eq:-2}), we write
\[
\mathrm{Op}_{\Sigma}\left(\tilde{T}_{k}\right)\mathrm{Op}_{\Sigma}\left(\tilde{T}_{k'}\right)\underset{(\ref{eq:compos_operators})}{\approx}\mathrm{Op}_{\Sigma}\left(\tilde{T}_{k}\tilde{T}_{k'}\right)\eq{\ref{eq:delta_T}}\delta_{k=k'}\mathrm{Op}_{\Sigma}\left(\tilde{T}_{k}\right).
\]
To get (\ref{eq:alg}), we write
\begin{align*}
\mathrm{Op}_{\Sigma}\left(e^{tX_{\mathcal{F}}}\tilde{T}_{k}\right)\mathrm{Op}_{\Sigma}\left(e^{t'X_{\mathcal{F}}}\tilde{T}_{k'}\right) & \underset{(\ref{eq:compos_operators})}{\approx}\mathrm{Op}_{\Sigma}\left(e^{tX_{\mathcal{F}}}\tilde{T}_{k}e^{t'X_{\mathcal{F}}}\tilde{T}_{k'}\right)\\
 & \eq{\ref{eq:commut_T}}\mathrm{Op}_{\Sigma}\left(e^{tX_{\mathcal{F}}}e^{t'X_{\mathcal{F}}}\tilde{T}_{k}\tilde{T}_{k'}\right)\eq{\ref{eq:delta_T}}\delta_{k=k'}\mathrm{Op}_{\Sigma}\left(e^{\left(t+t'\right)X_{\mathcal{F}}}\tilde{T}_{k}\right).
\end{align*}
\end{proof}

\subsubsection{Trace formula}

We present some trace formula for PDO (Pseudo Differential Operators),
i.e. FIO (Fourier Integral Operators) at $t=0$. We will use this
formula in section \ref{subsec:Approximate-projector}. It is also
possible to express $\mathrm{Tr}\left(\mathrm{Op}_{\Sigma}\left(\boldsymbol{1}_{\left[\omega,\omega'\right]}e^{tX_{\mathcal{F}}}\tilde{a}\right)\right)$
for $t\neq0$, but we don't need it in this paper.

\begin{cBoxB}{}
\begin{thm}[Trace formula]
\label{thm:Take-.-There}Take $\mu<\beta$. There exists $C>0$,
for any $\omega\geq\delta\geq1$, for any symbol $a\in C^{\beta}\left(M;L\left(\mathcal{F}_{\left[0,K\right]}\left(E_{s}\right)\right)\right)$,

\begin{equation}
\left\Vert \mathrm{Op}_{\Sigma}\left(\boldsymbol{1}_{\boldsymbol{\omega}\left(\rho\right)\in\left[\omega-\delta,\omega+\delta\right]}\tilde{a}\right)\right\Vert _{\mathrm{Tr}_{\mathcal{H}_{W}\left(M\right)}}\leq C\omega^{d}\delta.\label{eq:bound_OP_Sigma-1}
\end{equation}
\begin{equation}
\mathrm{Tr}\left(\mathrm{Op}_{\Sigma}\left(\boldsymbol{1}_{\boldsymbol{\omega}\left(\rho\right)\in\left[\omega-\delta,\omega+\delta\right]}\tilde{a}\right)\right)=\left(\frac{\omega^{d}}{\left(2\pi\right)^{d+1}}\left(2\delta\right)\right)\int_{M}\mathrm{Tr}\left(a\left(m\right)\right)dm+r\left(\omega,\delta\right),\label{eq:bound_OP_Sigma-1-1}
\end{equation}
with remainder
\[
\left|r\left(\omega,\delta\right)\right|\leq C\omega^{d}\delta\left(\left|\omega\right|^{-\left(\beta-\mu\right)/2}+\frac{\delta}{\omega}\right)+C\omega^{d}.
\]
\end{thm}

\end{cBoxB}

\begin{rem}
Later in (\ref{eq:last}) we will use (\ref{eq:bound_OP_Sigma-1-1})
as follows. For any small $c'>0$ we will take $\delta\gg1$ and $\omega\gg\delta$
large enough so that the remainder $r\left(\omega,\delta\right)$
is smaller than $c'\omega^{d}\delta$, i.e. negligible w.r.t. the
first term.
\end{rem}

\begin{proof}
Since the symbol $\boldsymbol{1}_{\boldsymbol{\omega}\left(\rho\right)\in\left[\omega-\delta,\omega+\delta\right]}\tilde{a}$
is fiber-wise, we have
\begin{align}
\mathrm{Op}_{\Sigma}\left(\boldsymbol{1}_{\boldsymbol{\omega}\left(\rho\right)\in\left[\omega-\delta,\omega+\delta\right]}\tilde{a}\right) & \eq{\ref{eq:def_Op_a}}\mathcal{T}_{N_{s}}^{\Delta}\boldsymbol{1}_{\boldsymbol{\omega}\left(\rho\right)\in\left[\omega-\delta,\omega+\delta\right]}\tilde{a}\mathcal{T}_{N_{s}}\nonumber \\
 & =\int_{\rho\in\Sigma}A\left(\rho\right)\boldsymbol{1}_{\boldsymbol{\omega}\left(\rho\right)\in\left[\omega-\delta,\omega+\delta\right]}\tilde{a}\left(\rho\right)B\left(\rho\right)\frac{d\rho}{\left(2\pi\right)^{d+1}}\label{eq:int1}
\end{align}
with for $\rho\in\Sigma$, the finite dimensional vector space $\mathcal{F}_{\left[0,K\right]}\left(N_{s}\left(\rho\right)\right)$
and the finite rank linear operators
\begin{align}
A\left(\rho\right) & :=\mathcal{T}_{N_{s}}^{\Delta}\tilde{T}_{\left[0,K\right]}\left(\rho\right)\quad:\mathcal{F}_{\left[0,K\right]}\left(N_{s}\left(\rho\right)\right)\rightarrow C^{\infty}\left(M\right)\label{eq:def_A-1}
\end{align}
\begin{align}
B\left(\rho\right) & :=\tilde{T}_{\left[0,K\right]}\left(\rho\right)\mathcal{T}_{N_{s}}\quad:C^{\infty}\left(M\right)\rightarrow\mathcal{F}_{\left[0,K\right]}\left(N_{s}\left(\rho\right)\right)\label{eq:def_B-2}
\end{align}
Where $\tilde{T}_{\left[0,K\right]}\left(\rho\right)$ is the bundle
map projector onto $\mathcal{F}_{\left[0,K\right]}\left(N_{s}\left(\rho\right)\right)$,
defined in (\ref{eq:def_Tk-1}). We have $\exists C>0,\forall\rho\in\Sigma,$
\[
\left\Vert A\left(\rho\right)\right\Vert \leq C,\qquad\left\Vert B\left(\rho\right)\right\Vert \leq C.
\]
and
\begin{equation}
\left\Vert \boldsymbol{1}_{\boldsymbol{\omega}\left(\rho\right)\in\left[\omega-\delta,\omega+\delta\right]}\tilde{a}\left(\rho\right)\right\Vert _{\mathrm{Tr}_{\mathcal{H}_{W}\left(M\right)}}\leq C\label{eq:C}
\end{equation}
For $\omega\geq\delta>0$, we have 
\begin{equation}
\int_{\Sigma}\boldsymbol{1}_{\boldsymbol{\omega}\left(\rho\right)\in\left[\omega-\delta,\omega+\delta\right]}d\rho\eq{\ref{eq:dvol_E0*}}\mathrm{Vol}\left(M\right)\int_{\omega-\delta}^{\omega+\delta}\omega'^{d}d\omega'=\mathrm{Vol}\left(M\right)\omega^{d}\left(2\delta\right)\left(1+O\left(\frac{\delta}{\omega}\right)\right).\label{eq:int_sig}
\end{equation}
Hence
\begin{align*}
\left\Vert \mathrm{Op}_{\Sigma}\left(\boldsymbol{1}_{\boldsymbol{\omega}\left(\rho\right)\in\left[\omega-\delta,\omega+\delta\right]}\tilde{a}\right)\right\Vert _{\mathrm{Tr}_{\mathcal{H}_{W}\left(M\right)}} & \ineq{\ref{eq:int1}}C\int_{\Sigma}\left\Vert \boldsymbol{1}_{\boldsymbol{\omega}\left(\rho\right)\in\left[\omega-\delta,\omega+\delta\right]}\tilde{a}\left(\rho\right)\right\Vert _{\mathrm{Tr}_{\mathcal{H}_{W}\left(M\right)}}\frac{d\rho}{\left(2\pi\right)^{d+1}}\\
 & \ineq{\ref{eq:C},\ref{eq:int_sig}}C\omega^{d}\delta,
\end{align*}
giving (\ref{eq:bound_OP_Sigma-1}). Then
\[
\mathrm{Tr}\left(\mathrm{Op}_{\Sigma}\left(\boldsymbol{1}_{\boldsymbol{\omega}\left(\rho\right)\in\left[\omega-\delta,\omega+\delta\right]}\tilde{a}\right)\right)\eq{\ref{eq:int1}}\int_{\Sigma}\mathrm{Tr}\left(\boldsymbol{1}_{\boldsymbol{\omega}\left(\rho\right)\in\left[\omega-\delta,\omega+\delta\right]}\tilde{a}\left(\rho\right)B\left(\rho\right)A\left(\rho\right)\right)\frac{d\rho}{\left(2\pi\right)^{d+1}},
\]
where
\begin{align*}
B\left(\rho\right)A\left(\rho\right) & \eq{\ref{eq:def_A-1},\ref{eq:def_B-2},\ref{eq:def_T_Ns_Delta},\ref{eq:def_T_Ns}}\tilde{T}_{\left[0,K\right]}\left(\rho\right)\mathcal{B}_{N_{s},N_{u}}^{\dagger}\chi_{\Sigma}^{\mu}\widetilde{\left(\exp_{N}\right)^{\circ}}\mathcal{T}\mathcal{T}^{\dagger}\widetilde{\left(\exp_{N}^{-1}\right)^{\circ}}\chi_{\Sigma}^{\mu}\mathcal{B}_{N_{s},N_{u}}\tilde{T}_{\left[0,K\right]}\left(\rho\right)\\
 & =\tilde{T}_{\left[0,K\right]}\left(\rho\right)+O\left(\left\langle \omega\right\rangle ^{-\left(\beta-\mu\right)/2}\right),
\end{align*}
where $\boldsymbol{\omega}\left(\rho\right)^{-\frac{1}{2}\left(\beta-\mu\right)}$
comes from Hölder irregularity as in (\ref{eq:Holder_irreg}). Hence
\[
\mathrm{Tr}\left(\mathrm{Op}_{\Sigma}\left(\boldsymbol{1}_{\boldsymbol{\omega}\left(\rho\right)\in\left[\omega-\delta,\omega+\delta\right]}\tilde{a}\right)\right)=\int_{\Sigma}\mathrm{Tr}\left(\boldsymbol{1}_{\boldsymbol{\omega}\left(\rho\right)\in\left[\omega-\delta,\omega+\delta\right]}\tilde{a}\left(\rho\right)\right)\frac{d\rho}{\left(2\pi\right)^{d+1}}+R
\]
with remainder
\begin{align*}
\left|R\right| & \leq C\int_{\Sigma}\boldsymbol{1}_{\boldsymbol{\omega}\left(\rho\right)\in\left[\omega-\delta,\omega+\delta\right]}\boldsymbol{\omega}\left(\rho\right)^{-\left(\beta-\mu\right)/2}d\rho+C\omega^{d}\\
 & \ineq{\ref{eq:int_sig}}C\omega^{d}\delta\omega^{-\left(\beta-\mu\right)/2}+C\omega^{d}.
\end{align*}
 $C\omega^{d}$ is due to sharp cutoff in frequency, and main term
\[
\int_{\Sigma}\mathrm{Tr}\left(\boldsymbol{1}_{\boldsymbol{\omega}\left(\rho\right)\in\left[\omega-\delta,\omega+\delta\right]}\tilde{a}\left(\rho\right)\right)\frac{d\rho}{\left(2\pi\right)^{d+1}}\eq{\ref{eq:dvol_E0*},\ref{eq:int_sig}}\frac{\omega^{d}\left(2\delta\right)}{\left(2\pi\right)^{d+1}}\left(1+O\left(\frac{\delta}{\omega}\right)\right)\int_{M}\mathrm{Tr}\left(a\left(m\right)\right)dm.
\]
\end{proof}
We have obtained (\ref{eq:bound_OP_Sigma-1-1}).

\section{\label{subsec:Band-spectrum-of}Proof of Theorem \ref{Thm: bands}
(band spectrum)}

We will follow the strategy presented in the proof of \cite[Lemma 5.16]{faure_tsujii_Ruelle_resonances_density_2016}.
Let $K\in\mathbb{N}$, $\epsilon>0$ be fixed and consider $z\in\mathbb{C}$
in the spectral plane such that
\begin{equation}
\gamma_{K+1}^{+}+\epsilon<\mathrm{Re}\left(z\right)<\gamma_{K}^{-}-\epsilon.\label{eq:Re_z}
\end{equation}
We put 
\begin{equation}
\omega=\mathrm{Im}\left(z\right).\label{eq:def_omega}
\end{equation}
Let $\delta>0$. We consider the frequency intervals
\begin{equation}
J_{\omega}:=\left[\omega-1,\omega+1\right],\qquad J'_{\omega,\delta}:=\left[\omega-1-\delta,\omega+1+\delta\right],\label{eq:J_delta}
\end{equation}
so that $J_{\omega}\subset J'_{\omega,\delta}$. Let $\sigma>0$.
We consider the following partition of the cotangent bundle
\begin{equation}
T^{*}M=\Omega_{0}\cup\Omega_{1}\cup\Omega_{2}\label{eq:partition}
\end{equation}
with, as in (\ref{eq:def_Lambda}),
\begin{equation}
\Omega_{0}:=\left\{ \rho\in T^{*}M\,\mid\quad\left\Vert \rho_{u}+\rho_{s}\right\Vert _{g_{\rho}}\leq\sigma\text{ and }\boldsymbol{\omega}\left(\rho\right)\in J'_{\omega,\delta}\right\} ,\label{eq:def_Omega_sigma}
\end{equation}
\begin{equation}
\Omega_{1}:=\left\{ \rho\in T^{*}M\,\mid\quad\left\Vert \rho_{u}+\rho_{s}\right\Vert _{g_{\rho}}>\sigma\text{ and }\boldsymbol{\omega}\left(\rho\right)\in J'_{\omega,\delta}\right\} ,\label{eq:def_Omega_1}
\end{equation}
\[
\Omega_{2}:=\left\{ \rho\in T^{*}M\,\mid\quad\boldsymbol{\omega}\left(\rho\right)\notin J'_{\omega,\delta}\right\} .
\]
For $j=0,1,2$, we define $\boldsymbol{1}_{\Omega_{j}}:T^{*}M\rightarrow\left[0,1\right]$
as being the characteristic function of the set $\Omega_{j}$ above
and as in (\ref{eq:def_op_Lambda}), we set
\[
\mathrm{Op}\left(\boldsymbol{1}_{\Omega_{j}}\right):=\mathcal{T}^{\dagger}\boldsymbol{1}_{\Omega_{j}}\mathcal{T}\quad:C^{\infty}\left(M\right)\rightarrow C^{\infty}\left(M\right),
\]
giving a resolution of identity
\begin{align}
\mathrm{Id}_{\mathcal{H}_{W}\left(M\right)}\eq{\ref{eq:partition}} & \mathrm{Op}\left(\boldsymbol{1}_{\Omega_{0}}\right)+\mathrm{Op}\left(\boldsymbol{1}_{\Omega_{1}}\right)+\mathrm{Op}\left(\boldsymbol{1}_{\Omega_{2}}\right)\nonumber \\
= & \mathrm{Op}_{\Sigma}\left(T_{\left[0,K\right]}\right)\mathrm{Op}\left(\boldsymbol{1}_{\Omega_{0}}\right)+\mathrm{Op}_{\Sigma}\left(T_{\geq\left(K+1\right)}\right)\mathrm{Op}\left(\boldsymbol{1}_{\Omega_{0}}\right)+\mathrm{Op}\left(\boldsymbol{1}_{\Omega_{1}}\right)+\mathrm{Op}\left(\boldsymbol{1}_{\Omega_{2}}\right)\label{eq:Id_decomp}\\
 & +\left(\mathrm{Id}-\mathrm{Op}_{\Sigma}\left(\mathrm{Id}_{\mathcal{F}}\right)\right)\mathrm{Op}\left(\boldsymbol{1}_{\Omega_{0}}\right).
\end{align}
We will now construct the resolvent of $X$ at $z$ from the contribution
of each term in (\ref{eq:Id_decomp}), except for the last term. For
this last term, we observe that
\[
\mathrm{Id}-\mathrm{Op}_{\Sigma}\left(\mathrm{Id}_{\mathcal{F}}\right)\underset{(\ref{eq:expression})}{\approx}0.
\]
Since $\sigma$ is fixed in (\ref{eq:def_Omega_sigma}) we have  that
\begin{equation}
\lim_{\left|\omega\right|\rightarrow\infty}\left\Vert \left(\mathrm{Id}-\mathrm{Op}_{\Sigma}\left(\mathrm{Id}_{\mathcal{F}}\right)\right)\mathrm{Op}\left(\boldsymbol{1}_{\Omega_{0}}\right)\right\Vert _{\mathcal{H}_{W}\left(M\right)}\eq{\ref{eq:result_of_Shur}}0.\label{eq:r_omega}
\end{equation}

\subsection{Contribution of $\Omega_{0}$}

Recall that $\mathrm{Op}\left(\boldsymbol{1}_{\Omega_{0}}\right)$
defined from $\Omega_{0}$ in (\ref{eq:def_Omega_sigma}) depends
on $\omega$ defined in (\ref{eq:def_omega}).

\begin{cBoxB}{}
\begin{lem}
\label{lem:We-have-,,}We have (for negative time) $\forall\epsilon>0,\exists C_{\epsilon}>0$,$\forall t\leq0$,
$\exists\omega_{t}>0$,$\forall\omega>\omega_{t}$,
\begin{equation}
\left\Vert e^{tX}\mathrm{Op}_{\Sigma}\left(\tilde{T}_{\left[0,K\right]}\right)\mathrm{Op}\left(\boldsymbol{1}_{\Omega_{0}}\right)\right\Vert _{\mathcal{H}_{W}\left(M\right)}\leq C_{\epsilon}e^{t\left(\gamma_{K}^{-}-\epsilon\right)},\label{eq:exp-tX}
\end{equation}
and (for positive time) $\forall\epsilon>0,\exists C_{\epsilon}>0$,$\forall t\geq0$,
$\exists\omega_{t}>0$,$\forall\omega>\omega_{t}$,
\begin{equation}
\left\Vert e^{tX_{F}}\mathrm{Op}_{\Sigma}\left(\tilde{T}_{\left[0,K\right]}\right)\mathrm{Op}\left(\boldsymbol{1}_{\Omega_{0}}\right)\right\Vert _{\mathcal{H}_{W}\left(M\right)}\leq C_{\epsilon}e^{t\left(\gamma_{0}^{+}+\epsilon\right)},\label{eq:exp-tX_0}
\end{equation}
\begin{equation}
\left\Vert e^{tX}\mathrm{Op}_{\Sigma}\left(\tilde{T}_{\geq\left(K+1\right)}\right)\mathrm{Op}\left(\boldsymbol{1}_{\Omega_{0}}\right)\right\Vert _{\mathcal{H}_{W}\left(M\right)}\leq C_{\epsilon}e^{t\left(\gamma_{K+1}^{+}+\epsilon\right)}.\label{eq:exp-tXK+1}
\end{equation}
\end{lem}

\end{cBoxB}

\begin{proof}
Let $t\leq0$. We have
\begin{align}
e^{tX}\mathrm{Op}_{\Sigma}\left(\tilde{T}_{\left[0,K\right]}\right)\underset{(\ref{eq:expression})}{\approx} & \mathrm{Op}_{\Sigma}\left(e^{tX_{\mathcal{F}}}\right)\mathrm{Op}_{\Sigma}\left(\tilde{T}_{\left[0,K\right]}\right)\nonumber \\
\underset{(\ref{eq:compos_operators})}{\approx} & \mathrm{Op}_{\Sigma}\left(e^{tX_{\mathcal{F}}}\tilde{T}_{\left[0,K\right]}\right)\label{eq:ineq}
\end{align}
Now we use the cut-off in frequency $\mathrm{Op}\left(\boldsymbol{1}_{\Omega_{0}}\right)$.
We have
\begin{equation}
\left\Vert \mathrm{Op}\left(\boldsymbol{1}_{\Omega_{0}}\right)\right\Vert _{\mathcal{H}_{W}\left(M\right)}\leq C,\label{eq:Op_Omega0_bounded}
\end{equation}
and
\begin{align*}
\left\Vert e^{tX}\mathrm{Op}_{\Sigma}\left(\tilde{T}_{\left[0,K\right]}\right)\mathrm{Op}\left(\boldsymbol{1}_{\Omega_{0}}\right)\right\Vert _{\mathcal{H}_{W}\left(M\right)}\underset{(\ref{eq:ineq},\ref{eq:result_of_Shur})}{\leq} & \left\Vert \mathrm{Op}_{\Sigma}\left(e^{tX_{\mathcal{F}}}\tilde{T}_{\left[0,K\right]}\right)\mathrm{Op}\left(\boldsymbol{1}_{\Omega_{0}}\right)\right\Vert _{\mathcal{H}_{W}\left(M\right)}+r\left(\omega\right)\\
\leq & \left\Vert \mathrm{Op}_{\Sigma}\left(e^{tX_{\mathcal{F}}}\tilde{T}_{\left[0,K\right]}\right)\right\Vert _{\mathcal{H}_{W}\left(M\right)}\left\Vert \mathrm{Op}\left(\boldsymbol{1}_{\Omega_{0}}\right)\right\Vert _{\mathcal{H}_{W}\left(M\right)}+r\left(\omega\right)\\
\underset{(\ref{eq:bound-4},\ref{eq:Op_Omega0_bounded})}{\leq} & C_{K,\epsilon'}e^{t\left(\gamma_{K}^{-}-\epsilon'\right)}+r\left(\omega\right),
\end{align*}
 with some function $r\left(\omega\right)$ that satisfies $\lim_{\left|\omega\right|\rightarrow\infty}r\left(\omega\right)=0$.
This gives (\ref{eq:exp-tX}). Similarly, using (\ref{eq:bound-4-1})
we get (\ref{eq:exp-tX_0}) and using (\ref{eq:bound-1}) we get (\ref{eq:exp-tXK+1}).
\end{proof}

\paragraph{Approximate resolvent $R_{\mathrm{in}}^{\left(0\right)}\left(z\right)$
and $R_{\mathrm{out}}^{\left(0\right)}\left(z\right)$:}

For $T\geq0$ we define
\begin{equation}
R_{\mathrm{in}}^{\left(0\right)}\left(z\right):=-\left(\int_{-T}^{0}e^{-tz}e^{tX}dt\right)\mathrm{Op}_{\Sigma}\left(\tilde{T}_{\left[0,K\right]}\right)\mathrm{Op}\left(\boldsymbol{1}_{\Omega_{0}}\right).\label{eq:def_R0}
\end{equation}
Then
\begin{align}
\left\Vert R_{\mathrm{in}}^{\left(0\right)}\left(z\right)\right\Vert _{\mathcal{H}_{W}\left(M\right)}\underset{(\ref{eq:def_R0},\ref{eq:exp-tX})}{\leq} & \int_{0}^{T}e^{t\mathrm{Re}\left(z\right)}C_{\epsilon'}e^{-t\left(\gamma_{K}^{-}-\epsilon'\right)}dt\underset{(\ref{eq:Re_z})}{\leq}C_{\epsilon'}\int_{0}^{T}e^{-\left(\epsilon-\epsilon'\right)t}dt\leq C_{\epsilon}.\label{eq:b_R0}
\end{align}
where we have taken $\epsilon'=\frac{1}{2}\epsilon$. We also have
\begin{align*}
\left(z-X\right)R_{\mathrm{in}}^{\left(0\right)}\left(z\right) & \eq{\ref{eq:def_R0}}\left(\mathrm{Id}-e^{Tz}e^{-TX}\right)\mathrm{Op}_{\Sigma}\left(\tilde{T}_{\left[0,K\right]}\right)\mathrm{Op}\left(\boldsymbol{1}_{\Omega_{0}}\right)
\end{align*}
and
\begin{align}
\left\Vert \left(z-X\right)R_{\mathrm{in}}^{\left(0\right)}\left(z\right)-\mathrm{Op}_{\Sigma}\left(\tilde{T}_{\left[0,K\right]}\right)\mathrm{Op}\left(\boldsymbol{1}_{\Omega_{0}}\right)\right\Vert _{\mathcal{H}_{W}\left(M\right)} & \underset{(\ref{eq:exp-tX})}{\leq}e^{T\mathrm{Re}\left(z\right)}C_{\epsilon'}e^{-T\left(\gamma_{K}^{-}-\epsilon'\right)}\label{eq:R0_in}\\
 & \underset{(\ref{eq:Re_z})}{\leq}C_{\epsilon'}e^{-\left(\epsilon-\epsilon'\right)T}\leq C_{\epsilon}e^{-\frac{1}{2}\epsilon T}
\end{align}
with $\epsilon=\frac{1}{2}\epsilon'$. Similarly we define
\begin{equation}
R_{\mathrm{out}}^{\left(0\right)}\left(z\right):=\left(\int_{0}^{T}e^{-tz}e^{tX}dt\right)\mathrm{Op}_{\Sigma}\left(\tilde{T}_{\geq\left(K+1\right)}\right)\mathrm{Op}\left(\boldsymbol{1}_{\Omega_{0}}\right)\label{eq:def_R0-1}
\end{equation}
and get
\begin{equation}
\left\Vert R_{\mathrm{out}}^{\left(0\right)}\left(z\right)\right\Vert _{\mathcal{H}_{W}\left(M\right)}\underset{(\ref{eq:exp-tXK+1},\ref{eq:Re_z})}{\leq}C_{\epsilon}\label{eq:b_R0out}
\end{equation}
and
\begin{equation}
\left\Vert \left(z-X\right)R_{\mathrm{out}}^{\left(0\right)}\left(z\right)-\mathrm{Op}_{\Sigma}\left(\tilde{T}_{\geq\left(K+1\right)}\right)\mathrm{Op}\left(\boldsymbol{1}_{\Omega_{0}}\right)\right\Vert _{\mathcal{H}_{W}\left(M\right)}\underset{(\ref{eq:exp-tXK+1},\ref{eq:Re_z})}{\leq}C_{\epsilon}e^{-\frac{1}{2}\epsilon T}.\label{eq:R0_out}
\end{equation}

\subsection{Contribution of $\Omega_{1}$}

Recall that $\mathrm{Op}\left(\boldsymbol{1}_{\Omega_{1}}\right)$
defined from $\Omega_{1}$ in (\ref{eq:def_Omega_1}) depends on $\sigma>0$.
Let $\sigma'>0$. 
\[
e^{tX}\mathrm{Op}\left(\boldsymbol{1}_{\Omega_{1}}\right)=e^{tX}\left(\mathrm{Id}-\mathrm{Op}\left(\chi_{\Sigma,\sigma'}\right)\right)\mathrm{Op}\left(\boldsymbol{1}_{\Omega_{1}}\right)+e^{tX}\mathrm{Op}\left(\chi_{\Sigma,\sigma'}\right)\mathrm{Op}\left(\boldsymbol{1}_{\Omega_{1}}\right)
\]
For the first term, from Theorem \ref{thm:decay}, we have $\forall\Lambda>0,\exists C>0,\forall t\geq0,$
$\exists\sigma_{t},\forall\sigma'>\sigma_{t}$,
\[
\left\Vert e^{tX}\left(\mathrm{Id}-\mathrm{Op}\left(\chi_{\Sigma,\sigma'}\right)\right)\mathrm{Op}\left(\boldsymbol{1}_{\Omega_{1}}\right)\right\Vert _{\mathcal{H}_{W}\left(M\right)}\underset{(\ref{eq:decay_outside})}{\leq}\frac{1}{2}Ce^{-\Lambda t}.
\]
For the second term, since the support of the symbols are disjoint,
we have $\forall\Lambda>0,\exists C>0,\forall t\geq0,$ $\forall\sigma',\exists\sigma>\sigma'$,$\exists\omega_{t}>0,\forall\left|\omega\right|>\omega_{t},$
\[
\left\Vert \mathrm{Op}\left(\chi_{\Sigma,\sigma'}\right)\mathrm{Op}\left(\boldsymbol{1}_{\Omega_{1}}\right)\right\Vert _{\mathcal{H}_{W}\left(M\right)}\leq\frac{1}{2}Ce^{-\Lambda t}
\]
Taking $\Lambda>0$ large enough, we deduce that $\exists C>0,\forall t\geq0,$
$\exists\sigma_{t},\forall\sigma>\sigma_{t},\exists\omega_{t}>0,\forall\left|\omega\right|>\omega_{t},$
\begin{equation}
\left\Vert e^{tX}\mathrm{Op}\left(\boldsymbol{1}_{\Omega_{1}}\right)\right\Vert _{\mathcal{H}_{W}\left(M\right)}\leq Ce^{t\gamma_{K+1}^{+}}.\label{eq:Xi_Omega1}
\end{equation}

\paragraph{Approximate resolvent:}

For $T\geq0$ we define
\begin{equation}
R^{\left(1\right)}\left(z\right):=\int_{0}^{T}e^{-tz}e^{tX}\mathrm{Op}\left(\boldsymbol{1}_{\Omega_{1}}\right)dt\label{eq:def_R1}
\end{equation}
and similarly to (\ref{eq:b_R0}) and (\ref{eq:R0_in}) we get
\begin{equation}
\left\Vert R^{\left(1\right)}\left(z\right)\right\Vert _{\mathcal{H}_{W}\left(M\right)}\underset{(\ref{eq:Xi_Omega1},\ref{eq:Re_z})}{\leq}C_{\epsilon},\label{eq:b_R1}
\end{equation}
and
\begin{equation}
\left\Vert \left(z-X\right)R^{\left(1\right)}\left(z\right)-\mathrm{Op}\left(\boldsymbol{1}_{\Omega_{1}}\right)\right\Vert _{\mathcal{H}_{W}\left(M\right)}\underset{(\ref{eq:Xi_Omega1},\ref{eq:Re_z})}{\leq}Ce^{-\epsilon T}.\label{eq:R1}
\end{equation}

\subsection{Contribution of $\Omega_{2}$}

We define
\[
R^{\left(2\right)}\left(z\right):=\mathrm{Op}\left(\frac{1}{z-i\omega\left(.\right)}\boldsymbol{1}_{\Omega_{2}}\right)
\]
We have the following properties. Recall that $\delta>0$ enters in
the Definition (\ref{eq:J_delta}) of $J'_{\omega,\delta}$, hence
for $\Omega_{2}$.
\begin{lem}
\cite[Lemma 5.17]{faure_tsujii_Ruelle_resonances_density_2016}There
exists $C>0$ such that for any $\omega\in\mathbb{R},\delta>0$,

\begin{equation}
\left\Vert R^{\left(2\right)}\left(z\right)\right\Vert _{\mathcal{H}_{W}\left(M\right)}\leq\frac{C}{\delta},\label{eq:b_R2}
\end{equation}
and
\begin{equation}
\left\Vert \left(z-X\right)R^{\left(2\right)}\left(z\right)-\mathrm{Op}\left(\chi_{\Omega_{2,\sigma}}\right)\right\Vert _{\mathcal{H}_{W}\left(M\right)}\leq\frac{C}{\delta}.\label{eq:R2}
\end{equation}
\end{lem}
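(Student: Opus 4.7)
The plan is to exploit the wave-packet resolution of identity $\mathcal{T}^{\dagger}\mathcal{T}=\mathrm{Id}$ from (\ref{eq:resol_ident_Pi_rho}) to rewrite $R^{(2)}(z)$ as a multiplication operator conjugated by $\mathcal{T}$ and then estimate things fibre by fibre on $T^{*}M$, exactly as for the outer-frequency contribution in \cite[Lemma~5.18]{faure_tsujii_Ruelle_resonances_density_2016}. Concretely, with $\Pi(\rho)=\Phi_{\rho}\langle\Phi_{\rho}|\cdot\rangle$ we have
\[
R^{(2)}(z)=\mathcal{T}^{\dagger} M_{f}\,\mathcal{T},\qquad f(\rho):=\frac{\chi_{\Omega_{2}}(\rho)}{z-i\boldsymbol{\omega}(\rho)},
\]
so the analysis reduces to a bound on the multiplication operator $M_{f}$ on the image $\mathcal{T}(\mathcal{H}_{W})\subset L^{2}(T^{*}M;W^{2})$.

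For the first bound, observe that on $\Omega_{2}$ we have $|\boldsymbol{\omega}(\rho)-\omega|\geq\delta$, hence $|z-i\boldsymbol{\omega}(\rho)|\geq\delta$ uniformly and $\|f\|_{L^{\infty}}\leq 1/\delta$. Since $\mathcal{T}\mathcal{T}^{\dagger}$ is an orthogonal projector on $L^{2}(T^{*}M)$ and since the weight $W$ is slowly varying in the sense of (\ref{eq:slow_variation_W}), the conjugation by $W$ only introduces harmless $O(1)$ factors, giving $\|R^{(2)}(z)\|_{\mathcal{H}_{W}}\leq C/\delta$.

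For the second bound, I would use that $\boldsymbol{\omega}$ is the principal symbol of $-iX$ (Remark after (\ref{eq:omega_function})), so that on a wave packet $\Phi_{\rho}$,
\[
X\Phi_{\rho}=i\boldsymbol{\omega}(\rho)\Phi_{\rho}+E(\rho),
\]
with a remainder $E(\rho)$ controlled by sub-principal terms and by the spread of $\Phi_{\rho}$, which is $O(\delta^{\perp}(\rho))$ by (\ref{eq:def_delta}). Consequently
\[
(z-X)R^{(2)}(z)=\mathcal{T}^{\dagger}M_{(z-i\boldsymbol{\omega})f}\mathcal{T}+\mathcal{E}=\hat{\chi}_{\Omega_{2}}+\mathcal{E},
\]
where $\mathcal{E}$ collects the sub-principal and commutator contributions. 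Each piece of $\mathcal{E}$ comes with a factor $1/(z-i\boldsymbol{\omega}(\rho))$ inherited from $f$, which forces $\|\mathcal{E}\|_{\mathcal{H}_{W}}\leq C/\delta$.

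The main technical obstacle is the control of the remainder $\mathcal{E}$, i.e.\ making precise the statement that $X$ acts on $\Phi_{\rho}$ as multiplication by $i\boldsymbol{\omega}(\rho)$ modulo a term whose Schwartz kernel in the $\mathcal{T}$-picture is of order $1$ on the diagonal and is absorbed by the factor $\delta^{-1}$ coming from $f$. This is where the moderate/temperate properties of the metric $g$ from \cite[Lemma~4.13]{faure_tsujii_Ruelle_resonances_density_2016} and the slow variation of $W$ enter, together with a Schur test on $T^{*}M$ analogous to the one underlying (\ref{eq:result_of_Shur}). Once these ingredients are in place, both inequalities follow at once.
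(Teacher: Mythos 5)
Your reconstruction is correct and follows the same route as the cited source: the paper itself gives no proof here beyond the reference to \cite[Lemma 5.18]{faure_tsujii_Ruelle_resonances_density_2016}, and that argument is exactly the one you outline — write $R^{(2)}(z)=\mathcal{T}^{\dagger}M_{f}\mathcal{T}$ with $\left|z-i\boldsymbol{\omega}\left(\rho\right)\right|\geq\delta$ on $\Omega_{2}$, use that $X\Phi_{\rho}=i\boldsymbol{\omega}\left(\rho\right)\Phi_{\rho}+O\left(1\right)$, and absorb everything through the Schur test with the temperate weight $W$. The only thin spot is the unproved assertion that the kernel $\langle\Phi_{\rho'}|E\left(\rho\right)\rangle$ has the same off-diagonal decay and $O\left(1\right)$ diagonal size as the Bergman kernel, but that is exactly the content of the cited lemma and your identification of the needed ingredients is accurate.
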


\subsection{Sum of contributions}

The final approximate resolvent is defined by
\[
R\left(z\right):=R_{\mathrm{in}}^{\left(0\right)}\left(z\right)+R_{\mathrm{out}}^{\left(0\right)}\left(z\right)+R^{\left(1\right)}\left(z\right)+R^{\left(2\right)}\left(z\right).
\]
From previous estimates we have
\[
\left\Vert R\left(z\right)\right\Vert _{\mathcal{H}_{W}\left(M\right)}\underset{(\ref{eq:b_R0},\ref{eq:b_R0out},\ref{eq:b_R1},\ref{eq:b_R2})}{\leq}3C_{\epsilon}+\frac{C}{\delta},
\]
and
\[
\left\Vert \left(z-X\right)R\left(z\right)-\mathrm{Id}\right\Vert _{\mathcal{H}_{W}\left(M\right)}\underset{(\ref{eq:Id_decomp},\ref{eq:r_omega},\ref{eq:R0_in},\ref{eq:R0_out},\ref{eq:R1},\ref{eq:R2})}{\leq}r\left(\omega\right)+2C_{\epsilon}e^{-\frac{1}{2}\epsilon T}+Ce^{-\epsilon T}+\frac{C}{\delta},
\]
with $\lim_{\left|\omega\right|\rightarrow\infty}r\left(\omega\right)=0$.
Hence we can take in this order $T\gg1$, $\sigma\gg1$, $\delta\gg1$,$\left|\omega\right|\gg1$,
large enough so that
\[
\left(z-X\right)R\left(z\right)=\mathrm{Id}-r_{2}\left(z\right)
\]
with 
\[
\left\Vert r_{2}\left(z\right)\right\Vert _{\mathcal{H}_{W}\left(M\right)}<\frac{1}{2}.
\]
Hence $\left(\mathrm{Id}-r_{2}\left(z\right)\right)^{-1}$ is bounded.
We set $\tilde{R}\left(z\right):=R\left(z\right)\left(\mathrm{Id}-r_{2}\left(z\right)\right)^{-1}$
and get that
\[
\left(z-X\right)\tilde{R}\left(z\right)=\mathrm{Id}.
\]
With a similar construction on the left $\tilde{R}_{l}\left(z\right)\left(z-X\right)=\mathrm{Id}$,
we deduce that 
\[
\left\Vert \left(z-X\right)^{-1}\right\Vert \leq C_{\epsilon}
\]
is bounded by $C_{\epsilon}>0$ that does not depend on $\omega$.
We have shown (\ref{eq:gaps-1}) and (\ref{eq:resolvent-1}). This
finishes the proof of Theorem \ref{Thm: bands}.

\section{\label{subsec:Proof-of-Theorem-2}Proof of Theorem \ref{thm:1_emergence_of_QM}
(emergence of quantum dynamics)}

Recall that the space $\mathcal{H}_{W}\left(M\right)$ in (\ref{eq:def_H_W})
depends on the weight $W$ in (\ref{eq:def_W}) that itself depends
on a parameter $R\in\mathbb{R}$. Let $K\in\mathbb{N}$ and $\gamma_{K+1}^{+}$
defined in (\ref{eq:def_gamma_}). Using Theorem \ref{thm:If--is},
we can take $R$ large enough such that the generator $X$ in $\mathcal{H}_{W}\left(M\right)$
has discrete Ruelle spectrum on $\mathrm{Re}\left(z\right)>\gamma_{K+1}^{+}$
. 

We will use the approximate projector $\mathrm{Op}_{\Sigma}\left(\tilde{T}_{\left[0,K\right]}\right)$
defined from the symbol (\ref{eq:def_TK+1}) and (\ref{eq:def_Op_a}),
the operator $\mathrm{Op}\left(\chi_{\omega}\right)$ in (\ref{eq:def_Xi_low})
that selects the low frequencies. We have the decomposition

\begin{align}
e^{tX} & =\mathrm{Op}\left(\chi_{\Sigma,\sigma}\right)e^{tX}\mathrm{Op}_{\Sigma}\left(\tilde{T}_{\left[0,K\right]}\right)\mathrm{Op}\left(\chi_{\Sigma,\sigma}\right)\left(\mathrm{Id}-\mathrm{Op}\left(\chi_{\omega}\right)\right)\label{eq:eq}\\
 & +\mathrm{Op}\left(\chi_{\Sigma,\sigma}\right)e^{tX}\left(\mathrm{Id}-\mathrm{Op}_{\Sigma}\left(\tilde{T}_{\left[0,K\right]}\right)\right)\mathrm{Op}\left(\chi_{\Sigma,\sigma}\right)\left(\mathrm{Id}-\mathrm{Op}\left(\chi_{\omega}\right)\right)\\
 & +\left(\mathrm{Id}-\mathrm{Op}\left(\chi_{\Sigma,\sigma}\right)\right)e^{tX}\left(\mathrm{Id}-\mathrm{Op}\left(\chi_{\omega}\right)\right)+\mathrm{Op}\left(\chi_{\Sigma,\sigma}\right)e^{tX}\left(\mathrm{Id}-\mathrm{Op}\left(\chi_{\Sigma,\sigma}\right)\right)\left(\mathrm{Id}-\mathrm{Op}\left(\chi_{\omega}\right)\right)\label{eq:line2}\\
 & +e^{tX}\mathrm{Op}\left(\chi_{\omega}\right)
\end{align}
\[
\]
For the first term in the right hand side of (\ref{eq:eq}) we have
\begin{align}
e^{tX}\mathrm{Op}_{\Sigma}\left(\tilde{T}_{\left[0,K\right]}\right) & \underset{(\ref{eq:expression})}{\approx}\mathrm{Op}_{\Sigma}\left(e^{tX_{\mathcal{F}}}\right)\mathrm{Op}_{\Sigma}\left(\tilde{T}_{\left[0,K\right]}\right)\nonumber \\
 & \underset{(\ref{eq:compos_operators})}{\approx}\mathrm{Op}_{\Sigma}\left(e^{tX_{\mathcal{F}}}\tilde{T}_{\left[0,K\right]}\right)\label{eq:eq3}
\end{align}
hence
\begin{align}
\mathrm{Op}\left(\chi_{\Sigma,\sigma}\right)e^{tX}\mathrm{Op}_{\Sigma}\left(\tilde{T}_{\left[0,K\right]}\right) & \mathrm{Op}\left(\chi_{\Sigma,\sigma}\right)\left(\mathrm{Id}-\mathrm{Op}\left(\chi_{\omega}\right)\right)\label{eq:eq4}\\
 & =\mathrm{Op}\left(\chi_{\Sigma,\sigma}\right)\mathrm{Op}_{\Sigma}\left(e^{tX_{\mathcal{F}}}\tilde{T}_{\left[0,K\right]}\right)\mathrm{Op}\left(\chi_{\Sigma,\sigma}\right)\left(\mathrm{Id}-\mathrm{Op}\left(\chi_{\omega}\right)\right)+r_{t,\omega}
\end{align}
with
\begin{equation}
\lim_{\left|\omega\right|\rightarrow\infty}\left\Vert r_{t,\omega}\right\Vert _{\mathcal{H}_{W}\left(M\right)}\eq{\ref{eq:result_of_Shur}}0.\label{eq:r_t_omega}
\end{equation}
For the second term in the right hand side of (\ref{eq:eq}), we proceed
exactly as for the proof of Eq.(\ref{eq:exp-tXK+1}) in Lemma \ref{lem:We-have-,,}
above, with the difference that we replace $\mathrm{Op}\left(\boldsymbol{1}_{\Omega_{0}}\right)$
by $\mathrm{Op}\left(\chi_{\Sigma,\sigma}\right)\left(\mathrm{Id}-\mathrm{Op}\left(\chi_{\omega}\right)\right)$.
We get that $\forall\epsilon>0,\exists C_{\epsilon}>0$,$\forall t\geq0$,
$\exists\omega_{t}>0$,$\forall\omega>\omega_{t}$,
\[
\left\Vert \mathrm{Op}\left(\chi_{\Sigma,\sigma}\right)e^{tX}\left(\mathrm{Id}-\mathrm{Op}_{\Sigma}\left(\tilde{T}_{\left[0,K\right]}\right)\right)\mathrm{Op}\left(\chi_{\Sigma,\sigma}\right)\left(\mathrm{Id}-\mathrm{Op}\left(\chi_{\omega}\right)\right)\right\Vert _{\mathcal{H}_{W}\left(M\right)}\leq C_{\epsilon}e^{t\left(\gamma_{K+1}^{+}+\epsilon\right)}
\]
Let $\Lambda\geq-\gamma_{K+1}^{+}$. For the terms in line (\ref{eq:line2}),
we have $\exists C>0,\forall t\geq0,$ $\exists\sigma_{t},\forall\sigma>\sigma_{t}$,
\[
\left\Vert \left(\mathrm{Id}-\mathrm{Op}\left(\chi_{\Sigma,\sigma}\right)\right)e^{tX}\left(\mathrm{Id}-\mathrm{Op}\left(\chi_{\omega}\right)\right)\right\Vert _{\mathcal{H}_{W}\left(M\right)}\ineq{\ref{eq:decay_outside-1}}Ce^{-\Lambda t}\leq Ce^{t\gamma_{K+1}^{+}},
\]
\[
\left\Vert \mathrm{Op}\left(\chi_{\Sigma,\sigma}\right)e^{tX}\left(\mathrm{Id}-\mathrm{Op}\left(\chi_{\Sigma,\sigma}\right)\right)\left(\mathrm{Id}-\mathrm{Op}\left(\chi_{\omega}\right)\right)\right\Vert _{\mathcal{H}_{W}\left(M\right)}\ineq{\ref{eq:decay_outside-1}}Ce^{-\Lambda t}\leq Ce^{t\gamma_{K+1}^{+}}.
\]
The operator $\mathrm{Op}\left(\chi_{\omega}\right)$ is compact hence
we can decompose the last term in the right hand side of (\ref{eq:eq})
and the last term in (\ref{eq:eq3}) as
\begin{equation}
\left(e^{tX}-\mathrm{Op}\left(\chi_{\Sigma,\sigma}\right)\mathrm{Op}_{\Sigma}\left(e^{tX_{\mathcal{F}}}\tilde{T}_{\left[0,K\right]}\right)\mathrm{Op}\left(\chi_{\Sigma,\sigma}\right)\right)\mathrm{Op}\left(\chi_{\omega}\right)=R_{t}+r''\label{eq:eq2}
\end{equation}
where $R_{t}$ is finite rank and $\left\Vert r''\right\Vert _{\mathcal{H}_{W}\left(M\right)}\leq C_{\epsilon}e^{t\left(\gamma_{K+1}^{+}+\epsilon\right)}$.
We get
\[
e^{tX}\eq{\ref{eq:eq},\ref{eq:eq4},\ref{eq:eq2}}\mathrm{Op}\left(\chi_{\Sigma,\sigma}\right)\mathrm{Op}_{\Sigma}\left(e^{tX_{\mathcal{F}}}\tilde{T}_{\left[0,K\right]}\right)\mathrm{Op}\left(\chi_{\Sigma,\sigma}\right)+R_{t}+r''+r_{t,\omega},
\]
where $\left|\omega\right|$ is taken large enough in (\ref{eq:r_t_omega})
so that $\left\Vert r_{t,\omega}\right\Vert _{\mathcal{H}_{W}\left(M\right)}\leq Ce^{t\left(\gamma_{K+1}^{+}+\epsilon\right)}$
also. We have obtained $\exists C>0$, $\forall t>0$, $\exists\sigma_{t}>0$,$\forall\sigma>\sigma_{t}$,
$\exists R_{\sigma,t}$ finite rank operator, 
\[
\left\Vert e^{tX}-\mathrm{Op}\left(\chi_{\Sigma,\sigma}\right)\mathrm{Op}_{\Sigma}\left(e^{tX_{\mathcal{F}}}\tilde{T}_{\left[0,K\right]}\right)\mathrm{Op}\left(\chi_{\Sigma,\sigma}\right)+R_{\sigma,t}\right\Vert _{\mathcal{H}_{W}\left(M\right)}\leq Ce^{t\left(\gamma_{K+1}^{+}+\epsilon\right)}.
\]
giving Theorem \ref{thm:1_emergence_of_QM}.
\begin{rem}
by the effects of weight $W$ we can replace $\mathrm{Op}\left(\chi_{\Sigma,\sigma}\right)\mathrm{Op}_{\Sigma}\left(e^{tX_{\mathcal{F}}}\tilde{T}_{\left[0,K\right]}\right)\mathrm{Op}\left(\chi_{\Sigma,\sigma}\right)$
by $\mathrm{Op}_{\Sigma}\left(e^{tX_{\mathcal{F}}}\tilde{T}_{\left[0,K\right]}\right)$
in (\ref{eq:emerging-1}).
\end{rem}

\section{\label{sec:Proof-of-Theorem_Weyl}Proof of Theorem \ref{thm:Weyl-law-for}
(Weyl law)}

The aim of this section is to prove Theorem \ref{thm:Weyl-law-for}.
We do this in three steps.

\subsection{\label{subsec:Approximate-projector}Approximate projector}

For $\omega\in\mathbb{R}$, $\delta>0$ and $k\in\mathbb{N}$, we
define an approximate projector $P$ for the union of band $k'\in\left[0,k\right]$
and frequency interval $\left[\omega-\delta,\omega+\delta\right]$
as follows. We will relate its trace to a symplectic volume. We denote
$\boldsymbol{1}_{\left[\omega-\delta,\omega+\delta\right]}:C^{\beta}\left(\Sigma;\mathcal{F}_{\left[0,k\right]}\right)\rightarrow C^{\beta}\left(\Sigma;\mathcal{F}_{\left[0,k\right]}\right)$
the multiplication operator by the characteristic function for $\rho\in\Sigma$,
\[
\boldsymbol{1}_{\left[\omega-\delta,\omega+\delta\right]}\left(\rho\right):=\begin{cases}
1 & \text{ if }\boldsymbol{\omega}\left(\rho\right)\in\left[\omega-\delta,\omega+\delta\right]\\
0 & \text{ if not.}
\end{cases}.
\]
We define the operator
\begin{equation}
P:=\mathrm{Op}_{\Sigma}\text{\ensuremath{\left(\boldsymbol{1}_{\left[\omega-\delta,\omega+\delta\right]}\tilde{T}_{\left[0,k\right]}\right)}}\quad:C^{\infty}\left(M\right)\rightarrow C^{\infty}\left(M\right).\label{eq:P_k-1}
\end{equation}
The next lemma shows that the trace of $P$ is related to the symplectic
volume of the support of function $\boldsymbol{1}_{\left[\omega-\delta,\omega+\delta\right]}$
on the trapped set $\Sigma$.

\begin{cBoxB}{}
\begin{lem}
The operator $P$ is trace class in $\mathcal{H}_{W}\left(M\right)$
and $\exists C>0$,$\forall\omega\geq\delta\geq1$,
\begin{equation}
\left\Vert P\right\Vert _{\mathrm{Tr}}\leq C\omega^{d}\delta,\qquad\left\Vert P\right\Vert _{\mathcal{H}_{W}}\leq C,\label{eq:trace_bound}
\end{equation}
\begin{equation}
\left|\mathrm{Tr}\left(P\right)-\mathrm{rank}\left(\mathcal{F}_{\left[0,k\right]}\right)\frac{\mathrm{Vol}\left(M\right)\omega^{d}}{\left(2\pi\right)^{d+1}}\left(2\delta\right)\right|\leq C\omega^{d}\delta\left(\left|\omega\right|^{-\left(\beta-\mu\right)/2}+\frac{\delta}{\omega}\right)+C\omega^{d}\label{eq:trace-1}
\end{equation}
\end{lem}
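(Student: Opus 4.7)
The plan is to compute $\mathrm{Tr}(P)$ by unfolding the definition (\ref{eq:def_Op_a}) and reducing to a fibered trace on $\Sigma$. By definition,
\[
P = \check{\mathcal{T}}_{\sigma}^{\Delta}\,\check{\mathrm{Op}}\bigl(\chi_{[\omega-\delta,\omega+\delta]}T_{[0,k]}\bigr)\,\check{\mathcal{T}}_{\sigma},
\]
so by cyclicity
\[
\mathrm{Tr}(P) = \mathrm{Tr}\Bigl(\check{\mathrm{Op}}\bigl(\chi_{[\omega-\delta,\omega+\delta]}T_{[0,k]}\bigr)\,\check{\mathcal{T}}_{\sigma}\check{\mathcal{T}}_{\sigma}^{\Delta}\Bigr).
\]
The operator $\check{\mathcal{T}}_{\sigma}\check{\mathcal{T}}_{\sigma}^{\Delta}$ acts on $\mathcal{S}_{\flat}(T_{\Sigma}T^{*}M)$ and, by the resolution of identity $\mathcal{T}^{\dagger}\mathcal{T}=\mathrm{Id}$ together with the truncation $\chi_{\sigma}$ and the twisted exponential map $\widetilde{\exp^{\circ}}$, it coincides (up to a tempered error supported at distance $\gtrsim\sigma$ in the fiber, and at frequencies $|\omega|\le\omega_\sigma$) with the identity. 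The first step is therefore to show
\[
\check{\mathcal{T}}_{\sigma}\check{\mathcal{T}}_{\sigma}^{\Delta} = \mathrm{Id}_{\mathcal{S}_{\flat}(T_{\Sigma}T^{*}M)} + r_{\sigma},
\]
where $r_{\sigma}$ has Schwartz kernel $O_{N}(\sigma^{-N})$ away from the diagonal and contributes at most $C\omega^{d}$ to the trace (the last term in (\ref{eq:trace-1})). This uses the moderateness of $g$ and the $\Psi_{\tilde{\phi}^{0}}$-calculus developed around Theorem \ref{thm:propagation_singularities_2}.

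The second step is the fiberwise trace evaluation. By the factorization (\ref{eq:def_Op_check}) the symbol $\chi_{[\omega-\delta,\omega+\delta]}T_{[0,k]}$ is quantized as $\widetilde{\mathrm{Op}}(\mathrm{Id}_{\mathcal{S}(K)})\otimes_{\Sigma}(\mathcal{B}_{N_{s},N_{u}}T_{[0,k]}\mathcal{B}_{N_{s},N_{u}}^{\dagger})$ multiplied by $\chi_{[\omega-\delta,\omega+\delta]}$. The $K$-component $\widetilde{\mathrm{Op}}(\mathrm{Id})$ is a projector onto Gaussian states along $K$, with fiberwise trace $1$ per unit symplectic volume in normalization $(2\pi)^{\dim K/2}$. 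The $N$-component is a fiberwise finite rank operator, and since $T_{[0,k]}$ has rank $\mathrm{rank}(\mathcal{F}_{[0,k]})$ pointwise on $\Sigma$ (cf.\ (\ref{eq:rank_F_k})) and the Bargmann transform $\mathcal{B}_{N_{s},N_{u}}$ is an isometric embedding, the fiberwise trace is $\mathrm{rank}(\mathcal{F}_{[0,k]})$. Combining these and integrating over $\Sigma$ against the Liouville volume $d\varrho$, one gets the leading expression
\[
\mathrm{Tr}(P) \;\sim\; \frac{\mathrm{rank}(\mathcal{F}_{[0,k]})}{(2\pi)^{d+1}}\int_{\Sigma}\chi_{[\omega-\delta,\omega+\delta]}(\rho)\,d\varrho
\underset{(\ref{eq:dvol_E0*})}{=}\frac{\mathrm{rank}(\mathcal{F}_{[0,k]})\,\mathrm{Vol}(M)}{(2\pi)^{d+1}}\int_{\omega-\delta}^{\omega+\delta}\omega'^{d}\,d\omega'.
\]
A Taylor expansion $\int_{\omega-\delta}^{\omega+\delta}\omega'^{d}d\omega'=2\delta\omega^{d}+O(\delta^{3}\omega^{d-2})$ yields the main term $2\delta\omega^{d}$ with an error bounded by $C\omega^{d}\delta\cdot(\delta/\omega)$ as claimed.

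The third step is the control of the remainders. The Hölder continuity of the bundle $N_{s}\to\Sigma$ (exponent $\beta$) prevents $T_{[0,k]}$ from being exactly fiberwise-constant on scales larger than the quantum uncertainty; a slow-variation estimate analogous to (\ref{eq:slow_variation_W}), applied to a symbol that varies over a set of $g$-diameter $\sigma$ in $T_{\Sigma}T^{*}M$ (which on $M$ has size $\sigma|\omega|^{-1/2}$ from (\ref{eq:metric_g_tilde_in_coordinates})), produces an error factor $|\omega|^{-\beta/2}\sigma^{2(2d+2)}$, where the large power $2(2d+2)$ reflects the symplectic volume of a $\sigma$-ball in $T_{\Sigma}T^{*}M$. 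Finally, the Schur-type bound (\ref{eq:result_of_Shur}) applied to $\check{\mathrm{Op}}(\chi T_{[0,k]})$ gives $\|P\|_{\mathcal{H}_{W}}\le C_{\sigma}$, while $\|P\|_{\mathrm{Tr}}\le C_{\sigma}\omega^{d}\delta$ follows from decomposing $P$ on a basis of wave packets $\Phi_{j,\rho}$ with $\rho\in\mathrm{supp}(\chi_{[\omega-\delta,\omega+\delta]}\cap\Sigma_{\sigma})$ whose cardinality in phase space scales as $\omega^{d}\delta$.

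The main obstacle will be the second step, namely rigorously justifying that the trace of the fiberwise operator $\widetilde{\mathrm{Op}}(\mathrm{Id}_{\mathcal{S}(K)})\otimes_{\Sigma}(\mathcal{B}_{N_{s},N_{u}}T_{[0,k]}\mathcal{B}_{N_{s},N_{u}}^{\dagger})$, viewed as an operator on $M$ via $\check{\mathcal{T}}_{\sigma}^{\Delta}(\cdot)\check{\mathcal{T}}_{\sigma}$, produces the symplectic volume on $\Sigma$ with the correct normalization $(2\pi)^{-(d+1)}$, when $\Sigma$ is only Hölder-embedded (through $N_s$) inside $T^{*}M$; this is where the delicate $|\omega|^{-\beta/2}\sigma^{2(2d+2)}$ error enters and has to be tracked through the Bargmann calculus of Section \ref{sec:Bargmann-transform-and}.
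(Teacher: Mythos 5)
Your proposal is correct and follows essentially the same route as the paper: cyclicity of the trace, the approximate identity $\check{\mathcal{T}}_{\sigma}\check{\mathcal{T}}_{\sigma}^{\Delta}\approx\mathrm{Id}$, the fiberwise evaluation $\mathrm{Tr}\left(T_{\left[0,k\right]}\right)=\mathrm{rank}\left(\mathcal{F}_{\left[0,k\right]}\right)$ integrated against $d\varrho/\left(2\pi\right)^{d+1}$ on $\Sigma$, the expansion $\int_{\omega-\delta}^{\omega+\delta}\omega'^{d}d\omega'=2\delta\omega^{d}\left(1+O\left(\delta/\omega\right)\right)$, and the same three error sources ($\sigma$-cutoff, H\"older regularity of $N_{s}$, sharp frequency cutoff). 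The paper establishes trace-classness and the bounds (\ref{eq:trace_bound}) by showing $P\in\Psi_{\mathrm{Id},g/\sigma^{2}}$ with kernel concentrated on a domain of volume $O_{\sigma}\left(\omega^{d}\delta\right)$ and then factoring through Hilbert--Schmidt operators, which is the same idea as your wave-packet counting argument.
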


\end{cBoxB}

\begin{rem}
Later in (\ref{eq:last}) we will use (\ref{eq:trace-1}) as follows.
For any small $c'>0$ we will take $\delta\gg1$ and $\omega\gg\delta$
large enough so that the right hand side of (\ref{eq:trace-1}) is
smaller than $c'\omega^{d}\delta$.
\end{rem}

\begin{proof}
We have
\[
\left\Vert P\right\Vert _{\mathrm{Tr}}\eq{\ref{eq:P_k-1}}\left\Vert \mathrm{Op}_{\Sigma}\text{\ensuremath{\left(\boldsymbol{1}_{\left[\omega-\delta,\omega+\delta\right]}\tilde{T}_{\left[0,k\right]}\right)}}\right\Vert _{\mathrm{Tr}}\ineq{\ref{eq:bound_OP_Sigma-1}}C\omega^{d}\delta,
\]
\[
\left\Vert P\right\Vert _{\mathcal{H}_{W}\left(M\right)}\eq{\ref{eq:P_k-1}}\left\Vert \mathrm{Op}_{\Sigma}\text{\ensuremath{\left(\boldsymbol{1}_{\left[\omega-\delta,\omega+\delta\right]}\tilde{T}_{\left[0,k\right]}\right)}}\right\Vert _{\mathcal{H}_{W}\left(M\right)}\ineq{\ref{eq:bound_OP_Sigma}}C.
\]
Eq. (\ref{eq:trace-1}) comes from (\ref{eq:bound_OP_Sigma-1-1})
because $T_{\left[0,k\right]}\left(m\right)$ is a projector onto
$\mathrm{Pol}_{\left[0,k\right]}\left(E_{s}\left(m\right)\right)$
hence
\[
\mathrm{Tr}\left(T_{\left[0,k\right]}\left(m\right)\right)\eq{\ref{eq:def_Tk}}\mathrm{dim}\left(\mathrm{Pol}_{\left[0,k\right]}\left(E_{s}\left(m\right)\right)\right)\eq{\ref{eq:def_F-1}}\mathrm{rank}\left(\mathcal{F}_{\left[0,k\right]}\right)
\]
and
\[
\int_{M}\mathrm{Tr}\left(T_{\left[0,k\right]}\left(m\right)\right)dm=\mathrm{Vol}\left(M\right)\mathrm{rank}\left(\mathcal{F}_{\left[0,k\right]}\right).
\]
\end{proof}

\subsection{Truncated resolvent}

\begin{cBoxA}{}
\begin{defn}
For $z\in\mathbb{C}$, $\Lambda>0$, let
\begin{equation}
\mathcal{R}\left(z\right):=\left(z-\left(X-\Lambda P\right)\right)^{-1}\label{eq:def_RR}
\end{equation}
\begin{equation}
D\left(z\right):=\left(z-X\right)^{-1}-\mathcal{R}\left(z\right)\label{eq:def_D}
\end{equation}
\end{defn}

\end{cBoxA}

\begin{rem}
A similar construction of truncated resolvent $\mathcal{R}\left(z\right)$
has been used in \cite[Proof of Prop. 5.14]{faure_tsujii_Ruelle_resonances_density_2016}.
\end{rem}

Let $\delta'>0$ and define the rectangle in the spectral plane
\begin{equation}
\tilde{R}\left(\omega,\delta,\delta'\right):=\left[\gamma_{k}^{-}-2\epsilon,\gamma_{0}^{+}+2\epsilon\right]\times i\left[\omega-\delta+\delta',\omega+\delta-\delta'\right]\label{eq:def_Rtilde}
\end{equation}

The next lemma is similar to \cite[Lemma 5.16]{faure_tsujii_Ruelle_resonances_density_2016}
so we omit the proof.

\begin{cBoxB}{}
\begin{lem}
\label{lem:ResolventBound}There exists $C>0$ such that, if $\delta'>0$
in (\ref{eq:def_Rtilde}) and $\Lambda>0$ in (\ref{eq:def_RR}) are
sufficiently large, then for any $\omega\in\mathbb{R},\delta>\delta'>0$,
$z\in\tilde{R}\left(\omega,\delta,\delta'\right)$, 
\begin{equation}
\left\Vert \mathcal{R}\left(z\right)\right\Vert _{\mathcal{H}_{W}\left(M\right)}\leq C.\label{eq:R_bounded}
\end{equation}
\end{lem}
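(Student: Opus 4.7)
The plan is to adapt the approximate-resolvent construction used in the proof of Theorem \ref{Thm: bands} (Section \ref{subsec:Band-spectrum-of}) to the perturbed generator $X - \Lambda P$. I would use the same partition $\mathrm{Id} = \hat{\chi}_{\Omega_{0,\sigma}} + \hat{\chi}_{\Omega_{1,\sigma}} + \hat{\chi}_{\Omega_2}$ of $T^{*}M$, with the frequency interval $J'_{\omega,\delta} = [\omega-\delta,\omega+\delta]$ chosen to match the frequency cutoff built into $P$, and construct an approximate resolvent as a sum $R(z) = R_{\mathrm{in}}^{(0)} + R_{\mathrm{out}}^{(0)} + R^{(1)} + R^{(2)}$. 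The essential new input, compared to the band proof, is that on the microlocal support of $\mathrm{Op}_{\sigma}(T_{[0,k]}) \hat{\chi}_{\Omega_{0,\sigma}}$ the operator $P$ is approximately the identity, thanks to the composition rule (\ref{eq:-2}) together with the fact that the frequency cutoff of $P$ contains $\Omega_{0,\sigma}$; consequently $X - \Lambda P$ acts there as $X - \Lambda \,\mathrm{Id}$ up to microlocal remainders, shifting the effective spectrum of the first $k+1$ bands by $-\Lambda$ and permitting forward (rather than backward) integration on the inner piece.

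The main new step is the key estimate
\begin{equation}
\bigl\|e^{t(X-\Lambda P)}\,\mathrm{Op}_{\sigma_{2}}(T_{[0,k]})\,\hat{\chi}_{\Omega_{0,\sigma}}\bigr\|_{\mathcal{H}_{W}} \le C_{\epsilon}\, e^{t(\gamma_{0}^{+}+\epsilon - \Lambda)}, \qquad t\ge 0,
\label{eq:plan-key}
\end{equation}
proved by a Duhamel comparison between $e^{t(X-\Lambda P)}$ and $e^{-\Lambda t} e^{tX}$ on the range of $\mathrm{Op}_{\sigma_{2}}(T_{[0,k]}) \hat{\chi}_{\Omega_{0,\sigma}}$. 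The comparison is propagated in time by the Egorov formula (\ref{eq:Egorov_formula}): the flow preserves the band $[0,k]$ (because $[e^{tX_{\mathcal{F}}},T_{k}]=0$) and keeps the frequency $\boldsymbol{\omega}$ constant, so the approximate identity $P\approx_{0}\mathrm{Id}$ on the inner range survives propagation up to remainders controlled by the parameters $\sigma,\sigma_{2},|\omega|$. Combined with the bound (\ref{eq:bound-4-1}) for $e^{tX}$ on $T_{[0,k]}$, this yields (\ref{eq:plan-key}). Once available, the forward integral
\[
R_{\mathrm{in}}^{(0)}(z):=\int_{0}^{T} e^{-tz}\,e^{t(X-\Lambda P)}\,\mathrm{Op}_{\sigma_{2}}(T_{[0,k]})\,\hat{\chi}_{\Omega_{0,\sigma}}\,dt
\]
is uniformly bounded on $\tilde{R}(\omega,\delta,\delta')$ as soon as $\Lambda$ is chosen large enough to ensure $\gamma_{0}^{+}+\epsilon-\Lambda < \gamma_{k}^{-}-2\epsilon$, with $(z-(X-\Lambda P))R_{\mathrm{in}}^{(0)}(z)$ equal to $\mathrm{Op}_{\sigma_{2}}(T_{[0,k]})\hat{\chi}_{\Omega_{0,\sigma}}$ up to a boundary term of size $\mathcal{O}(e^{-(\Lambda-O(\epsilon))T})$.

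The other three pieces are built exactly as in Section \ref{subsec:Band-spectrum-of}. For $R_{\mathrm{out}}^{(0)}$ (the $T_{\ge (k+1)}$ part of $\Omega_{0,\sigma}$), $P$ is microlocally orthogonal to $\mathrm{Op}_{\sigma_{2}}(T_{\ge (k+1)})$ by (\ref{eq:-2}), so $e^{t(X-\Lambda P)}\approx e^{tX}$ on this range; forward integration together with (\ref{eq:exp-tXK+1}) gives a uniform bound, using the spectral gap $\gamma_{k+1}^{+}<\gamma_{k}^{-}-\epsilon$ assumed in Theorem \ref{thm:Weyl-law-for} to ensure convergence. For $R^{(1)}$, Theorem \ref{thm:decay} yields arbitrarily fast decay of $e^{tX}\hat{\chi}_{\Omega_{1,\sigma}}$ and $P$ is negligible there because its image is concentrated near $\Sigma$. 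For $R^{(2)}$, the operator $P$ vanishes up to microlocal error off $J'_{\omega,\delta}$, and one uses the diagonal wave-packet resolvent $\int_{\Omega_{2}}(z-i\boldsymbol{\omega}(\rho))^{-1}\Pi(\rho)\,d\rho/(2\pi)^{2d+1}$, bounded by $C/\delta'$ because $\mathrm{Im}(z)$ stays at distance at least $\delta'$ from the complement of $J'_{\omega,\delta}$.

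Summing $R(z)$, each term is bounded by a constant, and the total remainder $r(z):=\mathrm{Id}-(z-(X-\Lambda P))R(z)$ collects the microlocal errors from $\approx_{t}$ in the four constructions, the boundary terms from truncating the time integrals at $T$, and the $C/\delta'$ leakage of $R^{(2)}$: its norm is bounded by $C_{N}\sigma^{-N}+C_{\epsilon}e^{-(\Lambda-O(\epsilon))T}+C/\delta'+o_{|\omega|\to\infty}(1)$, which is less than $1/2$ uniformly in $z\in \tilde{R}(\omega,\delta,\delta')$ once $\sigma$, then $T$, then $\delta'$, then $|\omega|$ are chosen large enough. A parallel construction on the left gives $R_{l}(z)(z-(X-\Lambda P))=\mathrm{Id}-r_{l}(z)$ with $\|r_{l}(z)\|<1/2$, and a Neumann series yields the exact resolvent with the uniform bound (\ref{eq:R_bounded}). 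The main obstacle will be a clean justification of (\ref{eq:plan-key}): the Duhamel comparison requires controlling $[X,P]$ in $\mathcal{H}_{W}$, which is delicate because $P$ depends on the (only H\"older continuous) bundle $\mathcal{F}$ and because one must track that the commutator is microlocally supported near the boundary frequencies $\omega\pm\delta$, hence disjoint from $\mathrm{Op}_{\sigma_{2}}(T_{[0,k]})\hat{\chi}_{\Omega_{0,\sigma}}$ up to errors that decay in $|\omega|$ and in $\sigma_{2}-\sigma$.
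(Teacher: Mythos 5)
Your proposal is correct and follows essentially the route the paper intends: the paper gives no proof here beyond citing \cite[Lemma 5.16]{faure_tsujii_Ruelle_resonances_density_2016}, and the construction there is exactly your phase-space partition with time-truncated forward semigroup integrals, the whole point of $-\Lambda P$ being that it acts as $-\Lambda\,\mathrm{Id}$ (up to $\approx$-errors) on the inner component $\mathrm{Op}_{\sigma_{2}}\left(T_{\left[0,k\right]}\right)\hat{\chi}_{\Omega_{0,\sigma}}$ and as $0$ on the other components, so that every piece of the approximate resolvent converges under forward integration once $\Lambda>\gamma_{0}^{+}-\gamma_{k}^{-}+O\left(\epsilon\right)$. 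Two small remarks: the parameters must be chosen in the order $T$ first, then $\sigma,\sigma_{2}$ depending on $T$, then $\left|\omega\right|$ (not $\sigma$ before $T$ as you wrote), since the $\approx_{t}$ errors accumulate over $t\in\left[0,T\right]$; and your worry about controlling $\left[X,P\right]$ is already absorbed by the calculus of Section \ref{sec:Emerging-quantum-dynamics} --- Egorov (\ref{eq:Egorov_formula}) together with the flow-invariance of $\boldsymbol{\omega}$ and the relation $\left[e^{tX_{\mathcal{F}}},T_{k}\right]=0$ gives $e^{tX}P\approx_{t}Pe^{tX}$ directly, the H\"older regularity entering only through the $\left|\omega\right|^{-\beta/2}$ term of (\ref{eq:precise_bound}) and the sharp frequency edges being kept at distance $\delta'$ from $\mathrm{Im}\left(z\right)$, so no separate commutator estimate is needed.
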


\end{cBoxB}

\begin{rem}
A consequence of Lemma \ref{lem:ResolventBound} and expression (\ref{eq:def_D})
is that the poles of $D\left(z\right)$ in $\tilde{R}\left(\omega,\delta,\delta'\right)$
coincide with the poles of the resolvent $\left(z-X\right)^{-1}$,
i.e. Ruelle resonances.
\end{rem}

Let us consider the following union of two horizontal bands (see Figure
\ref{fig:boxes})
\begin{align}
B_{\pm}\left(\omega,\delta,\delta'\right) & :=\left[\gamma_{k}^{-}-\epsilon,\gamma_{0}^{+}+\epsilon\right]\times i\left[\omega\pm\left(\delta-2\delta'\right),\omega\pm\left(\delta-\delta'\right)\right],\label{eq:def_B}\\
B\left(\omega,\delta,\delta'\right) & :=B_{+}\left(\omega,\delta,\delta'\right)\cup B_{-}\left(\omega,\delta,\delta'\right).
\end{align}

\begin{center}
\begin{figure}
\begin{centering}
\input{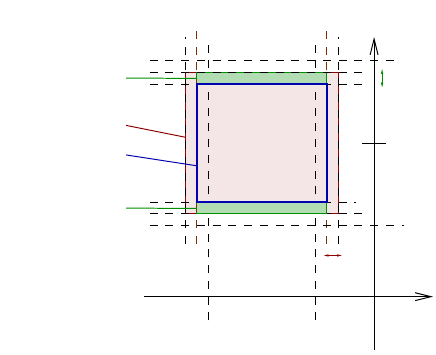tex_t}
\par\end{centering}
\caption{\label{fig:boxes}Picture in the spectral plane for the rectangular
domain $\tilde{R}\left(\omega,\delta,\delta'\right)$ defined in (\ref{eq:def_Rtilde}),
the sub-domains $B_{\pm}\left(\omega,\delta,\delta'\right)$ defined
in (\ref{eq:def_B}) and $R\left(\omega,\delta,\delta'\right)$ defined
later in (\ref{eq:def_R_omega_delta}).}
\end{figure}
\par\end{center}

We will need later (in the proof of Lemma \ref{lem:We-have-,}), the
following proposition.

\begin{cBoxB}{}
\begin{prop}
\label{lem:For-any--1}The operator $D\left(z\right):\mathcal{H}_{W}\left(M\right)\rightarrow\mathcal{H}_{W}\left(M\right)$
is meromorphic w.r.t. $z\in\tilde{R}\left(\omega,\delta,\delta'\right)$
and trace class outside of its poles of first order. In particular
$\mathrm{Tr}\left(D\left(z\right)\right)\in L_{\mathrm{loc}}^{1}$
is locally integrable and with setting $z:=x+iy$, we have $\exists C>0,\forall\delta>1,\forall\omega>\delta,$
\begin{equation}
\int_{B\left(\omega,\delta,\delta'\right)}\left|\mathrm{Tr}\left(D\left(z\right)\right)\right|dxdy\leq C\delta'\omega^{d}\ln\delta.\label{eq:int_trace}
\end{equation}
\end{prop}

\end{cBoxB}

\begin{proof}
We first show few lemmas.
\begin{proof}
~

\begin{cBoxB}{}
\begin{lem}
$D\left(z\right)$ is trace class outside of its poles and
\begin{equation}
\mathrm{Tr}\left(D\left(z\right)\right)=\partial_{z}\ln F\left(z\right)\label{eq:Tr_D}
\end{equation}
with
\begin{align}
A\left(z\right) & :=\mathrm{Id}-\mathcal{R}\left(z\right)\Lambda P,\label{eq:def_A}
\end{align}
\begin{equation}
F\left(z\right):=\mathrm{det}\left(A\left(z\right)\right).\label{eq:def_F}
\end{equation}
\end{lem}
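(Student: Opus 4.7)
The plan is to prove the lemma in three short steps: establish that $D(z)$ is trace class via the resolvent identity, verify that $F(z)$ is a well-defined Fredholm determinant on $\tilde{R}(\omega,\delta,\delta')$, and then compute $\partial_z \ln F(z)$ by the standard determinant formula combined with cyclicity of the trace. The main algebraic input throughout is the relation $\mathcal{R}(z)^{-1} - (z-X) = \Lambda P$, which comes directly from the definition (\ref{eq:def_RR}).

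First I would use the second resolvent identity in the form
\begin{equation*}
D(z) = (z-X)^{-1} - \mathcal{R}(z) = (z-X)^{-1}\,\Lambda P\,\mathcal{R}(z) = \mathcal{R}(z)\,\Lambda P\,(z-X)^{-1}.
\end{equation*}
By (\ref{eq:trace_bound}) the operator $\Lambda P$ is trace class in $\mathcal{H}_W(M)$, while $\mathcal{R}(z)$ is uniformly bounded on $\tilde{R}(\omega,\delta,\delta')$ by Lemma \ref{lem:ResolventBound} and $(z-X)^{-1}$ is bounded away from Ruelle resonances by Theorem \ref{thm:If--is}. Hence $D(z)$ is meromorphic in $z\in\tilde{R}$, with poles exactly at the Ruelle resonances of $X$, and trace class outside these poles.

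Next, the same bound shows $A(z)-\mathrm{Id}=-\mathcal{R}(z)\Lambda P$ is trace class and holomorphic in $z$, so the Fredholm determinant $F(z)=\det A(z)$ is well-defined and holomorphic on $\tilde{R}$. The key algebraic identity is
\begin{equation*}
\mathcal{R}(z)(z-X) = \mathcal{R}(z)\bigl(\mathcal{R}(z)^{-1}-\Lambda P\bigr) = \mathrm{Id} - \mathcal{R}(z)\Lambda P = A(z),
\end{equation*}
which gives $A(z)^{-1}=(z-X)^{-1}\mathcal{R}(z)^{-1}$ away from zeros of $F$, and identifies these zeros with the Ruelle resonances of $X$ in $\tilde{R}$.

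Finally I would apply the standard formula $\partial_z \ln\det A(z)=\mathrm{Tr}\bigl(A(z)^{-1}\partial_z A(z)\bigr)$ for analytic families of trace-class perturbations. From $\partial_z\mathcal{R}(z)=-\mathcal{R}(z)^2$ we get $\partial_z A(z)=\mathcal{R}(z)^2\Lambda P$, hence $A(z)^{-1}\partial_z A(z)=(z-X)^{-1}\mathcal{R}(z)\,\Lambda P$. Cyclicity of the trace, justified because $\Lambda P$ is trace class while $(z-X)^{-1}$ and $\mathcal{R}(z)$ are bounded, yields
\begin{equation*}
\mathrm{Tr}\bigl((z-X)^{-1}\mathcal{R}(z)\Lambda P\bigr) = \mathrm{Tr}\bigl(\Lambda P(z-X)^{-1}\mathcal{R}(z)\bigr) = \mathrm{Tr}\bigl(\mathcal{R}(z)\Lambda P(z-X)^{-1}\bigr) = \mathrm{Tr}\,D(z),
\end{equation*}
which is (\ref{eq:Tr_D}). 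The only subtle point — and the step that makes everything fit together — is the factorization $A(z)=\mathcal{R}(z)(z-X)$: it is what lets the cyclic rearrangement land exactly on $\mathrm{Tr}\,D(z)$ and simultaneously matches the zeros of $F(z)$ with the poles of $(z-X)^{-1}$. The rest is standard Fredholm determinant theory.
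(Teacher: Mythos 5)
Your proof is correct and follows essentially the same route as the paper: the second resolvent identity giving $D(z)=\mathcal{R}(z)\Lambda P(z-X)^{-1}$ for the trace-class claim, the factorization $A(z)=\mathcal{R}(z)(z-X)$, and the computation $\partial_{z}A(z)=\mathcal{R}(z)^{2}\Lambda P$ are exactly the paper's identities (\ref{eq:R-Rk}), (\ref{eq:R-Rk2}) and (\ref{eq:int_A}). The only (cosmetic) difference is that you finish with the standard formula $\partial_{z}\ln\det A=\mathrm{Tr}\left(A^{-1}\partial_{z}A\right)$ plus cyclicity of the trace, whereas the paper manipulates $\mathrm{Tr}\left(\ln A(z)\right)$ directly; your version is, if anything, slightly cleaner.
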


\end{cBoxB}

$D\left(z\right)$ is meromorphic because both $\left(z-X\right)^{-1}$
and $\mathcal{R}\left(z\right)$ are meromorphic, see \cite[Lemma 5.16]{faure_tsujii_Ruelle_resonances_density_2016}.
$\mathcal{R}\left(z\right)$ is even holomorphic on the rectangle
$\tilde{R}\left(\omega,\delta,\delta'\right)$, from Lemma \ref{lem:ResolventBound}.
Then
\begin{align}
D\left(z\right)\eq{\ref{eq:def_D}} & \left(z-X\right)^{-1}-\mathcal{R}\left(z\right)\nonumber \\
= & \left(z-\left(X-\Lambda P\right)\right)^{-1}\left(\left(z-\left(X-\Lambda P\right)\right)-\left(z-X\right)\right)\left(z-X\right)^{-1}\nonumber \\
= & \mathcal{R}\left(z\right)\Lambda P\left(z-X\right)^{-1}.\label{eq:R-Rk}
\end{align}
In this last expression $P$ is trace class. $\left(z-X\right)^{-1}$
and $\mathcal{R}\left(z\right)$ are bounded outside of their poles
hence $D\left(z\right)$ is trace class outside the poles of $\left(z-X\right)^{-1}$
and $\mathcal{R}\left(z\right)$. We have
\begin{align}
D\left(z\right) & \eq{\ref{eq:def_D}}\left(z-X\right)^{-1}-\mathcal{R}\left(z\right)=\left(z-\left(X-\Lambda P\right)+\Lambda P\right)^{-1}-\mathcal{R}\left(z\right)\nonumber \\
 & \eq{\ref{eq:def_RR}}\mathcal{R}\left(z\right)\left(\mathrm{Id}-\mathcal{R}\left(z\right)\Lambda P\right)^{-1}-\mathcal{R}\left(z\right)\eq{\ref{eq:def_A}}\mathcal{R}\left(z\right)\left(A\left(z\right)^{-1}-\mathrm{Id}\right).\label{eq:help}
\end{align}
We compute
\begin{equation}
\partial_{z}A\left(z\right)\eq{\ref{eq:def_A}}\mathcal{R}\left(z\right)^{2}\Lambda P\eq{\ref{eq:def_A}}\mathcal{R}\left(z\right)\left(\mathrm{Id}-A\left(z\right)\right)\eq{\ref{eq:help}}D\left(z\right)A\left(z\right),\label{eq:int_A}
\end{equation}
then we have
\begin{align*}
\partial_{z}\ln A\left(z\right) & =\left(\partial_{z}A\left(z\right)\right)A\left(z\right)^{-1}\eq{\ref{eq:int_A}}D\left(z\right),
\end{align*}
\[
\ln F\left(z\right)\eq{\ref{eq:def_F}}\ln\mathrm{det}\left(A\left(z\right)\right)=\mathrm{Tr}\left(\ln A\left(z\right)\right),
\]
\[
\partial_{z}\ln F\left(z\right)=\mathrm{Tr}\left(\partial_{z}\ln A\left(z\right)\right)=\mathrm{Tr}\left(D\left(z\right)\right).
\]
We have obtained (\ref{eq:Tr_D}).
\end{proof}
For later use, we also notice that
\begin{equation}
\mathcal{R}\left(z\right)\eq{\ref{eq:R-Rk}}A\left(z\right)\left(z-X\right)^{-1}.\label{eq:R-Rk2}
\end{equation}

\begin{cBoxB}{}
\begin{lem}
There exists $C>0$, such that for every $z\in B\left(\omega,\delta,\delta'\right)$,
\begin{equation}
\left|F\left(z\right)\right|\leq e^{C\left(\ln\delta\right)\omega^{d}}.\label{eq:F_upper_bound}
\end{equation}
and for every $z\in B\left(\omega,\delta,\delta'\right)$ with $\mathrm{Re}\left(z\right)=\gamma_{k}^{-}-\epsilon$,
\begin{equation}
\left|F\left(z\right)\right|\geq e^{-C\left(\ln\delta\right)\omega^{d}}.\label{eq:F_lower_bound}
\end{equation}
\end{lem}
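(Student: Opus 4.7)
The plan is to establish both bounds on $F(z) = \det(A(z))$ with $A(z) = \mathrm{Id} - \mathcal{R}(z)\Lambda P$ using Fredholm determinant estimates, with two inputs already at hand: the trace class bound $\|P\|_{\mathrm{Tr}} \leq C_\sigma \omega^d \delta$ together with the operator bound $\|P\|_{\mathcal{H}_W} \leq C_\sigma$ from (\ref{eq:trace_bound}), and the uniform resolvent bound $\|\mathcal{R}(z)\|_{\mathcal{H}_W} \leq C$ on the full rectangle $\tilde{R}(\omega,\delta,\delta')$ from Lemma \ref{lem:ResolventBound}.

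For the upper bound (\ref{eq:F_upper_bound}), I would apply Weyl's multiplicative inequality
\[
|\det(\mathrm{Id} - K)| \leq \prod_{j \geq 1}(1 + s_j(K))
\]
to $K = \mathcal{R}(z)\Lambda P$, which is trace class. Combining $s_j(K) \leq \Lambda \|\mathcal{R}(z)\| \, s_j(P)$ with both bounds on $P$ yields $\log|F(z)| \leq \sum_j \log(1 + \Lambda C s_j(P))$. To pass from the naive estimate $O(\omega^d \delta)$ (obtained via $\sum_j s_j(P) = \|P\|_{\mathrm{Tr}}$) to the sharper $O(\omega^d \ln\delta)$, the plan is to decompose $P$ according to a dyadic partition of the frequency window $[\omega-\delta,\omega+\delta]$: by the micro-local decay (\ref{eq:P_decay}), the interactions between different dyadic pieces are negligible, each piece has effective rank $O(\omega^d)$, and there are only $O(\ln\delta)$ dyadic scales.

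For the lower bound (\ref{eq:F_lower_bound}) on the line $\mathrm{Re}(z) = \gamma_k^- - \epsilon$, the key algebraic identity is
\[
A(z)^{-1} = \mathrm{Id} + \Lambda (z-X)^{-1} P,
\]
obtained from $\mathcal{R}(z)^{-1} = (z-X) + \Lambda P$ together with $A(z) = \mathcal{R}(z)(z-X)$ (cf.\ (\ref{eq:R-Rk2})). By the hypothesis $\gamma_{k+1}^+ < \gamma_k^- - \epsilon$ of Theorem \ref{thm:Weyl-law-for}, the line $\mathrm{Re}(z) = \gamma_k^- - \epsilon$ lies in a spectral gap of $X$, so Theorem \ref{Thm: bands} gives $\|(z-X)^{-1}\|_{\mathcal{H}_W} \leq C_\epsilon$ for $|\mathrm{Im}(z)|$ large enough. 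Consequently $\Lambda(z-X)^{-1}P$ is trace class with structural bounds of the same type as $\mathcal{R}(z)\Lambda P$, and applying the same argument as for the upper bound to $|\det(A(z)^{-1})| = |F(z)|^{-1}$ yields $|F(z)|^{-1} \leq e^{C\omega^d \ln\delta}$, hence (\ref{eq:F_lower_bound}).

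The main obstacle is precisely the refined estimate producing the factor $\ln\delta$ rather than the crude $\delta$ coming from a direct application of Weyl's inequality with $\|K\|_{\mathrm{Tr}} \leq \Lambda \|\mathcal{R}(z)\| \, \|P\|_{\mathrm{Tr}}$. This improvement is essential for the integrated estimate (\ref{eq:int_trace}), and ultimately for the Weyl law; it requires a genuinely micro-local treatment of $P$ exploiting both its spatial concentration near $\Sigma$ at scale $\sigma$ and its frequency concentration in $[\omega-\delta,\omega+\delta]$, so that the dyadic decomposition in frequency together with the off-diagonal decay (\ref{eq:P_decay}) converts the linear count $\omega^d\delta$ of effective eigenvalues into the logarithmic sum $\omega^d\log\delta$ of contributions across $O(\log\delta)$ scales.
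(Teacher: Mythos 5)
Your overall frame is right and partly matches the paper: the upper bound via $\log\left|F\left(z\right)\right|\leq\sum_{j}\left|\lambda_{j}\left(z\right)\right|\leq\left\Vert \mathcal{R}\left(z\right)\Lambda P\right\Vert _{\mathrm{Tr}}$, and for the lower bound the identity $A\left(z\right)^{-1}=\mathrm{Id}+\Lambda\left(z-X\right)^{-1}P$ together with the resolvent bound on $\mathrm{Re}\left(z\right)=\gamma_{k}^{-}-\epsilon$ from Theorem \ref{Thm: bands}. Your lower-bound route (bounding $\left|\det\left(A\left(z\right)^{-1}\right)\right|=\left|F\left(z\right)\right|^{-1}$ from above by the same machinery) is a legitimate variant of the paper's, which instead shows $\left|1-\lambda_{j}\left(z\right)\right|>c$ uniformly and hence $\left|\log\left|1-\lambda_{j}\left(z\right)\right|\right|\leq C'\left|\lambda_{j}\left(z\right)\right|$.

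The gap is in the one step that carries the lemma: the passage from $\omega^{d}\delta$ to $\omega^{d}\ln\delta$. Decomposing $P$ in frequency cannot by itself reduce anything, since trace norms are subadditive: a dyadic piece of $\left[\omega-\delta,\omega+\delta\right]$ of length $2^{j}$ has trace norm $\asymp\omega^{d}2^{j}$, not $O\left(\omega^{d}\right)$ as you claim, and summing the pieces just returns $\omega^{d}\delta$. Neither the kernel decay (\ref{eq:P_decay}) of $P$ nor its spatial concentration near $\Sigma$ enters here. What produces the logarithm is the decay of the \emph{resolvent} in the frequency variable combined with the location of $z$: because $z\in B\left(\omega,\delta,\delta'\right)$ lies in the thin bands at the edges of the frequency window, the block $P_{w}$ supported on the $w$-th unit-length frequency interval $I_{w}$ satisfies $\left\Vert \mathcal{R}\left(z\right)\Lambda P_{w}\right\Vert _{\mathrm{Tr}}\leq C\omega^{d}/w$ (the paper's estimate (\ref{eq:estimate})), since $\mathcal{R}\left(z\right)$ acting at frequency distance $\asymp w$ from $\mathrm{Im}\left(z\right)$ has norm $O\left(1/w\right)$ while $\left\Vert P_{w}\right\Vert _{\mathrm{Tr}}\leq C\omega^{d}$; the harmonic sum over the $\lfloor\delta\rfloor$ blocks then gives $C\omega^{d}\ln\delta$. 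A dyadic decomposition would also work, but only in combination with this $1/\mathrm{dist}$ resolvent gain, which is absent from your argument, so as written your mechanism does not close. The same refined bound with $\left(z-X\right)^{-1}$ in place of $\mathcal{R}\left(z\right)$ is also what your lower-bound step needs.
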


\end{cBoxB}

\begin{proof}
For $z\in B_{-}\left(\omega,\delta,\delta'\right)$, writing $\lambda_{j}\left(z\right)$,
$j=1,2,\cdots,J,$ for the eigenvalues of the trace class operator
$\mathcal{R}\left(z\right)\Lambda P$, we have by \href{https://en.wikipedia.org/wiki/Trace_class\#Lidskii's_theorem}{Lidskii's theorem}
\begin{align}
\log\left|F\left(z\right)\right| & =\log\left|\mathrm{det}\left(\mathrm{Id}-\mathcal{R}\left(z\right)\Lambda P\right)\right|=\sum_{j}\log\left|1-\lambda_{j}\left(z\right)\right|\nonumber \\
 & \leq\sum_{j}\log\left(1+\left|\lambda_{j}\left(z\right)\right|\right)\leq\sum_{j}\left|\lambda_{j}\left(z\right)\right|\leq\left\Vert \mathcal{R}\left(z\right)\Lambda P\right\Vert _{\mathrm{Tr}}.\label{eq:log_F}
\end{align}
For the last inequality, see e.g. \cite[p.64]{gohberg-00}. Notice
that $\left\Vert \mathcal{R}\left(z\right)\Lambda P\right\Vert _{\mathrm{Tr}}\underset{(\ref{eq:trace_bound},\ref{eq:R_bounded})}{\leq}C\delta\omega^{d}$
but this is not enough to get (\ref{eq:F_upper_bound}). We will improve
the last bound as follows. We decompose the frequency interval $\left[\omega-\delta,\omega+\delta\right]$
as the union of $\lfloor\delta\rfloor$ (the integer part of $\delta$)
intervals of bounded length $l=\frac{2\delta}{\left[\delta\right]}$,
so that 
\begin{equation}
\left[\omega-\delta,\omega+\delta\right]=\bigcup_{w\in\left\{ 1,2,\ldots,\lfloor\delta\rfloor\right\} }I_{w}\label{eq:decomp_inter}
\end{equation}
with
\begin{equation}
I_{w}:=\left[\omega-\delta+\left(w-1\right)l,\omega-\delta+wl\right].\label{eq:def_Iw}
\end{equation}
Parallel to (\ref{eq:P_k-1}) we define
\[
P_{w}:=\check{\mathcal{T}}_{\sigma}^{\Delta}\check{\mathrm{Op}}_{\Sigma}\left(T_{\left[0,k\right]}\right)\chi_{I_{w}}\check{\mathcal{T}}_{\sigma}\quad:C^{\infty}\left(M\right)\rightarrow C^{\infty}\left(M\right)
\]
i.e. an approximate projector on frequency interval of size $l$ and
we write
\begin{equation}
P=\sum_{w}P_{w}.\label{eq:P_Pw}
\end{equation}
Similarly to (\ref{eq:trace_bound}) we have
\[
\left\Vert P_{w}\right\Vert _{\mathrm{Tr}}\leq C\omega^{d}
\]
and
\begin{equation}
\left\Vert \mathcal{R}\left(z\right)\Lambda P_{w}\right\Vert _{\mathrm{Tr}}\leq\frac{1}{w}C\omega^{d}\label{eq:estimate}
\end{equation}
uniformly for $1\le w\le\lfloor\delta\rfloor$. Hence
\[
\left\Vert \mathcal{R}\left(z\right)\Lambda P\right\Vert _{\mathrm{Tr}}\leq\sum_{w=1}^{\left[\delta\right]}\left\Vert \mathcal{R}\left(z\right)\Lambda P_{w}\right\Vert _{\mathrm{Tr}}\leq C\omega^{d}\ln\delta.
\]
We get
\[
\left|F\left(z\right)\right|\underset{(\ref{eq:log_F})}{\leq}e^{\left\Vert \mathcal{R}\left(z\right)\Lambda P\right\Vert _{\mathrm{Tr}}}\leq e^{C\omega^{d}\ln\delta}.
\]
We have obtained (\ref{eq:F_upper_bound}). To prove the second claim
\eqref{eq:F_lower_bound}, we consider an arbitrary $z\in B\left(\omega,\delta,\delta'\right)$
with $\mathrm{Re}\left(z\right)=\gamma_{k}^{-}-\epsilon$. From Corollary
\ref{Thm: bands} we have $\left\Vert \left(z-X\right)^{-1}\right\Vert \leq C$.
Then
\[
\left(\mathrm{Id}-\mathcal{R}\left(z\right)\Lambda P\right)^{-1}=A\left(z\right)^{-1}\eq{\ref{eq:R-Rk2}}\left(z-X\right)^{-1}\mathcal{R}\left(z\right)^{-1}\eq{\ref{eq:def_RR}}\left(\mathrm{Id}+\left(z-X\right)^{-1}\Lambda P\right)
\]
is uniformly bounded from (\ref{eq:trace_bound}). Hence $\left|1-\lambda_{j}\left(z\right)\right|>c$
with some $c>0$ independent on $\omega$ and therefore we have $\left|\log\left|1-\lambda_{j}\left(z\right)\right|\right|<C'\left|\lambda_{j}\left(z\right)\right|$
for some $C'>0$. Thus 
\[
-\log\left|F\left(z\right)\right|=-\sum_{j}\log\left|1-\lambda_{j}\left(z\right)\right|\leq C'\sum_{j}\left|\lambda_{j}\left(z\right)\right|\leq C'\left\Vert \mathcal{R}\left(z\right)\Lambda P\right\Vert _{\mathrm{Tr}}\leq C'\omega^{d}\ln\delta,
\]
giving (\ref{eq:F_lower_bound}).
\end{proof}
Let $\mathrm{spect}\left(X\right)$ denotes the discrete Ruelle spectrum
of $X$.

\begin{cBoxB}{}
\begin{lem}
We have $\exists C>0,\forall\omega$,$\forall z\in B\left(\omega,\delta,\delta'\right),$
\begin{equation}
\left|\partial_{z}\ln F\left(z\right)-\sum_{z_{j}\in\mathrm{spect}\left(X\right)\cap B\left(\omega,\delta,\delta'\right)}\frac{1}{z-z_{j}}\right|\leq C\left(\ln\delta\right)\omega^{d}\label{eq:lnF}
\end{equation}
where the sum over $z_{j}\in\mathrm{spect}\left(X\right)\cap B\left(\omega,\delta,\delta'\right)$
is counted with multiplicity of the Ruelle eigenvalues.
\end{lem}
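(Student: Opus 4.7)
The plan is to use a Weierstrass-type factorization of $F$ on the rectangle $\tilde R(\omega,\delta,\delta')$ together with complex-analytic estimates on its logarithmic derivative, powered by the two-sided bound on $|F(z)|$ already established.

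The first step is to identify the zeros of $F$. By Lemma \ref{lem:ResolventBound}, the truncated resolvent $\mathcal R(z)$ is holomorphic and uniformly bounded on $\tilde R(\omega,\delta,\delta')$, so from the identity $(z-X)^{-1}=A(z)^{-1}\mathcal R(z)$ that follows from (\ref{eq:R-Rk2}), the poles of $(z-X)^{-1}$ in $\tilde R$ (which are exactly the Ruelle eigenvalues $z_j$) are precisely the zeros of $F(z)=\det A(z)$, with matching algebraic multiplicities $m_j$. Consequently one can factor
\begin{equation}
F(z) = G(z)\prod_{z_j\in\sigma(X)\cap\tilde R}(z-z_j)^{m_j},
\end{equation}
where $G$ is holomorphic and nowhere vanishing on $\tilde R$. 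Taking the logarithmic derivative yields
\begin{equation}
\partial_z\ln F(z) - \sum_{z_j\in\sigma(X)\cap B}\frac{m_j}{z-z_j} = \partial_z\ln G(z) + \sum_{z_j\in\sigma(X)\cap(\tilde R\setminus B)}\frac{m_j}{z-z_j},
\end{equation}
and it suffices to bound each term on the right by $C\omega^d\ln\delta$ uniformly in $z\in B$.

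For the term $\partial_z\ln G(z)$, I would appeal to a Borel--Carath\'eodory-type inequality. The upper bound (\ref{eq:F_upper_bound}) gives $|F(z)|\leq e^{C\omega^d\ln\delta}$ throughout $\tilde R$, and the lower bound (\ref{eq:F_lower_bound}) provides a point $z_0$ with $|F(z_0)|\geq e^{-C\omega^d\ln\delta}$ on the left edge $\mathrm{Re}(z)=\gamma_k^--\epsilon$. Translating these into bounds on $G = F \cdot \prod(z-z_j)^{-m_j}$ and invoking the standard fact that a non-vanishing holomorphic function on a domain with bounded logarithm has logarithmic derivative controlled in the sup norm on compact subsets by the oscillation of $\log|G|$, one obtains $|\partial_z\ln G(z)|\leq C\omega^d\ln\delta$ on $B$. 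For the remaining sum, the geometric separation between $B$ (the top and bottom strips of $\tilde R$) and $\tilde R\setminus B$ (the middle strip) forces $|z-z_j|$ to be bounded below by a definite constant when $z_j\in\tilde R\setminus B$ and $z\in B$; combined with the a priori bound $\sum m_j = O(\omega^d\ln\delta)$ obtained from Jensen's formula applied to $F$, this gives the desired control of the second sum.

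The main obstacle will be the Borel--Carath\'eodory step, specifically controlling $|G(z)|$ from above uniformly on $B$ rather than only outside Cartan exceptional disks. The difficulty is that $|G(z)|=|F(z)|/\prod_j|z-z_j|^{m_j}$ could a priori blow up near an eigenvalue $z_j\in\tilde R$; the resolution is to invoke Cartan's minorization lemma, which guarantees that $\prod_j|z-z_j|^{m_j}\geq e^{-C\omega^d\ln\delta}$ outside a union of disks of controllably small total radius, and then to combine this with a standard Harnack- or Cauchy-integral bound on $\partial_z\log G$ in terms of the oscillation of $\log|G|$ on a slightly larger contour in $\tilde R$. Once this is in place, the pointwise estimate (\ref{eq:lnF}) follows; integrating over $B$ (which has total area $O(\delta')$) then yields the trace estimate (\ref{eq:int_trace}) required for the Weyl law.
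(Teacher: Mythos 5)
Your overall strategy is the same as the paper's: identify the zeros of $F=\det A$ with the Ruelle eigenvalues via (\ref{eq:R-Rk2}), divide them out, and control the logarithmic derivative of the remaining non-vanishing factor by Borel--Carath\'eodory plus Cauchy, using the two-sided bounds (\ref{eq:F_upper_bound}), (\ref{eq:F_lower_bound}). However, there is a genuine gap in your decomposition. You factor out \emph{all} zeros of $F$ in the large rectangle $\tilde R(\omega,\delta,\delta')$ and are then left with the extra term $\sum_{z_j\in\sigma(X)\cap(\tilde R\setminus B)}m_j/(z-z_j)$, which you claim is bounded because $B$ and $\tilde R\setminus B$ are ``geometrically separated.'' They are not: by (\ref{eq:def_Rtilde}) and (\ref{eq:def_B}), $B_\pm$ are sub-rectangles of $\tilde R$ sharing boundary with the complementary middle strip (and with the thin vertical margins of width $\epsilon$), so an eigenvalue $z_j\in\tilde R\setminus B$ can lie arbitrarily close to a point $z\in B$, and the corresponding term $m_j/(z-z_j)$ is not bounded by $C\omega^d\ln\delta$. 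The pointwise estimate (\ref{eq:lnF}) therefore does not follow from your decomposition. The fix is what the paper does: divide out exactly the zeros lying in $B(\omega,\delta,\delta')$ --- the same set that appears in the statement --- normalized at the point $z_0$ from (\ref{eq:F_lower_bound}), so that $\partial_z\ln F-\sum_{z_j\in\sigma(X)\cap B}1/(z-z_j)=\partial_z\ln G$ exactly, with no leftover sum, and then run Borel--Carath\'eodory and Cauchy on $g=\ln G$.

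Two secondary points. First, your count $\sum m_j=O(\omega^d\ln\delta)$ via Jensen's formula is under-justified: $z_0$ sits on the edge $\mathrm{Re}(z)=\gamma_k^--\epsilon$, so any disk about $z_0$ contained in $\tilde R$ has radius $O(\min(\epsilon,\delta'))$ and does not see all the zeros; the paper instead invokes the a priori Weyl upper bound $O(\delta'\omega^d)$ on the resonance density from \cite[Thm.~3.6]{faure_tsujii_Ruelle_resonances_density_2016}, which you should use as well. Second, your worry about bounding $|G|$ from above near the divided-out zeros is legitimate, but Cartan's lemma only yields the bound off exceptional disks; the standard resolution (as in Titchmarsh's lemma $\alpha$, which the paper cites) is the nested-region trick --- bound $|F|$ on a slightly larger region, divide out only zeros in the smaller one, so that $|z-z_j|\gtrsim|z_0-z_j|$ automatically on the outer boundary --- after which the maximum principle for the harmonic function $\log|G|$ gives the upper bound everywhere inside and Borel--Carath\'eodory applies.
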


\end{cBoxB}

\begin{proof}
This is lemma $\alpha$ in \cite[p.56]{titchmarsh1986theory}. We
reproduce the proof here. From (\ref{eq:R_bounded}), $F\left(z\right)$
is a well defined holomorphic function in the domains $B\left(\omega,\delta,\delta'\right)$
and from (\ref{eq:R-Rk2}), the zeroes of $F\left(z\right)$ coincide
up to multiplicities with the poles of $\left(z-X\right)^{-1}$, i.e.
Ruelle eigenvalues $z_{j}\in\mathrm{spect}\left(X\right)\cap B\left(\omega,\delta,\delta'\right)$.
Using Weyl upper bound $O\left(\omega^{d}\right)$ on the density
in \cite[Thm.3.6]{faure_tsujii_Ruelle_resonances_density_2016}, note
that the number of such Ruelle eigenvalues is bounded by $C\delta'\omega^{d}\le C\delta\omega^{d}$.
From (\ref{eq:F_lower_bound}), we can fix some $z_{0}\in B\left(\omega,\delta,\delta'\right)$
with $\mathrm{Re}\left(z_{0}\right)=\gamma_{k}^{-}-\epsilon$ so that
$\left|F\left(z_{0}\right)\right|\underset{(\ref{eq:F_lower_bound})}{\geq}e^{-C'\delta\omega^{d}}.$
The function
\begin{equation}
G\left(z\right):=\frac{F\left(z\right)\prod_{z_{j}}\left(z_{0}-z_{j}\right)}{F\left(z_{0}\right)\prod_{z_{j}}\left(z-z_{j}\right)}.\label{eq:def_G}
\end{equation}
$G\left(z\right)$ is holomorphic on $B\left(\omega,\delta,\delta'\right)$
and from estimate (\ref{eq:F_upper_bound}) on the boundary of $B\left(\omega,\delta,\delta'\right)$
and \href{https://en.wikipedia.org/wiki/Maximum_modulus_principle}{maximum modulus principle}
we get
\begin{equation}
\forall z\in B\left(\omega,\delta,\delta'\right),\quad\left|G\left(z\right)\right|\underset{(\ref{eq:F_upper_bound})}{\leq}e^{C\omega^{d}\ln\delta}.\label{eq:bound_G}
\end{equation}
Moreover $G\left(z\right)$ has no zero on on the simply connected
domain $B\left(\omega,\delta,\delta'\right)$ hence
\begin{equation}
g\left(z\right):=\ln G\left(z\right)\label{eq:def_g}
\end{equation}
is well defined as a holomorphic function on $B\left(\omega,\delta,\delta'\right)$
with $g\left(z_{0}\right)=0$ and satisfies $\mathrm{Re}g\left(z\right)\underset{(\ref{eq:bound_G})}{\leq}C\omega^{d}\ln\delta$.
By \href{https://en.wikipedia.org/wiki/Borel\%E2\%80\%93Carath\%C3\%A9odory_theorem}{Borel-Carathéodory theorem}\footnote{Here we do not need to give the constants and the proof is simple:
$g$ maps $B\left(\omega,\delta,\delta'\right)$ on the half-plane
$\mathrm{Re}\left(z\right)<C\delta\omega^{d}$. We post-compose by
a \href{https://en.wikipedia.org/wiki/M\%C3\%B6bius_transformation}{Moebius transformation}
$f$ which maps this half-plane to the unit disk and maps $g\left(z_{0}\right)=0$
to $0$. Then the image of $B\left(\omega,\delta,\delta'\right)$
by $f\circ g$ is contained in a disk $\left|z\right|<r<1$. This
implies that the image $g\left(B\left(\omega,\delta,\delta'\right)\right)$
is contained in a (large) disk $\left|z\right|<C'\delta\omega^{d}$.} this implies
\[
\left|g\left(z\right)\right|<C'\omega^{d}\ln\delta
\]
on a region slightly larger than $B\left(\omega,\delta,\delta'\right)$
and therefore by \href{https://en.wikipedia.org/wiki/Cauchy's_integral_formula}{Cauchy integral formula}
\[
\left|\partial_{z}g\left(z\right)\right|\eq{\ref{eq:def_g},\ref{eq:def_G}}\left|\partial_{z}\ln F\left(z\right)-\sum_{z_{j}}\frac{1}{z-z_{j}}\right|<C''\omega^{d}\ln\delta
\]
giving the conclusion.
\end{proof}
Now we finish with the proof of Lemma \ref{lem:For-any--1}. Using
Weyl upper bound on the density in \cite[Thm. 2.3]{faure_tsujii_Ruelle_resonances_density_2016},
we have, with $z=x+iy$, that
\begin{equation}
\sum_{z_{j}\in\mathrm{spect}\left(X\right)\cap B\left(\omega,\delta,\delta'\right)}\int_{B\left(\omega,\delta,\delta'\right)}\frac{1}{\left|z-z_{j}\right|}dxdy<C\left(\delta'\omega^{d}\right)\ln\delta'.\label{eq:bound}
\end{equation}
Then
\begin{align*}
\int_{B\left(\omega,\delta,\delta'\right)}\left|\mathrm{Tr}\left(D\left(z\right)\right)\right|dxdy & \eq{\ref{eq:Tr_D}}\int_{B\left(\omega,\delta,\delta'\right)}\left|\partial_{z}\ln F\left(z\right)\right|dxdy\\
 & \underset{(\ref{eq:lnF},\ref{eq:bound})}{\leq}C\delta'\omega^{d}\ln\delta+C\delta'\omega^{d}\ln\delta'\leq2C\delta'\omega^{d}\ln\delta.
\end{align*}
We have obtained (\ref{eq:int_trace}).
\end{proof}

\subsection{Counting resonances by argument principle}

From Lemma \ref{lem:ResolventBound}, the truncated resolvent $\mathcal{R}\left(z\right)$
is bounded on the rectangle $\tilde{R}\left(\omega,\delta,\delta'\right)$
and hence by the Definition (\ref{eq:def_D}) the poles of $z\rightarrow\mathrm{Tr}\,D\left(z\right)$
coincides with the poles of $\left(z-X\right)^{-1}$, i.e., the Ruelle
resonance. In the following, we count the number of Ruelle resonance
in the rectangle $\tilde{R}\left(\omega,\delta,\delta'\right)$ by
using a slightly generalized version of the argument principle. We
learned this method from the paper of S. Dyatlov \cite[Section 11]{dyatlov_resonance_band_2013}. 

In the following, we suppose that $\delta>0,\delta'>0$ is fixed so
that $2\delta'<\delta$ and we consider the limit $|\omega|\to\infty$.
We take a $C^{\infty}$ function $f_{\delta,\delta'}:\mathbb{R}\rightarrow\left[0,1\right]$
that is $f_{\delta,\delta'}\left(\omega'\right)=1$ for $\left|\omega'\right|<\delta-2\delta'$
and $f_{\delta,\delta'}\left(\omega'\right)=0$ for $\left|\omega'\right|>\delta-\delta'$
and $\left|\partial_{\omega'}f\right|<C\delta'^{-1}$. Then for $\omega\in\mathbb{R}$,
$z\in\mathbb{C}$, let
\[
f_{\omega,\delta,\delta'}\left(z\right):=f_{\delta,\delta'}\left(\mathrm{Im}\left(z-i\omega\right)\right)
\]
Let us define the rectangle in the spectral plane (see Figure \ref{fig:boxes})
\begin{equation}
R\left(\omega,\delta,\delta'\right):=\left[\gamma_{k}^{-}-\epsilon,\gamma_{0}^{+}+\epsilon\right]\times i\left[\omega-\delta+2\delta',\omega+\delta-2\delta'\right].\label{eq:def_R_omega_delta}
\end{equation}
In the following we suppose that the $k$-th band is separated from
$(k+1)$-st band i.e.
\[
\gamma_{k+1}^{+}<\gamma_{k}^{-}.
\]
Note that $R\left(\omega,\delta,\delta'\right)$ is a proper subset
of $\tilde{R}\left(\omega,\delta,\delta'\right)$ defined in (\ref{eq:def_Rtilde}).
We have that $\forall z\in R\left(\omega,\delta,\delta'\right)$,
$f_{\omega,\delta,\delta'}\left(z\right)=1$ and $\forall z\in\tilde{R}\left(\omega,\delta,\delta'\right)$,
\begin{equation}
\left|\partial_{\overline{z}}f_{\omega,\delta,\delta'}\left(z\right)\right|<C\delta'^{-1}.\label{eq:almost_anal}
\end{equation}
We consider the following paths $\Gamma:=\Gamma_{\mathrm{-}}\cup\Gamma_{+}$
on $\mathbb{C}$, parameterized as
\[
\Gamma_{-}:\omega\in\mathbb{R}\rightarrow z=\gamma_{k}^{-}-\epsilon+i\omega\in\mathbb{C},
\]
\[
\Gamma_{+}:\omega\in\mathbb{R}\rightarrow z=\gamma_{0}^{+}+\epsilon+i\omega\in\mathbb{C}.
\]

From (\ref{eq:R_bounded}) and (\ref{eq:resolvent-1}), there is no
pole of $D(z)$ on $\Gamma\cap\mathrm{supp}\left(f_{\omega,\delta,\delta'}\right)$
supposing that $\left|\omega\right|$ is sufficiently large. We know
that $D\left(z\right)$ is in trace class. Hence the integral $\int_{\Gamma_{+}-\Gamma_{-}}f_{\omega,\delta,\delta'}\left(z\right)\mathrm{Tr}\left(D\left(z\right)\right)dz$
is well defined. The next lemma shows that this integral is close
to the number of Ruelle resonances in the rectangle $R\left(\omega,\delta,\delta'\right)$.

\begin{cBoxB}{}
\begin{lem}
\label{lem:We-have-,}We have $\exists C>0,$$\forall\delta>1,\forall\omega>\delta,\forall0<\delta'<\delta$,
\begin{equation}
\left|\left(\frac{1}{2\pi i}\int_{\Gamma_{+}-\Gamma_{-}}f_{\omega,\delta,\delta'}\left(z\right)\mathrm{Tr}\left(D\left(z\right)\right)dz\right)-\sharp\left(\mathrm{spect}\left(X\right)\cap\tilde{R}\left(\omega,\delta,\delta'\right)\right)\right|<C\omega^{d}\max\left(\ln\delta,\delta'\right)\label{eq:res4}
\end{equation}
\end{lem}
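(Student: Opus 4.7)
The plan is to prove \eqref{eq:res4} by a generalized argument principle (à la Helffer--Sjöstrand / Dyatlov), converting the contour integral over $\Gamma_+ - \Gamma_-$ into a weighted sum over Ruelle resonances plus a controlled error coming from the support of $\partial_{\bar z} f_{\omega,\delta,\delta'}$. The key identity is that, by \eqref{eq:Tr_D}, $\mathrm{Tr}(D(z)) = \partial_z \ln F(z)$ is meromorphic on $\tilde{R}(\omega,\delta,\delta')$ with simple poles precisely at the Ruelle eigenvalues $z_j \in \sigma(X)\cap \tilde{R}(\omega,\delta,\delta')$, each with residue equal to its algebraic multiplicity (since zeros of the Fredholm determinant $F$ match poles of the resolvent with multiplicity).

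First, I will apply Stokes' theorem to the smooth $(1,0)$-form $f_{\omega,\delta,\delta'}(z)\,\mathrm{Tr}(D(z))\,dz$ on the rectangle $\tilde{R}(\omega,\delta,\delta')$, using that $f_{\omega,\delta,\delta'}$ vanishes on the top and bottom horizontal sides (which lie outside $\mathrm{supp}(f_{\omega,\delta,\delta'})$ thanks to the cutoff). Lemma \ref{lem:ResolventBound} ensures that $\mathcal{R}(z)$, and hence $\mathrm{Tr}(D(z))$, has no pole on $\Gamma \cap \mathrm{supp}(f_{\omega,\delta,\delta'})$. Removing small disks around each pole $z_j \in \tilde{R}(\omega,\delta,\delta')$ and letting their radii tend to zero yields
\[
\frac{1}{2\pi i}\int_{\Gamma_+ - \Gamma_-} f_{\omega,\delta,\delta'}(z)\,\mathrm{Tr}(D(z))\,dz = \sum_{z_j \in \sigma(X)\cap \tilde{R}} f_{\omega,\delta,\delta'}(z_j) + \frac{1}{\pi}\int\int_{\tilde{R}} \partial_{\bar z} f_{\omega,\delta,\delta'}(z)\,\mathrm{Tr}(D(z))\,dxdy,
\]
where the contour orientation is such that $\Gamma_+ - \Gamma_-$ together with the (zero-contributing) horizontal edges bounds $\tilde{R}$.

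Second, I will estimate the two terms on the right. For the sum, since $f_{\omega,\delta,\delta'} \equiv 1$ on $R(\omega,\delta,\delta')$ and $0 \le f \le 1$ on the transition bands $B(\omega,\delta,\delta') = \tilde{R}\setminus R$, the discrepancy
\[
\Bigl|\sum_{z_j \in \tilde{R}} f_{\omega,\delta,\delta'}(z_j) - \sharp\bigl(\sigma(X)\cap \tilde{R}(\omega,\delta,\delta')\bigr)\Bigr| \le \sharp\bigl(\sigma(X)\cap B(\omega,\delta,\delta')\bigr)
\]
is controlled by the Weyl upper bound \cite[Thm.3.6]{faure_tsujii_Ruelle_resonances_density_2016}, giving $O(\delta'\omega^d)$, since $B$ has imaginary width $O(\delta')$. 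For the area integral, the support of $\partial_{\bar z} f_{\omega,\delta,\delta'}$ lies in $B(\omega,\delta,\delta')$, where $|\partial_{\bar z} f_{\omega,\delta,\delta'}| \le C/\delta'$ by \eqref{eq:almost_anal}. Combining with Lemma \ref{lem:For-any--1},
\[
\Bigl|\int\int_{B} \partial_{\bar z}f_{\omega,\delta,\delta'}\cdot \mathrm{Tr}(D(z))\,dxdy\Bigr| \le \frac{C}{\delta'}\int\int_{B}|\mathrm{Tr}(D(z))|\,dxdy \le \frac{C}{\delta'}\cdot C\delta'\omega^d \ln \delta = C\omega^d \ln \delta.
\]
Adding the two error contributions yields $O(\omega^d(\ln\delta + \delta')) = O(\omega^d \max(\ln\delta,\delta'))$, which is precisely \eqref{eq:res4}.

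The main technical obstacle, which has already been overcome in Lemma \ref{lem:For-any--1}, is the logarithmic bound on $\int_B|\mathrm{Tr}(D(z))|\,dxdy$; without the refinement via the decomposition \eqref{eq:decomp_inter}--\eqref{eq:estimate} of $P$ into pieces $P_w$ supported on thin frequency slices (which gives the telescoping $\sum_w 1/w \asymp \ln\delta$ rather than a linear-in-$\delta$ trace bound), the argument principle would produce an error of order $\omega^d \delta$, which is far too weak. Beyond that, the remaining work is routine: check the orientation in Stokes' theorem, verify that the contribution from horizontal sides vanishes because $f_{\omega,\delta,\delta'}$ is supported strictly inside those edges, and absorb constants.
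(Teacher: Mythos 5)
Your proposal is correct and follows essentially the same route as the paper: the Cauchy--Pompeiu (generalized argument principle) identity applied to $f_{\omega,\delta,\delta'}(z)\,\mathrm{Tr}(D(z))\,dz$ on $\tilde{R}(\omega,\delta,\delta')$, with the horizontal edges killed by the support of $f_{\omega,\delta,\delta'}$, the area term bounded by $\left|\partial_{\overline{z}}f_{\omega,\delta,\delta'}\right|\leq C\delta'^{-1}$ together with the $L^{1}$ bound (\ref{eq:int_trace}) of Lemma \ref{lem:For-any--1}, and the discrepancy between $\sum_{j}f_{\omega,\delta,\delta'}(z_{j})$ and the resonance count controlled by the Weyl upper bound on $B(\omega,\delta,\delta')$. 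Both error terms and their combination into $C\omega^{d}\max(\ln\delta,\delta')$ match the paper's argument exactly.
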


\end{cBoxB}

\begin{proof}
From the Definition of $f_{\omega,\delta,\delta'}$ above, we have
$f_{\omega,\delta,\delta'}\left(z\right)=0$ if $\left|\mathrm{Im}\left(z\right)-\omega\right|>\delta-\delta'$,
so we can close the contour integral on horizontal segments $\mathrm{Im}\left(z\right)=\omega\pm\left(\delta-\delta'\right)$.
The residues of the poles of $D\left(z\right)$ inside the contour
are the spectral projector onto eigenspaces. Hence \href{https://en.wikipedia.org/wiki/Cauchy\%27s_integral_formula\#Smooth_functions}{Cauchy integral formula for smooth functions}
gives
\begin{align*}
\frac{1}{2\pi i}\int_{\Gamma_{+}-\Gamma_{-}}f_{\omega,\delta,\delta'}\left(z\right)\mathrm{Tr}\left(D\left(z\right)\right)dz= & \sum_{z_{j}\in\mathrm{spect}\left(X\right)\cap\tilde{R}\left(\omega,\delta,\delta'\right)}f_{\omega,\delta,\delta'}\left(z_{j}\right)\\
 & +\frac{1}{\pi}\int_{\tilde{R}\left(\omega,\delta,\delta'\right)}\left(\partial_{\overline{z}}f_{\omega,\delta,\delta'}\right)\left(z\right)\mathrm{Tr}\left(D\left(z\right)\right)dxdy
\end{align*}
with $z=x+iy$. We have $\exists C>0,$$\forall\delta>0,\forall\delta'<\frac{1}{2}\delta,\forall\omega>\delta$,
\begin{align}
\left|\int_{\tilde{R}\left(\omega,\delta,\delta'\right)}\left(\partial_{\overline{z}}f_{\omega,\delta,\delta'}\right)\left(z\right)\mathrm{Tr}\left(D\left(z\right)\right)dxdy\right|\leq & \delta'^{-1}\int_{B\left(\omega,\delta,\delta'\right)}\left|\mathrm{Tr}\left(D\left(z\right)\right)\right|dxdy\nonumber \\
\underset{(\ref{eq:almost_anal},\ref{eq:int_trace})}{\leq} & C\delta'^{-1}\left(\ln\delta\right)\delta'\omega^{d}=C\omega^{d}\ln\delta.\label{eq:use}
\end{align}
From properties of $f_{\omega,\delta,\delta'}$ and using Weyl upper
bound on the density in \cite[Thm.2.3]{faure_tsujii_Ruelle_resonances_density_2016}
we have
\[
\left|\sum_{z_{j}\in\mathrm{spect}\left(X\right)\cap\tilde{R}\left(\omega,\delta,\delta'\right)}f_{\omega,\delta,\delta'}\left(z_{j}\right)-\sharp\left\{ z_{j}\in\mathrm{spect}\left(X\right)\cap\tilde{R}\left(\omega,\delta,\delta'\right)\right\} \right|\leq C\omega^{d}\delta'.
\]
We have obtained (\ref{eq:res4}).
\end{proof}

\subsection{Relation with the symplectic volume}

The last step is to relate the integral $\frac{1}{2\pi i}\int_{\Gamma_{+}-\Gamma_{-}}f_{\omega,\delta,\delta'}\left(z\right)\mathrm{Tr}\left(D\left(z\right)\right)dz$
in (\ref{eq:res4}) to the trace of $P$ in (\ref{eq:trace-1}). Recall
Definition (\ref{eq:P_k-1}) of $P$. We define the operator 
\[
P_{0}:=\mathrm{Op}_{\Sigma}\text{\ensuremath{\left(\boldsymbol{1}_{\left[\omega-\delta+\delta'/2,\omega+\delta-\delta'/2\right]}T_{\left[0,k\right]}\right)}}.
\]
For $z\in R\left(\omega,\delta,\delta'\right)$, let
\[
d(z)=\min\left\{ \mathrm{Im}(z)-(\omega+\delta+\delta'/2),\mathrm{Im}(z)+(\omega-\delta-\delta'/2)\right\} \ge\delta'/2.
\]

\begin{cBoxB}{}
\begin{lem}[\textbf{Approximate expressions of $D\left(z\right)$}]
\textbf{\label{lem:Approximate-expressions-of-3}}There exists $C>0$
such that for any $T>0$, $\omega>\delta>\delta'>1$ and any $z\in\mathbb{C}$
with $\mathrm{Im}\left(z\right)\in\left[\omega-\delta+\delta',\omega+\delta-\delta'\right]$
and $\mathrm{Re}\left(z\right)=\gamma_{0}^{+}+\epsilon$, we have
\begin{equation}
\left\Vert D\left(z\right)-\left(\int_{0}^{T}e^{-tz}e^{t\left(X-\Lambda\right)}dt\right)\left(\Lambda P_{0}\right)\left(\int_{0}^{T}e^{-tz}e^{tX}dt\right)\right\Vert _{\mathrm{Tr}}\leq C\omega^{d}\left(e^{-\epsilon T}+\frac{\delta'}{\left(d\left(z\right)\right)^{2}}\right)\label{eq:approx_D-4}
\end{equation}

If instead $\mathrm{Re}\left(z\right)=\gamma_{k}^{-}-\epsilon$, we
have
\begin{equation}
\left\Vert D\left(z\right)-\left(\int_{0}^{T}e^{-tz}e^{t\left(X-\Lambda\right)}dt\right)\left(\Lambda P_{0}\right)\left(\int_{-T}^{0}e^{-tz}e^{tX}dt\right)\right\Vert _{\mathrm{Tr}}\leq C\omega^{d}\left(e^{-\epsilon T}+\frac{\delta'}{\left(d\left(z\right)\right)^{2}}\right)\label{eq:approx_D--2}
\end{equation}
\end{lem}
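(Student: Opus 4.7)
The starting point is the exact identity $D(z) = \mathcal{R}(z)\,\Lambda P\,(z-X)^{-1}$ from (\ref{eq:R-Rk}). I will prove (\ref{eq:approx_D-4}) by independently approximating each of the three factors in this product: replacing $(z-X)^{-1}$ by $\int_0^T e^{-tz} e^{tX}\,dt$, replacing $\mathcal{R}(z)$ by $\int_0^T e^{-tz} e^{t(X-\Lambda)}\,dt$, and replacing $P$ by $P_0$. The tools to control the errors in trace-class norm are: the uniform bound on $\mathcal{R}(z)$ (Lemma \ref{lem:ResolventBound}), the trace norm bound (\ref{eq:trace_bound}) on $P$, and the semigroup decay estimates (\ref{eq:exp-tX})--(\ref{eq:exp-tXK+1}), (\ref{eq:bound-4-1})--(\ref{eq:bound-4}) on the image of $T_{[0,k]}$.

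For the factor $(z-X)^{-1}$ on the line $\mathrm{Re}(z) = \gamma_0^+ + \epsilon$, I use the identity
\[
(z-X)^{-1} = \int_0^T e^{-tz} e^{tX}\,dt + e^{-Tz} e^{TX}(z-X)^{-1},
\]
so that after multiplication by $\Lambda P$ on the left the remainder is $\Lambda P e^{TX}\cdot e^{-Tz}(z-X)^{-1}$. The absorption $P \approx P\,\mathrm{Op}_\sigma(T_{[0,k]})$, a consequence of (\ref{eq:-2}), combined with the Egorov formula (\ref{eq:Egorov_formula}) to commute $\mathrm{Op}_\sigma(T_{[0,k]})$ across $e^{TX}$ and the decay bound (\ref{eq:bound-4-1}), gives operator norm $\leq C_{\epsilon'}\,e^{-T(\epsilon-\epsilon')}$ for this remainder after combination with $e^{-T\mathrm{Re}(z)}$. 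The analogous step for $\mathcal{R}(z)$ uses
\[
\mathcal{R}(z) = \int_0^T e^{-tz} e^{t(X-\Lambda P)}\,dt + e^{-Tz} e^{T(X-\Lambda P)}\mathcal{R}(z),
\]
the uniform boundedness (\ref{eq:R_bounded}), and the same Egorov-style commutation; the replacement of $e^{t(X-\Lambda P)}$ by $e^{t(X-\Lambda)}$ inside the integral is then justified because $X - \Lambda P$ and $X - \Lambda$ coincide on the range of $P$ up to errors already captured by (\ref{eq:-2}). To extract a trace-class bound of order $\omega^d$ (rather than $\omega^d \delta$), I will decompose $P = \sum_{w=1}^{\lfloor\delta\rfloor} P_w$ as in (\ref{eq:decomp_inter})--(\ref{eq:P_Pw}); each $P_w$ has $\|P_w\|_{\mathrm{Tr}} \leq C\omega^d$, and the $1/w$ decay of the resolvents applied to the frequency-localised piece $P_w$ (exactly as in (\ref{eq:estimate})) lets the sum over $w$ be absorbed in the constant.

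The last step is the replacement of $P$ by $P_0$, which supplies the $\delta'/d(z)^2$ term. The difference $P - P_0$ is the sum of two pieces supported in frequency windows of width $\delta'/2$ centred near $\omega \pm \delta$, each of trace norm $O(\omega^d \delta')$. For $z$ with $d(z) \geq \delta'/2$, these supports lie at frequency distance at least $d(z)$ from $\mathrm{Im}(z)$, and the oscillatory $t$-integrals $\int_0^T e^{-tz} e^{tX}(\cdot)$ applied to a frequency bin at distance $d(z)$ each produce a factor $1/d(z)$; combining the two flanking integrals with the trace bound $O(\omega^d \delta')$ for $\Lambda(P - P_0)$ yields the error $C\omega^d \delta'/d(z)^2$. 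The main obstacle throughout is the trace-class (not merely operator-norm) control of the remainders uniformly in $\delta$, which forces the frequency-decomposition argument mentioned above; once this is in place, the backward-integral case (\ref{eq:approx_D--2}) on $\mathrm{Re}(z) = \gamma_k^- - \epsilon$ follows by the completely symmetric scheme, where the finite-time integral for $(z-X)^{-1}$ is taken over $t \in [-T,0]$ and the relevant decay bounds are (\ref{eq:exp-tX}) and (\ref{eq:bound-4}) in place of (\ref{eq:exp-tXK+1}) and (\ref{eq:bound-4-1}).
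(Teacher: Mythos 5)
Your proposal is correct and follows essentially the same route as the paper: both start from the exact identity $D(z)=\mathcal{R}(z)\Lambda P(z-X)^{-1}$ of (\ref{eq:R-Rk}), truncate the two resolvents to finite-time integrals with boundary remainders controlled by the band decay estimates, and use the frequency-bin decomposition $P=\sum_{w}P_{w}$ of (\ref{eq:P_Pw}) with the $1/\mathrm{dist}\left(I_{w},\mathrm{Im}\left(z\right)\right)^{2}$ gain of (\ref{eq:estimate}) to convert the naive $\omega^{d}\delta$ trace bound into $\omega^{d}$, while the bins dropped in passing from $P$ to $P_{0}$ supply the $\omega^{d}\delta'/\left(d\left(z\right)\right)^{2}$ term. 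The only cosmetic difference is that you substitute the approximations factor by factor, whereas the paper verifies the same identity by applying $\left(z-X+\Lambda\right)$ and $\left(z-X\right)$ on either side of $\tilde{D}_{w}$ and reading off the boundary terms $r_{w}$; the algebra and the estimates invoked are the same.
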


\end{cBoxB}

\begin{proof}
Let $z\in\mathbb{C}$ with $\mathrm{Im}\left(z\right)\in\left[\omega-\delta+\delta',\omega+\delta-\delta'\right]$.
In the following we let $T>0$ be sufficiently large. Suppose $\mathrm{Re}\left(z\right)=\gamma_{0}^{+}+\epsilon$.
As in (\ref{eq:P_Pw}), we decompose $P=\sum_{w\in W}P_{w}$ with
$W:=\left\{ 1,2,\ldots,\lfloor\delta\rfloor\right\} $. As in (\ref{eq:decomp_inter})
with interval $I_{w}$ defined in (\ref{eq:def_Iw}) we decompose
\[
\left[\omega-\delta+\delta'/2,\omega+\delta-\delta'/2\right]=\bigcup_{w\in W_{0}}I_{w}
\]
with a subset $W_{0}\subset W$. So we can write 
\[
P_{0}=\sum_{w\in W_{0}}P_{w}.
\]
We have
\[
D\left(z\right)\eq{\ref{eq:R-Rk}}\sum_{w\in W}D_{w}\left(z\right)
\]
with
\[
D_{w}\left(z\right):=\Lambda\mathcal{R}\left(z\right)P_{w}\left(z-X\right)^{-1}.
\]
For $w\in W_{0}$, let
\[
\tilde{D}_{w}\left(z\right):=\Lambda\left(\int_{0}^{T}e^{-tz}e^{t\left(X-\Lambda\right)}dt\right)P_{w}\left(\int_{0}^{T}e^{-tz}e^{tX}dt\right).
\]
Then
\begin{align*}
\left(z-X+\Lambda\right)\tilde{D}_{w}\left(z\right)\left(z-X\right) & =\Lambda\left[e^{-tz}e^{t\left(X-\Lambda\right)}\right]_{0}^{T}P_{w}\left[e^{-tz}e^{tX}\right]_{0}^{T}\\
 & =\Lambda P_{w}+\Lambda r_{w},
\end{align*}
with
\[
r_{w}=e^{T\left(-z+X-\Lambda\right)}P_{w}+P_{w}e^{T\left(-z+X\right)}+e^{T\left(-z+X-\Lambda\right)}P_{w}e^{T\left(-z+X\right)}.
\]
We deduce that

\[
\tilde{D}_{w}\left(z\right)=D_{w}\left(z\right)+\Lambda\mathcal{R}\left(z\right)r_{w}\left(z-X\right)^{-1}
\]
with
\begin{align*}
\mathcal{R}\left(z\right)r_{w}\left(z-X\right)^{-1} & =\mathcal{R}\left(z\right)e^{T\left(-z+X-\Lambda\right)}P_{w}\left(z-X\right)^{-1}+\mathcal{R}\left(z\right)P_{w}\left(z-X\right)^{-1}e^{T\left(-z+X\right)}\\
 & \qquad+\mathcal{R}\left(z\right)e^{T\left(-z+X-\Lambda\right)}P_{w}\left(z-X\right)^{-1}e^{T\left(-z+X\right)}.
\end{align*}
For the right hand side of the last equality, we have the following
estimates. We use the notation $\mathcal{O}_{\mathrm{Tr}}\left(\ast\right)$
for an operator whose trace norm is bounded by a constant multiple
of $\ast$. If $w\in W_{0}$ then as in (\ref{eq:estimate}), we have
\[
\mathcal{R}\left(z\right)r_{w}\left(z-X\right)^{-1}=\mathcal{O}_{\mathrm{Tr}}\left(\omega^{d}e^{-\epsilon T}\frac{1}{\mathrm{dist}\left(I_{w},\mathrm{Im}\left(z\right)\right)^{2}}\right),
\]
where $\mathrm{dist}\left(I_{w},\mathrm{Im}\left(z\right)\right)$
is the distance between $I_{w}$ and $\mathrm{Im}\left(z\right)$.
If $w\in W\backslash W_{0}$ then
\[
D_{w}\left(z\right)=\mathcal{O}_{\mathrm{Tr}}\left(\omega^{d}\frac{1}{\left(d\left(z\right)\right)^{2}}\right).
\]
Taking the sum with respect to $w\in W_{0}$, we see
\[
\sum_{w\in W_{0}}\left\Vert \mathcal{R}\left(z\right)r_{w}\left(z-X\right)^{-1}\right\Vert _{\mathrm{Tr}}=\sum_{w\in W_{0}}\mathcal{O}\left(\omega^{d}e^{-\epsilon T}\frac{1}{\mathrm{dist}\left(I_{w},\mathrm{Im}\left(z\right)\right)^{2}}\right)=\mathcal{O}\left(\omega^{d}e^{-\epsilon T}\right),
\]
\[
\sum_{w\in W\backslash W_{0}}\mathcal{O}_{\mathrm{Tr}}\left(\omega^{d}\frac{1}{\left(d\left(z\right)\right)^{2}}\right)=\mathcal{O}_{\mathrm{Tr}}\left(\omega^{d}\frac{\delta'}{\left(d\left(z\right)\right)^{2}}\right).
\]
Hence
\begin{align*}
\left\Vert D\left(z\right)-\left(\int_{0}^{T}e^{-tz}e^{t\left(X-\Lambda\right)}dt\right)\left(\Lambda P_{0}\right)\left(\int_{0}^{T}e^{-tz}e^{tX}dt\right)\right\Vert _{\mathrm{Tr}} & =\mathcal{O}\left(\omega^{d}\left(e^{-\epsilon T}+\frac{\delta'}{\left(d\left(z\right)\right)^{2}}\right)\right).
\end{align*}
\end{proof}
Now we give the key estimate. 
\begin{cBoxB}{}
\begin{lem}
For any arbitrarily small $c>0$, if we take $\delta$ large enough,
$\delta'>1$ large enough and $\delta/\delta'$ large enough, we have
\begin{equation}
\left|\left(\frac{1}{2\pi i}\int_{\Gamma_{+}-\Gamma_{-}}f_{\omega,\delta,\delta'}\left(z\right)\mathrm{Tr}\left(D\left(z\right)\right)dz\right)-\mathrm{Tr}\left(P\right)\right|<c\omega^{d}\delta\label{eq:Int_TrP}
\end{equation}
\end{lem}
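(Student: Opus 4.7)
The strategy is to substitute the approximations of $D(z)$ from Lemma \ref{lem:Approximate-expressions-of-3} into the contour integral, convert the $z$-integrals to time-domain integrals by Fourier--Laplace duality, and then identify the leading time-domain expression as $\mathrm{Tr}(P)$ up to the claimed error. The three tunable scales are the time cutoff $T$, the transition width $\delta'$, and the frequency window $\delta$; I take $T$ large first, then $\delta/\delta'$ large, and then $\delta$ large.

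Replacing $D(z)$ by its approximation on each contour $\Gamma_{\pm}$ costs a trace-norm error bounded by $C\omega^{d}(e^{-\epsilon T}+\delta'/d(z)^{2})$. Integrated against $|f_{\omega,\delta,\delta'}|$ along the contour of effective length $\sim 2\delta$, the first contribution is $O(\omega^{d}\delta\,e^{-\epsilon T})$ and becomes $<c\omega^{d}\delta/3$ for $T$ large, while the second, using $\int |f|\cdot\delta'/d(z)^{2}\,dy=O(1)$, is $O(\omega^{d})$ and becomes $<c\omega^{d}\delta/3$ for $\delta$ large. Next, parametrize $z=c_{\pm}+iy$ with $c_{+}=\gamma_{0}^{+}+\epsilon$ and $c_{-}=\gamma_{k}^{-}-\epsilon$, note that $f_{\omega,\delta,\delta'}(c+iy)=f_{\delta,\delta'}(y-\omega)$ is independent of $c$, expand
\[
A^{\pm}(z)\,B(z)=\int_{I^{\pm}}\!\!\int_{0}^{T} e^{-(t+s)z}\,e^{tX}\,e^{s(X-\Lambda P)}\,dt\,ds
\]
with $I^{+}=[0,T]$, $I^{-}=[-T,0]$, and interchange the $z$ and $(t,s)$ integrations. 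The inner $z$-integral collapses to $(2\pi)^{-1}\,e^{-(t+s)c_{\pm}}\,e^{-i(t+s)\omega}\,\hat{f}_{\delta,\delta'}(t+s)$, where $\hat{f}_{\delta,\delta'}(u)=\int f_{\delta,\delta'}(y)\,e^{-iuy}\,dy$ is concentrated at scale $1/\delta'$ about $u=0$ with total mass $2\pi$. Using cyclicity of the trace then rewrites the principal part of $I$ as a time-domain trace against this localized kernel.

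The identification of this time-domain expression with $\mathrm{Tr}(P)$ follows a spectral-residue heuristic: pretending $X$ and $P$ commute, $\Lambda(z-X)^{-1}(z-X+\Lambda P)^{-1}$ has a simple pole of residue $1$ at each eigenvalue of $X$ on $\mathrm{Range}(P)$ (the other pole at $z=X-\Lambda$ lies outside the contour for $\Lambda$ large), so closing the contour through the horizontal segments where $f=0$ yields $\sum_{z_{j}}f(z_{j})\,\mathrm{Tr}(P\Pi_{z_{j}})\approx\mathrm{Tr}(P)$. Rigorously, in the high-frequency limit $|\omega|\to\infty$, the microlocal localization of $P_{0}$ to the frequency shell near $\omega$, combined with Egorov's formula \eqref{eq:Egorov_formula} and the composition formula \eqref{eq:compos_operators}, allows one to replace the non-commuting product $e^{tX}e^{s(X-\Lambda P)}$ acting on $\mathrm{Range}(P_{0})$ by its effective scalar action $e^{i(t+s)\omega}e^{-s\Lambda}\mathrm{Id}$, whose flow phase exactly cancels the Fourier phase $e^{-i(t+s)\omega}$ in the kernel. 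The surviving scalar integral $\Lambda\int_{0}^{\infty}e^{-s\Lambda}\hat{f}_{\delta,\delta'}(s)\,ds$ tends to $2\pi f_{\delta,\delta'}(0)=2\pi$ as $\Lambda\to\infty$, giving $\mathrm{Tr}(P_{0})$, and $|\mathrm{Tr}(P_{0})-\mathrm{Tr}(P)|\leq C\omega^{d}\delta'<c\omega^{d}\delta/3$ for $\delta/\delta'$ large.

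The main obstacle is this last step: making rigorous the reduction of $e^{tX}e^{s(X-\Lambda P)}$ traced against $\Lambda P_{0}$ to its effective scalar action in the presence of the Fourier kernel. This requires Duhamel's formula for $e^{s(X-\Lambda P)}$ (splitting along $\mathrm{Range}(P)$ and its complement), Egorov propagation of $P_{0}$ by $e^{tX}$, and careful matching of the $\Gamma_{+}$ and $\Gamma_{-}$ contributions so that the boundary terms in $t$ cancel, all within the slow-variation microlocal framework already deployed elsewhere in the paper.
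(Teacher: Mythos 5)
Your overall skeleton (insert the approximation of $D(z)$ from Lemma \ref{lem:Approximate-expressions-of-3}, pass to the time domain via the Fourier transform $\hat{f}_{\delta,\delta'}$, control the errors by $T$, $\delta'$, $\delta/\delta'$) is the paper's, and your error bookkeeping for the substitution step is fine. But the step you yourself flag as "the main obstacle" is a genuine gap, and the route you propose to close it (Egorov propagation, an "effective scalar action" $e^{i(t+s)\omega}e^{-s\Lambda}\mathrm{Id}$ on $\mathrm{Range}(P_{0})$, residue heuristics pretending $[X,P]=0$) is both unnecessary and not workable as stated: $e^{tX}$ does not act as multiplication by $e^{it\omega}$ on $\mathrm{Range}(P_{0})$ beyond the level of principal symbols, and your claimed limit $\Lambda\int_{0}^{\infty}e^{-s\Lambda}\hat{f}_{\delta,\delta'}(s)\,ds\to 2\pi f_{\delta,\delta'}(0)$ is false (that limit is $\hat{f}_{\delta,\delta'}(0)=\int f_{\delta,\delta'}\approx 2\delta$, and in any case $\Lambda$ is fixed here). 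The identity actually needed is $\int_{\mathbb{R}}\hat{f}_{\delta,\delta'}(s)\,ds=2\pi f_{\delta,\delta'}(0)=2\pi$, applied to the $s=t+t'$ integral.

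The missing idea is much simpler: cyclicity of the trace. After the change of variables $s=t+t'$ and localization of $\hat{f}_{\delta,\delta'}$ at $s=0$, the integrand on $\Gamma_{-}$ reduces to $\mathrm{Tr}\left(e^{-t\left(X-\Lambda\right)}\Lambda P_{0}e^{tX}\right)=\Lambda e^{t\Lambda}\,\mathrm{Tr}\left(e^{-tX}P_{0}e^{tX}\right)=\Lambda e^{t\Lambda}\,\mathrm{Tr}\left(P_{0}\right)$ exactly — no microlocal input — so the remaining $t$-integral over $\left[-T,0\right]$ telescopes to $\mathrm{Tr}\left(\left[e^{t\Lambda}P_{0}\right]_{-T}^{0}\right)=\left(1-e^{-\Lambda T}\right)\mathrm{Tr}\left(P_{0}\right)$, and $\left|\mathrm{Tr}(P_{0})-\mathrm{Tr}(P)\right|\leq C\omega^{d}\delta'$ finishes. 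You also miss the asymmetry between the two contours: there is no "matching of boundary terms". On $\Gamma_{+}$ both time variables run over $\left[0,T\right]$, so the set where $\left|t+t'\right|\lesssim\delta'^{-1}$ is only a sliver near the corner and the entire $\Gamma_{+}$ integral is an error term $\mathcal{O}\left(\delta'^{-1}\omega^{d}\delta+\omega^{d}\left(\delta e^{-\epsilon T}+\delta'\right)\right)$; the main term $2\pi i\,\mathrm{Tr}(P)$ comes entirely from $\Gamma_{-}$, where $t\in\left[-T,0\right]$ and the locus $t+t'=0$ is the full anti-diagonal. This is precisely why Lemma \ref{lem:Approximate-expressions-of-3} gives different time ranges on the two vertical lines.
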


\end{cBoxB}

\begin{proof}
We first consider the integration on $\Gamma_{+}$. From the last
lemma, we have

\begin{align}
\int_{\Gamma_{+}} & f_{\omega,\delta,\delta'}\left(z\right)\mathrm{Tr}\left(D\left(z\right)\right)dz=\int_{\mathbb{R}}f_{\delta,\delta'}\left(\omega'-\omega\right)\mathrm{Tr}\left(D\left(\gamma_{0}^{+}+\epsilon+i\omega'\right)\right)d\left(i\omega'\right)\nonumber \\
\eq{\ref{eq:approx_D-4}}i & \mathrm{Tr}\left[\int_{\omega'\in\mathbb{R}}f_{\delta,\delta'}\left(\omega'-\omega\right)\left(\int_{t'=0}^{T}e^{-t'\left(\gamma_{0}^{+}+\epsilon+i\omega'\right)}e^{t'\left(X-\Lambda\right)}dt'\right)\left(\Lambda P_{0}\right)\left(\int_{t=0}^{T}e^{-t\left(\gamma_{0}^{+}+\epsilon+i\omega'\right)}e^{tX}dt\right)\right]d\omega'\label{eq:f_om_delt}\\
 & +\mathcal{O}_{Tr}\left(\omega^{d}\left(\delta e^{-\epsilon T}+\delta'\right)\right).\nonumber 
\end{align}
Let
\begin{align*}
\tilde{f}_{\omega,\delta,\delta'}\left(t\right) & :=\int_{\mathbb{R}}e^{-t\left(\gamma_{0}^{+}+\epsilon+i\omega'\right)}f_{\delta,\delta'}\left(\omega'-\omega\right)d\omega'=e^{-t\left(\gamma_{0}^{+}+\epsilon+i\omega\right)}\hat{f}_{\delta,\delta'}\left(t\right),
\end{align*}
where $\hat{f}_{\delta,\delta'}\left(t\right):=\int_{\mathbb{R}}e^{-it\omega'}f_{\delta,\delta'}\left(\omega'\right)d\omega'$
satisfies a fast decay for $\left|t\right|\gg\delta'^{-1}$ as follows:
$\forall N>0,\exists C_{N}>0,\forall t,$
\begin{equation}
\left|\hat{f}_{\delta,\delta'}\left(t\right)\right|\leq C_{N}\left\langle \delta't\right\rangle ^{-N}\quad\text{{and}\ensuremath{\quad\int_{\mathbb{R}}\hat{f}_{\delta,\delta'}\left(t\right)dt=2\pi f_{\delta,\delta'}\left(0\right)=2\pi.}}\label{eq:f_hat-1}
\end{equation}
Then, with the change of variables $\left(t,t'\right)\rightarrow\left(t,s=t+t'\right)$,
we see
\begin{align*}
\int_{\Gamma_{+}} & f_{\omega,\delta,\delta'}\left(z\right)\mathrm{Tr}\left(D\left(z\right)\right)dz\\
\eq{\ref{eq:f_om_delt}}i & \int_{0}^{T}\int_{0}^{T}\tilde{f}_{\omega,\delta,\delta'}\left(t+t'\right)\mathrm{Tr}\left(e^{t'\left(X-\Lambda\right)}P_{0}\Lambda e^{tX}\right)dt'dt+\mathcal{O}_{Tr}\left(\omega^{d}\left(\delta e^{-\epsilon T}+\delta'\right)\right)\\
=i & \int_{0}^{2T}\left(\int_{t=\max\left\{ 0,s-T\right\} }^{\min\left\{ T,s\right\} }e^{-s\left(\gamma_{0}^{+}+\epsilon+i\omega\right)}\hat{f}_{\delta,\delta'}\left(s\right)\mathrm{Tr}\left(e^{\left(s-t\right)\left(X-\Lambda\right)}P_{0}\Lambda e^{tX}\right)dt\right)ds\\
 & +\mathcal{O}_{Tr}\left(\omega^{d}\left(\delta e^{-\epsilon T}+\delta'\right)\right).
\end{align*}
From (\ref{eq:f_hat-1}), by letting $\delta'$ be large (and $\hat{f}_{\delta,\delta'}\left(t\right)$
be more concentrate to $0$), we get

\[
\left|\int_{\Gamma_{+}}f_{\omega,\delta,\delta'}\left(z\right)\mathrm{Tr}\left(D\left(z\right)\right)dz\right|=\mathcal{O}_{Tr}\left(\delta'^{-1}\omega^{d}\delta\right)+\mathcal{O}_{Tr}\left(\omega^{d}\left(\delta e^{-\epsilon T}+\delta'\right)\right).
\]
Next we consider the integration on $\Gamma_{-}$. Similarly to the
argument above, we see
\begin{align*}
\int_{-\Gamma_{-}} & f_{\omega,\delta,\delta'}\left(z\right)\mathrm{Tr}\left(D\left(z\right)\right)dz=i\int f_{\delta,\delta'}\left(\omega'-\omega\right)\mathrm{Tr}\left(D\left(\gamma_{0}^{-}-\epsilon-i\omega'\right)\right)d\omega'\\
\eq{\ref{eq:approx_D-4}} & i\mathrm{Tr}\left[\int f_{\delta,\delta'}\left(\omega'-\omega\right)\left(\int_{0}^{T}e^{-t'\left(\gamma_{0}^{-}-\epsilon-i\omega'\right)}e^{t'\left(X-\Lambda\right)}dt'\right)\left(\Lambda P_{0}\right)\left(\int_{-T}^{0}e^{-t\left(\gamma_{0}^{-}-\epsilon-i\omega'\right)}e^{tX}dt\right)\right]d\omega'\\
 & +\mathcal{O}_{\mathrm{Tr}}\left(\omega^{d}\left(\delta e^{-\epsilon T}+\delta'\right)\right)\\
= & i\int_{s=-T}^{T}\left(\int_{t=\max\left\{ -T,s-T\right\} }^{\min\left\{ 0,s\right\} }e^{-s\left(\gamma_{0}^{-}-\epsilon-i\omega'\right)}\hat{f}_{\delta,\delta'}\left(s\right)\mathrm{Tr}\left(e^{s\left(X-\Lambda\right)}e^{-t\left(X-\Lambda\right)}\Lambda P_{0}e^{tX}\right)dt\right)ds\\
 & +\mathcal{O}_{\mathrm{Tr}}\left(\omega^{d}\left(\delta e^{-\epsilon T}+\delta'\right)\right)\\
= & 2\pi i\int_{t=-T}^{0}\mathrm{Tr}\left(e^{-t\left(X-\Lambda\right)}\Lambda P_{0}e^{tX}\right)dt+\mathcal{O}_{Tr}\left(\delta'^{-1}\omega^{d}\delta\right)+\mathcal{O}_{\mathrm{Tr}}\left(\omega^{d}\left(\delta e^{-\epsilon T}+\delta'\right)\right)\\
= & 2\pi i\mathrm{Tr}\left(\left[e^{t\Lambda}P_{0}\right]_{t=-T}^{0}\right)+\mathcal{O}_{Tr}\left(\delta'^{-1}\omega^{d}\delta\right)+\mathcal{O}_{\mathrm{Tr}}\left(\omega^{d}\left(\delta e^{-\epsilon T}+\delta'\right)\right)\\
= & 2\pi i\mathrm{Tr}\left(P_{0}\right)+\mathcal{O}_{Tr}(e^{-\epsilon T}\omega^{d})+\mathcal{O}_{Tr}\left(\delta'^{-1}\omega^{d}\delta\right)+\mathcal{O}_{\mathrm{Tr}}\left(\omega^{d}\left(\delta e^{-\epsilon T}+\delta'\right)\right)\\
= & 2\pi i\mathrm{Tr}\left(P\right)+\mathcal{O}_{\mathrm{Tr}}\left(\omega^{d}\delta'\right)+\mathcal{O}_{\mathrm{Tr}}(e^{-\epsilon T}\omega^{d})+\mathcal{O}_{\mathrm{Tr}}\left(\delta'^{-1}\omega^{d}\delta\right)+\mathcal{O}_{\mathrm{Tr}}\left(\omega^{d}\left(\delta e^{-\epsilon T}+\delta'\right)\right).
\end{align*}
Therefore, by letting $T$ large $\delta\gg\delta'\gg1$ and then
letting $\omega$ large, we obtain the required estimate.
\end{proof}

\subsection{\label{subsec:Weyl-law}Weyl law}

Using the previous estimates we get that

\begin{align}
\left|\sharp\left(\mathrm{spect}\left(X\right)\cap\tilde{R}\left(\omega,\delta,\delta'\right)\right)-\mathrm{rank}\left(\mathcal{F}_{\left[0,k\right]}\right)\frac{\mathrm{Vol}\left(M\right)\omega^{d}}{\left(2\pi\right)^{d+1}}\left(2\delta\right)\right|\nonumber \\
\underset{(\ref{eq:res4},\ref{eq:Int_TrP},\ref{eq:trace-1})}{<}C\omega^{d}\max\left\{ \delta',\ln\delta\right\} +c\omega^{d}\delta+C\omega^{d}\delta\left(\left|\omega\right|^{-\beta/2}+\frac{\delta}{\omega}\right)+C\omega^{d} & \left(C_{N}\sigma^{-N}+\left|\omega\right|^{-\beta/2}\sigma+\frac{\delta}{\omega}\right)\nonumber \\
< & c'\omega^{d}\delta.\label{eq:last}
\end{align}
with arbitrarily small $c'>0$ and $\omega\gg\delta\gg\delta'\gg1$.
Comparing this estimate for adjacent $k$'s, we obtain Theorem \ref{thm:Weyl-law-for}.

\section{\label{sec:Proof-of-Theorem-accumulation}Proof of Theorem \ref{thm:Ergodic-concentration-of}
(accumulation of eigenvalues on narrower bands)}

The aim of this section is to prove Theorem \ref{thm:Ergodic-concentration-of}.
We consider the operator $X_{F}$ defined in Section \ref{subsec:Transfer-operator}
where $X$ is a contact Anosov flow. For $k\in\mathbb{N},$ consider
the vector bundle $\pi:\mathcal{F}_{k}\rightarrow\Sigma$, with $\mathcal{F}_{k}=\left|\mathrm{det}N_{s}\right|^{-1/2}\otimes\mathrm{Pol}_{k}\left(N_{s}\right)\otimes F$
defined in (\ref{eq:def_F-1-1}), twisted by $F$, and the flow induced
from $X_{F}$,
\[
\phi_{k}^{t}:\mathcal{F}_{k}\rightarrow\mathcal{F}_{k},\quad t\in\mathbb{R},
\]
that is a linear bundle map over $\tilde{\phi}^{t}:\Sigma\rightarrow\Sigma$.
For a point $\rho\in\Sigma$, $t\in\mathbb{R}$, let
\[
g_{k,t}\left(\rho\right):=\ln\left(\left(\max_{v\in\pi^{-1}\left(\rho\right)}\frac{\left\Vert \phi_{k}^{t}\left(v\right)\right\Vert }{\left\Vert v\right\Vert }\right)^{1/t}\right),
\]
that defines a continuous function $g_{k,t}$ on $\Sigma$. Let $d\varrho$
be the Liouville measure (\ref{eq:dvol_E0*}) on $\Sigma$ and
\[
\mu_{k,t}:=\left(g_{k,t}^{\circ}\right)^{\dagger}\left(d\varrho\right)
\]
be the push-forward measure on $\mathbb{R}$ by the map $g_{k,t}$.
We have already defined $\gamma_{k}^{\pm}$ in (\ref{eq:def_gamma_})
that can be written as
\[
\gamma_{k}^{+}:=\lim_{t\rightarrow+\infty}\max_{\rho\in\Sigma}g_{k,t}\left(\rho\right)=\lim_{t\rightarrow+\infty}\max\left(\mathrm{supp}\left(\mu_{k,t}\right)\right),
\]
\[
\gamma_{k}^{-}:=\lim_{t\rightarrow-\infty}\min_{\rho\in\Sigma}g_{k,t}\left(\rho\right)=\lim_{t\rightarrow-\infty}\min\left(\mathrm{supp}\left(\mu_{k,t}\right)\right).
\]
Now we define the \textbf{maximal and minimal Lyapounov exponents
with respect to the Liouville measure} by
\begin{equation}
\check{\gamma}_{k}^{+}=\max\left(\mathrm{supp}\left(\lim_{t\rightarrow+\infty}\mu_{k,t}\right)\right),\label{eq:def_Lyapounov_exp}
\end{equation}
\[
\check{\gamma}_{k}^{-}=\min\left(\mathrm{supp}\left(\lim_{t\rightarrow-\infty}\mu_{k,t}\right)\right).
\]
One has
\[
\gamma_{k}^{-}\leq\check{\gamma}_{k}^{-}\leq\check{\gamma}_{k}^{+}\leq\gamma_{k}^{+}.
\]
From the definition of $\check{\gamma}_{k}^{-},\check{\gamma}_{k}^{+}$,
for any small $\epsilon>0$, if $I_{\epsilon}:=\left[\check{\gamma}_{k}^{-}-\epsilon,\check{\gamma}_{k}^{+}+\epsilon\right]$
and
\[
\Sigma_{\omega,\delta,I_{\epsilon},t}:=\left(g_{k,t}^{-1}\left(I_{\epsilon}\right)\cap\left[\omega,\omega+\delta\right]\right)\subset\Sigma,
\]
then we have that for any $c>0$, there exists $T_{c}>0$ such that
\begin{equation}
\left(d\varrho\right)\left(\Sigma\backslash\Sigma_{\omega,\delta,I_{\epsilon},T_{c}}\right)\leq c\delta\omega^{d}.\label{eq:measure}
\end{equation}

For the proof of Theorem \ref{thm:Ergodic-concentration-of}, we follow
\cite[Thm 1.3.11 and Chapter 8,]{faure-tsujii_prequantum_maps_12}.
Let $c>0$. Take $T_{c}>0$ large enough such that (\ref{eq:measure})
holds true. Then similarly to (\ref{eq:bound-4-1}) one gets $\forall\epsilon>0,\exists C_{\epsilon}>0$,$\forall t\geq0$,
$\exists\sigma_{t}>0$, $\forall\sigma>\sigma_{t},$ $\exists\omega_{\sigma}>0,\forall\omega>\omega_{\sigma}$,
\[
\left\Vert e^{tX}\mathrm{Op}_{\Sigma}\left(\boldsymbol{1}_{\Sigma_{\omega,\delta,I_{\epsilon},T_{c}}}T_{k}\right)\right\Vert _{\mathcal{H}_{W}\left(M\right)}\leq C_{\epsilon}e^{t\left(\check{\gamma}_{k}^{+}+\epsilon\right)},
\]
\[
\left\Vert e^{-tX}\mathrm{Op}_{\Sigma}\left(\boldsymbol{1}_{\Sigma_{\omega,\delta,I_{\epsilon},T_{c}}}T_{k}\right)\right\Vert _{\mathcal{H}_{W}\left(M\right)}\leq C_{\epsilon}e^{-t\left(\check{\gamma}_{k}^{-}-\epsilon\right)},
\]
\[
\left\Vert \mathrm{Op}_{\Sigma}\left(\boldsymbol{1}_{\Sigma\backslash\Sigma_{\omega,\delta,I_{\epsilon},T_{c}}}T_{k}\right)\right\Vert _{\mathrm{Tr}}=c\delta\omega^{d}+C\omega^{d}.
\]
Using arguments with perturbation of the resolvent as in previous
proofs, we deduce the Theorem.

\section{\label{sec:Horocycle-operators}Proof of Theorem \ref{thm:Quantization-of-()}
(horocycle operators)}

Suppose $s\in C^{\beta}\left(M;E_{s}\right)$ is a Hölder continuous
section of the stable bundle over $M$ and $u\in C^{\beta}\left(M;E_{u}\right)$
a section of the unstable bundle. Let $\pi:T^{*}M\rightarrow M$ be
the canonical projection. We have seen in (\ref{eq:def_Nus}) that
for $\rho\in\Sigma$, the differential $d\pi$ gives a linear isomorphism
$d\pi:N\left(\rho\right)=N_{s}\left(\rho\right)\oplus N_{u}\left(\rho\right)\rightarrow E_{u}\left(m\right)\oplus E_{s}\left(m\right)$
with $m=\pi\left(\rho\right)$. By pull back we get sections $\tilde{s}:=\left(d\pi\right)^{-1}\left(s\right)\in C\left(\Sigma;N_{s}\right)$
and $\tilde{u}:=\left(d\pi\right)^{-1}\left(u\right)\in C\left(\Sigma;N_{u}\right)$.
Recall from Section \ref{subsec:Contact-Anosov-flow} that $d\mathcal{A}$
is a symplectic form on $E_{u}\oplus E_{s}$ and from Definition \ref{def:KNA}
that $N\left(\rho\right)=N_{s}\left(\rho\right)\oplus N_{u}\left(\rho\right)$
is a $\Omega$-symplectic vector space and that $\Omega_{\rho}\left(\tilde{u},\tilde{s}\right)\eq{\ref{eq:theta_A}}\omega\left(\rho\right).\left(d\mathcal{A}\right)\left(u,s\right)$,
hence $\tilde{u}$ (so then $u$) defines a dual vector 
\begin{equation}
\tilde{u}^{*}:=\check{\Omega}\tilde{u}\eq{\ref{eq:def_Omega_check}}\Omega\left(\tilde{u},.\right)\eq{\ref{eq:theta_A}}\boldsymbol{\omega}\left(.\right)\,\left(d\mathcal{A}\right)\left(u,d\pi\left(.\right)\right)\in N_{s}^{*}\left(\rho\right).\label{eq:utilde*}
\end{equation}

For $k\in\mathbb{N}$, let $p\in C\left(\Sigma;\mathcal{F}_{k}\right)$
a continuous section of $\mathcal{F}_{k}\left(N_{s}\right)\eq{\ref{eq:def_F-1}}\left|\mathrm{det}N_{s}\right|^{-1/2}\otimes\mathrm{Pol}_{k}\left(N_{s}\right)\otimes F$
over $\Sigma$ where $\mathrm{Pol}_{k}\left(N_{s}\right)\equiv\mathrm{Sym}^{k}\left(N_{s}^{*}\otimes\ldots\otimes N_{s}^{*}\right)$
is equivalent to the \href{https://en.wikipedia.org/wiki/Symmetric_algebra}{symmetric tensor algebra}
of degree $k$.

\begin{cBoxA}{}
\begin{defn}
\label{def:For--we}For $s\in C^{\beta}\left(M;E_{s}\right)$ we define
\begin{equation}
\iota_{s}:\begin{cases}
C^{\beta}\left(\Sigma;\mathcal{F}_{k}\right) & \rightarrow C^{\beta}\left(\Sigma;\mathcal{F}_{k-1}\right)\\
p & \rightarrow\iota_{\tilde{s}}p=kp\left(\tilde{s},.,\ldots,.\right)
\end{cases}\label{eq:def_is}
\end{equation}
as the \href{https://en.wikipedia.org/wiki/Interior_product}{interior product},
i.e. the pointwise (i.e. independently in every fiber) contraction
in the first entry of $p$ by $\tilde{s}:=\left(d\pi\right)^{-1}\left(s\right)\in C^{\beta}\left(\Sigma;N_{s}\right)$,
but with a factor $k$. For $u\in C^{\beta}\left(M;E_{u}\right)$
we define
\[
u\varodot:\begin{cases}
C^{\beta}\left(\Sigma;\mathcal{F}_{k}\right) & \rightarrow C^{\beta}\left(\Sigma;\mathcal{F}_{k+1}\right)\\
p & \rightarrow\tilde{u}^{*}\varodot p=\mathrm{Sym}\left(\tilde{u}^{*}\otimes p\right)
\end{cases}
\]
as the symmetric point-wise tensor product of $p$ by $\tilde{u}^{*}:=\omega\left(d\mathcal{A}\right)\left(u,d\pi\left(.\right)\right)\in C^{\beta}\left(\Sigma;N_{s}^{*}\right)$.
\end{defn}

\end{cBoxA}

~

\begin{cBoxB}{}
\begin{lem}[\textbf{Weyl algebra}]
\textbf{\label{lem:Weyl-algebra.-For}}For $s\in C^{\beta}\left(M;E_{s}\right)$,
$u\in C^{\beta}\left(M;E_{u}\right)$ and $k\in\mathbb{N}$ we have
\begin{equation}
\left[\iota_{s},u\varodot\right]=\boldsymbol{\omega}\left(.\right)\left(d\mathcal{A}\right)\left(s,u\right)\mathrm{Id}_{\mathcal{F}_{k}}\quad:C^{\beta}\left(\Sigma;\mathcal{F}_{k}\right)\rightarrow C^{\beta}\left(\Sigma;\mathcal{F}_{k}\right)\label{eq:commut}
\end{equation}
where $\omega$ is the frequency function (\ref{eq:omega_function})
on $\Sigma$ and $\mathrm{Id}_{\mathcal{F}_{k}}$ is the identity
operator in fibers.
\end{lem}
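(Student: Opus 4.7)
The statement is a pointwise identity on each fiber $\mathcal{F}_k(\rho)\cong\mathrm{Pol}_k(N_s(\rho))$, so I would fix $\rho\in\Sigma$ with base point $m=\pi(\rho)$ and frequency $\omega=\boldsymbol{\omega}(\rho)$, and reduce the proof to a linear-algebraic computation on the symplectic vector space $N(\rho)=N_s(\rho)\oplus N_u(\rho)$. The continuity in $\rho$ of the resulting formula is automatic from the continuity of the sections $s,u$ and of the bundles $N_s,N_u$, so no regularity issue arises.

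The core idea is to identify $\mathrm{Pol}_k(N_s(\rho))=\mathrm{Sym}^k(N_s^*(\rho))$ with the space of homogeneous degree-$k$ polynomial functions on $N_s(\rho)$ via the polarization isomorphism $p\mapsto\hat p$ with $\hat p(v):=p(v,\ldots,v)$. Under this identification, the operator $u\vee$ from Definition \ref{def:For--we} is realized as multiplication by the linear function $v\mapsto\tilde u^*(v)$ on $N_s(\rho)$, so that $(\widehat{u\vee p})(v)=\tilde u^*(v)\,\hat p(v)$. The contraction $\iota_s$, by the chain rule for the polarization, is the normalized directional derivative: if $\hat q$ is of degree $k'$ then $\widehat{\iota_s q}(v)=q(\tilde s,v,\ldots,v)=\tfrac{1}{k'}\partial_{\tilde s}\hat q(v)$. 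The $k$-factor appearing in the statement $k\iota_s$ is exactly what is needed to absorb this normalization: on the degree-$(k+1)$ side produced by $u\vee$, the operator plays the role of the genuine derivation $\partial_{\tilde s}$, while on $\mathcal{F}_k$ it still gives a contraction into $\mathcal{F}_{k-1}$ that is then sent back up by $u\vee$.

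With this dictionary the computation of the commutator reduces to the Leibniz identity $\partial_{\tilde s}(\tilde u^*(v)\hat p(v))=\tilde u^*(\tilde s)\hat p(v)+\tilde u^*(v)\partial_{\tilde s}\hat p(v)$, i.e.\ the canonical CCR $[\partial_{\tilde s},M_{\tilde u^*}]=\tilde u^*(\tilde s)\,\mathrm{Id}$ on polynomial functions. Comparing the two compositions $k\iota_s\circ(u\vee)$ and $(u\vee)\circ k\iota_s$ on $\hat p\in\mathcal{F}_k$, the terms proportional to $\tilde u^*(v)\,\widehat{\iota_s p}(v)$ cancel out and one is left with $\tilde u^*(\tilde s)\,\hat p(v)$, which is the scalar multiple of the identity as claimed.

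The last step is to rewrite $\tilde u^*(\tilde s)=\Omega(\tilde u,\tilde s)$ in terms of $d\mathscr{A}$. I would use that on $\Sigma$ the Liouville form equals $\theta=\omega\,\pi^*\mathscr{A}$, so $\Omega=d\theta=d\omega\wedge\pi^*\mathscr{A}+\omega\,\pi^*d\mathscr{A}$; since $d\pi(\tilde u),d\pi(\tilde s)\in E_u\oplus E_s\subset\ker\mathscr{A}$, the $d\omega\wedge\pi^*\mathscr{A}$ term vanishes on pairs of vectors from $N(\rho)$, leaving $\Omega(\tilde u,\tilde s)=\omega(d\mathscr{A})(u,s)$, hence the desired right-hand side (up to the sign convention built into the definition of $\tilde u^*$). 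The main obstacle here is really only bookkeeping: being careful with the degree-dependent normalization hidden in the notation $k\iota_s$, and tracking the sign/ordering of arguments in $d\mathscr{A}$ so that the identity matches the convention of (\ref{eq:utilde*}); the algebraic content is simply the one-dimensional Heisenberg commutator $[\partial,x]=1$.
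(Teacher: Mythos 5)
Your proposal is correct and follows essentially the same route as the paper's proof: reduce to a pointwise statement on the fiber, identify $\mathrm{Pol}_{k}\left(N_{s}\right)$ with polynomial functions via polarization so that $u\vee$ becomes multiplication by $\tilde{u}^{*}$ and $k\iota_{s}$ becomes the directional derivative $\partial_{\tilde{s}}$, and then invoke the Heisenberg relation $\left[\partial,x\right]=\mathrm{Id}$ together with $\tilde{u}^{*}\left(\tilde{s}\right)=\Omega\left(\tilde{u},\tilde{s}\right)=\omega\left(d\mathscr{A}\right)\left(u,s\right)$ from (\ref{eq:utilde*}). The only cosmetic differences are that the paper works in a basis $\left(e_{i}\right)_{i}$ with coordinates $\sigma_{i}$ while you argue coordinate-free, and you spell out the vanishing of the $d\omega\wedge\pi^{*}\mathscr{A}$ term that the paper leaves implicit in (\ref{eq:theta_A}); your reading of the degree-dependent normalization in the notation $k\iota_{s}$ is the intended one.
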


\end{cBoxB}

\begin{proof}
This is point wise relation so we consider a point $\rho\in\Sigma$
with frequency $\omega\eq{\ref{eq:omega_function}}\boldsymbol{\omega}\left(\rho\right)$
and the vector space $N_{s}\left(\rho\right)$ defined in (\ref{eq:def_Nus}).
We use the isomorphism $\mathrm{Pol}_{k}\left(N_{s}\right)\equiv\mathrm{Sym}^{k}\left(N_{s}^{*}\otimes\ldots\otimes N_{s}^{*}\right)$
to derive (\ref{eq:commut}) that goes as follows. If $\left(e_{i}\right)_{i}$
is a basis of $N_{s}\left(\rho\right)$, we write $\tilde{s}=\sum_{i=1}^{d}\sigma_{i}e_{i}\in N_{s}\left(\rho\right)$
with components $\sigma=\left(\sigma_{i}\right)_{i}\in\mathbb{R}^{d}$.
For $p\in\mathrm{Sym}^{k}\left(N_{s}^{*}\otimes\ldots\otimes N_{s}^{*}\right)$,
we associate a degree $k$ polynomial $P\in\mathrm{Pol}_{k}\left(\mathbb{R}^{d}\right)$
by $P\left(\sigma\right):=p\left(\tilde{s},\tilde{s},\ldots\tilde{s}\right)$.
We define $e_{i}^{*}\varodot p:=\mathrm{Sym}\left(e_{i}^{*}\otimes p\right)$
and observe that $\left(e_{i}^{*}\varodot p\right)\left(\tilde{s}\right)=\sigma_{i}P\left(\sigma\right)$
and $\left(\iota_{e_{i}}p\right)\left(\tilde{s}\right)=\left(\partial_{\sigma_{i}}P\right)\left(\sigma\right)$.
Then the ``Weyl algebra relation '' on polynomials $\left[\partial_{\sigma_{i}},\sigma_{j}\right]=\delta_{i=j}\mathrm{Id}$
gives $\left[\iota_{e_{i}},e_{j}^{*}\varodot\right]=\delta_{i=j}\mathrm{Id}$
hence for $\tilde{s}\in N_{s}$, $\tilde{u}^{*}\in N_{s}^{*}$ we
have $\left[\iota_{\tilde{s}},\tilde{u}^{*}\varodot\right]=\tilde{u}^{*}\left(\tilde{s}\right)\mathrm{Id}$.
With (\ref{eq:utilde*}), we get (\ref{eq:commut}).
\end{proof}

\paragraph{Proof of Theorem \ref{thm:Quantization-of-()}:}

Let $K\in\mathbb{N}$. We will use $\tilde{T}_{\left[0,K\right]}$
to restrict to the finite rank bundle of homogeneous polynomials of
degree less than $K$. For $k<K$, let $\tilde{T}_{k}$ the projector
on homogeneous polynomials of degree $k$. We have
\begin{align*}
\left[\mathrm{Op}_{\Sigma}\left(\iota_{s}\tilde{T}_{\left[0,K\right]}\right),\mathrm{Op}_{\Sigma}\left(\left(u\varodot\right)\tilde{T}_{\left[0,K\right]}\right)\right]\mathrm{Op}_{\Sigma}\left(\tilde{T}_{k}\right) & \underset{(\ref{eq:compos_operators})}{\approx}\mathrm{Op}_{\Sigma}\left(\left[\iota_{s},u\varodot\right]\tilde{T}_{k}\right)\eq{\ref{eq:commut}}\mathrm{Op}_{\Sigma}\left(\boldsymbol{\omega}\left(.\right)\left(d\mathcal{A}\right)\left(s,u\right)\tilde{T}_{k}\right)
\end{align*}
that gives (\ref{eq:weyl_algebra_quantized}). We have
\begin{align*}
e^{tX_{F}}\mathrm{Op}_{\Sigma}\left(\iota_{s}\tilde{T}_{k}\right) & \underset{(\ref{eq:expression})}{\approx}\mathrm{Op}_{\Sigma}\left(e^{tX_{\mathcal{F}}}\right)\mathrm{Op}_{\Sigma}\left(\iota_{s}\tilde{T}_{k}\right)\underset{(\ref{eq:compos_operators})}{\approx}\mathrm{Op}_{\Sigma}\left(e^{tX_{\mathcal{F}}}\iota_{s}\tilde{T}_{k}\right)\\
 & =\mathrm{Op}_{\Sigma}\left(\iota_{d\phi^{t}s}e^{tX_{\mathcal{F}}}\tilde{T}_{k}\right)\underset{(\ref{eq:commut_T},\ref{eq:compos_operators})}{\approx}\mathrm{Op}_{\Sigma}\left(\iota_{d\phi^{t}s}\tilde{T}_{k}\right)\mathrm{Op}_{\Sigma}\left(e^{tX_{\mathcal{F}}}\right)\\
 & \underset{(\ref{eq:expression})}{\approx}\mathrm{Op}_{\Sigma}\left(\iota_{d\phi^{t}s}\tilde{T}_{k}\right)e^{tX_{F}},
\end{align*}
giving (\ref{eq:commut-1-1-2}) and similarly to get (\ref{eq:commut-1-1-1-1}).

\appendix

\section{\label{sec:General-notations-used}General notations used in this
paper}
\begin{itemize}
\item \textbf{Dual $*$.} If $E$ is a finite dimensional vector space and
$E^{*}$ its \href{https://en.wikipedia.org/wiki/Dual_space}{dual space},
$x\in E,\xi\in E^{*}$ we denote the duality by $\langle\xi|x\rangle\in\mathbb{R}$.
If $E_{1},E_{2}$ are vector spaces and $A:E_{1}\rightarrow E_{2}$
is a linear map, we will denote $A^{*}:E_{2}^{*}\rightarrow E_{1}^{*}$
the induced dual map on the dual spaces defined by $\left\langle A^{*}\xi|x\right\rangle =\langle\xi|Ax\rangle,\forall x\in E_{1},\xi\in E_{2}^{*}$.
\item \textbf{Bilinear map.} Let $\Omega:E\times E\rightarrow\mathbb{R}$
be a bilinear map on a vector space $E$. It defines a linear map
$\check{\Omega}:E\rightarrow E^{*}$ by 
\begin{equation}
\check{\Omega}\left(x\right)\left(.\right)=\Omega\left(x,.\right).\label{eq:def_Omega_check}
\end{equation}

\begin{itemize}
\item If $\check{\Omega}$ is invertible ($\Omega$ is said to be non degenerated),
it induces a bilinear form on $E^{*}$ denoted $\Omega^{-1}:E^{*}\times E^{*}\rightarrow\mathbb{R}$
and defined by $\Omega^{-1}\left(\xi_{1},\xi_{2}\right):=\Omega\left(\check{\Omega}^{-1}\xi_{1},\check{\Omega}^{-1}\xi_{2}\right)=\xi_{1}\left(\check{\Omega}^{-1}\xi_{2}\right)$. 
\item The bilinear map $\Omega:E\times E\rightarrow\mathbb{R}$ is anti-symmetric
iff 
\begin{equation}
\check{\Omega}^{*}=-\check{\Omega}.\label{eq:Omega_antis}
\end{equation}
The bilinear map $\Omega:E\times E\rightarrow\mathbb{R}$ is symmetric
iff $\check{\Omega}^{*}=\check{\Omega}$.
\end{itemize}
\item \textbf{Adjoint $\dagger$.} Suppose $A:E_{1}\rightarrow E_{2}$ is
a linear map and $\Omega_{1}$ (respect. $\Omega_{2}$) is a non degenerated
bilinear form on $E_{1}$ (respect. $E_{2}$). The $\Omega$-adjoint
of $A$ is $A^{\dagger_{\Omega}}:E_{2}\rightarrow E_{1}$ defined
by
\begin{equation}
A^{\dagger_{\Omega}}=\check{\Omega}_{1}^{-1}A^{*}\check{\Omega}_{2}\label{eq:adjoint}
\end{equation}
equivalently given by
\[
\Omega_{1}\left(A^{\dagger_{\Omega}}x_{1},x_{2}\right)=\Omega_{2}\left(x_{1},Ax_{2}\right),\forall x_{1}\in E_{1},\forall x_{2}\in E_{2}.
\]
In this paper bilinear forms will be either a symplectic form $\Omega\left(.,.\right)$
or a metric $g\left(.,.\right)$.
\item \textbf{Pull-back $\circ$.} If $f:M\rightarrow N$ is a smooth map
between two manifolds,
\begin{itemize}
\item We denote $f^{\circ}$ the \textbf{pull back} operator
\begin{equation}
f^{\circ}:\begin{cases}
C^{\infty}\left(N\right) & \rightarrow C^{\infty}\left(M\right)\\
u & \rightarrow u\circ f
\end{cases}\label{eq:def_pullback}
\end{equation}
and if $f$ is a diffeomorphism, we denote $f^{-\circ}:=\left(f^{-1}\right)^{\circ}$
the \textbf{push forward} operator, so that $f^{-\circ}f^{\circ}=\mathrm{Id}$. 
\item We denote $df:TM\rightarrow TN$ the differential of $f$ that is
a linear bundle map and denote $\left(df\right)^{*}:T^{*}N\rightarrow T^{*}M$
its dual.
\end{itemize}
\end{itemize}

\section{\label{sec:More-informations-on}More information about flows}

\subsection{\label{subsec:Transfer-operator-1}Transfer operator}

Suppose that $X$ is a vector field on $M$ and $\left(\phi^{t}\right)^{\circ}=e^{tX}$
is the pull back operator defined in (\ref{eq:def_exptX}). For any
smooth measure $d\mu$ on $M$, the $L^{2}\left(M,d\mu\right)$ scalar
product is defined by $\langle v|u\rangle_{L^{2}}:=\int_{M}\overline{u}vd\mu$
with $u,v\in C_{c}^{\infty}\left(M\right)$. Then the $L^{2}$-adjoint
$\left(\phi^{t}\right)^{\circ\dagger}$ of $\left(\phi^{t}\right)^{\circ}$,
defined by $\langle v|\left(\phi^{t}\right)^{\circ\dagger}u\rangle_{L^{2}}=\langle\left(\phi^{t}\right)^{\circ}v|u\rangle_{L^{2}},\forall u,v\in C_{c}^{\infty}\left(M\right)$
is given by
\[
\left(\phi^{t}\right)^{\circ\dagger}=\left|\mathrm{det}\left(d\phi^{-t}\right)\right|.\left(\phi^{-t}\right)^{\circ}=e^{tX^{\dagger}},\quad\text{with }X^{\dagger}=-X-\mathrm{div}_{\mu}X.
\]
$\left(\phi^{t}\right)^{\circ\dagger}$ is called the \textbf{Ruelle-Perron-Frobenius
operator }or \textbf{\href{https://en.wikipedia.org/wiki/Transfer_operator}{transfer operator}.
}Observe that $\left(\phi^{t}\right)^{\circ\dagger}$\textbf{ }\textbf{\small{}pushes
forward probability distributions} because
\[
\int_{M}\left(\phi^{t}\right)^{\circ\dagger}ud\mu=\langle\mathrm{1}|\left(\phi^{t}\right)^{\circ\dagger}u\rangle_{L^{2}}=\langle\underbrace{\left(\phi^{t}\right)^{\circ}\mathrm{1}}_{1}|u\rangle_{L^{2}}=\int_{M}ud\mu.
\]
In particular the evolution of Dirac measures gives $\left(\phi^{t}\right)^{\circ\dagger}\delta_{m}=\delta_{\phi^{t}\left(m\right)}$
and is equivalent to evolution of points under the flow map $\phi^{t}$.

\subsection{\label{subsec:More-general-pull}More general pull back operators
$e^{tX_{F}}$}

Suppose that $X$ is a smooth vector field on $M$ and $F\rightarrow M$
is a smooth vector bundle over $M$. Suppose that $X_{F}:C^{\infty}\left(M;F\right)\rightarrow C^{\infty}\left(M;F\right)$
is a derivation over $X$ fulfilling (\ref{eq:def_XF}).

\subsubsection{Expression in local frames}

For a general vector bundle $F$ of rank $r$, with respect to a local
frame $\left(e_{1},\ldots e_{r}\right)$, a section $u\in C^{\infty}\left(M;F\right)$
is expressed as $u\left(m\right)=\sum_{j=1}^{r}u_{j}\left(m\right)e_{j}\left(m\right)$
with $m\in M$ and components $u_{j}\left(m\right)\in\mathbb{C}$.
We introduce a matrix of potential functions $V_{k,j}\left(m\right)\in\mathbb{C}$
defined by
\begin{equation}
\left(X_{F}e_{k}\right)\left(m\right)=\sum_{j}V_{k,j}\left(m\right)e_{j}\left(m\right).\label{eq:def_Vkj}
\end{equation}
Then the operator $X_{F}$ in (\ref{eq:def_X_F}) is expressed as
\begin{equation}
\left(X_{F}u\right)\left(m\right)\eq{\ref{eq:def_XF}}\sum_{j=1}^{r}\left(\left(Xu_{j}\right)\left(m\right)+\sum_{k=1}^{r}V_{k,j}\left(m\right)u_{k}\left(m\right)\right)e_{j}\left(m\right).\label{eq:A_bundle}
\end{equation}
We see that the expression (\ref{eq:A_bundle}) generalizes (\ref{eq:XF_X_V}).

\subsubsection{Expression of $e^{tX_{F}}$ from a flow $\tilde{\phi}_{F}^{t}$ on
$F$}

An equivalent way to present the operator $X_{F}$ in (\ref{eq:def_XF}),
is to consider a vector field $\tilde{X}_{F}$ on $F$, a lift of
$X$, i.e. $\left(d\pi\right)\tilde{X}_{F}=X$, with the projection
$\pi:F\rightarrow M$, and such that the flow map generated by $\tilde{X}_{F}$
\[
\tilde{\phi}_{F}^{t}:F\rightarrow F
\]
i.e. $\left(\tilde{\phi}_{F}^{t}\right)^{\circ}=e^{t\tilde{X}_{F}}:\mathcal{S}\left(F\right)\rightarrow\mathcal{S}\left(F\right)$,
is a smooth linear bundle map over $\phi^{t}$ which means that for
every $m\in M,t\in\mathbb{R}$, $\tilde{\phi}_{F}^{t}\left(m\right):F\left(m\right)\rightarrow F\left(\phi^{t}\left(m\right)\right)$
is a linear map. See Figure \ref{fig:Transfer-operator}. This defines
a \textbf{group of pull back operators} with generator $X_{F}$ acting
on a section $u\in C^{\infty}\left(M;F\right)$ of the vector bundle
$F$:
\[
e^{tX_{F}}:C^{\infty}\left(M;F\right)\rightarrow C^{\infty}\left(M;F\right),\quad t\in\mathbb{R}.
\]
by
\begin{equation}
e^{tX_{F}}u:=\tilde{\phi}_{F}^{-t}\left(u\circ\phi^{t}\right).\label{eq:evol_XF}
\end{equation}
We get $e^{tX_{F}}\left(fu\right)=\tilde{\phi}_{F}^{-t}\left(\left(e^{tX}f\right)\left(u\circ\phi^{t}\right)\right)=\left(e^{tX}f\right)\left(e^{tX_{F}}u\right)$
and deduce (\ref{eq:def_XF}) by derivation w.r.t. $t$. Conversely
the flow map $\tilde{\phi}_{F}^{t}$ and its generator $\tilde{X}_{F}$
is defined from $X_{F}$ by (\ref{eq:evol_XF}) and use of any section
$u$.
\begin{center}
\begin{figure}
\begin{centering}
\input{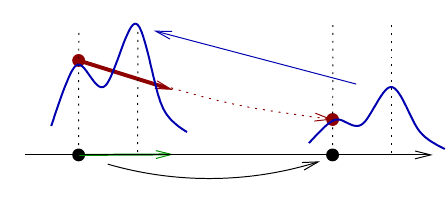tex_t}
\par\end{centering}
\caption{\label{fig:Transfer-operator}pull back operator $e^{tX_{F}}$}
\end{figure}
\par\end{center}

\section{\label{sec:Bargmann-transform-and}Bargmann transform and Metaplectic
operators}

In this section we collect results that concerns the \href{https://en.wikipedia.org/wiki/Segal\%E2\%80\%93Bargmann_space}{Bargmann transform}
on a vector space $E$, the quantization of affine and linear symplectic
map, that are the unitary representations of the \href{https://en.wikipedia.org/wiki/Heisenberg_group}{Heisenberg group}
and the \href{https://en.wikipedia.org/wiki/Metaplectic_group}{metaplectic operators}.
All these definitions and results are well known and important in
many fields of mathematics and physics. They are at the core of micro-local
analysis. We present these definitions and results in a form that
is adapted to our paper that relies strongly on them. References are
\cite{folland-88},\cite{hall99},\cite[appendix 4.4]{lerner2011metrics},\cite[chap.3, chap.4]{faure-tsujii_prequantum_maps_12}.

\subsection{Weyl Heisenberg group}

Let $E$ be a real vector space, $n=\mathrm{dim}E$. For $x\in E$,
$\xi\in E^{*}$ we denote $\langle\xi|x\rangle\in\mathbb{R}$ the
duality. Let $dx\in\left|\Lambda^{n}\left(E\right)\right|$ be a \href{https://en.wikipedia.org/wiki/Density_on_a_manifold}{density}
on $E$ let $d\xi$ be the induced density on $E^{*}$. Let us denote
the \href{https://en.wikipedia.org/wiki/Fourier_transform}{Fourier transform}
\[
\mathcal{F}:\begin{cases}
\mathcal{S}\left(E\right) & \rightarrow\mathcal{S}\left(E^{*}\right)\\
u & \rightarrow v\left(\xi\right)=\frac{1}{\left(2\pi\right)^{n/2}}\int e^{-i\langle\xi|x\rangle}u\left(x\right)dx
\end{cases}
\]
Its $L^{2}$-adjoint is
\[
\mathcal{F}^{\dagger}:\begin{cases}
\mathcal{S}\left(E^{*}\right) & \rightarrow\mathcal{S}\left(E\right)\\
v & \rightarrow u\left(x\right)=\frac{1}{\left(2\pi\right)^{n/2}}\int e^{i\langle\xi|x\rangle}v\left(\xi\right)d\xi
\end{cases}
\]
and we have that $\mathcal{F}^{\dagger}\mathcal{F}=\mathrm{Id}_{\mathcal{S}\left(E\right)}$
hence $\mathcal{F}^{-1}=\mathcal{F}^{\dagger}$.

For $x\in E$, $\xi\in E^{*}$ we define the translation maps
\[
T_{x}:\begin{cases}
E & \rightarrow E\\
y & \rightarrow y+x
\end{cases},\qquad T_{\xi}:\begin{cases}
E^{*} & \rightarrow E^{*}\\
\xi' & \rightarrow\xi'+\xi
\end{cases}
\]
And denote $T_{x}^{-\circ},T_{\xi}^{-\circ}$ the push forward operators.
We will denote
\begin{equation}
\hat{T}_{x,\xi}:=T_{x}^{-\circ}\left(\mathcal{F}^{-1}T_{\xi}^{-\circ}\mathcal{F}\right)\label{eq:def_T_rho}
\end{equation}

\begin{cBoxB}{}
\begin{lem}
On \emph{$\mathcal{S}\left(E\right)$}, one has
\begin{equation}
\left(\mathcal{F}^{-1}T_{\xi}^{-\circ}\mathcal{F}\right)T_{x}^{-\circ}=e^{i\langle\xi|x\rangle}T_{x}^{-\circ}\left(\mathcal{F}^{-1}T_{\xi}^{-\circ}\mathcal{F}\right).\label{eq:weyl_heisenberg}
\end{equation}
\[
\hat{T}_{x,\xi}\hat{T}_{x',\xi'}=e^{i\langle\xi|x'\rangle}\hat{T}_{x+x',\xi+\xi'}.
\]
In particular $\hat{T}_{x,\xi}$ is a unitary operator in $L^{2}\left(E,dx\right)$
and
\begin{equation}
\hat{T}_{x,\xi}^{-1}=\hat{T}_{x,\xi}^{\dagger}=\hat{T}_{-\left(x,\xi\right)}e^{i\xi x}.\label{eq:inverse_T}
\end{equation}
\end{lem}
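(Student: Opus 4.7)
The plan is to reduce everything to the single pointwise identity that $\mathcal{F}^{-1}T_{\xi}^{-\circ}\mathcal{F}$ acts on $\mathcal{S}(E)$ as multiplication by $e^{i\langle\xi|x\rangle}$. First I would verify this by applying the definitions: for $u\in\mathcal{S}(E)$, after translating the frequency variable $\eta\mapsto\eta+\xi$ in the inverse Fourier integral, one gets $(\mathcal{F}^{-1}T_{\xi}^{-\circ}\mathcal{F}u)(x)=e^{i\langle\xi|x\rangle}u(x)$. The commutation relation (\ref{eq:weyl_heisenberg}) is then obvious: evaluating $(\mathcal{F}^{-1}T_{\xi}^{-\circ}\mathcal{F})T_{x}^{-\circ}u$ at $y$ gives $e^{i\langle\xi|y\rangle}u(y-x)$, while $T_{x}^{-\circ}(\mathcal{F}^{-1}T_{\xi}^{-\circ}\mathcal{F})u$ at $y$ gives $e^{i\langle\xi|y-x\rangle}u(y-x)$, and the two differ exactly by the scalar phase $e^{i\langle\xi|x\rangle}$. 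This is the canonical commutation relation between position-translations and frequency-modulations.

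With (\ref{eq:weyl_heisenberg}) in hand, the composition formula follows by a direct manipulation of factors in
\[
\hat{T}_{x,\xi}\hat{T}_{x',\xi'}=T_{x}^{-\circ}(\mathcal{F}^{-1}T_{\xi}^{-\circ}\mathcal{F})T_{x'}^{-\circ}(\mathcal{F}^{-1}T_{\xi'}^{-\circ}\mathcal{F}):
\]
applying (\ref{eq:weyl_heisenberg}) to move $(\mathcal{F}^{-1}T_{\xi}^{-\circ}\mathcal{F})$ past $T_{x'}^{-\circ}$ produces a phase $e^{i\langle\xi|x'\rangle}$; the two position-translations then combine as $T_{x+x'}^{-\circ}$, while the two frequency-translations, conjugated together by $\mathcal{F}$, combine as $\mathcal{F}^{-1}T_{\xi+\xi'}^{-\circ}\mathcal{F}$. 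This gives exactly $e^{i\langle\xi|x'\rangle}\hat{T}_{x+x',\xi+\xi'}$.

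For unitarity and the inverse formula: each of $T_{x}^{-\circ}$, $T_{\xi}^{-\circ}$ and $\mathcal{F}$ is unitary (translations preserve the Lebesgue measure, and $\mathcal{F}$ is unitary by Plancherel given our normalization $\mathcal{F}^{\dagger}\mathcal{F}=\mathrm{Id}$), so $\hat{T}_{x,\xi}$ is a composition of unitaries, hence unitary, with $\hat{T}_{x,\xi}^{-1}=\hat{T}_{x,\xi}^{\dagger}$. The explicit expression for the inverse is then read off from the composition rule applied with $(x',\xi')=(-x,-\xi)$: one gets $\hat{T}_{x,\xi}\hat{T}_{-x,-\xi}=e^{-i\langle\xi|x\rangle}\hat{T}_{0,0}=e^{-i\langle\xi|x\rangle}\mathrm{Id}$, so $\hat{T}_{x,\xi}^{-1}=e^{i\langle\xi|x\rangle}\hat{T}_{-(x,\xi)}$, which is (\ref{eq:inverse_T}) since the scalar phase $e^{i\xi x}$ commutes freely with the operator.

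There is no real obstacle here; the statement is a direct verification and is the standard starting point for the Weyl--Heisenberg calculus. The only points one has to be a little careful about are the sign conventions in $\mathcal{F}$ and the compatible normalization of the densities $dx$ and $d\xi$ used in the paper, so that the phases in (\ref{eq:weyl_heisenberg}) and (\ref{eq:inverse_T}) come out with the correct signs.
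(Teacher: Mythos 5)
Your proof is correct and follows essentially the same route as the paper: both arguments reduce everything to the identification of $\mathcal{F}^{-1}T_{\xi}^{-\circ}\mathcal{F}$ with multiplication by $e^{i\langle\xi|x\rangle}$ (you verify it pointwise on Schwartz functions, the paper on Dirac distributions $\delta_{x}$, which is the same computation), and then the commutation, composition, unitarity and inverse formulas follow by the identical factor manipulations.
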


\end{cBoxB}

\begin{rem}
The family of operators $\hat{T}_{x,\xi}$ with $\left(x,\xi\right)\in T^{*}E=E\oplus E^{*}$
are called the Schrödinger unitary representation of the \href{https://en.wikipedia.org/wiki/Heisenberg_group}{Weyl Heisenberg group}.
\end{rem}

\begin{proof}
Let $x\in E$. Then
\[
\left(\mathcal{F}\delta_{x}\right)\left(\xi'\right)=\frac{1}{\left(2\pi\right)^{n/2}}e^{-i\xi'x},\quad\left(T_{\xi}^{-\circ}\mathcal{F}\delta_{x}\right)\left(\xi'\right)=\frac{1}{\left(2\pi\right)^{n/2}}e^{-i\left(\xi'-\xi\right)x},
\]
\[
\left(\mathcal{F}^{-1}T_{\xi}^{-\circ}\mathcal{F}\delta_{x}\right)\left(x'\right)=\frac{1}{\left(2\pi\right)^{n}}\int e^{i\xi'x}e^{-i\left(\xi'-\xi\right)x}d\xi'=e^{i\xi x}\delta_{x}\left(x'\right)
\]
Hence $\mathcal{F}^{-1}T_{\xi}^{-\circ}\mathcal{F}\delta_{x}=e^{i\xi x}\delta_{x}$.
Hence
\[
T_{x}^{-\circ}\left(\mathcal{F}^{-1}T_{\xi}^{-\circ}\mathcal{F}\right)\delta_{x'}=e^{i\xi x'}\delta_{x+x'},
\]
\[
\left(\mathcal{F}^{-1}T_{\xi}^{-\circ}\mathcal{F}\right)T_{x}^{-\circ}\delta_{x'}=e^{i\xi\left(x+x'\right)}\delta_{x+x'}=e^{i\langle\xi|x\rangle}T_{x}^{-\circ}\left(\mathcal{F}^{-1}T_{\xi}^{-\circ}\mathcal{F}\right)\delta_{x'}.
\]
Then
\begin{align*}
\hat{T}_{x,\xi}\hat{T}_{x',\xi'} & =T_{x}^{-\circ}\left(\mathcal{F}^{-1}T_{\xi}^{-\circ}\mathcal{F}\right)T_{x'}^{-\circ}\left(\mathcal{F}^{-1}T_{\xi'}^{-\circ}\mathcal{F}\right)\\
 & =e^{i\langle\xi|x'\rangle}T_{x}^{-\circ}T_{x'}^{-\circ}\left(\mathcal{F}^{-1}T_{\xi}^{-\circ}\mathcal{F}\right)\left(\mathcal{F}^{-1}T_{\xi'}^{-\circ}\mathcal{F}\right)\\
 & =e^{i\langle\xi|x'\rangle}\hat{T}_{x+x',\xi+\xi'}.
\end{align*}
\end{proof}

\subsection{\label{subsec:Bargmann-Transform}Bargmann Transform on a Euclidean
vector space $\left(E,g\right)$}

Let $E$ be a vector space with an Euclidean metric $g$. We have
the canonical identification
\[
T^{*}E=E\oplus E^{*},
\]
with the \textbf{canonical symplectic form} $\Omega$ on $E\oplus E^{*}$
given by: for $\left(x_{1},\xi_{1}\right),\left(x_{2},\xi_{2}\right)\in E\oplus E^{*}$,
\begin{equation}
\Omega\left(\left(x_{1},\xi_{1}\right),\left(x_{2},\xi_{2}\right)\right)=\langle\xi_{1}|x_{2}\rangle-\langle\xi_{2}|x_{1}\rangle.\label{eq:def_Omega}
\end{equation}
We denote $dx$ the density on $E$ induced by $g$.

\begin{cBoxA}{}
\begin{defn}
A \textbf{Gaussian wave packet in vertical gauge (V)} with parameters
$\left(x,\xi\right)\in E\oplus E^{*}$, is the function $\varphi_{x,\xi}^{\left(\mathrm{V}\right)}\in\mathcal{S}\left(E\right)$
given by: for $y\in E$,
\begin{equation}
\varphi_{x,\xi}^{\left(\mathrm{V}\right)}\left(y\right):=\pi^{-\frac{n}{4}}e^{i\langle\xi|y-x\rangle}e^{-\frac{1}{2}\left\Vert y-x\right\Vert _{g}^{2}}\label{eq:Gaussian_wave_packet}
\end{equation}
Similarly a \textbf{Gaussian wave packet in radial gauge (R) }is
\begin{equation}
\varphi_{x,\xi}^{\left(\mathrm{R}\right)}\left(y\right):=e^{\frac{i}{2}\langle\xi|x\rangle}\varphi_{x,\xi}^{\left(\mathrm{V}\right)}\left(y\right)=\pi^{-\frac{n}{4}}e^{i\langle\xi|y-\frac{1}{2}x\rangle}e^{-\frac{1}{2}\left\Vert y-x\right\Vert _{g}^{2}}\label{eq:Gaussian_wave_packet-V-R}
\end{equation}
\end{defn}

\end{cBoxA}

\begin{rem}
We have
\begin{itemize}
\item The terms 'vertical gauge' or 'radial gauge' come from a geometrical
construction of these waves packets using prequantization that we
do not describe here. But we can explain the terminology as follows.
The wave packet is seen as a map $\varphi^{\left(V\right)}:\left(x,\xi\right)\in E\oplus E^{*}\rightarrow\mathcal{H}=L^{2}\left(E\right)$
that induces a map to the projective space $\left[\varphi^{\left(V\right)}\right]:E\oplus E^{*}\rightarrow\mathbb{P}\left(\mathcal{H}\right)$
(i.e. ignoring phase) and a pull back line bundle $L\rightarrow E\oplus E^{*}$
with Hermitian connection (Levi Civita). The map $\varphi^{\left(V\right)}$
can be seen as a trivialization of this line bundle $L$. Its covariant
derivative is given by $D\varphi^{\left(V\right)}=Pd\varphi^{\left(V\right)}=\eta^{\left(V\right)}\varphi^{\left(V\right)}$
\cite[(1.5) p.540]{taylor_tome2}, with projector onto the fiber $P=\varphi^{\left(V\right)}\langle\varphi^{\left(V\right)}|.\rangle$
hence connection one-form
\[
\eta^{\left(V\right)}=\langle\varphi^{\left(V\right)}|d\varphi^{\left(V\right)}\rangle\eq{\ref{eq:Gaussian_wave_packet}}-i\xi dx.
\]
We observe that the kernel of this one-form is along $x=\mathrm{cste}$
on phase space $E\oplus E^{*}$, i.e. 'vertical'. Similarly for the
wave packet (\ref{eq:Gaussian_wave_packet-V-R}), considered as another
trivialization, we compute the connection one-form
\[
\eta^{\left(R\right)}=\langle\varphi^{\left(R\right)}|d\varphi^{\left(R\right)}\rangle\eq{\ref{eq:Gaussian_wave_packet-V-R}}\frac{i}{2}\left(-\xi dx+xd\xi\right),
\]
for which the kernel at point $\left(x,\xi\right)$ contains the 'radial'
vector $\left(x,\xi\right)$.
\item $\left\Vert \varphi_{x,\xi}^{\left(\mathrm{V}\right)}\right\Vert _{L^{2}\left(E,dx\right)}=\left\Vert \varphi_{x,\xi}^{\left(\mathrm{R}\right)}\right\Vert _{L^{2}\left(E,dx\right)}=1$
with $dx$ the density on $E$ associated to the metric $g$.
\item We have
\begin{equation}
\varphi_{x,\xi}^{\left(\mathrm{V}\right)}\eq{\ref{eq:def_T_rho}}\hat{T}_{x,\xi}\varphi_{0,0}^{\left(\mathrm{V}\right)}\label{eq:phi_T}
\end{equation}
\end{itemize}
\end{rem}

\begin{cBoxA}{}
\begin{defn}
The \textbf{Bargmann transform} in vertical gauge is the continuous
operator
\begin{equation}
\mathcal{B}_{E,\left(\mathrm{V}\right)}:\begin{cases}
\mathcal{S}\left(E\right) & \rightarrow\mathcal{S}\left(E\oplus E^{*}\right)\\
u & \rightarrow\left(\left(x,\xi\right)\rightarrow\langle\varphi_{x,\xi}^{\left(\mathrm{V}\right)}|u\rangle_{L^{2}\left(E;dx\right)}\right).
\end{cases}\label{eq:def_Bargman_B}
\end{equation}
\end{defn}

\end{cBoxA}

\begin{cBoxB}{}
\begin{lem}
\label{lem:With-respect-to}With respect to the density $\frac{1}{\left(2\pi\right)^{n}}dxd\xi$
on $E\oplus E^{*}$, the Bargmann transform $\mathcal{B}_{E,\left(\mathrm{V}\right)}:L^{2}\left(E;dx\right)\rightarrow L^{2}\left(E\oplus E^{*};\frac{1}{\left(2\pi\right)^{n}}dxd\xi\right)$
is an isometry. Its $L^{2}$-adjoint operator $\mathcal{B}_{\left(\mathrm{V}\right)}^{\dagger}:L^{2}\left(E\oplus E^{*};\frac{1}{\left(2\pi\right)^{n}}dxd\xi\right)\rightarrow L^{2}\left(E;dx\right)$
is a smoothing operator given by 
\begin{equation}
\left(\mathcal{B}_{\left(\mathrm{V}\right)}^{\dagger}v\right)\left(y\right)=\int_{E\oplus E^{*}}\varphi_{x,\xi}^{\left(\mathrm{V}\right)}\left(y\right)v\left(x,\xi\right)\frac{dxd\xi}{\left(2\pi\right)^{n}}\label{eq:B*}
\end{equation}
and satisfies the \textbf{resolution of identity}
\begin{equation}
\mathrm{Id}_{L^{2}\left(E\right)}=\mathcal{B}_{\left(\mathrm{V}\right)}^{\dagger}\mathcal{B}_{\left(\mathrm{V}\right)}=\int_{E\oplus E^{*}}\pi\left(x,\xi\right)\frac{dxd\xi}{\left(2\pi\right)^{n}}\label{eq:resol_ident_bargamnn}
\end{equation}
with $\pi\left(x,\xi\right):=\varphi_{x,\xi}^{\left(\mathrm{V}\right)}\langle\varphi_{x,\xi}^{\left(\mathrm{V}\right)}|.\rangle$
that is the rank one orthogonal projector in $L^{2}\left(E\right)$
onto $\mathbb{C}\varphi_{x,\xi}^{\left(\mathrm{V}\right)}$.
\end{lem}

\end{cBoxB}

\begin{proof}
We first check (\ref{eq:B*}). For $u\in\mathcal{S}\left(E\right)$
and $v\in\mathcal{S}\left(E\oplus E^{*}\right)$ we have
\begin{align*}
\langle u|\mathcal{B}_{\left(\mathrm{V}\right)}^{\dagger}v\rangle_{L^{2}} & =\langle\mathcal{B}_{\left(\mathrm{V}\right)}u|v\rangle_{L^{2}}=\int_{E\oplus E^{*}}\overline{\langle\varphi_{x,\xi}^{\left(\mathrm{V}\right)}|u\rangle_{L^{2}}}v\left(x,\xi\right)\frac{dxd\xi}{\left(2\pi\right)^{n}}\\
 & =\int_{E}\overline{u\left(y\right)}\int_{E\oplus E^{*}}\varphi_{x,\xi}^{\left(\mathrm{V}\right)}\left(y\right)v\left(x,\xi\right)\frac{dxd\xi}{\left(2\pi\right)^{n}}dy.
\end{align*}
Now we check that $\left\Vert \mathcal{B}_{\left(\mathrm{V}\right)}u\right\Vert ^{2}=\left\Vert u\right\Vert ^{2}$
that is equivalent to (\ref{eq:resol_ident_bargamnn}). We have
\begin{align*}
\left\Vert \mathcal{B}_{\left(\mathrm{V}\right)}u\right\Vert ^{2} & =\int_{E\oplus E^{*}}\left|\langle\varphi_{x,\xi}^{\left(\mathrm{V}\right)}|u\rangle_{L^{2}}\right|^{2}\frac{dxd\xi}{\left(2\pi\right)^{n}}\\
 & =\pi^{-\frac{n}{2}}\int e^{-i\langle\xi|y-x\rangle}e^{-\frac{1}{2}\left\Vert y-x\right\Vert _{g}^{2}}u\left(y\right)dye^{i\langle\xi|y'-x\rangle}e^{-\frac{1}{2}\left\Vert y'-x\right\Vert _{g}^{2}}\overline{u\left(y'\right)}dy'\frac{dxd\xi}{\left(2\pi\right)^{n}}
\end{align*}
But
\[
\int e^{i\langle\xi|y'-y\rangle}d\xi=\left(2\pi\right)^{n}\delta\left(y'-y\right)
\]
\[
\int_{E}e^{-\left\Vert y\right\Vert ^{2}}dy=\pi^{\frac{n}{2}}
\]
hence
\[
\left\Vert \mathcal{B}_{\left(\mathrm{V}\right)}u\right\Vert ^{2}=\pi^{-\frac{n}{2}}\int e^{-\left\Vert y-x\right\Vert _{g}^{2}}\left|u\left(y\right)\right|^{2}dydx=\int\left|u\left(y\right)\right|^{2}dy=\left\Vert u\right\Vert ^{2}.
\]
\end{proof}
By using $\varphi_{x,\xi}^{\left(\mathrm{R}\right)}$ in radial gauge
instead of $\varphi_{x,\xi}^{\left(\mathrm{V}\right)}$, we can define
similarly operators $\mathcal{B}_{\left(\mathrm{R}\right)},\mathcal{B}_{\left(\mathrm{R}\right)}^{\dagger}$. 

Let
\begin{equation}
\mathcal{P}_{\left(\mathrm{V}\right)}:=\mathcal{B}_{\left(\mathrm{V}\right)}\mathcal{B}_{\left(\mathrm{V}\right)}^{\dagger}\quad:\mathcal{S}\left(E\oplus E^{*}\right)\rightarrow\mathcal{S}\left(E\oplus E^{*}\right)\label{eq:Bergman_projector-1}
\end{equation}
\begin{equation}
\mathcal{P}_{\left(\mathrm{R}\right)}:=\mathcal{B}_{\left(\mathrm{R}\right)}\mathcal{B}_{\left(\mathrm{R}\right)}^{\dagger}\quad:\mathcal{S}\left(E\oplus E^{*}\right)\rightarrow\mathcal{S}\left(E\oplus E^{*}\right)\label{eq:Bergman_projector}
\end{equation}
be the orthogonal projector in $L^{2}\left(E\oplus E^{*}\right)$
onto their closed images $\mathrm{Im}\left(\mathcal{B}_{\left(\mathrm{V}\right)}\right),\mathrm{Im}\left(\mathcal{B}_{\left(\mathrm{R}\right)}\right)$
respectively, called the \textbf{Bergman projector}. Remark that Lemma
\ref{lem:With-respect-to} holds true for $\mathcal{B}_{\left(\mathrm{R}\right)}$
as well.

The metric $g$ on $E$ induces a canonical metric on $E\oplus E^{*}$
denoted $\boldsymbol{g}:=g\oplus g^{-1}$.

\begin{cBoxB}{}
\begin{lem}
The Schwartz kernel of $\mathcal{P}_{\left(\mathrm{V}\right)},\mathcal{P}_{\left(\mathrm{R}\right)}$
are given by

\begin{equation}
\langle\delta_{x',\xi'}|\mathcal{P}_{\left(\mathrm{V}\right)}\delta_{x,\xi}\rangle=\exp\left(\frac{i}{2}\left(\xi'+\xi\right)\left(x'-x\right)-\frac{1}{4}\left\Vert \left(x',\xi'\right)-\left(x,\xi\right)\right\Vert _{g\oplus g^{-1}}^{2}\right)\label{eq:Kernel_of_P_vertical_Gauge}
\end{equation}
\begin{equation}
\langle\delta_{x',\xi'}|\mathcal{P}_{\left(\mathrm{R}\right)}\delta_{x,\xi}\rangle=\exp\left(-\frac{i}{2}\Omega\left(\left(x',\xi'\right),\left(x,\xi\right)\right)-\frac{1}{4}\left\Vert \left(x',\xi'\right)-\left(x,\xi\right)\right\Vert _{g\oplus g^{-1}}^{2}\right)\label{eq:Kernel_of_P_radial_Gauge}
\end{equation}
\end{lem}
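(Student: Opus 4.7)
\bigskip

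\noindent\textbf{Proof proposal.} The plan is to compute the two kernels directly from the definitions, reducing the claim to one explicit Gaussian integral. The starting point is the observation that, writing $\mathcal{P}_{(\mathrm{V})}=\mathcal{B}_{(\mathrm{V})}\mathcal{B}_{(\mathrm{V})}^{\dagger}$ and unfolding (\ref{eq:def_Bargman_B}) and (\ref{eq:B*}), the Schwartz kernel of $\mathcal{P}_{(\mathrm{V})}$ with respect to the reference measure $\tfrac{dxd\xi}{(2\pi)^{n}}$ on $E\oplus E^{*}$ is exactly the overlap of two Gaussian wave packets,
\begin{equation*}
\langle \delta_{x',\xi'}\mid \mathcal{P}_{(\mathrm{V})}\delta_{x,\xi}\rangle \;=\;\langle \varphi_{x',\xi'}^{(\mathrm{V})}\mid \varphi_{x,\xi}^{(\mathrm{V})}\rangle_{L^{2}(E,dx)}.
\end{equation*}
So the whole proof reduces to evaluating this inner product.

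Next I would insert the explicit formula (\ref{eq:Gaussian_wave_packet}) and split the integrand into a phase factor and a Gaussian amplitude. Writing the midpoint $a:=(x+x')/2$, the standard identity
\begin{equation*}
\|y-x\|_{g}^{2}+\|y-x'\|_{g}^{2}\;=\;2\|y-a\|_{g}^{2}+\tfrac{1}{2}\|x-x'\|_{g}^{2}
\end{equation*}
lets me complete the square in $y$. After the change of variable $z=y-a$ the integral becomes the standard Fourier transform of a Gaussian, $\int_{E}e^{-\|z\|_{g}^{2}+i\langle\xi-\xi'\mid z\rangle}dz=\pi^{n/2}e^{-\|\xi-\xi'\|_{g^{-1}}^{2}/4}$, and the prefactor $\pi^{-n/2}$ from the normalization in (\ref{eq:Gaussian_wave_packet}) cancels the $\pi^{n/2}$. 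What remains is a product of Gaussians in $x-x'$ and $\xi-\xi'$ (which combine into $-\tfrac{1}{4}\|(x',\xi')-(x,\xi)\|_{g\oplus g^{-1}}^{2}$) together with the linear phase surviving from $-\langle\xi'\mid y-x'\rangle+\langle\xi\mid y-x\rangle$ and $\langle \xi-\xi'\mid a\rangle$. A short bookkeeping check shows that this residual phase equals $\tfrac{i}{2}\langle \xi'+\xi\mid x'-x\rangle$, which yields (\ref{eq:Kernel_of_P_vertical_Gauge}).

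For the radial gauge formula (\ref{eq:Kernel_of_P_radial_Gauge}) I would not redo the integral: from (\ref{eq:Gaussian_wave_packet-1}) we have $\varphi_{x,\xi}^{(\mathrm{R})}=e^{\frac{i}{2}\langle\xi\mid x\rangle}\varphi_{x,\xi}^{(\mathrm{V})}$, hence the kernel of $\mathcal{P}_{(\mathrm{R})}$ differs from the one of $\mathcal{P}_{(\mathrm{V})}$ by the phase factor $e^{-\frac{i}{2}\langle\xi'\mid x'\rangle}e^{\frac{i}{2}\langle\xi\mid x\rangle}$. Adding this phase to $\tfrac{i}{2}\langle\xi'+\xi\mid x'-x\rangle$ and expanding, the $\xi'x'$ and $\xi x$ terms cancel and one is left with $\tfrac{i}{2}(\langle\xi\mid x'\rangle-\langle\xi'\mid x\rangle)=-\tfrac{i}{2}\Omega((x',\xi'),(x,\xi))$ by (\ref{eq:def_Omega}).

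There is no real obstacle here; the computation is routine. The only place one has to be careful is the bookkeeping of the four linear terms in $(\xi,\xi',x,x')$ that come respectively from the wave packet phases and from the Gaussian translation $y\mapsto y-a$, since it is there that the symmetric combination $\tfrac{i}{2}\langle\xi'+\xi\mid x'-x\rangle$ (vertical gauge) and the antisymmetric combination $-\tfrac{i}{2}\Omega$ (radial gauge) emerge. The identity $\int_{E}e^{-\|z\|^{2}}dz=\pi^{n/2}$ fixes the overall constant to be $1$, consistent with the fact that $\mathcal{P}_{(\mathrm{V})}$ and $\mathcal{P}_{(\mathrm{R})}$ are orthogonal projectors with $\mathcal{P}\delta_{x,\xi}\big|_{(x,\xi)}=1$ on the diagonal.
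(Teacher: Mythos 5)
Your proposal is correct and follows essentially the same route as the paper: both reduce the claim to the overlap $\langle\varphi_{x',\xi'}^{(\mathrm{V})}|\varphi_{x,\xi}^{(\mathrm{V})}\rangle_{L^{2}(E,dx)}$, evaluate it by a single Gaussian integral, and obtain the radial-gauge kernel by multiplying with the phase $e^{-\frac{i}{2}\langle\xi'|x'\rangle}e^{\frac{i}{2}\langle\xi|x\rangle}$. The only (immaterial) difference is that you complete the square around the midpoint $a=(x+x')/2$ whereas the paper applies the complex-shift Gaussian formula $\int e^{-\|y\|^{2}+by}dy=\pi^{n/2}e^{\|b\|^{2}/4}$ with $b=(x'+x)+i(\xi-\xi')$; both bookkeepings yield the same phase $\tfrac{i}{2}\langle\xi'+\xi|x'-x\rangle$.
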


\end{cBoxB}

\begin{proof}
We have
\begin{align*}
\langle\delta_{x',\xi'}|\mathcal{P}_{\left(\mathrm{V}\right)}\delta_{x,\xi}\rangle & =\langle\varphi_{x',\xi'}^{\left(\mathrm{V}\right)}|\varphi_{x,\xi}^{\left(\mathrm{V}\right)}\rangle=\pi^{-\frac{n}{2}}\int e^{-i\langle\xi'|y-x'\rangle}e^{-\frac{1}{2}\left\Vert y-x'\right\Vert _{g}^{2}}e^{i\langle\xi|y-x\rangle}e^{-\frac{1}{2}\left\Vert y-x\right\Vert _{g}^{2}}dy\\
 & =\pi^{-\frac{n}{2}}e^{i\left(\langle\xi'|x'\rangle-\langle\xi|x\rangle\right)-\frac{1}{2}\left\Vert x'\right\Vert _{g}^{2}-\frac{1}{2}\left\Vert x\right\Vert _{g}^{2}}\int e^{-\left\Vert y\right\Vert _{g}^{2}+\langle b|y\rangle}dy
\end{align*}
with $b=\check{g}\left(x'+x\right)+i\left(\xi-\xi'\right)\in E^{*}$.
We use the Gaussian integral
\begin{equation}
\int_{E}e^{-\alpha\left\Vert y\right\Vert _{g}^{2}+\langle b|y\rangle}dy=\left(\frac{\pi}{\alpha}\right)^{\frac{n}{2}}e^{\frac{1}{4\alpha}\left\Vert b\right\Vert _{g^{-1}}^{2}},\label{eq:Gaussian_integral}
\end{equation}
with $\alpha=1$, giving
\begin{align*}
\langle\delta_{x',\xi'}|\mathcal{P}_{\left(\mathrm{V}\right)}\delta_{x,\xi}\rangle & =e^{i\left(\langle\xi'|x'\rangle-\langle\xi|x\rangle\right)-\frac{1}{2}\left\Vert x'\right\Vert ^{2}-\frac{1}{2}\left\Vert x\right\Vert ^{2}}e^{\frac{1}{4}\left(\left\Vert x'+x\right\Vert ^{2}-\left\Vert \xi-\xi'\right\Vert ^{2}+2i\langle\xi-\xi'|x'+x\rangle\right)}\\
 & =e^{i\left(\langle\xi'|x'\rangle-\langle\xi|x\rangle+\frac{1}{2}\langle\xi-\xi'|x'+x\rangle\right)}e^{-\frac{1}{4}\left(\left\Vert x'-x\right\Vert ^{2}+\left\Vert \xi-\xi'\right\Vert ^{2}\right)}\\
 & =e^{i\frac{1}{2}\langle\xi'+\xi|x'-x\rangle}e^{-\frac{1}{4}\left\Vert \left(x',\xi'\right)-\left(x,\xi\right)\right\Vert _{g+g^{-1}}^{2}}
\end{align*}
We have
\begin{align*}
\langle\delta_{x',\xi'}|\mathcal{P}_{\left(\mathrm{R}\right)}\delta_{x,\xi}\rangle & =e^{i\frac{1}{2}\left(\langle\xi|x\rangle-\langle\xi'|x'\rangle\right)}\langle\delta_{x',\xi'}|\mathcal{P}_{\left(\mathrm{V}\right)}\delta_{x,\xi}\rangle\\
 & =e^{i\frac{1}{2}\left(-\langle\xi'|x\rangle+\langle\xi|x'\rangle\right)}e^{-\frac{1}{4}\left\Vert \left(x',\xi'\right)-\left(x,\xi\right)\right\Vert _{g+g^{-1}}^{2}}\\
 & =e^{-i\frac{1}{2}\Omega\left(\left(x',\xi'\right),\left(x,\xi\right)\right)}e^{-\frac{1}{4}\left\Vert \left(x',\xi'\right)-\left(x,\xi\right)\right\Vert _{g+g^{-1}}^{2}}
\end{align*}
\end{proof}
\noindent\fcolorbox{blue}{white}{\begin{minipage}[t]{1\columnwidth - 2\fboxsep - 2\fboxrule}%
\begin{lem}
Let $\pi:E\oplus E^{*}\rightarrow E$ denotes the projector on the
first component, and $\pi^{\circ}:\mathcal{S}\left(E\right)\rightarrow\mathcal{S}\left(E\oplus E^{*}\right)$
the pull back operator. We have
\[
\mathcal{B}_{\left(\mathrm{V}\right)}=\pi^{\frac{n}{4}}\mathcal{P}_{\left(\mathrm{V}\right)}\pi^{\circ}
\]
\end{lem}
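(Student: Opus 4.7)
The plan is to exploit the definition $\mathcal{P}_{(\mathrm{V})} = \mathcal{B}_{(\mathrm{V})} \mathcal{B}_{(\mathrm{V})}^{\dagger}$ so that verifying the identity reduces to computing a single action of $\mathcal{B}_{(\mathrm{V})}^{\dagger}$ on a function that is constant in $\xi$. Writing
\[
\mathcal{P}_{(\mathrm{V})}\, \pi^{\circ} u \;=\; \mathcal{B}_{(\mathrm{V})} \bigl(\mathcal{B}_{(\mathrm{V})}^{\dagger}\, \pi^{\circ} u\bigr),
\]
it suffices to check that $\mathcal{B}_{(\mathrm{V})}^{\dagger}\, \pi^{\circ} u = \pi^{-n/4}\, u$ for all $u \in \mathcal{S}(E)$; the claim $\mathcal{B}_{(\mathrm{V})} u = \pi^{n/4}\, \mathcal{P}_{(\mathrm{V})}\, \pi^{\circ} u$ then follows at once.

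To establish this key identity, I would substitute the explicit formulas \eqref{eq:B*} and \eqref{eq:Gaussian_wave_packet} to obtain
\[
(\mathcal{B}_{(\mathrm{V})}^{\dagger}\, \pi^{\circ} u)(y) \;=\; \pi^{-n/4} \int_{E \oplus E^{*}} e^{i\langle \xi | y - x\rangle}\, e^{-\frac{1}{2}\|y-x\|_{g}^{2}}\, u(x)\, \frac{dx\, d\xi}{(2\pi)^{n}},
\]
and then perform the $\xi$-integration first using the standard Fourier identity $\int_{E^{*}} e^{i\langle\xi | y-x\rangle}\, \frac{d\xi}{(2\pi)^{n}} = \delta(y-x)$. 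This collapses the remaining $x$-integral against the Gaussian (which evaluates to $1$ on the diagonal) and yields $\pi^{-n/4}\, u(y)$, exactly as required.

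The only subtlety is that $\pi^{\circ} u$ is not in $L^{2}(E \oplus E^{*},\, (2\pi)^{-n} dx\, d\xi)$, so strictly speaking the composition $\mathcal{P}_{(\mathrm{V})} \pi^{\circ}$ has to be interpreted as a formal or oscillatory integral. I would address this by approximating $u$ by Schwartz functions times a cut-off in $\xi$ and passing to the limit; all of the integrals involved converge absolutely at fixed $(x',\xi')$ thanks to the Gaussian decay already present in the kernel formula \eqref{eq:Kernel_of_P_vertical_Gauge}, and the cancellation in the $\xi$-integration reproduces $\delta(y-x)$ in the limit, so no new estimates are required. I expect this regularization step to be the only non-trivial point of the argument.

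As a sanity check on the constant $\pi^{n/4}$, I would carry out the direct calculation: insert \eqref{eq:Kernel_of_P_vertical_Gauge} into $\mathcal{P}_{(\mathrm{V})} \pi^{\circ} u$, do the Gaussian integral in $\xi$ by completing the square in $-\frac{1}{4}\|\xi-\xi'\|^{2} + \frac{i}{2}\langle \xi+\xi' | x'-x\rangle$, obtain a prefactor $\pi^{-n/2}$ together with the remaining factor $e^{-\frac{1}{2}\|x'-x\|^{2}}e^{i\langle \xi' | x'-x\rangle}$, and compare with the prefactor $\pi^{-n/4}$ in the definition of $\mathcal{B}_{(\mathrm{V})}$; the ratio is $\pi^{-n/4}/\pi^{-n/2} = \pi^{n/4}$, confirming the stated constant.
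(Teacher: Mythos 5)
Your argument is correct. You take a mildly but genuinely different route from the paper: you exploit the factorization $\mathcal{P}_{\left(\mathrm{V}\right)}=\mathcal{B}_{\left(\mathrm{V}\right)}\mathcal{B}_{\left(\mathrm{V}\right)}^{\dagger}$ and reduce everything to the single identity $\mathcal{B}_{\left(\mathrm{V}\right)}^{\dagger}\pi^{\circ}u=\pi^{-n/4}u$, where the $\xi$-integration is pure Fourier inversion producing $\delta\left(y-x\right)$ and the Gaussian then evaluates to $1$ on the diagonal; the paper instead computes the Schwartz kernel $\langle\delta_{x',\xi'}|\mathcal{P}_{\left(\mathrm{V}\right)}\pi^{\circ}\delta_{y}\rangle$ directly from the explicit kernel (\ref{eq:Kernel_of_P_vertical_Gauge}) and matches it to $\overline{\varphi_{x',\xi'}^{\left(\mathrm{V}\right)}\left(y\right)}$ after a Gaussian completion of the square. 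Your main route is cleaner in that it avoids the completion of the square altogether and isolates where the constant $\pi^{-n/4}$ comes from, while your "sanity check" at the end is essentially the paper's own computation. On the regularization worry: a cut-off in $\xi$ is one way, but it is not really needed — performing the $x$-integral first turns the integrand into a Schwartz function of $\xi$ (Fourier transform of a Schwartz function), so the iterated integral converges absolutely and Fourier inversion applies directly; this is the same level of rigor as the paper's manipulation of $\delta$-kernels.
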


\end{minipage}}
\begin{proof}
We compute the Schwartz kernel of both sides for $y\in E,\left(x',\xi'\right)\in E\oplus E^{*}$.
We have $\langle\delta_{x',\xi'}|\mathcal{B}_{\left(\mathrm{V}\right)}\delta_{y}\rangle=\overline{\varphi_{x',\xi'}^{\left(\mathrm{V}\right)}\left(y\right)}$
and
\begin{align*}
\langle\delta_{x',\xi'}|\mathcal{P}_{\left(\mathrm{V}\right)}\pi^{\circ}\delta_{y}\rangle & =\int e^{\frac{i}{2}\left(\xi'+\xi\right)\left(x'-x\right)-\frac{1}{4}\left(\left\Vert x'-x\right\Vert ^{2}+\left\Vert \xi-\xi'\right\Vert ^{2}\right)}\delta\left(x-y\right)\frac{dxd\xi}{\left(2\pi\right)^{n}}\\
 & =\left(2\pi\right)^{-n}e^{\frac{i}{2}\xi'\left(x'-y\right)-\frac{1}{4}\left\Vert x'-y\right\Vert ^{2}-\frac{1}{4}\left\Vert \xi'\right\Vert ^{2}}\int e^{\frac{i}{2}\xi\left(x'-y\right)+\frac{1}{2}\xi.\xi'-\frac{1}{4}\left\Vert \xi\right\Vert ^{2}}d\xi
\end{align*}
We use the Gaussian integral (\ref{eq:Gaussian_integral}) with $\alpha=\frac{1}{4}$,
$b=\frac{i}{2}\left(x'-y\right)+\frac{1}{2}\xi'$, 
\[
\frac{1}{4\alpha}\left\Vert b\right\Vert ^{2}=\frac{1}{4}\left(\left\Vert \xi'\right\Vert ^{2}-\left\Vert x'-y\right\Vert ^{2}\right)+\frac{i}{2}\left(x'-y\right)\xi'
\]
\begin{align*}
\langle\delta_{x',\xi'}|\mathcal{P}_{\left(\mathrm{V}\right)}\pi^{\circ}\delta_{y}\rangle & =\left(2\pi\right)^{-n}e^{\frac{i}{2}\xi'\left(x'-y\right)-\frac{1}{4}\left\Vert x'-y\right\Vert ^{2}-\frac{1}{4}\left\Vert \xi'\right\Vert ^{2}}\left(\frac{\pi}{\alpha}\right)^{\frac{n}{2}}e^{\frac{1}{4\alpha}\left\Vert b\right\Vert ^{2}}\\
 & =\left(4\pi\right)^{\frac{n}{2}}\left(2\pi\right)^{-n}\pi^{\frac{n}{4}}\pi^{-\frac{n}{4}}e^{i\xi'\left(x'-y\right)-\frac{1}{2}\left\Vert x'-y\right\Vert ^{2}}\\
 & =\pi^{-\frac{n}{4}}\overline{\varphi_{x',\xi'}^{\left(\mathrm{V}\right)}\left(y\right)}
\end{align*}
\end{proof}

\subsection{Linear map $\phi:\left(E_{2},g_{2}\right)\rightarrow\left(E_{1},g_{1}\right)$}

Suppose $\phi:\left(E_{2},g_{2}\right)\rightarrow\left(E_{1},g_{1}\right)$
is a linear invertible map between two metric spaces. Let 
\[
\Phi:=\phi^{-1}\oplus\phi^{*}:E_{1}\oplus E_{1}^{*}\rightarrow E_{2}\oplus E_{2}^{*}
\]
be the induced map (i.e. pull back of differential forms $\Phi:T^{*}E_{1}\rightarrow T^{*}E_{2}$).
We have defined in (\ref{eq:def_Bargman_B}), $\mathcal{B}_{E_{1}}:\mathcal{S}\left(E_{1}\right)\rightarrow\mathcal{S}\left(E_{1}\oplus E_{1}^{*}\right)$
and $\mathcal{B}_{E_{2}}:\mathcal{S}\left(E_{2}\right)\rightarrow\mathcal{S}\left(E_{2}\oplus E_{2}^{*}\right)$
where $\mathcal{B}_{E_{j}}$ is either $\mathcal{B}_{\left(\mathrm{V}\right)}$
or $\mathcal{B}_{\left(R\right)}$.

\begin{cBoxB}{}
\begin{lem}
\label{lem:Let--be}Suppose $\phi:\left(E_{2},g_{2}\right)\rightarrow\left(E_{1},g_{1}\right)$
is a linear invertible map between two Euclidean vector spaces. We
have that 
\begin{equation}
\phi^{\circ}=\Upsilon\left(\phi\right)\mathcal{B}_{E_{2}}^{\dagger}\Phi^{-\circ}\mathcal{B}_{E_{1}}\label{eq:def_Upsilon}
\end{equation}
with
\begin{equation}
\Upsilon\left(\phi\right):=\left(\mathrm{det}\left(\frac{1}{2}\left(\mathrm{Id}+\left(\phi^{-1}\right)^{\dagger}\phi^{-1}\right)\right)\right)^{1/2}\label{eq:def_d}
\end{equation}
where $\left(\phi^{-1}\right)^{\dagger}$ is the metric-adjoint of
$\phi^{-1}$ defined by $\langle x_{1}|\left(\phi^{-1}\right)^{\dagger}x_{2}\rangle_{g_{1}}=\langle\phi^{-1}x_{1}|x_{2}\rangle_{g_{2}}$
for any $x_{1}\in E_{1}$, $x_{2}\in E_{2}$.
\end{lem}

\end{cBoxB}

\begin{rem}
If $\phi$ is an isometry then $\phi^{\dagger}\phi=\mathrm{Id}$ and
$\Upsilon\left(\phi\right)=1$.
\end{rem}

\begin{proof}
We compute the Schwartz kernels. For $y'\in E_{1},z'\in E_{2}$,
\[
\langle\delta_{z'}|\mathcal{B}_{E_{2}}^{\dagger}\Phi^{-\circ}\mathcal{B}_{E_{1}}\delta_{y'}\rangle=\langle\mathcal{B}_{E_{2}}\delta_{z'}|\Phi^{-\circ}\mathcal{B}_{E_{1}}\delta_{y'}\rangle
\]
For $y\in E_{1},\eta\in E_{1}^{*}$,
\[
\left(\mathcal{B}_{E_{1}}\delta_{y'}\right)\left(y,\eta\right)=\langle\varphi_{y,\eta}^{\left(1\right)}|\delta_{y'}\rangle=\pi^{-\frac{n}{4}}e^{-i\langle\eta|y'-y\rangle}e^{-\frac{1}{2}\left\Vert y'-y\right\Vert _{g_{1}}^{2}}
\]
For $z\in E_{2},\xi\in E_{2}^{*}$,
\[
\left(\Phi^{-\circ}\mathcal{B}_{E_{1}}\delta_{y'}\right)\left(z,\xi\right)=\pi^{-\frac{n}{4}}e^{-i\langle\phi^{*-1}\xi|y'-\phi z\rangle}e^{-\frac{1}{2}\left\Vert y'-\phi z\right\Vert _{g_{1}}^{2}}
\]
\[
\left(\mathcal{B}_{E_{2}}\delta_{z'}\right)\left(z,\xi\right)=\pi^{-\frac{n}{4}}e^{-i\langle\xi|z'-z\rangle}e^{-\frac{1}{2}\left\Vert z'-z\right\Vert _{g_{2}}^{2}}
\]
Hence
\[
\langle\mathcal{B}_{E_{2}}\delta_{z'}|\Phi^{-\circ}\mathcal{B}_{E_{1}}\delta_{y'}\rangle=\pi^{-\frac{n}{2}}\int e^{i\langle\xi|z'-z\rangle}e^{-\frac{1}{2}\left\Vert z'-z\right\Vert _{g_{2}}^{2}}e^{-i\langle\phi^{*-1}\xi|y'-\phi z\rangle}e^{-\frac{1}{2}\left\Vert y'-\phi z\right\Vert _{g_{1}}^{2}}\frac{dzd\xi}{\left(2\pi\right)^{n}}
\]
We do the symplectic change of variables $\left(z,\xi\right)\in E_{2}\oplus E_{2}^{*}$$\rightarrow$$\left(y,\eta\right)=\Phi^{-1}\left(z,\xi\right)=\left(\phi\left(z\right),\phi^{*^{-1}}\xi\right)\in\left(E_{1}\oplus E_{1}^{*}\right)$,
i.e. $\xi=\phi^{*}\eta$, $z=\phi^{-1}y$, and get
\begin{align*}
\langle\delta_{z'}|\mathcal{B}_{E_{2}}^{\dagger}\Phi^{-\circ}\mathcal{B}_{E_{1}}\delta_{y'}\rangle & =\pi^{-\frac{n}{2}}\int e^{i\langle\phi^{*}\eta|z'-\phi^{-1}y\rangle}e^{-\frac{1}{2}\left\Vert z'-\phi^{-1}y\right\Vert _{g_{2}}^{2}}e^{-i\langle\eta|y'-y\rangle}e^{-\frac{1}{2}\left\Vert y'-y\right\Vert _{g_{1}}^{2}}\frac{dyd\eta}{\left(2\pi\right)^{n}}\\
 & =\pi^{-\frac{n}{2}}\int e^{i\langle\eta|\phi z'-y'\rangle}e^{-\frac{1}{2}\left\Vert \phi^{-1}\left(\phi z'-y\right)\right\Vert _{g_{2}}^{2}}e^{-\frac{1}{2}\left\Vert y'-y\right\Vert _{g_{1}}^{2}}\frac{dyd\eta}{\left(2\pi\right)^{n}}
\end{align*}
We have
\[
\int e^{i\langle\eta|\phi z'-y'\rangle}\frac{d\eta}{\left(2\pi\right)^{n}}=\delta\left(\phi z'-y'\right)=\langle\delta_{z'}|\phi^{\circ}\delta_{y'}\rangle
\]
hence
\begin{align*}
\langle\delta_{z'}|\mathcal{B}_{E_{2}}^{\dagger}\Phi^{-\circ}\mathcal{B}_{E_{1}}\delta_{y'}\rangle & =\langle\delta_{z'}|\phi^{\circ}\delta_{y'}\rangle\pi^{-\frac{n}{2}}\int e^{-\frac{1}{2}\left\Vert \phi^{-1}\left(y'-y\right)\right\Vert _{g_{2}}^{2}}e^{-\frac{1}{2}\left\Vert y'-y\right\Vert _{g_{1}}^{2}}dy\\
 & =\langle\delta_{z'}|\phi^{\circ}\delta_{y'}\rangle\pi^{-\frac{n}{2}}\int_{E}e^{-\frac{1}{2}\left\Vert \phi^{-1}Y\right\Vert _{g_{2}}^{2}}e^{-\frac{1}{2}\left\Vert Y\right\Vert _{g_{1}}^{2}}dY\\
 & =\langle\delta_{z'}|\phi^{\circ}\delta_{y'}\rangle\pi^{-\frac{n}{2}}\int e^{-\frac{1}{2}\langle Y|\left(\mathrm{Id}+\left(\phi^{-1}\right)^{\dagger}\phi^{-1}\right)Y\rangle}dY\\
 & =\langle\delta_{z'}|\phi^{\circ}\delta_{y'}\rangle\pi^{-\frac{n}{2}}\left(\frac{\left(2\pi\right)^{n}}{\mathrm{det}\left(\mathrm{Id}+\left(\phi^{-1}\right)^{\dagger}\phi^{-1}\right)}\right)^{1/2}\\
 & =\langle\delta_{z'}|\phi^{\circ}\delta_{y'}\rangle\Upsilon\left(\phi\right)^{-1}
\end{align*}
Giving (\ref{eq:def_Upsilon}).
\end{proof}
Later we will use the following Lemma, where the determinant is measured
with respect to local densities.

\begin{cBoxB}{}
\begin{lem}
With $\Upsilon\left(\phi\right)$ defined in (\ref{eq:def_d}), we
have
\begin{equation}
\Upsilon\left(\phi^{-1}\right)=\left|\mathrm{det}\phi\right|\Upsilon\left(\phi\right).\label{eq:dphi_inv}
\end{equation}
If $\phi=\phi_{1}\oplus\phi_{2}$ on $E=E_{1}\overset{\perp}{\oplus}E_{2}$,
then
\begin{equation}
\Upsilon\left(\phi\right)=\Upsilon\left(\phi_{1}\right)\Upsilon\left(\phi_{2}\right).\label{eq:product}
\end{equation}
\end{lem}

\end{cBoxB}

\begin{proof}
Notice that $\mathrm{det}\left(\phi^{\dagger}\phi\right)=\left|\mathrm{det}\phi\right|^{2}$.
Then
\begin{align*}
\Upsilon\left(\phi\right)^{2} & =\mathrm{det}\left(\frac{1}{2}\left(\mathrm{Id}+\phi^{-1\dagger}\phi^{-1}\right)\right)=\left(\mathrm{det}\left(\phi^{\dagger}\phi\right)\right)^{-1}\mathrm{det}\left(\frac{1}{2}\left(\phi^{\dagger}\phi+\mathrm{Id}\right)\right)=\left|\mathrm{det}\phi\right|^{-2}\Upsilon\left(\phi^{-1}\right)^{2}
\end{align*}
\end{proof}
\begin{cBoxB}{}
\begin{cor}
If $E$ is endowed with two metrics $g_{1},g_{2}$ then
\[
\mathrm{Id}_{\mathcal{S}\left(E\right)}=\Upsilon_{g_{2},g_{1}}\mathcal{B}_{g_{2}}^{\dagger_{g_{2}}}\mathcal{B}_{g_{1}}
\]
with
\begin{equation}
\Upsilon_{g_{2},g_{1}}=\left(\mathrm{det}\left(\frac{1}{2}\left(\mathrm{Id}+\mathrm{Id}^{\dagger}\right)\right)\right)^{1/2}=\prod_{j=1}^{n}\left(\frac{1+I_{j}}{2}\right)^{1/2}\label{eq:d_g1_g2}
\end{equation}
$\left(I_{j}\right)_{j}\in\mathbb{R}^{n}$ are inertial moments of
$g_{2}$ with respect to $g_{1}$ i.e. in suitable coordinates, $g_{1}=\sum_{j}dy_{j}^{2}$,
$g_{2}=\sum_{j}I_{j}dy_{j}^{2}$. Consequently
\[
\mathcal{P}_{g_{1},g_{2}}:=\Upsilon_{g_{2},g_{1}}\mathcal{B}_{g_{1}}\mathcal{B}_{g_{2}}^{\dagger_{g_{2}}}\quad:\mathcal{S}\left(E\oplus E^{*}\right)\rightarrow\mathcal{S}\left(E\oplus E^{*}\right)
\]
is a projector, with $\mathrm{Im}\left(\mathcal{P}_{g_{1},g_{2}}\right)=\mathrm{Im}\left(\mathcal{B}_{g_{1}}\right)$,
$\mathrm{Ker}\left(\mathcal{P}_{g_{1},g_{2}}\right)=\left(\mathrm{Im}\left(\mathcal{B}_{g_{2}}\right)\right)^{\perp}$.
\end{cor}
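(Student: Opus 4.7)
The plan is to obtain this corollary as the ``same $E$, two metrics'' specialization of Lemma \ref{lem:Let--be}. Concretely, I would apply that lemma to the invertible linear map $\phi=\mathrm{Id}:(E,g_{1})\rightarrow(E,g_{2})$, which is legitimate even though $g_{1}\neq g_{2}$: the pull-back $\phi^{\circ}$ on $\mathcal{S}(E)$ is literally $\mathrm{Id}_{\mathcal{S}(E)}$, and the induced map $\Phi=\phi^{-1}\oplus\phi^{*}=\mathrm{Id}\oplus\mathrm{Id}$ on $E\oplus E^{*}$ gives $\Phi^{-\circ}=\mathrm{Id}$. The formula (\ref{eq:phi_circ}) then reads directly
\[
\mathrm{Id}_{\mathcal{S}(E)}=\Upsilon(\mathrm{Id})\,\mathcal{B}_{g_{2}}^{\dagger_{g_{2}}}\mathcal{B}_{g_{1}},
\]
which is exactly the claim once we identify $\Upsilon_{g_{2},g_{1}}=\Upsilon(\mathrm{Id})$. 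The only subtle point is that ``$\mathrm{Id}^{\dagger}$'' in (\ref{eq:def_d}) is the metric-adjoint defined by $\langle x_{1}|\mathrm{Id}^{\dagger}x_{2}\rangle_{g_{1}}=\langle x_{1}|x_{2}\rangle_{g_{2}}$, so it is nontrivial as soon as $g_{1}\neq g_{2}$.

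To get the product formula in (\ref{eq:d_g1_g2}), I would diagonalize $g_{2}$ with respect to $g_{1}$: choose coordinates in which $g_{1}=\sum_{j}dy_{j}^{2}$ and $g_{2}=\sum_{j}I_{j}dy_{j}^{2}$ with $I_{j}>0$ the principal inertial moments. In this basis $\mathrm{Id}^{\dagger}$ is the diagonal matrix $\mathrm{diag}(I_{j})$, so $\tfrac{1}{2}(\mathrm{Id}+\mathrm{Id}^{\dagger})=\mathrm{diag}\!\left(\tfrac{1+I_{j}}{2}\right)$ and its determinant is $\prod_{j}\tfrac{1+I_{j}}{2}$; taking the square root yields $\Upsilon_{g_{2},g_{1}}$.

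For the projector part, I would compute directly, using the resolution of identity just proved:
\begin{align*}
\mathcal{P}_{g_{1},g_{2}}^{2} & =\Upsilon_{g_{2},g_{1}}^{2}\,\mathcal{B}_{g_{1}}\bigl(\mathcal{B}_{g_{2}}^{\dagger_{g_{2}}}\mathcal{B}_{g_{1}}\bigr)\mathcal{B}_{g_{2}}^{\dagger_{g_{2}}}=\Upsilon_{g_{2},g_{1}}\,\mathcal{B}_{g_{1}}\,\mathrm{Id}_{\mathcal{S}(E)}\,\mathcal{B}_{g_{2}}^{\dagger_{g_{2}}}=\mathcal{P}_{g_{1},g_{2}},
\end{align*}
so $\mathcal{P}_{g_{1},g_{2}}$ is idempotent. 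The identification of the image is immediate: on one hand $\mathrm{Im}(\mathcal{P}_{g_{1},g_{2}})\subset\mathrm{Im}(\mathcal{B}_{g_{1}})$ from the factor $\mathcal{B}_{g_{1}}$ on the left; on the other, for any $u$, $\mathcal{B}_{g_{1}}u=\mathcal{P}_{g_{1},g_{2}}\mathcal{B}_{g_{1}}u$ by the identity just established. For the kernel, $\mathcal{P}_{g_{1},g_{2}}v=0$ is equivalent to $\mathcal{B}_{g_{2}}^{\dagger_{g_{2}}}v=0$ (since $\mathcal{B}_{g_{1}}$ is injective on its image thanks to $\mathcal{B}_{g_{1}}^{\dagger_{g_{1}}}\mathcal{B}_{g_{1}}=\mathrm{Id}$), and this is exactly orthogonality to $\mathrm{Im}(\mathcal{B}_{g_{2}})$ with respect to the $L^{2}$ pairing used to define $\dagger_{g_{2}}$.

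The only real issue to verify carefully is the interpretation of $\Upsilon(\mathrm{Id})$ in the two-metric setting: one must check that the proof of Lemma \ref{lem:Let--be}, in which a Gaussian integral produces the factor $\Upsilon(\phi)^{-1}$, goes through verbatim when $\phi^{-1}:(E,g_{1})\rightarrow(E,g_{2})$ is the identity map between two different Euclidean structures on the same space. This is the one calculational checkpoint, but it reduces to the observation that the quadratic form $\tfrac{1}{2}\langle Y|(\mathrm{Id}+\mathrm{Id}^{\dagger})Y\rangle_{g_{1}}=\tfrac{1}{2}(\|Y\|_{g_{1}}^{2}+\|Y\|_{g_{2}}^{2})$ is positive definite, so the Gaussian integral converges and produces exactly $\prod_{j}\bigl(\tfrac{1+I_{j}}{2}\bigr)^{-1/2}$, matching (\ref{eq:d_g1_g2}).
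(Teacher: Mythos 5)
Your proposal is correct and follows essentially the same route as the paper: apply Lemma \ref{lem:Let--be} with $\phi=\mathrm{Id}$ and diagonalize $g_{2}$ with respect to $g_{1}$ to evaluate $\Upsilon(\mathrm{Id})$ as $\prod_{j}\left(\frac{1+I_{j}}{2}\right)^{1/2}$; the idempotence and image/kernel verification you add is an immediate consequence of the resolution of identity and is left implicit in the paper. One small labeling slip: to land on $\mathcal{B}_{g_{2}}^{\dagger_{g_{2}}}\mathcal{B}_{g_{1}}$ with the lemma's convention $\phi:(E_{2},g_{2})\rightarrow(E_{1},g_{1})$ you must take $\phi=\mathrm{Id}:(E,g_{2})\rightarrow(E,g_{1})$ (not $(E,g_{1})\rightarrow(E,g_{2})$ as written), which is in fact consistent with the adjoint convention $\langle x_{1}|\mathrm{Id}^{\dagger}x_{2}\rangle_{g_{1}}=\langle x_{1}|x_{2}\rangle_{g_{2}}$ that you then correctly use in the diagonalization.
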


\end{cBoxB}

\begin{proof}
We apply Lemma \ref{lem:Let--be} with $\phi=\mathrm{Id}:\left(E,g_{2}\right)\rightarrow\left(E,g_{1}\right)$
and $\Phi=\mathrm{Id}$. We have
\[
\langle u|\left(\phi^{-1}\right)^{\dagger}v\rangle_{g_{1}}=\langle\phi^{-1}u|v\rangle_{g_{2}}=\langle u|v\rangle_{g_{2}}
\]
In coordinates such that $g_{1}=\sum_{j}dy_{j}^{2}$, $g_{2}=\sum_{j}I_{j}dy_{j}^{2}$,
\[
\left(\mathrm{Id}\right)_{j,k}^{\dagger}=\left(\phi^{-1}\right)_{j,k}^{\dagger}=I_{j}\delta_{j=k}
\]
giving (\ref{eq:d_g1_g2}).
\end{proof}

\subsection{\label{subsec:Compatible-triple}Compatible triple $g,\Omega,J$}

Let us recall the definition of a \href{https://en.wikipedia.org/wiki/Almost_complex_manifold\#Compatible_triples}{compatible triple}
$g,\Omega,J$. Let $F$ be a vector space endowed with a symplectic
bilinear form $\Omega$ and an Euclidean metric $g$. Let 
\begin{equation}
J:=\check{g}^{-1}\check{\Omega}\quad:F\rightarrow F\label{eq:def_J}
\end{equation}
 with $\check{\Omega}:F\rightarrow F^{*}$, $\check{g}:F\rightarrow F^{*}$
defined in Section \ref{sec:General-notations-used}.

\begin{cBoxA}{}
\begin{defn}
\label{def:Compatible}We say that $\Omega,g$ are compatible structures
on $F$ if $J:=\check{g}^{-1}\check{\Omega}:F\rightarrow F$ is an
\href{https://en.wikipedia.org/wiki/Almost_complex_manifold}{almost complex structure}
on $F$, i.e. 
\begin{equation}
J^{2}=-\mathrm{Id}\label{eq:J2}
\end{equation}
.
\end{defn}

\end{cBoxA}

\begin{cBoxB}{}
\begin{prop}
From (\ref{eq:def_J}), we have that for any $u,v\in F$
\begin{align}
\Omega\left(u,v\right)=g\left(Ju,v\right).\label{eq:g_Omega}
\end{align}
If moreover (\ref{eq:J2}) holds true, we have that for any $u,v\in F$,
\begin{equation}
g\left(Ju,Jv\right)=g\left(u,v\right),\qquad\Omega\left(Ju,Jv\right)=\Omega\left(u,v\right).\label{eq:J_comp}
\end{equation}
\end{prop}

\end{cBoxB}

\begin{proof}
We have
\[
g\left(Ju,v\right)\eq{\ref{eq:def_J}}\langle\check{g}\check{g}^{-1}\check{\Omega}u|v\rangle=\Omega\left(u,v\right).
\]
 Suppose $J^{2}=\check{g}^{-1}\check{\Omega}\check{g}^{-1}\check{\Omega}=-\mathrm{Id}$
hence $\check{g}=-\check{\Omega}\check{g}^{-1}\check{\Omega}$. Then
\[
g\left(Ju,Jv\right)=\langle\check{g}\check{g}^{-1}\check{\Omega}u|\check{g}^{-1}\check{\Omega}v\rangle=\langle-\check{\Omega}\check{g}^{-1}\check{\Omega}u|v\rangle=g\left(u,v\right)
\]
\[
\Omega\left(Ju,Jv\right)=\langle\check{\Omega}\check{g}^{-1}\check{\Omega}u|\check{g}^{-1}\check{\Omega}v\rangle=\langle-\check{\Omega}\check{g}^{-1}\check{\Omega}\check{g}^{-1}\check{\Omega}u|v\rangle=\Omega\left(u,v\right)
\]
\end{proof}
\begin{example}
Let $\left(E,g\right)$ be a vector space with Euclidean metric $g$.
On $E\oplus E^{*}$, the canonical symplectic form $\Omega$ in (\ref{eq:def_Omega}),
is compatible with the induced metric $\boldsymbol{g}:=g\oplus g^{-1}$.
\end{example}

\begin{cBoxB}{}
\begin{lem}
\label{lem:If--is}Let $\left(F,\Omega,g\right)$ be a vector space
with a symplectic structure $\Omega$ and compatible metric $g$.
If $E\subset F$ is a linear Lagrangian subspace, let $E^{\perp_{g}}$
be the orthogonal subspace with respect to $g$. Then $E^{\perp_{g}}=J\left(E\right)$
is also Lagrangian and $E=J\left(E^{\perp_{g}}\right)$. Hence we
have an isomorphism $\varPsi$ that is symplectic (for $\Omega$)
and isometric (for $g$)
\begin{equation}
\varPsi:\begin{cases}
F=E\oplus E^{\perp_{g}} & \rightarrow E\oplus E^{*}\\
\left(v,u\right) & \mapsto\left(v,\check{\Omega}u\right)
\end{cases}\label{eq:identif_F_E}
\end{equation}
where $E\oplus E^{*}$ is endowed with the canonical symplectic form
(\ref{eq:def_Omega}) and with the metric $\boldsymbol{g}=g\oplus g^{-1}$.
\end{lem}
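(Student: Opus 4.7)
My plan is to establish the three claims in order, each following from the compatibility identities $g(u,v)=\Omega(u,Jv)$ and $J^2=-\mathrm{Id}$.

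First, I would show $E^{\perp_{g}}=J(E)$. For $v\in F$, the compatibility relation gives $g(u,v)=\Omega(u,Jv)$ for every $u\in E$. Hence $v\in E^{\perp_{g}}$ iff $Jv\in E^{\perp_{\Omega}}$, and since $E$ is Lagrangian we have $E^{\perp_{\Omega}}=E$, so $v\in E^{\perp_{g}}$ iff $Jv\in E$, i.e.\ iff $v\in J^{-1}(E)$. Because $J^{-1}=-J$, we conclude $E^{\perp_{g}}=J(E)$. The identity $E=J(E^{\perp_{g}})$ then follows by applying $J$ again and using $J^{2}=-\mathrm{Id}$. For the Lagrangian property of $J(E)$: $J$ preserves $\Omega$ (since $\Omega(Ju,Jv)=\Omega(u,v)$), so $J(E)$ is isotropic; it has dimension $\tfrac{1}{2}\dim F$ since $J$ is invertible, so it is Lagrangian.

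Next I construct $\varPsi$ explicitly. For $(u,v)\in E\oplus E^{\perp_{g}}$, write $v=Jw$ with a unique $w\in E$ and set
\[
\varPsi(u,v):=\bigl(u,\,\check{g}(w)|_{E}\bigr)\in E\oplus E^{*},
\]
which is clearly a linear isomorphism. To verify that $\varPsi$ is symplectic, I compute $\Omega$ on $F$ for two elements $u_{i}+Jw_{i}$ ($i=1,2$). Since $E$ and $J(E)$ are Lagrangian, the diagonal terms vanish, and using $\Omega(u,Jw)=g(u,w)$ the cross terms give
\[
\Omega(u_{1}+Jw_{1},\,u_{2}+Jw_{2})=g(u_{1},w_{2})-g(u_{2},w_{1}),
\]
which matches the canonical form $\Omega_{\mathrm{can}}$ on $E\oplus E^{*}$ with $\xi_{i}=\check{g}(w_{i})|_{E}$. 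For the isometry property, $g$ splits orthogonally as $g|_{E}\oplus g|_{E^{\perp_{g}}}$, and $J$-invariance of $g$ gives $g(Jw_{1},Jw_{2})=g(w_{1},w_{2})=g^{-1}(\xi_{1},\xi_{2})$, so $\varPsi^{*}(g\oplus g^{-1})=g$.

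There is essentially no hard step: the whole argument is a structural consequence of the compatible-triple identities. The only small point to be careful about is the identification of the dual metric on $E^{*}$ used in $\boldsymbol{g}=g\oplus g^{-1}$ — one must check that it is precisely the one induced from $g|_{E}$ under $\check{g}|_{E}:E\to E^{*}$, which is tautological once the definition is unwound. Everything else is bookkeeping with the bilinear identities already proven for the compatible triple.
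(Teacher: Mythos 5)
Your argument for the first part is sound and in fact a little cleaner than the paper's: the paper only checks the inclusion $J\left(E\right)\subseteq E^{\perp_{g}}$ by computing $g\left(Jv_{1},v_{2}\right)=\Omega\left(v_{1},v_{2}\right)=0$ and tacitly uses a dimension count, whereas your "$v\in E^{\perp_{g}}\iff Jv\in E^{\perp_{\Omega}}=E$" gives both inclusions at once. The paper stops there and leaves the isomorphism $\varPsi$ implicit, so your explicit construction and verification is a genuine (and welcome) addition rather than a restatement.

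However, there is a sign error in that added verification. With your identification $\xi_{i}:=\check{g}\left(w_{i}\right)|_{E}$ one has $\langle\xi_{1}|u_{2}\rangle-\langle\xi_{2}|u_{1}\rangle=g\left(w_{1},u_{2}\right)-g\left(w_{2},u_{1}\right)$, while the computation on $F$ gives $\Omega\left(u_{1}+Jw_{1},u_{2}+Jw_{2}\right)=\Omega\left(u_{1},Jw_{2}\right)-\Omega\left(u_{2},Jw_{1}\right)=g\left(u_{1},w_{2}\right)-g\left(u_{2},w_{1}\right)$, which is the \emph{negative} of the former. So the map as written is anti-symplectic, not symplectic. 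The fix is immediate: send $v=Jw\in E^{\perp_{g}}$ to $\check{\Omega}\left(v\right)|_{E}=-\check{g}\left(w\right)|_{E}$ instead of $\check{g}\left(w\right)|_{E}$ (i.e.\ use the pairing $\xi\left(u'\right)=\Omega\left(v,u'\right)$, which is the natural choice here since $\check{\Omega}$ is what identifies a Lagrangian complement with $E^{*}$). The isometry verification is unaffected, since $g^{-1}\left(\check{g}\left(w_{1}\right),\check{g}\left(w_{2}\right)\right)=g\left(w_{1},w_{2}\right)=g\left(Jw_{1},Jw_{2}\right)$ is insensitive to the overall sign of $\xi$. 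With that one-line correction the proof is complete.
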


\end{cBoxB}

\begin{proof}
Let $v_{1},v_{2}\in E$ Lagrangian. Then
\[
g\left(Jv_{1},v_{2}\right)\eq{\ref{eq:g_Omega}}\Omega\left(Jv_{1},Jv_{2}\right)\eq{\ref{eq:J_comp}}\Omega\left(v_{1},v_{2}\right)\eq{\mathrm{Lag.}}0.
\]
Considering dimensions, $\mathrm{dim}E=\mathrm{dim}E^{\perp_{g}}=\frac{1}{2}\mathrm{dim}F$,
we deduce $E^{\perp_{g}}=J\left(E\right)$ and $E=J\left(E^{\perp_{g}}\right)$,
since $J^{2}=-\mathrm{Id}$. From compatibility $J$ is a symplectic
map hence $E^{\perp_{g}}$ is Lagrangian as well.
\end{proof}

\subsection{Bergman projector on a symplectic vector space $\left(F,\Omega,g\right)$}

Let $\left(F,\Omega,g\right)$ be a vector space with a symplectic
structure $\Omega$ and compatible metric $g$.

\begin{cBoxA}{}
\begin{defn}
Let $\mathcal{P}_{F}:\mathcal{S}\left(F\right)\rightarrow\mathcal{S}\left(F\right)$
be the operator defined by its Schwartz kernel: for any $\rho,\rho'\in F$,
\begin{equation}
\langle\delta_{\rho'}|\mathcal{P}_{F}\delta_{\rho}\rangle=\exp\left(-\frac{i}{2}\Omega\left(\rho',\rho\right)-\frac{1}{4}\left\Vert \rho'-\rho\right\Vert _{g}^{2}\right).\label{eq:def_P}
\end{equation}
\end{defn}

\end{cBoxA}

\begin{cBoxB}{}
\begin{lem}
$\mathcal{P}_{F}$ is an orthogonal projector in $L^{2}\left(F,\frac{d\rho}{\left(2\pi\right)^{n}}\right)$,
called the \textbf{Bergman projector}.
\end{lem}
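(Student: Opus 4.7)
The plan is to reduce the claim to the analogous statement for the Bergman projector $\mathcal{P}_{(\mathrm{R})}$ acting on $\mathcal{S}(E\oplus E^{*})$, whose Schwartz kernel was computed in (\ref{eq:Kernel_of_P_radial_Gauge}) and whose projector property follows from the resolution of identity (\ref{eq:resol_ident_bargamnn}). The bridge between $F$ and the model $E\oplus E^{*}$ is provided by Lemma \ref{lem:If--is}.

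First I would pick any Lagrangian subspace $E\subset F$ and invoke Lemma \ref{lem:If--is} to obtain the linear isomorphism $\varPsi\colon F=E\oplus E^{\perp_{g}}\to E\oplus E^{*}$ that is simultaneously symplectic (carrying $\Omega$ to the canonical symplectic form (\ref{eq:def_Omega})) and isometric (carrying $g$ to $\boldsymbol{g}=g\oplus g^{-1}$). Because $\varPsi$ is a linear isometry, its Jacobian is $1$ in the canonical densities, so the push-forward $\varPsi^{-\circ}$ extends to a unitary operator
\[
\varPsi^{-\circ}\;:\;L^{2}\!\left(F,\tfrac{d\rho}{(2\pi)^{n}}\right)\longrightarrow L^{2}\!\left(E\oplus E^{*},\tfrac{dxd\xi}{(2\pi)^{n}}\right),
\]
with adjoint $\varPsi^{\circ}=(\varPsi^{-\circ})^{\dagger}$.

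Second, I would check that $\mathcal{P}_{F}=\varPsi^{\circ}\,\mathcal{P}_{(\mathrm{R})}\,\varPsi^{-\circ}$ by a direct comparison of Schwartz kernels. The kernel (\ref{eq:def_P}) depends on $(\rho',\rho)$ only through $\Omega(\rho',\rho)$ and $\|\rho'-\rho\|_{g}^{2}$; since both of these quantities are preserved by $\varPsi$, substituting $\rho=\varPsi^{-1}(x,\xi)$ and $\rho'=\varPsi^{-1}(x',\xi')$ into (\ref{eq:def_P}) produces exactly the kernel (\ref{eq:Kernel_of_P_radial_Gauge}) of $\mathcal{P}_{(\mathrm{R})}$. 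Thus $\mathcal{P}_{F}$ is unitarily conjugate to $\mathcal{P}_{(\mathrm{R})}$.

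Third, I would conclude by recalling that in radial gauge one has, in complete analogy with (\ref{eq:Bergman_projector}) and (\ref{eq:resol_ident_bargamnn}), the factorisation $\mathcal{P}_{(\mathrm{R})}=\mathcal{B}_{(\mathrm{R})}\mathcal{B}_{(\mathrm{R})}^{\dagger}$ together with the isometry $\mathcal{B}_{(\mathrm{R})}^{\dagger}\mathcal{B}_{(\mathrm{R})}=\mathrm{Id}$; these two identities force $\mathcal{P}_{(\mathrm{R})}$ to be an orthogonal projector onto $\mathrm{Im}(\mathcal{B}_{(\mathrm{R})})$. Self-adjointness and idempotency are preserved under conjugation by the unitary $\varPsi^{\circ}$, so $\mathcal{P}_{F}$ is an orthogonal projector as claimed. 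The only subtle point in the argument is to notice that although the factorisation through the model $E\oplus E^{*}$ requires a choice of Lagrangian $E$, the operator $\mathcal{P}_{F}$ itself is defined intrinsically by its kernel (\ref{eq:def_P}); different choices of $E$ simply produce different Bargmann-type factorisations of one and the same projector, so no compatibility check is needed.
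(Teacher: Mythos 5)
Your proposal is correct and follows essentially the same route as the paper's own (one-line) proof: choose a Lagrangian $E\subset F$, use the symplectic--isometric identification $F\cong E\oplus E^{*}$ of Lemma \ref{lem:If--is} to match the kernel (\ref{eq:def_P}) with the radial-gauge kernel (\ref{eq:Kernel_of_P_radial_Gauge}), and conclude from $\mathcal{P}_{(\mathrm{R})}=\mathcal{B}_{(\mathrm{R})}\mathcal{B}_{(\mathrm{R})}^{\dagger}$ together with $\mathcal{B}_{(\mathrm{R})}^{\dagger}\mathcal{B}_{(\mathrm{R})}=\mathrm{Id}$. Your write-up merely makes explicit the unitarity of the conjugation and the independence of the choice of $E$, which the paper leaves implicit.
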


\end{cBoxB}

\begin{proof}
We choose any $E\subset F$ Lagrangian linear subspace. Then $\mathcal{P}_{F}\eq{\ref{eq:identif_F_E}}\Psi^{\circ}\circ\mathcal{P}_{E\oplus E^{*},\left(\mathrm{R}\right)}\circ\Psi^{-\circ}$
is conjugated to the Bergman projector in radial gauge defined in
(\ref{eq:Kernel_of_P_radial_Gauge}).
\end{proof}

\subsection{The metaplectic decomposition of $F\oplus F^{*}$}

Let $\left(F,\Omega\right)$ be a linear symplectic space. Let us
denote $\boldsymbol{\Omega}$ the canonical symplectic form on $F\oplus F^{*}$
as in (\ref{eq:def_Omega}). Recall $\check{\Omega}:F\rightarrow F^{*}$
defined in Section \ref{sec:General-notations-used} and that $\check{\Omega}^{*}=-\check{\Omega}$.

\begin{cBoxB}{}
\begin{lem}[Metaplectic decomposition]
\label{lem:def_K_N} Let $\left(F,\Omega\right)$ be a linear symplectic
space and 
\begin{equation}
K:=\mathrm{graph}\left(\check{\Omega}\right),\quad N:=\mathrm{graph}\left(-\check{\Omega}\right).\label{eq:def_K_N}
\end{equation}
\end{lem}

We have 
\begin{equation}
F\oplus F^{*}=K\overset{\perp_{\boldsymbol{\Omega}}}{\oplus}N.\label{eq:FF*KN}
\end{equation}
where the right hand side is an \textbf{$\boldsymbol{\Omega}$}-orthogonal
decomposition into linear symplectic subspaces. See Figure \ref{fig:symplectic_decomp}.
Explicitly for any $\left(x,\xi\right)\in F\oplus F^{*}$, $\left(x,\xi\right)=\left(\nu,\check{\Omega}\left(\nu\right)\right)+\left(\zeta,-\check{\Omega}\left(\zeta\right)\right)\in K\oplus N$
is given by
\begin{align}
\nu & =\frac{1}{2}\left(x+\check{\Omega}^{-1}\left(\xi\right)\right)\label{eq:nu_zeta}\\
\zeta & =\frac{1}{2}\left(x-\check{\Omega}^{-1}\left(\xi\right)\right)
\end{align}
\end{cBoxB}

\begin{figure}
\centering{}\input{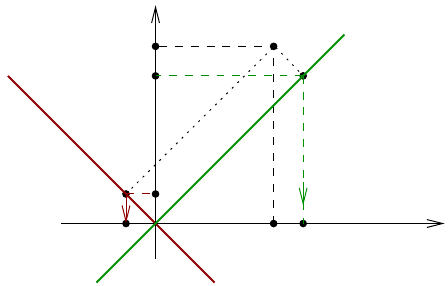tex_t}\caption{\label{fig:symplectic_decomp} Picture for the orthogonal decomposition
$F\oplus F^{*}=K\protect\overset{\perp}{\oplus}N$ in (\ref{eq:FF*KN}).}
\end{figure}

\begin{rem}
The metaplectic decomposition (\ref{eq:FF*KN}) is central in this
paper. It has also been used in \cite[Prop.6]{fred-PreQ-06} and in
\cite[Prop.2.2.9]{faure-tsujii_prequantum_maps_12}.
\end{rem}

\begin{proof}
If $\left(x,\xi\right)=\left(x,\check{\Omega}\left(x\right)\right)\in\mathrm{graph}\left(\check{\Omega}\right)$
and $\left(x',\xi'\right)=\left(x',-\check{\Omega}\left(x'\right)\right)\in\mathrm{graph}\left(-\check{\Omega}\right)$
then 
\begin{align*}
\boldsymbol{\Omega}\left(\left(x,\xi\right),\left(x',\xi'\right)\right) & =\langle\xi|x'\rangle-\langle x|\xi'\rangle=\langle\check{\Omega}\left(x\right)|x'\rangle+\langle x|\check{\Omega}\left(x'\right)\rangle\\
 & =\Omega\left(x,x'\right)+\Omega\left(x',x\right)=0
\end{align*}
Moreover if $\left(x,\xi\right)=\left(x,\check{\Omega}\left(x\right)\right)=\left(x,-\check{\Omega}\left(x\right)\right)\in\mathrm{graph}\left(\check{\Omega}\right)\cap\mathrm{graph}\left(-\check{\Omega}\right)$
then $\check{\Omega}\left(x\right)=-\check{\Omega}\left(x\right)$
hence $\check{\Omega}\left(x\right)=0$ hence $x=0$. This orthogonality
and transversality implies that $\mathrm{graph}\left(\check{\Omega}\right),\mathrm{graph}\left(-\check{\Omega}\right)$
are symplectic. Let us prove (\ref{eq:nu_zeta}). We write 
\begin{equation}
\begin{cases}
x & =\nu+\zeta\\
\xi & =\check{\Omega}\left(\nu\right)-\check{\Omega}\left(\zeta\right)=\check{\Omega}\left(\nu-\zeta\right)
\end{cases}\Leftrightarrow\begin{cases}
\nu & =\frac{1}{2}\left(x+\check{\Omega}^{-1}\left(\xi\right)\right)\\
\zeta & =\frac{1}{2}\left(x-\check{\Omega}^{-1}\left(\xi\right)\right)
\end{cases}\label{eq:x_nu_zeta}
\end{equation}
\end{proof}
\begin{cBoxB}{}
\begin{lem}
\label{lem:Let--be-1}Let $\left(F,\Omega,g\right)$ be a vector space
with a symplectic structure $\Omega$ and a compatible metric $g$.
Then (\ref{eq:FF*KN}) is orthogonal for the metric $\boldsymbol{g=}g\oplus g^{-1}$
as well. The maps
\begin{equation}
\sqrt{2}\pi_{K}:\left(x,\check{\Omega}\left(x\right)\right)\in\left(K,\boldsymbol{\Omega},\boldsymbol{g}\right)\rightarrow x\in\left(F,\Omega,g\right)\label{eq:pi_K}
\end{equation}
\begin{equation}
\sqrt{2}\pi_{N}:\left(x,\check{\Omega}^{*}\left(x\right)\right)\in\left(N,\boldsymbol{\Omega},\boldsymbol{g}\right)\rightarrow x\in\left(F,-\Omega,g\right)\label{eq:pi_N}
\end{equation}
are isomorphism for the respective bilinear forms.
\end{lem}

\end{cBoxB}

\begin{proof}
$K\overset{\perp}{\oplus}N$ is also orthogonal for the metric $\boldsymbol{g}$
because
\begin{align*}
\boldsymbol{g}\left(\left(x,\xi\right),\left(x',\xi'\right)\right) & =g\left(x,x'\right)+g^{-1}\left(\xi,\xi'\right)=g\left(x,x'\right)+g^{-1}\left(\check{\Omega}\left(x\right),-\check{\Omega}\left(x'\right)\right)\\
 & =g\left(x,x'\right)-g^{-1}\left(\check{\Omega}\left(x\right),\check{\Omega}\left(x'\right)\right)=0
\end{align*}
because $g^{-1}\left(\check{\Omega}\left(x\right),\check{\Omega}\left(x'\right)\right)=g\left(x,x'\right)$
from compatibility between $\Omega$ and $g$.

If $\left(x,\xi\right)=\left(x,\check{\Omega}\left(x\right)\right)\in K$
and $\left(x',\xi'\right)=\left(x',\check{\Omega}\left(x'\right)\right)\in K$
then
\begin{align*}
\boldsymbol{\Omega}\left(\left(x,\xi\right),\left(x',\xi'\right)\right) & =\langle\xi|x'\rangle-\langle x|\xi'\rangle=\langle\check{\Omega}\left(x\right)|x'\rangle-\langle x|\check{\Omega}\left(x'\right)\rangle=\langle\check{\Omega}\left(x\right)|x'\rangle-\langle\check{\Omega}^{*}\left(x\right)|x'\rangle\\
 & =\langle\check{\Omega}\left(x\right)|x'\rangle+\langle\check{\Omega}\left(x\right)|x'\rangle=2\Omega\left(x,x'\right)
\end{align*}
\begin{align*}
\boldsymbol{g}\left(\left(x,\xi\right),\left(x',\xi'\right)\right) & =g\left(x,x'\right)+g^{-1}\left(\xi,\xi'\right)=g\left(x,x'\right)+g^{-1}\left(\check{\Omega}\left(x\right),\check{\Omega}\left(x'\right)\right)=2g\left(x,x'\right)
\end{align*}
and a similar computation on $N$, if $\check{\Omega}$ is replaced
by $-\check{\Omega}$.
\end{proof}
Let $\mathcal{B}_{F}:\mathcal{S}\left(F\right)\rightarrow\mathcal{S}\left(F\oplus F^{*}\right)$
the Bargman transform in radial Gauge. Let $\mathcal{P}_{F\oplus F^{*}}:=\mathcal{B}_{F}\mathcal{B}_{F}^{\dagger}$.
Let $\mathcal{P}_{K},\mathcal{P}_{N}$ defined as in (\ref{eq:def_P})
for the respective spaces $\left(K,\Omega_{K},g_{K}\right)$, $\left(N,\Omega_{N},g_{N}\right)$
defined in (\ref{eq:def_K_N}) with metrics induced from $\boldsymbol{\Omega},\boldsymbol{g}$.
From (\ref{eq:FF*KN}) we have the natural identification 
\[
\mathcal{S}\left(F\oplus F^{*}\right)=\mathcal{S}\left(K\oplus N\right)=\mathcal{S}\left(K\right)\otimes\mathcal{S}\left(N\right).
\]

\begin{cBoxB}{}
\begin{lem}
\label{lem:We-have-}We have
\begin{equation}
\mathcal{P}_{F\oplus F^{*}}=\mathcal{P}_{K}\otimes\mathcal{P}_{N}\label{eq:PKNB}
\end{equation}
\end{lem}

\end{cBoxB}

\begin{proof}
Since $\boldsymbol{\Omega}_{F\oplus F^{*}}=\Omega_{K}\oplus\Omega_{N}$
and $\boldsymbol{g}=g_{K}\oplus g_{N}$, (\ref{eq:PKNB}) follows
from the expression of the kernels (\ref{eq:Kernel_of_P_radial_Gauge})
and (\ref{eq:def_P}) that coincide for both sides.
\end{proof}
\begin{cBoxB}{}
\begin{lem}
\label{lem:We-have-unitary} From the linear maps $\pi_{K},\pi_{N}$
in (\ref{eq:pi_K}), we have $L^{2}-$unitary operators
\begin{equation}
U_{K}:=\left(\sqrt{2}\pi_{K}\right)^{\circ}\,:\mathrm{Im}\left(\mathcal{P}_{F}\right)\rightarrow\mathrm{Im}\left(\mathcal{P}_{K}\right)\label{eq:def_U_K}
\end{equation}
\[
U_{N}:=\left(\sqrt{2}\pi_{N}\right)^{\circ}\,:\mathrm{Im}\left(C\mathcal{P}_{F}\mathcal{C}\right)\rightarrow\mathrm{Im}\left(\mathcal{P}_{N}\right)
\]
where $\mathcal{C}:u\in\mathcal{S}\left(F\right)\rightarrow\overline{u}\in\mathcal{S}\left(F\right)$
is the conjugation operator.
\end{lem}

\end{cBoxB}

\begin{proof}
This is a direct consequence of Lemma \ref{lem:Let--be-1} and expression
(\ref{eq:def_P}).
\end{proof}

\subsection{\label{subsec:Metaplectic-decomposition-of}Metaplectic decomposition
of a linear symplectic map}

In this section, let 
\[
\Phi:\left(F_{1},\Omega_{1}\right)\rightarrow\left(F_{2},\Omega_{2}\right)
\]
a linear invertible symplectic map and $g_{j}$ a compatible metric
on $F_{j}$ for $j=1,2$.

Let us first give a definition of metaplectic operator that will be
useful. The metaplectic correction $\Upsilon\left(\Phi\right)>0$
has been defined in (\ref{eq:def_d}). The reader may consult \cite[section 13.2]{treves2022analytic}
for a general reference about metaplectic operators. 

\begin{cBoxA}{}
\begin{defn}
\label{def:Let--a}
\begin{itemize}
\item We define the \textbf{metaplectic operator (or quantization) of $\Phi$}
by
\begin{equation}
\tilde{\mathrm{Op}}\left(\Phi\right):=\left(\Upsilon\left(\Phi\right)\right)^{1/2}\mathcal{P}_{F_{2}}\Phi^{-\circ}\mathcal{P}_{F_{1}}\quad:\mathrm{Im}\left(\mathcal{P}_{F_{1}}\right)\rightarrow\mathrm{Im}\left(\mathcal{P}_{F_{2}}\right)\label{eq:def_op_tilde}
\end{equation}
with the metaplectic correction $\Upsilon\left(\Phi\right)>0$ defined
in (\ref{eq:def_d}).
\item If $E_{1}\subset F_{1}$, $E_{2}\subset F_{2}$ are Lagrangian linear
subspace, and using the identification (\ref{eq:identif_F_E}), we
define
\begin{align}
\mathrm{Op}\left(\Phi\right): & =\left(\Upsilon\left(\Phi\right)\right)^{1/2}\mathcal{B}_{E_{2}}^{\dagger}\Phi^{-\circ}\mathcal{B}_{E_{1}}\quad:\mathcal{S}\left(E_{1}\right)\rightarrow\mathcal{S}\left(E_{2}\right)\label{eq:def_Op}
\end{align}
that is conjugated to $\tilde{\mathrm{Op}}\left(\Phi\right)_{/\mathrm{Im}\mathcal{B}_{E_{1}}\rightarrow\mathrm{Im}\mathcal{B}_{E_{2}}}$
since 
\begin{equation}
\tilde{\mathrm{Op}}\left(\Phi\right)=\mathcal{B}_{E_{2}}\mathrm{Op}\left(\Phi\right)\mathcal{B}_{E_{1}}^{\dagger}.\label{eq:op_tilde_op}
\end{equation}
\end{itemize}
\end{defn}

\end{cBoxA}

\begin{rem}
We will show in Proposition \ref{prop:The-operator-} that $\tilde{\mathrm{Op}}\left(\Phi\right):L^{2}\left(F_{1}\right)\rightarrow L^{2}\left(F_{2}\right)$
is $L^{2}-$unitary (hence $\mathrm{Op}\left(\Phi\right):L^{2}\left(E_{1}\right)\rightarrow L^{2}\left(E_{2}\right)$
is also unitary).
\end{rem}

Let 
\begin{equation}
\tilde{\Phi}:=\Phi^{-1}\oplus\Phi^{*}:\quad F_{2}\oplus F_{2}^{*}\rightarrow F_{1}\oplus F_{1}^{*}\label{eq:Phi_phi_minus}
\end{equation}
the induced map on cotangent spaces. Since $\Phi$ is symplectic
we have that $\tilde{\Phi}$ preserves the decompositions (\ref{eq:FF*KN}),
$F_{j}\oplus F_{j}^{*}=K_{j}\oplus N_{j}$, with $j=1,2$ and we denote
its components by
\begin{equation}
\tilde{\Phi}=\Phi_{K}\oplus\Phi_{N}.\label{eq:phi_tilde}
\end{equation}
Beware that $\Phi_{K},\Phi_{N}$ are conjugated to $\Phi^{-1}$, i.e.
$\Phi^{-1}\pi_{N_{2}}=\pi_{N_{1}}\Phi_{N}:N_{2}\rightarrow F_{1}$
and $\Phi^{-1}\pi_{K_{2}}=\pi_{K_{1}}\Phi_{K}:K_{2}\rightarrow F_{1}$.

\begin{cBoxB}{}
\begin{thm}
\label{lem:metaplectic_decomp}\cite{fred-PreQ-06}\cite{faure-tsujii_prequantum_maps_12}We
have
\begin{equation}
\mathcal{B}_{F_{1}}\Phi^{\circ}\mathcal{B}_{F_{2}}^{\dagger}=\tilde{\mathrm{Op}}\left(\tilde{\Phi}\right)=\tilde{\mathrm{Op}}\left(\Phi_{K}\right)\otimes\tilde{\mathrm{Op}}\left(\Phi_{N}\right)\quad:\mathrm{Im}\left(\mathcal{P}_{F_{2}\oplus F_{2}^{*}}\right)\rightarrow\mathrm{Im}\left(\mathcal{P}_{F_{1}\oplus F_{1}^{*}}\right).\label{eq:decomp_phi}
\end{equation}
\end{thm}

\end{cBoxB}

\begin{proof}
We have
\begin{equation}
\Upsilon\left(\tilde{\Phi}\right)\eq{\ref{eq:Phi_phi_minus},\ref{eq:product}}\Upsilon\left(\Phi^{-1}\right)\Upsilon\left(\Phi^{*}\right)\eq{\ref{eq:dphi_inv}}\left(\Upsilon\left(\Phi\right)\right)^{2}.\label{eq:upsilon_phi_tilde}
\end{equation}
Hence
\begin{align*}
\mathcal{B}_{F_{1}}\Phi^{\circ}\mathcal{B}_{F_{2}}^{\dagger} & \eq{\ref{eq:def_Upsilon},,\ref{eq:Bergman_projector}}\Upsilon\left(\Phi\right)\mathcal{P}_{F_{1}\oplus F_{1}^{*}}\tilde{\Phi}^{-\circ}\mathcal{P}_{F_{2}\oplus F_{2}^{*}}\\
 & \eq{\ref{eq:def_op_tilde}}\Upsilon\left(\Phi\right)\left(\Upsilon\left(\tilde{\Phi}\right)\right)^{-1/2}\tilde{\mathrm{Op}}\left(\tilde{\Phi}\right)\eq{\ref{eq:upsilon_phi_tilde}}\tilde{\mathrm{Op}}\left(\tilde{\Phi}\right),
\end{align*}
and
\begin{align*}
\tilde{\mathrm{Op}}\left(\tilde{\Phi}\right) & \eq{\ref{eq:def_op_tilde}}\left(\Upsilon\left(\tilde{\Phi}\right)\right)^{1/2}\mathcal{P}_{F_{2}\oplus F_{2}^{*}}\tilde{\Phi}^{-\circ}\mathcal{P}_{F_{1}\oplus F_{1}^{*}}\\
 & \eq{\ref{eq:phi_tilde},\ref{eq:PKNB},\ref{eq:product}}\left(\Upsilon\left(\Phi_{K}\right)\Upsilon\left(\Phi_{N}\right)\right)^{1/2}\left(\mathcal{P}_{K_{1}}^{\dagger}\Phi_{K}^{-\circ}\mathcal{P}_{K_{2}}\right)\otimes\left(\mathcal{P}_{N_{1}}^{\dagger}\Phi_{N}^{-\circ}\mathcal{P}_{N_{2}}\right)\\
 & \eq{\ref{eq:def_op_tilde}}\tilde{\mathrm{Op}}\left(\Phi_{K}\right)\otimes\tilde{\mathrm{Op}}\left(\Phi_{N}\right).
\end{align*}
\end{proof}
\begin{rem}
Lemma \ref{lem:metaplectic_decomp} is central to the analysis in
this paper. It is a factorization formula for the map $\Phi$ and
somehow gives a square root of $\Phi^{\circ}$.
\end{rem}

\begin{cBoxB}{}
\begin{prop}
For any linear symplectic maps $\Phi_{2,1}:\left(F_{1},\Omega_{1}\right)\rightarrow\left(F_{2},\Omega_{2}\right)$,
$\Phi_{3,2}:\left(F_{2},\Omega_{2}\right)\rightarrow\left(F_{3},\Omega_{3}\right)$
with compatible metrics $g_{j}$ on $F_{j}$, $j=1,2,3$, we have
\begin{equation}
\tilde{\mathrm{Op}}\left(\Phi_{3,2}\Phi_{2,1}\right)=\tilde{\mathrm{Op}}\left(\Phi_{3,2}\right)\tilde{\mathrm{Op}}\left(\Phi_{2,1}\right).\label{eq:homom}
\end{equation}
\end{prop}

\end{cBoxB}

\begin{proof}
We have
\begin{align*}
\tilde{\mathrm{Op}}\left(\left(\Phi_{3,2}\Phi_{2,1}\right)_{K}\right)\otimes\tilde{\mathrm{Op}}\left(\left(\Phi_{3,2}\Phi_{2,1}\right)_{N}\right) & \eq{\ref{eq:decomp_phi}}\mathcal{B}_{F_{3}}\left(\Phi_{3,2}\Phi_{2,1}\right)^{-\circ}\mathcal{B}_{F_{1}}^{\dagger}\\
 & =\mathcal{B}_{F_{3}}\Phi_{3,2}^{-\circ}\Phi_{2,1}^{-\circ}\mathcal{B}_{F_{1}}^{\dagger}\eq{\ref{eq:resol_ident_bargamnn}}\left(\mathcal{B}_{F_{3}}\Phi_{3,2}^{-\circ}\mathcal{B}_{F_{2}}^{\dagger}\right)\left(\mathcal{B}_{F_{2}}\Phi_{2,1}^{-\circ}\mathcal{B}_{F_{1}}^{\dagger}\right)\\
 & \eq{\ref{eq:decomp_phi}}\left(\tilde{\mathrm{Op}}\left(\left(\Phi_{3,2}\right)_{K}\right)\tilde{\mathrm{Op}}\left(\left(\Phi_{2,1}\right)_{K}\right)\right)\otimes\left(\tilde{\mathrm{Op}}\left(\left(\Phi_{3,2}\right)_{N}\right)\tilde{\mathrm{Op}}\left(\left(\Phi_{2,1}\right)_{N}\right)\right)
\end{align*}
Considering each factor gives (\ref{eq:homom}).
\end{proof}
\begin{cBoxB}{}
\begin{prop}
\label{prop:The-operator-}The operator $\tilde{\mathrm{Op}}\left(\Phi\right):\mathrm{Im}\left(\mathcal{P}_{F_{1}}\right)\rightarrow\mathrm{Im}\left(\mathcal{P}_{F_{2}}\right)$
defined in (\ref{eq:def_op_tilde}) is $L^{2}-$unitary.
\end{prop}

\end{cBoxB}

\begin{rem}
Since $\mathcal{B}_{E_{j}}:L^{2}\left(E_{j}\right)\rightarrow\mathrm{Im}\mathcal{B}_{E_{j}}=\mathrm{Im}\mathcal{P}_{F_{j}}$
is unitary this implies that $\mathrm{Op}\left(\Phi\right):L^{2}\left(E_{1}\right)\rightarrow L^{2}\left(E_{2}\right)$
is also unitary.
\end{rem}

\begin{proof}
We have $\Upsilon\left(\mathrm{Id}\right)=1$ hence $\tilde{\mathrm{Op}}\left(\Phi\right)\left(\mathrm{Id}\right)\eq{\ref{eq:def_op_tilde}}\mathcal{P}=\mathrm{Id}_{/\mathrm{Im}\mathcal{P}}$.
Also
\begin{equation}
\Upsilon\left(\Phi^{-1}\right)\eq{\ref{eq:dphi_inv}}\left|\mathrm{det}\Phi\right|\Upsilon\left(\Phi\right)=\Upsilon\left(\Phi\right).\label{eq:ups_inv}
\end{equation}
We have
\[
\tilde{\mathrm{Op}}\left(\Phi^{-1}\right)\tilde{\mathrm{Op}}\left(\Phi\right)\eq{\ref{eq:homom}}\tilde{\mathrm{Op}}\left(\mathrm{Id}\right)=\mathrm{Id}_{/\mathrm{Im}\mathcal{P}},
\]
so $\tilde{\mathrm{Op}}\left(\Phi\right)^{-1}=\tilde{\mathrm{Op}}\left(\Phi^{-1}\right)$
and
\begin{align*}
\tilde{\mathrm{Op}}\left(\Phi\right)^{\dagger} & \eq{\ref{eq:def_op_tilde}}\left(\Upsilon\left(\Phi\right)\right)^{1/2}\mathcal{P}\Phi^{\circ}\mathcal{P}\eq{\ref{eq:ups_inv}}\left(\Upsilon\left(\Phi^{-1}\right)\right)^{1/2}\mathcal{P}\left(\Phi^{-1}\right)^{-\circ}\mathcal{P}\\
 & \eq{\ref{eq:def_op_tilde}}\tilde{\mathrm{Op}}\left(\Phi^{-1}\right)=\tilde{\mathrm{Op}}\left(\Phi\right)^{-1}.
\end{align*}
\end{proof}

The next lemma considers a special case for the symplectic map $\Phi$.

\begin{cBoxB}{}
\begin{lem}
\cite[proof of Prop 4.3.1 p.79]{faure-tsujii_prequantum_maps_12}\label{lem:Let-,-be}Let
$\left(E_{1},g_{1}\right)$,$\left(E_{2},g_{2}\right)$ be Euclidean
vector spaces, and $\phi:E_{2}\rightarrow E_{1}$ an invertible linear
map. We denote $\phi^{\circ}:\mathcal{S}\left(E_{1}\right)\rightarrow\mathcal{S}\left(E_{2}\right)$
the pull-back operator and $\Phi:=\phi^{-1}\oplus\phi^{*}:E_{1}\oplus E_{1}^{*}\rightarrow E_{2}\oplus E_{2}^{*}$
the induced symplectic map on cotangent spaces. $\mathrm{Op}\left(\Phi\right)$
has been defined in (\ref{eq:def_Op}) and $\tilde{\mathrm{Op}}\left(\Phi\right)$
in (\ref{eq:def_op_tilde}). We have 
\begin{equation}
\mathrm{Op}\left(\Phi\right)=\left|\mathrm{det}\phi\right|^{1/2}\phi^{\circ}.\label{eq:Op(f)}
\end{equation}
\begin{equation}
\tilde{\mathrm{Op}}\left(\Phi\right)=\left|\mathrm{det}\phi\right|^{1/2}\mathcal{B}_{E_{2}}\phi^{\circ}\mathcal{B}_{E_{1}}^{\dagger}\label{eq:Op_tilde_PHI}
\end{equation}
\end{lem}

\end{cBoxB}

\begin{proof}
We have
\begin{align}
\Upsilon\left(\Phi\right) & \eq{\ref{eq:product}}\Upsilon\left(\phi\right)\Upsilon\left(\phi^{-1}\right)\eq{\ref{eq:dphi_inv}}\left|\mathrm{det}\phi\right|\left(\Upsilon\left(\phi\right)\right)^{2}\label{eq:dPhi_dphi}
\end{align}
Then
\[
\mathrm{Op}\left(\Phi\right)\eq{\ref{eq:def_Op}}\left(\Upsilon\left(\Phi\right)\right)^{1/2}\mathcal{B}_{E_{2}}^{\dagger}\Phi^{-\circ}\mathcal{B}_{E_{1}}\eq{\ref{eq:def_Upsilon}}\left(\Upsilon\left(\Phi\right)\right)^{1/2}\left(\Upsilon\left(\phi\right)\right)^{-1}\phi^{\circ}\eq{\ref{eq:dPhi_dphi}}\left|\mathrm{det}\phi\right|^{1/2}\phi^{\circ}
\]
\end{proof}

\subsection{Symplectic spinors}

The previous constructions correspond to what is sometimes called
\textbf{\href{https://en.wikipedia.org/wiki/Symplectic_spinor_bundle}{symplectic spinors}}
in the literature. We summarize what we have obtained and comment
on this.

We start from a linear symplectic space $\left(F,\Omega\right)$.
We have the decomposition $F\oplus F^{*}=K\overset{\perp_{\boldsymbol{\Omega}}}{\oplus}N$
in (\ref{eq:FF*KN}). We take a $\Omega-$compatible metric $g$ on
$F$, so that we can define $\mathcal{B}_{F}:L^{2}\left(F\right)\rightarrow\mathrm{Im}\mathcal{B}_{F}\subset L^{2}\left(F\oplus F^{*}\right)$
in radial Gauge from (\ref{eq:def_Bargman_B}), but also the operators
$\mathcal{P}_{F},\mathcal{P}_{F\oplus F*},\mathcal{P}_{K},\mathcal{P}_{N}$.

Recall from Lemma \ref{lem:We-have-unitary}, we have $L^{2}-$unitary
isomorphisms
\[
U_{K}:\mathrm{Im}\mathcal{P}_{F}\rightarrow\mathrm{Im}\mathcal{P}_{K},\quad U_{N}:\mathrm{Im}\left(\mathcal{C}\mathcal{P}_{F}\mathcal{C}\right)\rightarrow\mathrm{Im}\mathcal{P}_{N},
\]
and from (\ref{eq:PKNB}), we have a $L^{2}-$unitary isomorphism
\[
\mathcal{B}_{F}:L^{2}\left(F\right)\rightarrow\mathrm{Im}\mathcal{P}_{K}\otimes\mathrm{Im}\mathcal{P}_{N}.
\]

This suggests to give the following definition and Theorem.

\begin{cBoxA}{}
\begin{defn}
For a linear symplectic space $\left(F,\Omega\right)$ with $\Omega-$compatible
metric $g$, we define the space of \textbf{symplectic spinors} $\mathrm{Spin}_{\pm}\left(F\right)$
by
\[
\mathrm{Spin}_{+}\left(F\right):=\mathrm{Im}\mathcal{P}_{F}\subset L^{2}\left(F\right),\qquad\mathrm{Spin}_{-}\left(F\right):=\mathrm{Im}\left(\mathcal{C}\mathcal{P}_{F}\mathcal{C}\right)\subset L^{2}\left(F\right).
\]
We define the $L^{2}-$unitary operator
\begin{equation}
U_{\mathrm{spin}}:=\left(U_{K}^{\dagger}\otimes U_{N}^{\dagger}\right)\circ\mathcal{B}_{F}\quad:L^{2}\left(F\right)\rightarrow\mathrm{Spin}_{+}\left(F\right)\otimes\mathrm{Spin}_{-}\left(F\right).\label{eq:def_Uspin}
\end{equation}
\end{defn}

\end{cBoxA}

\begin{cBoxB}{}
\begin{thm}
Let $\Phi:\left(F_{1},\Omega_{1}\right)\rightarrow\left(F_{2},\Omega_{2}\right)$
a linear invertible symplectic map. We have the commutative diagram
with unitary operators
\begin{equation}
\xymatrix{L^{2}\left(F_{1}\right)\ar[rrr]\sp(0.4){\Phi^{-\circ}}\ar[d]\sp(0.4){U_{\mathrm{spin}}} &  &  & L^{2}\left(F_{2}\right)\ar[d]\sp(0.4){U_{\mathrm{spin}}}\\
\mathrm{Spin}_{+}\left(F_{1}\right)\otimes\mathrm{Spin}_{-}\left(F_{1}\right)\ar[rrr]\sp(0.5){\tilde{\mathrm{Op}}\left(\Phi\right)\otimes\left(\mathcal{C}\tilde{\mathrm{Op}}\left(\Phi\right)\mathcal{C}\right)} &  &  & \mathrm{Spin}_{+}\left(F_{2}\right)\otimes\mathrm{Spin}_{-}\left(F_{2}\right)
}
\label{eq:SPin_Spin}
\end{equation}
\end{thm}

\end{cBoxB}

\begin{rem}
The operator $\Phi^{-\circ}$ in the upper line is the push forward
of the symplectic map and can be interpreted as the ``classical dynamics''.
The lower line is the product of two metaplectic operators or quantum
operators. So this diagram shows that
\begin{itemize}
\item In Eq.(\ref{eq:def_Uspin}), the classical space $L^{2}\left(F\right)$
is identified as the product of two quantum spinor spaces.
\item In Eq. (\ref{eq:SPin_Spin}), the ``classical dynamics'' can be
factorized as the product of the quantum dynamics times its complex
conjugation.
\end{itemize}
\end{rem}

\begin{proof}
Since $\Phi\pi_{N_{1}}=\pi_{N_{2}}\Phi_{N}^{-1}:N_{1}\rightarrow F_{2}$,
we have $\Phi^{\circ}=U_{N_{1}}^{\dagger}\Phi_{N}^{-\circ}U_{N_{2}}=U_{K_{1}}^{\dagger}\Phi_{K}^{-\circ}U_{K_{2}}:L^{2}\left(F_{2}\right)\rightarrow L^{2}\left(F_{1}\right)$.
From Lemma \ref{lem:We-have-unitary}, we have
\[
\mathcal{C}\mathcal{P}_{F}\mathcal{C}=U_{N}^{\dagger}\mathcal{P}_{N}U_{N}.
\]
Hence
\begin{align*}
U_{1}^{\dagger}\tilde{\mathrm{Op}}\left(\Phi_{N}\right)U_{N_{2}} & \eq{\ref{eq:def_op_tilde}}\left(\Upsilon\left(\Phi\right)\right)^{1/2}U_{N_{1}}^{\dagger}\mathcal{P}_{N_{1}}\Phi_{N}^{-\circ}\mathcal{P}_{N_{2}}U_{N_{2}}\\
 & =\left(\Upsilon\left(\Phi\right)\right)^{1/2}U_{N_{1}}^{\dagger}\mathcal{P}_{N_{1}}U_{N_{1}}\Phi^{\circ}U_{N_{2}}^{\dagger}\mathcal{P}_{N_{2}}U_{N_{2}}=\left(\Upsilon\left(\Phi\right)\right)^{1/2}\mathcal{C}\mathcal{P}_{F_{1}}\mathcal{C}\Phi^{\circ}\mathcal{C}\mathcal{P}_{F_{2}}\mathcal{C}\\
 & \eq{\ref{eq:ups_inv}}\mathcal{C}\left(\Upsilon\left(\Phi^{-1}\right)\right)^{1/2}\mathcal{P}_{F_{1}}\Phi^{\circ}\mathcal{P}_{F_{2}}\mathcal{C}\eq{\ref{eq:def_op_tilde}}\mathcal{C}\tilde{\mathrm{Op}}\left(\Phi^{-1}\right)\mathcal{C}
\end{align*}
and similarly $U_{K_{1}}^{\dagger}\tilde{\mathrm{Op}}\left(\Phi_{K}\right)U_{K_{2}}=\tilde{\mathrm{Op}}\left(\Phi^{-1}\right)$.
So we get
\begin{align*}
\Phi^{\circ} & \eq{\ref{eq:decomp_phi}}\mathcal{B}_{F_{1}}^{\dagger}\left(\tilde{\mathrm{Op}}\left(\Phi_{K}\right)\otimes\tilde{\mathrm{Op}}\left(\Phi_{N}\right)\right)\mathcal{B}_{F_{2}}\qquad:L^{2}\left(F_{2}\right)\rightarrow L^{2}\left(F_{1}\right)\\
 & =\mathcal{B}_{F_{1}}^{\dagger}\left(\left(U_{K_{1}}\tilde{\mathrm{Op}}\left(\Phi^{-1}\right)U_{K_{2}}^{\dagger}\right)\otimes\left(U_{N_{1}}\mathcal{C}\tilde{\mathrm{Op}}\left(\Phi^{-1}\right)\mathcal{C}U_{N_{2}}^{\dagger}\right)\right)\mathcal{B}_{F_{2}}\\
 & \eq{\ref{eq:def_Uspin}}U_{\mathrm{spin}}^{\dagger}\left(\tilde{\mathrm{Op}}\left(\Phi^{-1}\right)\otimes\left(\mathcal{C}\tilde{\mathrm{Op}}\left(\Phi^{-1}\right)\mathcal{C}\right)\right)U_{\mathrm{spin}}
\end{align*}
giving (\ref{eq:SPin_Spin}).
\end{proof}

\subsection{Some useful decompositions}

Here we give a relation that is simple here in the linear setting
and is used in the main text as an approximation, in the proof of
Theorem \ref{Thm:approx_exptX_from_N}. Let us denote $\tilde{F}=F\oplus F^{*}\eq{\ref{eq:FF*KN}}K\oplus N$
and $\exp:T\tilde{F}\rightarrow\tilde{F}$ the exponential map. Now
we restrict the base to $K\subset\tilde{F}$ and consider $N\subset T_{K}\tilde{F}$
as a sub-bundle $N\rightarrow K$ of $T_{K}\tilde{F}\rightarrow K$.
We denote $\exp_{N}:N\rightarrow\tilde{F}$ the exponential map that
is an isomorphism.

As before, we have the operators $\mathcal{B}_{F}:\mathcal{S}\left(F\right)\rightarrow\mathcal{S}\left(\tilde{F}\right)$,
$\mathcal{B}_{F}^{\dagger}:\mathcal{S}\left(\tilde{F}\right)\rightarrow\mathcal{S}\left(F\right)$,
$\widetilde{\exp_{N}^{\circ}}:\mathcal{S}\left(\tilde{F}\right)\rightarrow\mathcal{S}\left(N\right)$,
$\widetilde{\left(\exp_{N}^{-1}\right)^{\circ}}:\mathcal{S}\left(N\right)\rightarrow\mathcal{S}\left(\tilde{F}\right)$. 

Let $\Phi:F\rightarrow F$ be a linear symplectic map. We have $\Phi^{\circ}:\mathcal{S}\left(F\right)\rightarrow\mathcal{S}\left(F\right)$.
The map $\tilde{\Phi}:\tilde{F}\rightarrow\tilde{F}$ that can be
seen as the bundle map $\tilde{\Phi}_{N}:N\rightarrow N$ over $\Phi_{K}$.
Then $\tilde{\mathrm{Op}}\left(\tilde{\Phi}_{N}\right):\mathcal{S}\left(N\right)\rightarrow\mathcal{S}\left(N\right)$
is a bundle map over $\Phi_{K}$. The metaplectic correction $\left(\Upsilon\left(\Phi_{K}\right)\right)^{1/2}>0$
has been defined in (\ref{eq:def_d}) and in fact we have $\Upsilon\left(\Phi_{K}\right)=\Upsilon\left(\Phi_{N}\right)=\Upsilon\left(\Phi\right)$.

\begin{cBoxB}{}
\begin{lem}
\label{lem:decomp}We have
\begin{equation}
\Phi^{\circ}=\mathcal{B}_{F}^{\dagger}\widetilde{\left(\exp_{N}^{-1}\right)^{\circ}}\left(\Upsilon\left(\Phi_{K}\right)\right)^{1/2}\tilde{\mathrm{Op}}\left(\tilde{\Phi}_{N}\right)\widetilde{\exp_{N}^{\circ}}\mathcal{B}_{F}.\label{eq:expression_Phi_circ}
\end{equation}
Equivalently
\begin{equation}
\tilde{\mathrm{Op}}\left(\tilde{\Phi}\right)\eq{\ref{eq:decomp_phi}}\mathcal{P}_{F}\widetilde{\left(\exp_{N}^{-1}\right)^{\circ}}\left(\Upsilon\left(\Phi_{K}\right)\right)^{1/2}\tilde{\mathrm{Op}}\left(\tilde{\Phi}_{N}\right)\widetilde{\exp_{N}^{\circ}}\mathcal{P}_{F}.\label{eq:expression_Phi_circ-1}
\end{equation}
\end{lem}

\end{cBoxB}

\begin{rem}
As the proof belows shows, the metaplectic correction $\left(\Upsilon\left(\Phi_{K}\right)\right)^{1/2}>0$
in (\ref{eq:expression_Phi_circ}) is due to the fact that $\tilde{\mathrm{Op}}\left(\tilde{\Phi}_{N}\right)$
over $\Phi_{K}$ does not preserves the space $\mathrm{Im}\mathcal{P}_{K}$.
\end{rem}

\begin{proof}
We have
\begin{align*}
\Phi^{\circ} & \eq{\ref{eq:resol_ident_bargamnn}}\mathcal{B}_{F}^{\dagger}\mathcal{B}_{F}\Phi^{\circ}\mathcal{B}_{F}^{\dagger}\mathcal{B}_{F}\eq{\ref{eq:decomp_phi}}\mathcal{B}_{F}^{\dagger}\left(\tilde{\mathrm{Op}}\left(\Phi_{K}\right)\otimes\tilde{\mathrm{Op}}\left(\Phi_{N}\right)\right)\mathcal{B}_{F}\\
 & \eq{\ref{eq:def_op_tilde}}\mathcal{B}_{F}^{\dagger}\left(\left(\left(\Upsilon\left(\Phi_{K}\right)\right)^{1/2}\mathcal{P}_{K}\Phi_{K}^{-\circ}\mathcal{P}_{K}\right)\otimes\tilde{\mathrm{Op}}\left(\Phi_{N}\right)\right)\mathcal{B}_{F}
\end{align*}
We use that $\mathcal{B}_{F}\eq{\ref{eq:Bergman_projector},\ref{eq:resol_ident_bargamnn}}\mathcal{P}_{F}\mathcal{B}_{F}\eq{\ref{eq:PKNB}}\left(\mathcal{P}_{K}\otimes\mathcal{P}_{N}\right)\mathcal{B}_{F}$
and similarly $\mathcal{B}_{F}^{\dagger}=\mathcal{B}_{F}^{\dagger}\left(\mathcal{P}_{K}\otimes\mathcal{P}_{N}\right)$.
Hence we can remove the operators $\mathcal{P}_{K}$ and get
\begin{align*}
\Phi^{\circ} & =\mathcal{B}_{F}^{\dagger}\left(\left(\Upsilon\left(\Phi_{K}\right)\right)^{1/2}\Phi_{K}^{-\circ}\otimes\tilde{\mathrm{Op}}\left(\Phi_{N}\right)\right)\mathcal{B}_{F}\\
 & =\left(\Upsilon\left(\Phi_{K}\right)\right)^{1/2}\mathcal{B}_{F}^{\dagger}\widetilde{\exp_{N}^{\circ}}^{-1}\tilde{\mathrm{Op}}\left(\tilde{\Phi}_{N}\right)\widetilde{\exp_{N}^{\circ}}\mathcal{B}_{F}.
\end{align*}
where the last line expresses that $\tilde{\mathrm{Op}}\left(\tilde{\Phi}_{N}\right):\mathcal{S}\left(N\right)\rightarrow\mathcal{S}\left(N\right)$
is a bundle map over $\Phi_{K}$. Using (\ref{eq:decomp_phi}) and
(\ref{eq:Bergman_projector}) we deduce (\ref{eq:expression_Phi_circ-1}).
\end{proof}
In the next Lemma, we use the (twisted) operator $\widetilde{\exp^{\circ}}:\mathcal{S}\left(\tilde{F}\right)\rightarrow\mathcal{S}\left(T\tilde{F}\right)$.
We also use the restriction operators $r_{0}:\mathcal{S}\left(T\tilde{F}\right)\rightarrow\mathcal{S}\left(\tilde{F}\right)$,
$r_{/K}:\mathcal{S}\left(T\tilde{F}\right)\rightarrow\mathcal{S}\left(T_{K}\tilde{F}\right)$,
$r_{N}:\mathcal{S}\left(T_{K}\tilde{F}\right)\rightarrow\mathcal{S}\left(N\right)$.
We have
\[
\mathrm{Id}_{\mathcal{S}\left(\tilde{F}\right)}=r_{0}\widetilde{\exp^{\circ}}=\widetilde{\left(\exp_{N}^{-1}\right)^{\circ}}r_{N}r_{/K}\widetilde{\exp^{\circ}},
\]
where the second equality can be seen as a generalization of the first
equality for the respective decompositions $\tilde{F}=\left\{ 0\right\} \oplus\tilde{F}=K\oplus N$.
More generally we have the following Lemma.

\begin{cBoxB}{}
\begin{lem}
\label{lem:For-the-symplectic}For the symplectic map $\tilde{\Phi}\eq{\ref{eq:phi_tilde}}\Phi_{K}\oplus\Phi_{N}:\tilde{F}\rightarrow\tilde{F}$,
using $\tilde{\mathrm{Op}}\left(\Phi\right):\mathcal{S}\left(T\tilde{F}\right)\rightarrow\mathcal{S}\left(T\tilde{F}\right)$
and $\tilde{\mathrm{Op}}_{/K}\left(\Phi\right):\mathcal{S}\left(T_{K}\tilde{F}\right)\rightarrow\mathcal{S}\left(T_{K}\tilde{F}\right)$
as bundle map operators, we have the decomposition
\begin{equation}
r_{0}\tilde{\mathrm{Op}}\left(\Phi\right)\widetilde{\exp^{\circ}}=\widetilde{\left(\exp_{N}^{-1}\right)^{\circ}}r_{N}\tilde{\mathrm{Op}}_{/K}\left(\Phi\right)r_{/K}\widetilde{\exp^{\circ}}.\label{eq:decomp_for_thm}
\end{equation}
\end{lem}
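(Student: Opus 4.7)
The plan is to verify the identity (\ref{eq:decomp_for_thm}) by comparing Schwartz kernels, exploiting the fact that in the linear setting everything trivializes: $T\tilde F \cong \tilde F \times \tilde F$, the exponential map is addition, $\exp(\rho,v)=\rho+v$, and $\exp_N^{-1}(k+n) = n \in N_k$ under the decomposition $\tilde F = K \oplus N$. The three essential inputs are (i) the fiberwise factorization $\tilde{\mathrm{Op}}(\tilde\Phi) = \tilde{\mathrm{Op}}(\Phi_K) \otimes \tilde{\mathrm{Op}}(\Phi_N)$ from Theorem \ref{lem:Let--a}, which applies equally to $\tilde{\mathrm{Op}}_{/K}(\tilde\Phi)$ on the restricted bundle $T_K\tilde F$; (ii) the explicit formulas (\ref{eq:def_twisted_pull_back}) and (\ref{eq:def_twisted_push_forward}) for the twisted exponential operators, which contribute the phases $e^{\mp i\rho\cdot d\pi(v)}$; and (iii) the base-point independence of the fiberwise Schwartz kernels $K_{\Phi_K}, K_{\Phi_N}$, which is automatic since $T\tilde F$ is canonically trivialized in the linear case.

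First I would unfold the LHS. For $u\in \mathcal S(\tilde F)$ and $\rho_0 = k_0+n_0 \in K\oplus N$, the twisted pull-back and factorization give
\[
(r_0 \tilde{\mathrm{Op}}(\tilde\Phi)\widetilde{\exp^{\circ}}u)(\rho_0) = \int K_{\Phi_K}(0,v_K)\,K_{\Phi_N}(0,v_N)\, e^{-i\rho_0\cdot d\pi(v_K+v_N)}\, u(\rho_0+v_K+v_N)\, dv_K\,dv_N.
\]
Analogously, I would unpack the RHS: after restricting the base by $r_{/K}$, applying $\tilde{\mathrm{Op}}_{/K}(\tilde\Phi)$ fiberwise over $k_0 \in K$, restricting the fiber to $N$ by $r_N$, and applying $\widetilde{(\exp_N^{-1})^{\circ}}$, one obtains
\[
e^{ik_0\cdot d\pi(n_0)} \int K_{\Phi_K}(0,v_K)\,K_{\Phi_N}(n_0,v_N)\, e^{-ik_0\cdot d\pi(v_K+v_N)}\, u(k_0+v_K+v_N)\, dv_K\,dv_N,
\]
where the prefactor $e^{ik_0\cdot d\pi(n_0)}$ is the phase contributed by (\ref{eq:def_twisted_push_forward}).

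Second, I would perform the change of variables $v_N\to v_N - n_0$ in the RHS integral. The translation invariance of $K_{\Phi_N}$ on the fiber — a consequence of it being built from the Bergman projector on the vector space $N$ with its symplectic structure, as in (\ref{eq:Kernel_of_P_radial_Gauge}) — yields $K_{\Phi_N}(n_0, v_N) = K_{\Phi_N}(0, v_N - n_0)$ up to a symplectic phase determined by $\Omega(n_0, v_N-n_0)$. Combining this with the three $d\pi$-phases (the outer $e^{ik_0\cdot d\pi(n_0)}$, the inner $e^{-ik_0\cdot d\pi(v_K+v_N)}$, and the target $e^{-i\rho_0\cdot d\pi(v_K+v_N)}$), the identity reduces to verifying that the symplectic and projection phases recombine to reproduce the LHS integrand pointwise, which in turn follows from the fact that $\theta = \rho\cdot d\pi$ is the canonical Liouville one-form and $d\theta = \Omega$ is respected by the splitting $\tilde F = K \oplus N$ (by Lemma \ref{lem:one_form_theta} and the $\Omega$-orthogonality of $K$ and $N$).

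The main obstacle is precisely this phase bookkeeping: the projection $d\pi:\tilde F\to F$ is not aligned with the symplectic decomposition $\tilde F = K\oplus N$, so the phase $e^{-i\rho\cdot d\pi(v)}$ mixes $K$- and $N$-components, producing cross-terms that must be absorbed by the symplectic phases in the Bergman kernels of $\tilde{\mathrm{Op}}(\Phi_K)$ and $\tilde{\mathrm{Op}}(\Phi_N)$. Once the compatibility of $d\pi$ and $\Omega$ with the splitting is carefully exploited, all cross-terms cancel and Fubini reduces the identity to the trivial statement $u(\rho_0+v_K+v_N) = u(k_0+v_K+(v_N+n_0))$, completing the proof.
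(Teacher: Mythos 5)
The paper offers no proof of this lemma at all: it is stated immediately after the identity $\mathrm{Id}_{\mathcal{S}(\tilde{F})}=r_{0}\widetilde{\exp^{\circ}}=\widetilde{\left(\exp_{N}^{-1}\right)^{\circ}}r_{N}r_{/K}\widetilde{\exp^{\circ}}$ with the words ``more generally'', the implicit argument being that in the exact linear setting (no cutoff $\chi_{\sigma}$) the fiberwise operator $\tilde{\mathrm{Op}}\left(\Phi\right)$ factorizes as $\tilde{\mathrm{Op}}\left(\Phi_{K}\right)\otimes\tilde{\mathrm{Op}}\left(\Phi_{N}\right)$ over the $\Omega$-orthogonal splitting $\tilde{F}=K\oplus N$ (Theorem \ref{lem:Let--a}), so that inserting it between the two expressions for the identity changes nothing. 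Your route is genuinely different and more explicit: you verify the identity at the level of Schwartz kernels, which is precisely the method the paper uses for the neighbouring Lemmas \ref{lem:Let-us-consider} and \ref{lem:Let-us-consider-1}. What your approach buys is transparency about where the twisting phases enter; what the paper's (implicit) approach buys is brevity, at the cost of hiding exactly the point you isolate.

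That said, the decisive step of your argument is asserted rather than carried out. You correctly identify that the whole content of the lemma is the phase bookkeeping: the Liouville phase $e^{-i\theta_{\rho_{0}}(v)}=e^{-i\theta_{k_{0}}(v)}e^{-i\theta_{n_{0}}(v)}$ on the left splits into a part matching the right-hand side and a cross term $e^{-i\theta_{n_{0}}(v)}$ that must be absorbed, together with the prefactor $e^{+i\theta_{k_{0}}(n_{0})}$ from (\ref{eq:def_twisted_push_forward}) and the translation of the fiber kernel by $n_{0}$. Saying that ``all cross-terms cancel'' because $\theta$ is the Liouville form and $K\perp_{\Omega}N$ is not yet a proof: the fiberwise metaplectic operator does not commute with multiplication by these phases, and the cancellation happens only through the Weyl--Heisenberg relation (\ref{eq:weyl_heisenberg}), exactly as in the chain of manipulations $\hat{T}_{x,\xi}$ in the proof of Lemma \ref{lem:Let-us-consider} (this is also why the \emph{twisted} pull-back is the right definition, as the paper remarks there). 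To complete your proof you should write the phase multiplications as operators $\hat{T}_{0,\xi}$ acting in the fiber and commute them through $\tilde{\mathrm{Op}}\left(\Phi_{K}\right)\otimes\tilde{\mathrm{Op}}\left(\Phi_{N}\right)$ using (\ref{eq:weyl_heisenberg}) and (\ref{eq:phi_T}); once that is done your reduction to $u\left(\rho_{0}+v_{K}+v_{N}\right)=u\left(k_{0}+v_{K}+\left(v_{N}+n_{0}\right)\right)$ is correct and the argument closes.
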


\end{cBoxB}

\subsection{\label{subsec:Taylor-operators-}Taylor operators $T_{k}$ on $\mathcal{S}\left(E\right)$}

Let $E$ an Euclidean vector space, $n=\mathrm{dim}E$. If $\left(e_{1},\ldots e_{n}\right)$
is a basis of $E$, we write $x=\sum_{i=1}^{n}x_{i}e_{i}\in E$. A
polynomial on $E$ is $P\left(x\right):=p\left(x_{1},\ldots x_{n}\right)$
with $p\in\mathbb{C}\left[x_{1},\ldots x_{n}\right]$ a polynomial
on $\mathbb{R}^{n}$. For $\alpha=\left(\alpha_{1},\ldots,\alpha_{n}\right)\in\mathbb{N}^{n}$,
we write $x^{\alpha}:=x_{1}^{\alpha_{1}}\ldots x_{n}^{\alpha_{n}}$
a monomial of degree $\left|\alpha\right|:=\alpha_{1}+\ldots\alpha_{n}$.
For $k\in\mathbb{N}$, we denote $\mathrm{Pol}_{k}\left(E\right)$
the space of \href{https://en.wikipedia.org/wiki/Homogeneous_polynomial}{homogeneous polynomials}
of degree $k$, that is independent on the basis. $P\in\mathrm{Pol}_{k}\left(E\right)$
can be written $P\left(x\right)=\sum_{\alpha\in\mathbb{N}^{k},\left|\alpha\right|=k}P_{\alpha}x^{\alpha}$
with components $P_{\alpha}\in\mathbb{C}$. We denote $\alpha!:=\alpha_{1}!\ldots\alpha_{n}!$
and $\delta^{\left(\alpha\right)}:=\delta_{0}^{\left(\alpha_{1}\right)}\left(x_{1}\right)\ldots\delta_{0}^{\left(\alpha_{n}\right)}\left(x_{n}\right)$
the Dirac distribution on $E$ (with $\alpha$-derivatives). We have
that for any $\alpha,\alpha'\in\mathbb{N}^{n}$, $\langle\frac{1}{\alpha'!}\delta_{0}^{\left(\alpha'\right)}|x^{\alpha}\rangle=\delta_{\alpha'=\alpha}$.
Hence the set $\left(x^{\alpha}\right)_{\alpha}$ forms a basis of
$\mathrm{Pol}\left(E\right)$ and $\left(\langle\delta^{\left(\alpha\right)}|.\rangle\right)_{\alpha}$
is the dual basis. See Section \ref{sec:Horocycle-operators} for
comments about the equivalence $\mathrm{Pol}_{k}\left(E\right)\equiv\mathrm{Sym}\left(\left(E^{*}\right)^{\otimes k}\right)$. 

\begin{cBoxA}{}
\begin{defn}
\label{def:T_k}For $k\in\mathbb{N}$, let
\[
T_{k}:\mathcal{S}'\left(E\right)\rightarrow\mathrm{Pol}_{k}\left(E\right)\subset\mathcal{S}'\left(E\right)
\]
be the projector onto $\mathrm{Pol}_{k}\left(E\right)$ with kernel
$\oplus_{k'\neq k}\mathrm{Pol}_{k}\left(E\right)$. We have
\[
\forall k,k',\quad T_{k'}T_{k}=T_{k}\delta_{k'=k},
\]
and explicitly, with respect to a basis of $E$,
\begin{equation}
T_{k}=\sum_{\alpha\in\mathbb{N}^{k},\left|\alpha\right|=k}x^{\alpha}\langle\frac{1}{\alpha!}\delta_{0}^{\left(\alpha\right)}|.\rangle.\label{eq:def_T_k}
\end{equation}
\end{defn}

\end{cBoxA}

The next lemma shows  how this relation is changed by a small norm
operator if we insert a truncation at large distance $\sigma$ from
the origin. We put $\chi_{\sigma}\left(\rho\right)=1$ for $\left|\rho\right|_{\boldsymbol{g}}\leq\sigma$,
$\chi_{\sigma}\left(\rho\right)=0$ for $\left|\rho\right|_{\boldsymbol{g}}>\sigma$
and $\mathrm{Op}\left(\chi_{\sigma}\right):=\mathcal{B}^{\dagger}\mathcal{M}_{\chi_{\sigma}}\mathcal{B}$.

\begin{cBoxB}{}
\begin{lem}
\label{lem:For-,-we}For $k,k'\in\mathbb{N}$, we have $\forall N,\exists C_{N}>0,\forall\sigma>0$,
\[
R_{\sigma}:=\mathrm{Op}\left(\chi_{\sigma}\right)\left(T_{k'}\mathrm{Op}\left(\chi_{\sigma}\right)T_{k}-T_{k}\delta_{k'=k}\right)\mathrm{Op}\left(\chi_{\sigma}\right)
\]
satisfies
\[
\left|\langle\delta_{\rho'}|\mathcal{B}^{\dagger}R_{\sigma}\mathcal{B}\delta_{\rho}\rangle\right|\leq\left\langle \mathrm{dist}_{\boldsymbol{g}/\sigma^{2}}\left(\rho',\rho\right)\right\rangle ^{-N}C_{N}\sigma^{-N}\left\langle \mathrm{dist}_{\boldsymbol{g}/\sigma^{2}}\left(\rho,0\right)\right\rangle ^{-N},
\]

Consequently $\left\Vert R_{\sigma}\right\Vert \leq C_{N}\sigma^{-N}$
for any $N>0$, with $C_{N}>0$.
\end{lem}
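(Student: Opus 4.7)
The first step is an algebraic reduction using the exact identity $T_{k'}T_k = \delta_{k'=k}\,T_k$ (which follows directly from \eqref{eq:def_T_k}): this gives
\[
T_{k'}\mathrm{Op}(\chi_\sigma)T_k - \delta_{k'=k}T_k = T_{k'}\bigl(\mathrm{Op}(\chi_\sigma)-\mathrm{Id}\bigr)T_k,
\]
so
\[
R_\sigma = \mathrm{Op}(\chi_\sigma)\,T_{k'}\,\bigl(\mathrm{Op}(\chi_\sigma)-\mathrm{Id}\bigr)\,T_k\,\mathrm{Op}(\chi_\sigma).
\]
The point is that $\mathrm{Op}(\chi_\sigma)-\mathrm{Id} = -\mathrm{Op}(1-\chi_\sigma)$ is anti-Wick quantization of a symbol supported in $\{|\rho|_{\boldsymbol g}\ge\sigma\}$, while the Taylor projectors $T_k$ are concentrated near the origin in $E$.

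Next I compute the Bargmann-side kernel $\tilde T_k(\rho,\rho'):= \langle\delta_\rho, \mathcal B T_k\mathcal B^\dagger \delta_{\rho'}\rangle$ explicitly. Using $\mathcal B^\dagger\delta_{\rho'} = \varphi_{\rho'}$ and $T_k u = \sum_{|\alpha|=k}\tfrac{1}{\alpha!}x^\alpha(\partial^\alpha u)(0)$, one finds
\[
\tilde T_k(\rho,\rho') = \sum_{|\alpha|=k}\frac{1}{\alpha!}\,\bigl(\mathcal{B}x^\alpha\bigr)(\rho)\,\bigl(\partial^\alpha\varphi_{\rho'}\bigr)(0).
\]
Writing $\rho=(x,\xi)$, $\rho'=(x',\xi')$, a direct Gaussian integral gives
\[
(\mathcal B x^\alpha)(x,\xi) = R_\alpha(x,\xi)\,e^{-\tfrac12|\xi|^2},\qquad (\partial^\alpha\varphi_{\rho'})(0) = P_\alpha(x',\xi')\,e^{-i\xi'x'-\tfrac12|x'|^2},
\]
with $R_\alpha, P_\alpha$ polynomials of degree $|\alpha|$. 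Thus $\tilde T_k(\rho,\rho')$ is a polynomial in $(\rho,\rho')$ times the Gaussian factor $e^{-\tfrac12|\xi|^2}e^{-\tfrac12|x'|^2}$: it has Gaussian decay in the momentum of the output variable and the position of the input variable, but only polynomial growth in the other two. The crucial observation is that in the composed kernel $\tilde T_{k'}(\rho_1,\rho_2)\tilde T_k(\rho_2,\rho_3)$, the Gaussian factors in the middle variable $\rho_2=(x_2,\xi_2)$ come from \emph{both} sides and give combined decay $e^{-\tfrac12|x_2|^2}e^{-\tfrac12|\xi_2|^2}=e^{-\tfrac12|\rho_2|_{\boldsymbol g}^2}$, i.e.\ Gaussian decay in the entire phase-space variable $\rho_2$.

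The third step is the kernel estimate. Conjugating by $\mathcal B$ and using $\mathcal B\mathcal B^\dagger=\mathcal P$, $\mathcal P\tilde T_k=\tilde T_k$, $\tilde T_{k'}\mathcal P=\tilde T_{k'}$, the Schwartz kernel of $\mathcal B R_\sigma\mathcal B^\dagger$ at $(\rho_0,\rho_0')$ is
\[
\int \mathcal P(\rho_0,\rho_1)\chi_\sigma(\rho_1)\,\tilde T_{k'}(\rho_1,\rho_2)(\chi_\sigma-1)(\rho_2)\,\tilde T_k(\rho_2,\rho_3)\,\chi_\sigma(\rho_3)\mathcal P(\rho_3,\rho_0')\,d\rho_1\,d\rho_2\,d\rho_3.
\]
The inner integration in $\rho_2$ is over $\{|\rho_2|_{\boldsymbol g}\ge\sigma\}$ against an integrand with Gaussian decay $\lesssim\mathrm{poly}(\rho_2)\,e^{-\tfrac12|\rho_2|^2_{\boldsymbol g}}$; this is bounded by $C\,e^{-c\sigma^2}\le C_N\sigma^{-N}$ for any $N$, and also provides uniform Gaussian decay in $\rho_1$ (through $|x_2|$) and $\rho_3$ (through $|\xi_2|$) after integration. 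The outer factors $\chi_\sigma(\rho_1),\chi_\sigma(\rho_3)$ localize $|\rho_1|,|\rho_3|\le\sigma$, so the polynomial $\mathrm{poly}(\rho_1,\rho_3)$ of degree at most $k+k'$ is bounded by $\sigma^{k+k'}$, absorbed by the exponential factor. Finally, the outer Bergman projectors $\mathcal P(\rho_0,\rho_1)$ and $\mathcal P(\rho_3,\rho_0')$, having Gaussian kernel $\asymp e^{-\tfrac14|\cdot|^2_{\boldsymbol g}}$, propagate the bound: since $|\rho_1|\le\sigma$, we get $\mathcal P(\rho_0,\rho_1)\lesssim e^{-\tfrac14(|\rho_0|_{\boldsymbol g}-\sigma)_+^2}\le C_N\langle|\rho_0|_{\boldsymbol g}/\sigma\rangle^{-N}$, and similarly for $\rho_0'$; combining yields the required rapid decay in $\mathrm{dist}_{\boldsymbol g/\sigma^2}(\rho_0,0)$ and (via the triangle inequality $|\rho_0-\rho_0'|\le|\rho_0-\rho_1|+2\sigma+|\rho_3-\rho_0'|$) in $\mathrm{dist}_{\boldsymbol g/\sigma^2}(\rho_0,\rho_0')$.

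The main technical obstacle is the observation in step two: that the Gaussian decay of $\tilde T_k$ in only ``half'' the phase-space directions (position for input, momentum for output) combines, in the product $\tilde T_{k'}\cdot\tilde T_k$, to yield full Gaussian decay in the intermediate variable $\rho_2$. This is what turns the cutoff condition $|\rho_2|_{\boldsymbol g}\ge\sigma$ into the Gaussian gain $e^{-c\sigma^2}$; everything else (tracking the polynomial-in-$\sigma$ prefactors, the Bergman tails, the rescaling to the metric $\boldsymbol g/\sigma^2$) is then a routine Gaussian integral once this structural identity is recognized. The final Schur-test bound $\|R_\sigma\|\le C_N\sigma^{-N}$ follows immediately from the kernel estimate and \eqref{eq:result_of_Shur}.
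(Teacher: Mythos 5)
Your proof is correct and follows essentially the same route as the paper's: both reduce the middle factor to the matrix elements $\langle\frac{1}{\alpha'!}\delta_{0}^{(\alpha')}|\mathrm{Op}(\chi_{\sigma})x^{\alpha}\rangle-\delta_{\alpha'=\alpha}$ (equivalently $T_{k'}\left(\mathrm{Op}(\chi_{\sigma})-\mathrm{Id}\right)T_{k}$) and gain the $O(\sigma^{-N})$ factor from the full Gaussian decay of the product of the two ``half-Gaussian'' kernels on the region $\left\{ \left|\rho\right|_{\boldsymbol{g}}\geq\sigma\right\}$, with the outer cutoffs and Bergman tails supplying the off-diagonal decay and the $\left\langle \mathrm{dist}_{\boldsymbol{g}/\sigma^{2}}\left(\rho,0\right)\right\rangle ^{-N}$ factor. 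The only difference is that you spell out the key scalar estimate that the paper asserts without detail.
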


\end{cBoxB}

\begin{proof}
We have
\begin{align}
\langle\delta_{\rho'}|\mathcal{B}R\mathcal{B}^{\dagger}\delta_{\rho}\rangle & \eq{\ref{eq:def_T_k}}\chi_{\sigma}\left(\rho'\right)\chi_{\sigma}\left(\rho\right)\label{eq:SK}\\
 & \sum_{\alpha'\in\mathbb{N}^{k},\left|\alpha'\right|=k',\alpha\in\mathbb{N}^{k},\left|\alpha\right|=k,}\langle\varphi_{\rho'}|x^{\alpha'}\rangle\left(\langle\frac{1}{\alpha'!}\delta_{0}^{\left(\alpha'\right)}|\mathrm{Op}\left(\chi_{\sigma}\right)x^{\alpha}\rangle-\delta_{\alpha'=\alpha}\right)\langle\frac{1}{\alpha!}\delta_{0}^{\left(\alpha\right)}|\varphi_{\rho}\rangle.
\end{align}
We have $\forall N,\exists C_{N},\forall\sigma>0$,
\[
\left|\langle\frac{1}{\alpha'!}\delta_{0}^{\left(\alpha'\right)}|\mathcal{B}^{\dagger}\mathcal{M}_{\chi_{\sigma}}\mathcal{B}x^{\alpha}\rangle-\delta_{\alpha'=\alpha}\right|\leq C_{N}\sigma^{-N},
\]
and due to truncations $\chi_{\sigma}\left(\rho'\right)\chi_{\sigma}\left(\rho\right)$
in (\ref{eq:SK}) we deduce that
\[
\left|\langle\delta_{\rho'}|\mathcal{B}^{\dagger}R\mathcal{B}\delta_{\rho}\rangle\right|\leq\left\langle \mathrm{dist}_{\boldsymbol{g}/\sigma^{2}}\left(\rho',\rho\right)\right\rangle ^{-N}C_{N}\sigma^{-N}\left\langle \mathrm{dist}_{\boldsymbol{g}/\sigma^{2}}\left(\rho,0\right)\right\rangle ^{-N},
\]
and $\left\Vert R_{\sigma}\right\Vert \leq C_{N}\sigma^{-N}$ from
example (\ref{eq:result_of_Shur}).
\end{proof}

\subsection{Analysis on $T\left(E\oplus E^{*}\right)$}

In this section we ``lift'' the analysis from $E\oplus E^{*}$ to
$T\left(E\oplus E^{*}\right)$. This might been seem a bit artificial
for vector spaces, but it will be useful in this paper as a linearized
model for manifolds.

\subsubsection{Bargman transform}

Let $\left(E,g\right)$ be a linear Euclidean space. Let $F:=E\oplus E^{*}$with
induced metric $\boldsymbol{g}=g\oplus g^{-1}$. One has $TF=T\left(E\oplus E^{*}\right)=\left(E\oplus E^{*}\right)\oplus\left(E\oplus E^{*}\right)$.
The exponential map is
\[
\exp:\begin{cases}
TF & \rightarrow F\\
\left(\rho,\rho'\right) & \rightarrow\rho+\rho'
\end{cases}
\]
Let us define the ``twisted pull back operator'' $\widetilde{\exp^{\circ}}:\mathcal{S}\left(F\right)\rightarrow\mathcal{S}\left(TF\right)$
by its Schwartz kernel as follows. For $\left(\rho_{1},\rho_{1}'\right)\in TF$,
$\rho_{2}\in F$, $\rho_{1}=\left(x_{1},\xi_{1}\right)$,$\rho_{1}'=\left(x_{1}',\xi_{1}'\right)$,
\[
\langle\delta_{\rho_{1},\rho_{1}'}|\widetilde{\exp^{\circ}}\delta_{\rho_{2}}\rangle=\delta_{\rho_{2}-\left(\rho_{1}+\rho_{1}'\right)}e^{-i\xi_{1}x_{1}'}
\]
We define the ``restriction'' operator $r_{0}:\mathcal{S}\left(TF\right)\rightarrow\mathcal{S}\left(F\right)$
by its Schwartz kernel
\[
\langle\delta_{\rho_{2}}|r_{0}\delta_{\rho_{1},\rho_{1}'}\rangle=\delta_{\rho_{2}-\rho_{1}}\delta_{\rho_{1}'}
\]
Let the ``horizontal space'' be
\begin{equation}
H:=F\oplus E.\label{eq:def_H}
\end{equation}
If $\mathcal{B}:\mathcal{S}\left(E\right)\rightarrow\mathcal{S}\left(F\right)$
is the Bargman transform defined in (\ref{eq:def_Bargman_B}), we
define the partial Bargman transform
\[
B:\mathcal{S}\left(H\right)\rightarrow\mathcal{S}\left(TF\right)
\]
by $B\left(u\left(\rho\right)\otimes v\left(x'\right)\right)=u\left(\rho\right)\otimes\left(\mathcal{B}v\right)\left(x'\right)$,
i.e. action on the second part only.

Let $\sigma>0$ and let us introduce the cutoff function that truncates
at distance $\sigma$ in the fiber $F$:
\[
\chi_{\sigma}:\begin{cases}
\mathcal{S}\left(TF\right) & \rightarrow\mathcal{S}'\left(TF\right)\\
u\left(\rho,\rho'\right) & \rightarrow\begin{cases}
u\left(\rho,\rho'\right) & \text{ if }\left\Vert \rho'\right\Vert _{\boldsymbol{g}}\leq\sigma\\
0 & \text{otherwise}
\end{cases}
\end{cases}
\]

Let
\[
\boldsymbol{B}_{\chi}^{\Delta}:=B^{\dagger}\chi_{\sigma}\widetilde{\exp^{\circ}}\quad:\mathcal{S}\left(F\right)\rightarrow\mathcal{S}\left(H\right)
\]
\[
\boldsymbol{B}:=r_{0}B\quad:\mathcal{S}\left(H\right)\rightarrow\mathcal{S}\left(F\right)
\]

\subsubsection{Linear map}

Let $A:E\rightarrow E$ a linear invertible map and $\tilde{A}:=A^{-1}\oplus A^{*}$
the induced map on $F:=E\oplus E^{*}$. Let $A^{\circ}:\mathcal{S}\left(E\right)\rightarrow\mathcal{S}\left(E\right)$
the pull back operator and $A_{H}:=\tilde{A}^{-1}\oplus A\quad:H\rightarrow H$
the induced map on $H$ in (\ref{eq:def_H}).

\begin{cBoxB}{}
\begin{lem}
\label{lem:Let-us-consider}Let us consider the difference operator
\[
R:=\boldsymbol{B}A_{H}^{\circ}\boldsymbol{B}_{\chi}^{\Delta}-\mathcal{B}A^{\circ}\mathcal{B}^{\dagger}\quad:\mathcal{S}\left(F\right)\rightarrow\mathcal{S}\left(F\right).
\]
Then $\forall N>0,\exists C_{N}>0$ such that for any $\sigma>0$,
$\rho,\rho'$,
\[
\left|\langle\delta_{\rho'}|R\delta_{\rho}\rangle\right|\leq C_{N}\left\langle \left\Vert \rho-\tilde{A}^{-1}\rho'\right\Vert \right\rangle ^{-N}\sigma^{-N}.
\]
\end{lem}

\end{cBoxB}

\begin{rem}
In particular, for $\sigma=\infty$, i.e. no cut-off, then $R=0$.
Another interesting particular case is $A=\mathrm{Id}$.
\end{rem}

\begin{proof}
We compute the Schwartz kernel.
\begin{align*}
\langle\delta_{\rho'}|\boldsymbol{B}A_{H}^{\circ}\boldsymbol{B}_{\chi}^{\Delta}\delta_{\rho}\rangle & =\langle\delta_{\rho'}|r_{0}BA_{H}^{\circ}B^{\dagger}\chi_{\sigma}\widetilde{\exp^{\circ}}\delta_{\rho}\rangle\\
 & =\int\langle\delta_{\rho}|r_{0}\delta_{\rho_{1},\rho_{1}'}\rangle\langle\delta_{\rho_{1},\rho_{1}'}|BA_{H}^{\circ}B^{\dagger}\delta_{\rho_{2},\rho_{2}'}\rangle\chi_{\sigma}\left(\rho_{2}'\right)\langle\delta_{\rho_{2},\rho_{2}'}|\widetilde{\exp^{\circ}}\delta_{\rho}\rangle\\
 & =\int\left(\delta_{\rho'-\rho_{1}}\delta_{\rho_{1}'}\right)\left(\delta_{\tilde{A}\rho_{2}-\rho_{1}}\langle\varphi_{\rho_{1}'}|A^{\circ}\varphi_{\rho_{2}'}\rangle\right)\chi_{\sigma}\left(\rho_{2}'\right)\left(\delta_{\rho-\left(\rho_{2}+\rho_{2}'\right)}e^{-i\xi_{2}x_{2}'}\right)\\
 & =\langle\varphi_{0}|A^{\circ}\varphi_{\rho-\tilde{A}^{-1}\rho'}\rangle e^{-i\left(\left(A^{*}\right)^{-1}\xi'\right)\left(x-Ax'\right)}\chi_{\sigma}\left(\rho-\tilde{A}^{-1}\rho'\right)
\end{align*}
because Dirac measures gave $\rho_{2}'=\rho-\rho_{2}=\rho-\tilde{A}^{-1}\rho'$,
$\xi_{2}=\left(A^{*}\right)^{-1}\xi'$ and $x_{2}'=x-Ax'$. Then
\begin{align*}
\langle\delta_{\rho'}|\boldsymbol{B}A_{H}^{\circ}\boldsymbol{B}_{\chi}^{\Delta}\delta_{\rho}\rangle & =\langle\varphi_{0}|A^{\circ}\varphi_{\rho-\tilde{A}^{-1}\rho'}\rangle e^{i\xi'\left(x'-A^{-1}x'\right)}\chi_{\sigma}\left(\rho-\tilde{A}^{-1}\rho'\right)\\
 & \eq{\ref{eq:phi_T}}\langle\varphi_{0}|A^{\circ}T_{x-Ax'}^{-\circ}\left(\mathcal{F}T_{\xi-A^{*-1}\xi'}^{-\circ}\mathcal{F}\right)\varphi_{0}\rangle e^{i\xi'\left(x'-A^{-1}x'\right)}\chi_{\sigma}\left(\rho-\tilde{A}^{-1}\rho'\right)\\
 & \eq{\ref{eq:weyl_heisenberg}}\langle\varphi_{0}|A^{\circ}\left(\mathcal{F}T_{-A^{*-1}\xi'}^{-\circ}\mathcal{F}\right)T_{-Ax'}^{-\circ}T_{x}^{-\circ}\left(\mathcal{F}T_{\xi}^{-\circ}\mathcal{F}\right)\varphi_{0}\rangle\\
 & \qquad\qquad e^{-i\left(-A^{*-1}\xi'\right)x}e^{i\xi'\left(-A^{-1}x'\right)}\chi_{\sigma}\left(\rho-\tilde{A}^{-1}\rho'\right)\\
 & =\eq{\ref{eq:weyl_heisenberg}}\langle\varphi_{0}|\left(\mathcal{F}T_{-\xi'}^{-\circ}\mathcal{F}\right)T_{-x'}^{-\circ}A^{\circ}T_{x}^{-\circ}\left(\mathcal{F}T_{\xi}^{-\circ}\mathcal{F}\right)\varphi_{0}\rangle\chi_{\sigma}\left(\rho-\tilde{A}^{-1}\rho'\right)\\
 & =\langle\varphi_{\rho'}|A^{\circ}\varphi_{\rho}\rangle\chi_{\sigma}\left(\rho-\tilde{A}^{-1}\rho'\right)=\langle\delta_{\rho'}|\mathcal{B}A^{\circ}\mathcal{B}^{\dagger}\delta_{\rho}\rangle\chi_{\sigma}\left(\rho-\tilde{A}^{-1}\rho'\right)
\end{align*}
If $\left\Vert \rho-\tilde{A}^{-1}\rho'\right\Vert \geq\sigma$ then
$\langle\delta_{\rho'}|R\delta_{\rho}\rangle=0$ otherwise
\[
\forall N>0,\exists C_{N}>0,\quad\left|\langle\varphi_{0}|A^{\circ}\varphi_{\rho-\tilde{A}^{-1}\rho'}\rangle\right|\leq C_{A}C_{N}\sigma^{-N}.
\]
\end{proof}

\subsubsection{Taylor projectors}

For $\rho\in F$, $\hat{T}_{\rho}$ has been defined in (\ref{eq:def_T_rho}).
Let us define
\[
T_{H}^{\left(k\right)}:=\hat{T}_{-\rho}T_{k}\hat{T}_{\rho}\quad:\mathcal{S}\left(H\right)\rightarrow\mathcal{S}\left(H\right)
\]
where $\rho\in F$ denotes the first variable and the operators acts
on the second variable $x'\in E$ only.

\begin{cBoxB}{}
\begin{lem}
\label{lem:Let-us-consider-1}Let us consider the difference operator
\[
R:=\boldsymbol{B}T_{H}^{\left(k\right)}\boldsymbol{B}_{\chi}^{\Delta}-\mathcal{B}T_{k}\mathcal{B}^{*}\quad:\mathcal{S}\left(F\right)\rightarrow\mathcal{S}\left(F\right).
\]
Then for $\left\Vert \rho-\rho'\right\Vert \leq\sigma$ we have $\langle\delta_{\rho'}|R\delta_{\rho}\rangle=0$.
\end{lem}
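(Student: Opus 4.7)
The plan is to mimic the computation carried out in the proof of Lemma~\ref{lem:Let-us-consider}, replacing the linear symplectic map $A$ by the ``shifted Taylor projector'' $T_H^{(k)} = \hat{T}_{-\rho}T_k\hat{T}_\rho$. The goal is to show that, exactly as in that earlier lemma, one obtains a factorized kernel of the form
\[
\langle\delta_{\rho'}|\boldsymbol{B}T_H^{(k)}\boldsymbol{B}_\chi^\Delta\delta_\rho\rangle = \langle\delta_{\rho'}|\mathcal{B}T_k\mathcal{B}^\dagger\delta_\rho\rangle\,\chi_\sigma(\rho-\rho'),
\]
so that $R = 0$ wherever $\chi_\sigma(\rho-\rho')=1$, i.e.\ whenever $\|\rho-\rho'\|\le\sigma$.

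First I would unfold the definitions of $\boldsymbol{B}=r_0B$ and $\boldsymbol{B}_\chi^\Delta=B^\dagger\chi_\sigma\widetilde{\exp^\circ}$ by inserting Dirac bases at each step, mirroring the chain of equalities in the proof of Lemma~\ref{lem:Let-us-consider}. The kernel of $\widetilde{\exp^\circ}$ forces $\rho_2+\rho_2'=\rho$ and contributes a phase $e^{-i\xi_2 x_2'}$; the kernel of $r_0$ forces $\rho_1'=0$ and $\rho_1=\rho'$. Since $T_H^{(k)}$ is pointwise in the base variable of the bundle $TF\to F$ (it acts only on the fiber of $TF$ over a fixed base point), the two Dirac constraints pin down the base point at which $T_H^{(k)}$ is evaluated; what survives is the cut-off value $\chi_\sigma(\rho_2')=\chi_\sigma(\rho-\rho_2)$.

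The second, algebraic, step is to bring the surviving fiber operator into a form that can be identified with $\mathcal{B}T_k\mathcal{B}^\dagger$. Here I would use the identity $\varphi_{x,\xi}=\hat{T}_{x,\xi}\varphi_{0,0}$ from \eqref{eq:phi_T} together with the Weyl--Heisenberg commutation \eqref{eq:weyl_heisenberg} to commute the translations $\hat{T}_{\pm\rho}$ past the two coherent states arising from $B$ and $B^\dagger$; as in the proof of Lemma~\ref{lem:Let-us-consider}, the resulting phases combine to cancel the phase $e^{-i\xi x'}$ coming from $\widetilde{\exp^\circ}$, so the fiber contribution collapses to $\langle\varphi_{\rho'}|T_k\varphi_\rho\rangle=\langle\delta_{\rho'}|\mathcal{B}T_k\mathcal{B}^\dagger\delta_\rho\rangle$. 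The only remaining factor is precisely the cut-off $\chi_\sigma(\rho-\rho')$, which equals $1$ under the hypothesis $\|\rho-\rho'\|\le\sigma$.

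The main technical obstacle, compared to Lemma~\ref{lem:Let-us-consider}, is just the bookkeeping of the base variable in $T_H^{(k)}=\hat{T}_{-\rho}T_k\hat{T}_\rho$: one must check that the $\rho$ in $\hat{T}_\rho$ is indeed evaluated at the base point $\rho'$ (or equivalently, after the Dirac constraints, at $\rho$), so that the conjugation by $\hat{T}_{\pm\rho}$ is compatible with the relative shift appearing between the two Gaussian wave packets, and no spurious phase factor is left over. Once this bookkeeping is done, the vanishing of $R$ on $\{\|\rho-\rho'\|\le\sigma\}$ follows immediately, in strict analogy with the $A$-case. No delicate estimate is required; unlike in Lemma~\ref{lem:Let--be} or Lemma~\ref{lem:For-,-we}, the statement is exact and combinatorial rather than asymptotic.
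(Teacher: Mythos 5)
Your proposal is correct and follows essentially the same route as the paper: compute the Schwartz kernel of $\boldsymbol{B}T_{H}^{(k)}\boldsymbol{B}_{\chi}^{\Delta}$, let the Dirac constraints from $r_{0}$ and $\widetilde{\exp^{\circ}}$ pin the base point and the fiber variable, and then use $\varphi_{\rho}=\hat{T}_{\rho}\varphi_{0}$ together with the Weyl--Heisenberg relation (\ref{eq:weyl_heisenberg}) and the adjoint formula (\ref{eq:inverse_T}) to collapse $\hat{T}_{-\rho'}T_{k}\hat{T}_{\rho'}$ into $\langle\varphi_{\rho'}|T_{k}\varphi_{\rho}\rangle$, leaving only the factor $\chi_{\sigma}(\rho-\rho')$. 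The bookkeeping point you flag (that the conjugating translation is evaluated at the base point fixed by $r_{0}$, so the phases cancel exactly) is precisely the one step the paper's computation carries out explicitly.
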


\end{cBoxB}

\begin{rem}
In particular for $\sigma=\infty$, i.e. no cut-off, then $R=0$.
\end{rem}

\begin{proof}
We repeat the lines of proof of Lemma \ref{lem:Let-us-consider}.
\begin{align*}
\langle\delta_{\rho'}|\boldsymbol{B}T_{H}^{\left(k\right)}\boldsymbol{B}_{\chi}^{\Delta}\delta_{\rho}\rangle & =\langle\delta_{\rho'}|r_{0}BT_{H}^{\left(k\right)}B^{*}\chi_{\sigma}\widetilde{\exp^{\circ}}\delta_{\rho}\rangle\\
 & =\langle\varphi_{0}|\hat{T}_{-\rho'}T_{k}\hat{T}_{\rho'}\varphi_{\rho-\rho'}\rangle e^{i\xi'\left(x'-x\right)}\chi_{\sigma}\left(\rho-\rho'\right)\\
 & \eq{\ref{eq:inverse_T}}\langle\varphi_{0}|\hat{T}_{\rho'}^{\dagger}e^{-i\xi'x'}T_{k}\hat{T}_{\rho'}\varphi_{\rho-\rho'}\rangle e^{i\xi'\left(x'-x\right)}\chi_{\sigma}\left(\rho-\rho'\right)\\
 & =\langle\varphi_{\rho'}|T_{k}\varphi_{\rho}\rangle\chi_{\sigma}\left(\rho-\rho'\right)=\langle\delta_{\rho'}|\mathcal{B}T_{k}\mathcal{B}^{\dagger}\delta_{\rho}\rangle\chi_{\sigma}\left(\rho-\rho'\right)
\end{align*}
\end{proof}

\section{\label{sec:Linear-expanding-maps}Linear contracting maps}

Let $\left(E,g\right)$ a finite dimensional vector space with Euclidean
metric $g$. In this section we consider a \textbf{linear }invertible
and\textbf{ contracting map} $\phi:\left(E,g\right)\rightarrow\left(E,g\right)$
i.e.
\begin{equation}
\lambda_{\pm}:=\lim_{t\rightarrow\pm\infty}\log\left\Vert \phi^{t}\right\Vert ^{1/t}\label{eq:def_gamma_-1-1}
\end{equation}
satisfy\footnote{Equivalently $\lambda_{-}\leq\lambda_{+}<0$ are the log of the minimal/maximal
modulus of eigenvalues of $\phi$ given by its Jordan decomposition.} $\lambda_{-}\leq\lambda_{+}<0$. Recall that $\mathrm{Op}\left(\Phi\right):\eq{\ref{eq:def_Op}}\left(\Upsilon\left(\Phi\right)\right)^{1/2}\mathcal{B}^{\dagger}\Phi^{-\circ}\mathcal{B}$
with $\Phi:=\phi^{-1}\oplus\phi^{*}:E\oplus E^{*}\rightarrow E\oplus E^{*}$
being the induced map on cotangent space and $\mathcal{B}:\mathcal{S}\left(E\right)\rightarrow\mathcal{S}\left(E\oplus E^{*}\right)$
defined in (\ref{eq:def_Bargman_B}). The pull back operator $\phi^{\circ}:\mathcal{S}\left(E\right)\rightarrow\mathcal{S}\left(E\right)$
has been defined in (\ref{eq:def_pullback}). The purpose of this
section is to study the spectrum of the following operator
\begin{equation}
\mathrm{Op}\left(\Phi\right)\eq{\ref{eq:Op(f)}}\left|\mathrm{det}\phi\right|^{1/2}\phi^{\circ}\qquad:\mathcal{S}\left(E\right)\rightarrow\mathcal{S}\left(E\right)\label{eq:Op_phi}
\end{equation}
on an adequate Hilbert space that contains $\mathcal{S}\left(E\right)$
(notice that $\mathrm{Op}\left(\Phi\right)$ is unitary in $L^{2}\left(E\right)$
and has essential spectrum on the unit circle).

For $k\in\mathbb{N}$, consider $T_{k}$ the finite rank projector
defined in (\ref{eq:def_T_k}). The vector space $\mathrm{Im}\left(T_{k}\right)\subset\mathcal{S}'\left(E\right)$
is finite dimensional. Since $\phi$ is a linear map, we have for
any $k\in\mathbb{N}$,
\begin{equation}
\left[\phi^{\circ},T_{k}\right]=0,\label{eq:phi,Tk}
\end{equation}
hence $\mathrm{Op}\left(\Phi\right):\mathrm{Im}\left(T_{k}\right)\rightarrow\mathrm{Im}\left(T_{k}\right)$
is invariant and has finite rank (we can compute explicitly its spectrum
from the spectrum of $\phi$). Let
\begin{equation}
\gamma_{k}^{\pm}:=\lim_{t\rightarrow\pm\infty}\log\left\Vert \mathrm{Op}\left(\Phi^{t}\right)_{/\mathrm{Im}\left(T_{k}\right)}\right\Vert ^{1/t}.\label{eq:def_gamma_-1}
\end{equation}
We can\footnote{In the simple $1-\mathrm{dim}$ case $\phi\left(x\right)=e^{\lambda}x$
on $\mathbb{R}$, with $\lambda_{\pm}=\lambda<0$, we get $\gamma_{k}^{+}=\gamma_{k}^{-}=\left(\frac{1}{2}+k\right)\lambda$.} compute $\gamma_{k}^{\pm}$ from the eigenvalues of $\phi$, see
Remark \cite[rem. 3.4.7]{faure-tsujii_prequantum_maps_12}. As in
(\ref{eq:estimates_lambda_k}), we have
\begin{equation}
\left(\frac{d}{2}+k\right)\lambda_{-}\leq\gamma_{k}^{-}\leq\gamma_{k}^{+}\leq\left(\frac{d}{2}+k\right)\lambda_{+},\label{eq:estimates_lambda_k-1}
\end{equation}
With $d=\mathrm{dim}E$. For every $k\in\mathbb{N}$, we have $\gamma_{k}^{-}\leq\gamma_{k}^{+}$,
$\gamma_{k+1}^{\pm}\leq\gamma_{k}^{\pm}$. The spectrum of $\mathrm{Op}\left(\Phi^{t}\right):\mathrm{Im}\left(T_{k}\right)\rightarrow\mathrm{Im}\left(T_{k}\right)$
is discrete and contained in the annulus $\left\{ z\in\mathbb{C},\,e^{t\gamma_{k}^{-}}\leq\left|z\right|\leq e^{t\gamma_{k}^{+}}\right\} $.
However we want to understand the action of $\mathrm{Op}\left(\Phi\right)$
on every function in $\mathcal{S}\left(E\right)$. For $K\in\mathbb{N}$,
let
\[
T_{\geq\left(K+1\right)}:=\mathrm{Id}_{\mathcal{S}\left(E\right)}-\left(\sum_{k=0}^{K}T_{k}\right).
\]
We will show below in Proposition \ref{prop:For-any-} that $\left\Vert \mathrm{Op}\left(\Phi^{t}\right)T_{\geq\left(K+1\right)}\right\Vert _{\mathcal{H}_{\mathcal{W}}\left(E\right)}\leq C_{\epsilon}e^{\left(\gamma_{K+1}^{+}+\epsilon\right)t}$
for some adequate norm $\left\Vert .\right\Vert _{\mathcal{H}_{\mathcal{W}}\left(E\right)}$
that we first define.

\subsection{Anisotropic Sobolev space $\mathcal{H}_{\mathcal{W}}\left(E\right)$}

Let $0<\gamma<1$, $R\in\mathbb{R}$. We define a weight function
$\mathcal{W}:E\oplus E^{*}\rightarrow\mathbb{R}^{+}$ similar to (\ref{eq:def_W-1})
as follows. For $\rho=\left(x,\xi\right)\in E\oplus E^{*}$, let
\[
h_{\gamma}\left(\rho\right)=\left\langle \left\Vert \rho\right\Vert _{g\oplus g^{-1}}\right\rangle ^{-\gamma}
\]
and
\begin{equation}
\mathcal{W}\left(\rho\right):=\frac{\left\langle h_{\gamma}\left(\rho\right)\left\Vert \xi\right\Vert _{g^{-1}}\right\rangle ^{R}}{\left\langle h_{\gamma}\left(\rho\right)\left\Vert x\right\Vert _{g}\right\rangle ^{R}}\label{eq:def_W-1-1}
\end{equation}
\begin{cBoxA}{}
\begin{defn}
For $u\in\mathcal{S}\left(E\right)$, we define the norm
\begin{equation}
\left\Vert u\right\Vert _{\mathcal{H}_{\mathcal{W}}\left(E\right)}:=\left\Vert \mathcal{W}\mathcal{B}u\right\Vert _{L^{2}\left(E\oplus E^{*}\right)},\label{eq:def_HWE-1}
\end{equation}
(where $\mathcal{W}$ denotes multiplication operator by $\mathcal{W}$)
and the \textbf{Sobolev space}
\begin{equation}
\mathcal{H}_{\mathcal{W}}\left(E\right):=\overline{\left\{ u\in\mathcal{S}\left(E\right)\right\} }\label{eq:def_HWE}
\end{equation}
where the completion is with the norm $\left\Vert u\right\Vert _{\mathcal{H}_{\mathcal{W}}\left(E\right)}$.
\end{defn}

\end{cBoxA}

\begin{rem}
~
\begin{itemize}
\item For $R=0$ we have $\mathcal{W}\equiv1$ hence $\mathcal{H}_{\mathcal{W}}\left(E\right)=L^{2}\left(E\right)$.
\item We have that $\mathcal{W}\left(0,\xi\right)\leq\left\langle \left|\xi\right|\right\rangle ^{r}$
and $\mathcal{W}\left(x,0\right)\leq\left\langle \left|x\right|\right\rangle ^{-r}$
with the order $r=R\left(1-\gamma\right)$ as in (\ref{eq:order_r})
(except for the factor $\frac{1}{2}$ that came from the metric).
\end{itemize}
For a \emph{non linear} map (Axiom A diffeomorphisms), it has been
explained in \cite[section 4]{faure_tsujii_Ruelle_resonances_density_2016}
that it is necessary to use a metric on phase space as (\ref{eq:metric_g_tilde_in_coordinates})
with $\delta^{\perp}\left(\eta\right)=\left\langle \left|\eta\right|\right\rangle ^{-\alpha^{\perp}}$
and exponent $\alpha^{\perp}\geq1/2$ to perform the micro-local analysis
and get Ruelle spectrum. In this section, we consider the special
case of a \emph{linear} map $\phi:E\rightarrow E$ is linear and for
that reason we can use the constant Euclidean metric $g\oplus g^{-1}$
on $T^{*}E=E\oplus E^{*}$ (i.e. exponent $\alpha^{\perp}=0$) to
get the Ruelle spectrum of the operator $\mathrm{Op}\left(\Phi\right)$
in (\ref{eq:Op_phi}).
\end{rem}

\subsection{Result}

The result that we will show is
\begin{cBoxB}{}
\begin{prop}
\label{prop:For-any-}For any $\epsilon>0,$ $K\in\mathbb{N}$, $R$
in (\ref{eq:def_W-1-1}) large enough so that 
\[
\lambda_{+}R\left(1-\gamma\right)<\gamma_{K+1}^{+},
\]
$\exists C_{\epsilon}>0,\forall t\geq0$,
\begin{equation}
\left\Vert \mathrm{Op}\left(\Phi^{t}\right)T_{\geq\left(K+1\right)}\right\Vert _{\mathcal{H}_{\mathcal{W}}\left(E\right)}\leq C_{\epsilon}e^{\left(\gamma_{K+1}^{+}+\epsilon\right)t},\label{eq:norm_Lw_out-1}
\end{equation}
with $\gamma_{K+1}^{+}$ defined in (\ref{eq:def_gamma_-1}).
\end{prop}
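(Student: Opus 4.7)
The plan is to transport everything to the Bargmann side, where the norm on $\mathcal{H}_{\mathcal{W}}(E)$ becomes an $L^2$ norm with weight $\mathcal{W}$ on the phase space $E \oplus E^*$. By Definition \ref{def:Let--a} and the resolution of identity $\mathcal{B}^{\dagger}\mathcal{B} = \mathrm{Id}$, the map $\mathcal{W}\,\mathcal{B}: \mathcal{H}_{\mathcal{W}}(E) \to L^2(E \oplus E^*, \frac{d\rho}{(2\pi)^n})$ is an isometry onto its image, and conjugates $\mathrm{Op}(\Phi^{t})$ with the weighted pullback $\Upsilon(\Phi^t)^{1/2}\,\mathcal{W}\,\mathcal{P}_F\,\Phi^{-t\circ}\,\mathcal{P}_F\,\mathcal{W}^{-1}$. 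Since $[\phi^{\circ}, T_k] = 0$ by (\ref{eq:phi,Tk}), the projector $T^{\geq(K+1)}$ commutes with $\mathrm{Op}(\Phi^t)$, so it suffices to bound the restriction of the weighted pullback to the Bargmann image of $\mathrm{Im}(T^{\geq(K+1)})$.

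First, I would establish the Lyapunov estimate for the weight: for $\phi$ expanding with $\|\phi^{-1}\| < \lambda < 1$, the lifted map $\Phi = \phi^{-1} \oplus \phi^*$ contracts in the $x$-direction and expands in the $\xi$-direction, hence (\ref{eq:def_W-1-1}) yields a constant $c>0$ and a fixed radius $R_0$ such that
\[
\frac{\mathcal{W}(\Phi^t(x,\xi))}{\mathcal{W}(x,\xi)} \leq C\, e^{-c R t}
\qquad \text{whenever}\ \|(x,\xi)\|_{g\oplus g^{-1}} \geq R_0,
\]
uniformly in $t \geq 0$. Taking a smooth cutoff $\chi \in C_c^{\infty}(E \oplus E^*)$ equal to $1$ on a slightly larger ball and using Schur's test, the $(1-\chi)$-part of the operator is bounded by $C\, e^{-c R t}$, which is $\leq e^{(\gamma_{K+1}^+ + \epsilon)t}$ as soon as $R$ is chosen large enough depending on $\epsilon$ and $K$.

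Second, for the compactly supported part I would exploit the algebraic structure on polynomials. The finite rank projectors $T_k$ diagonalize (block-wise) the action of $\phi^{\circ}$, and by definition (\ref{eq:def_gamma_-1}) the norm of $\mathrm{Op}(\Phi^t)$ on $\mathrm{Im}(T_k)$ grows like $e^{t(\gamma_k^+ + \epsilon)}$. The key microlocal input is that for $u$ with vanishing Taylor expansion up to order $K$ at the origin, the Bargmann transform $\mathcal{B}u$ restricted to the region $B_{R_0}$ lies in the closed subspace generated by Hermite-type modes of total degree $\geq K+1$, on which the metaplectic representative of $\Phi^t$ (equivalently $\mathrm{Op}(\Phi^t)$) acts with norm $\leq C_\epsilon e^{t(\gamma_{K+1}^+ + \epsilon)}$ by the monotonicity $\gamma_{k+1}^+ \leq \gamma_k^+$ and the spectral decomposition of $\mathrm{Op}(\Phi)$ band-by-band. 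Combining with the uniform boundedness of $\mathcal{W}$ and $\mathcal{W}^{-1}$ on the compact region, the $\chi$-part is therefore controlled by $C_\epsilon e^{t(\gamma_{K+1}^+ + \epsilon)}$.

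The main obstacle I expect is the second step, namely giving a precise and quantitative form to the assertion that $\mathcal{B} \circ T^{\geq(K+1)}$ microlocally selects, near the origin of phase space, only modes of total degree $\geq K+1$ in the Hermite decomposition, in a way that respects the grading under $\mathrm{Op}(\Phi^t)$. This requires carefully matching the Taylor grading in position space with the Bargmann/Hermite grading in phase space, and handling the commutators arising from composing $\mathcal{P}_F$ with the cutoff $\chi$, which are lower-order and absorb into the $\epsilon$ loss. Once this correspondence is set up, the proof is completed by summing the two estimates and choosing $R$ and the cutoff scale appropriately depending on $K$ and $\epsilon$.
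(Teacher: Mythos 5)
Your first step (transporting to the Bargmann side via the isometry $\mathcal{W}\mathcal{B}$, using the Lyapunov property of $\mathcal{W}$ and a cutoff plus Schur's test to kill the part of the operator supported outside a compact region at rate $e^{-cRt}$) matches the paper's argument and is sound. The gap is in your second step. You assert that for $u\in\mathrm{Im}\bigl(T^{\geq(K+1)}\bigr)$ the Bargmann transform $\mathcal{B}u$ lies (near the origin of phase space) in the span of ``Hermite-type modes of total degree $\geq K+1$,'' and that $\mathrm{Op}\left(\Phi^{t}\right)$ acts on that subspace with norm $\leq C_{\epsilon}e^{t(\gamma_{K+1}^{+}+\epsilon)}$ ``by the spectral decomposition of $\mathrm{Op}\left(\Phi\right)$ band-by-band.'' Neither claim holds as stated. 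The space $\mathrm{Im}\bigl(T^{\geq(K+1)}\bigr)$ is the finite-codimension kernel of the functionals $\langle\delta_{0}^{(\alpha)}|\cdot\rangle$, $|\alpha|\leq K$ — a local condition at $x=0$ — and it is not the closed span of high-degree Hermite modes, which is a global condition; the identification of the Taylor grading with the Hermite grading is only an approximate statement, and it is not used in the paper. More seriously, the only genuine ``bands'' available a priori are the finite-rank blocks $\mathrm{Im}\left(T_{k}\right)$, $k\leq K$, on which (\ref{eq:def_gamma_-1}) applies; on the infinite-dimensional complement there is no band-by-band spectral decomposition, and the assertion that $\mathrm{Op}\left(\Phi^{t}\right)$ has norm $\lesssim e^{t(\gamma_{K+1}^{+}+\epsilon)}$ there is precisely the content of the proposition. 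Your argument is circular at this point.

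The paper closes this gap by a different mechanism, which you would need to supply. First, the truncation estimate shows that the essential spectral radius of the weighted operator is $\leq e^{-\Lambda t}$ with $\Lambda$ as large as one wishes by taking $R$ large, so the spectrum of $\mathrm{Op}\left(\Phi^{t}\right)T^{\geq(K+1)}$ outside a small disk consists of discrete eigenvalues. Second — and this is the analytic input missing from your proposal — the Taylor–Lagrange remainder formula gives the correlation bound
\begin{equation*}
\left|\langle v|\mathrm{Op}\left(\Phi^{t}\right)T^{\geq\left(K+1\right)}u\rangle_{L^{2}}\right|\leq Ce^{t\gamma_{K+1}^{+}}\left\Vert u\right\Vert _{C^{K+1}}\left\Vert x^{K+1}v\right\Vert _{L^{1}},
\end{equation*}
which rules out discrete eigenvalues of modulus exceeding $e^{t\gamma_{K+1}^{+}}$; hence the spectral radius equals $e^{t\gamma_{K+1}^{+}}$, and the spectral radius formula (applied to the semigroup) converts this into the norm bound with the $\epsilon$ loss. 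Without the Taylor remainder estimate (or an equivalent quantitative statement), the bound on the compactly supported part cannot be obtained the way you describe.
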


\end{cBoxB}

\subsubsection{Proof of Proposition \ref{prop:For-any-}}

We have $\tilde{\mathrm{Op}}\left(\Phi\right)\eq{\ref{eq:def_op_tilde}}\left(\Upsilon\left(\Phi\right)\right)^{1/2}\mathcal{P}\Phi^{-\circ}\mathcal{P}$
and let
\[
\tilde{\mathrm{Op}}_{\mathcal{W}}\left(\Phi\right):=\mathcal{W}\tilde{\mathrm{Op}}\left(\Phi\right)\mathcal{W}^{-1}.
\]
From (\ref{eq:op_tilde_op}) and (\ref{eq:def_HWE-1}) we have the
commutative diagram (with the notation of weighted norm $\left\Vert u\right\Vert _{L^{2}\left(E\oplus E^{*};\mathcal{W}^{2}\right)}:=\int\mathcal{W}^{2}\left(x,\xi\right)u\left(x,\xi\right)\frac{dxd\xi}{\left(2\pi\right)^{\mathrm{dim}E}}$)

\begin{equation}
\begin{array}{ccccc}
\mathcal{H}_{\mathcal{W}}\left(E\right) & \overset{\mathcal{B}}{\longrightarrow} & L^{2}\left(E\oplus E^{*};\mathcal{W}^{2}\right) & \overset{\mathcal{W}}{\longrightarrow} & L^{2}\left(E\oplus E^{*}\right)\\
\downarrow\mathrm{Op}\left(\Phi\right) &  & \downarrow\tilde{\mathrm{Op}}\left(\Phi\right) &  & \downarrow\tilde{\mathrm{Op}}_{\mathcal{W}}\left(\Phi\right)\\
\mathcal{H}_{\mathcal{W}}\left(E\right) & \overset{\mathcal{B}}{\longrightarrow} & L^{2}\left(E\oplus E^{*};\mathcal{W}^{2}\right) & \overset{\mathcal{W}}{\longrightarrow} & L^{2}\left(E\oplus E^{*}\right)
\end{array}\label{eq:comm_diagram}
\end{equation}
where horizontal arrows are isometries by definition. Hence, the study
of $\mathrm{Op}\left(\Phi^{t}\right):\mathcal{H}_{\mathcal{W}}\left(E\right)\rightarrow\mathcal{H}_{\mathcal{W}}\left(E\right)$
is equivalent to study $\tilde{\mathrm{Op}}_{\mathcal{W}}\left(\Phi^{t}\right):L^{2}\left(E\oplus E^{*}\right)\rightarrow L^{2}\left(E\oplus E^{*}\right)$.

\begin{cBoxB}{}
\begin{lem}
\label{lem:Decay-property-of}``\textbf{Decay property} of $\mathcal{W}$
with respect to $\Phi$''. There exists $C>0$, such that for any
$t\geq0$, there exists $C_{t}$ such that for any $\rho\in E\oplus E^{*}$
\begin{align}
\frac{\mathcal{W}\left(\Phi^{t}\left(\rho\right)\right)}{\mathcal{W}\left(\rho\right)} & \leq C\nonumber \\
 & \leq Ce^{-\Lambda t}\text{ if }\left\Vert \rho\right\Vert _{g}\geq C_{t}\label{eq:decay-1}
\end{align}
with 
\begin{equation}
\Lambda=-\lambda_{+}R\left(1-\gamma\right)>0.\label{eq:Lambda}
\end{equation}
\end{lem}

\end{cBoxB}

\begin{proof}
Similar to the proof in \cite[Thm 5.9]{faure_tsujii_Ruelle_resonances_density_2016}.
 Write $\rho=\left(x,\xi\right)\in T^{*}E=E\oplus E^{*}$. We consider
different zones of $T^{*}E$:
\begin{enumerate}
\item If $\left|x\right|,\left|\xi\right|\leq1$ then $W\left(\rho\right)\asymp1$.
\item If $\left|x\right|>1$ and $\left|\xi\right|\leq\left|x\right|^{\gamma}$
then $W\left(\rho\right)\asymp\left|x\right|^{-R\left(1-\gamma\right)}$.
\item If $\left|\xi\right|>1$ and $\left|x\right|\leq\left|\xi\right|^{\gamma}$
then $W\left(\rho\right)\asymp\left|\xi\right|^{R\left(1-\gamma\right)}$.
\item Otherwise $W\left(\rho\right)\asymp\left(\frac{\left|\xi\right|}{\left|x\right|}\right)^{R}$.
\end{enumerate}
We observe that for $t\geq0$, $\left|\phi^{-t}\left(x\right)\right|\geq e^{-\lambda_{+}t}\left|x\right|$
and $\left|\phi^{*t}\left(\xi\right)\right|\leq e^{\lambda_{+}t}\left|\xi\right|$
from which we deduce Lemma \ref{lem:Decay-property-of}.
\end{proof}

\paragraph{Truncation in phase space near the trapped set}

Let $\chi\in C_{c}^{\infty}\left(\mathbb{R}^{+};[0,1]\right)$ such
that
\begin{align*}
\chi\left(x\right) & =1\text{ if }x\leq1,\\
\chi\left(x\right) & =0\text{ if }x\geq2,
\end{align*}
Let $\sigma>0$. For $\rho\in E\oplus E^{*}$ let
\[
\chi_{\sigma}\left(\rho\right):=\chi\left(\frac{\left\Vert \rho\right\Vert _{g}}{\sigma}\right).
\]
\begin{cBoxB}{}
\begin{lem}
We have $\forall R>0,$$\forall\epsilon>0$,$\exists C>0,\forall t\geq0,$
$\exists\sigma_{t}>0$, $\forall\sigma>\sigma_{t}$, 
\begin{equation}
\left\Vert \tilde{\mathrm{Op}}_{\mathcal{W}}\left(\Phi^{t}\right)\left(1-\chi_{\sigma}\right)\right\Vert _{L^{2}}\leq Ce^{\left(-\Lambda+\epsilon\right)t}.\label{eq:norm_Lw_out-2}
\end{equation}
where $\Lambda$ is given in (\ref{eq:Lambda}). The operator $\tilde{\mathrm{Op}}_{\mathcal{W}}\left(\Phi^{t}\right)\chi_{\sigma}$
is Trace class in $L^{2}$. Consequently for any $t>0$, the \href{https://en.wikipedia.org/wiki/Essential_spectrum}{essential spectral radius}
is
\begin{equation}
r_{\mathrm{ess.}}\left(\tilde{\mathrm{Op}}_{\mathcal{W}}\left(\Phi^{t}\right)_{L^{2}\rightarrow L^{2}}\right)\leq e^{-\Lambda t}.\label{eq:ress_LWT}
\end{equation}
\end{lem}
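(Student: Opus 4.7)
The plan is to analyze the integral kernel of $\tilde{\mathrm{Op}}_{\mathcal{W}}(\Phi^t)$ on $L^2(E\oplus E^*)$ and combine Schur's test with the temperate and decay properties of the weight $\mathcal{W}$ just established. Writing $\tilde{\mathrm{Op}}(\Phi^t)=\mathcal{B}\,\mathrm{Op}(\Phi^t)\,\mathcal{B}^{\dagger}$, so that $\tilde{\mathrm{Op}}(\Phi^t)$ is unitary on $\mathrm{Im}(\mathcal{B})\subset L^2$, and using the explicit Bergman kernel formula (\ref{eq:Kernel_of_P_radial_Gauge}), one checks that the kernel $K_t(\rho',\rho)$ of $\tilde{\mathrm{Op}}(\Phi^t)$ is concentrated on the graph $\{\rho'=\Phi^t\rho\}$ with Gaussian decay in the $\boldsymbol{g}$-distance: for every $N>0$ there exists $C_{t,N}$ such that
\[
|K_t(\rho',\rho)|\le C_{t,N}\langle\mathrm{dist}_{\boldsymbol{g}}(\rho',\Phi^t\rho)\rangle^{-N},
\]
and both Schur $L^1$-integrals $\int|K_t(\rho',\rho)|\,d\rho'$ and $\int|K_t(\rho',\rho)|\,d\rho$ are bounded by some $P(t)$ with at most polynomial growth in $t$ (here one uses $|\det\Phi^t|=1$ since $\Phi^t$ is symplectic).

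For the norm bound (\ref{eq:norm_Lw_out-2}), the kernel of $\tilde{\mathrm{Op}}_{\mathcal{W}}(\Phi^t)(1-\chi_\sigma)$ equals $K_t(\rho',\rho)\cdot\mathcal{W}(\rho')/\mathcal{W}(\rho)\cdot(1-\chi_\sigma(\rho))$, and I would factor
\[
\frac{\mathcal{W}(\rho')}{\mathcal{W}(\rho)}=\frac{\mathcal{W}(\rho')}{\mathcal{W}(\Phi^t\rho)}\cdot\frac{\mathcal{W}(\Phi^t\rho)}{\mathcal{W}(\rho)}.
\]
By the temperate property (\ref{eq:temperate-1}) (together with $h_\gamma\le h_0$), the first factor is at most $C\langle\mathrm{dist}_{\boldsymbol{g}}(\rho',\Phi^t\rho)\rangle^{N}$, which is absorbed into the rapid decay of $K_t$. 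By the decay property (\ref{eq:decay-1}), choosing $\sigma>\sigma_t:=C_t$ makes the second factor at most $Ce^{-\Lambda t}$ on the support of $1-\chi_\sigma$. Schur's test then yields $\|\tilde{\mathrm{Op}}_{\mathcal{W}}(\Phi^t)(1-\chi_\sigma)\|_{L^2}\le Ce^{-\Lambda t}P(t)$, and since $P(t)$ is polynomial, $P(t)\le C_\epsilon e^{\epsilon t}$ for any $\epsilon>0$, proving (\ref{eq:norm_Lw_out-2}).

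For the trace-class claim, the kernel of $\tilde{\mathrm{Op}}_{\mathcal{W}}(\Phi^t)\chi_\sigma$ is real-analytic (from the Bergman kernel), is compactly supported in the $\rho$ variable (inside the $\boldsymbol{g}$-ball of radius $2\sigma$), and is rapidly decaying in $\rho'$; inserting an auxiliary factor $\langle\rho\rangle^{-N}\langle\rho\rangle^N$ of sufficiently large $N$ factors the operator as a composition of two Hilbert--Schmidt operators, hence trace class. Finally, for the essential spectral radius (\ref{eq:ress_LWT}), the decomposition
\[
\tilde{\mathrm{Op}}_{\mathcal{W}}(\Phi^t)=\tilde{\mathrm{Op}}_{\mathcal{W}}(\Phi^t)\chi_\sigma+\tilde{\mathrm{Op}}_{\mathcal{W}}(\Phi^t)(1-\chi_\sigma)
\]
writes it as compact plus small-norm, so by Weyl's characterization of the essential spectrum, $r_{\mathrm{ess}}(\tilde{\mathrm{Op}}_{\mathcal{W}}(\Phi^t))\le C_\epsilon e^{(-\Lambda+\epsilon)t}$. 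Applying this with $t$ replaced by $nt$, and using the semigroup property (\ref{eq:homom}) together with $r_{\mathrm{ess}}(A^n)=r_{\mathrm{ess}}(A)^n$, gives $r_{\mathrm{ess}}(\tilde{\mathrm{Op}}_{\mathcal{W}}(\Phi^t))\le C_\epsilon^{1/n}e^{(-\Lambda+\epsilon)t}$; letting first $n\to\infty$ and then $\epsilon\to 0^+$ concludes (\ref{eq:ress_LWT}).

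The main technical obstacle is the uniform-in-$t$ control of the polynomial factor $P(t)$ in Schur's test: since $\Phi^t=\phi^{-t}\oplus\phi^{*t}$ simultaneously expands and contracts $\boldsymbol{g}$, the effective Gaussian width in $K_t$ and the metaplectic correction $\Upsilon(\Phi^t)^{1/2}$ depend nontrivially on $t$, and one must verify that their competing contributions (which must cancel since $\tilde{\mathrm{Op}}(\Phi^t)$ is unitary) leave only polynomial growth, which is precisely what forces the slack $-\Lambda+\epsilon$ rather than a cleaner $-\Lambda$ in (\ref{eq:norm_Lw_out-2}).
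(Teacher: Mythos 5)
The essential-spectral-radius deduction and the trace-class claim at the end of your argument are fine, but the core estimate on which everything rests contains a genuine gap: the assertion that the Schur integrals $\int|K_t(\rho',\rho)|\,d\rho'$ and $\int|K_t(\rho',\rho)|\,d\rho$ grow at most polynomially in $t$ is false. From (\ref{eq:Op(f)}), $\mathrm{Op}\left(\Phi^{t}\right)\varphi_{\rho}$ is a \emph{squeezed} Gaussian wave packet (spatial width $\asymp e^{-\lambda t}$, frequency width $\asymp e^{\lambda t}$ in each expanding direction of $\phi^{t}$), so its Bargmann coefficients $K_{t}\left(\rho',\rho\right)=\langle\varphi_{\rho'}|\mathrm{Op}\left(\Phi^{t}\right)\varphi_{\rho}\rangle$ are spread over a region of $\boldsymbol{g}$-volume $\asymp e^{2ct}$ around $\Phi^{t}\rho$, with $c=\frac{1}{2}\sum_{j}\lambda_{j}>0$; since the $L^{2}_{\rho'}$-norm of $K_{t}\left(\cdot,\rho\right)$ equals $1$, its $L^{1}_{\rho'}$-norm is $\asymp e^{ct}$. (The remark $\left|\mathrm{det}\Phi^{t}\right|=1$ controls the Jacobian of the exact transport, not the anisotropic Gaussian smearing introduced by $\mathcal{B}$ and $\mathcal{B}^{\dagger}$, which is where the loss occurs.) Your factorization of $\mathcal{W}\left(\rho'\right)/\mathcal{W}\left(\rho\right)$ through $\mathcal{W}\left(\Phi^{t}\rho\right)$ and the use of (\ref{eq:temperate-1}) and (\ref{eq:decay-1}) are correct, but the resulting Schur bound is $Ce^{-\Lambda t}e^{ct}$, not $Ce^{\left(-\Lambda+\epsilon\right)t}$. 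Since the statement is claimed for \emph{every} $R>0$ while $\Lambda=\frac{1}{2}\lambda\left(1-\gamma\right)R$ can be smaller than $c$, the extra factor cannot be absorbed into the $\epsilon$-slack. You correctly identify this cancellation as "the main technical obstacle," but you assert rather than prove it, and as stated it does not hold.

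The standard way around this (and the route taken in the cited \cite[Lemma 5.14]{faure_tsujii_Ruelle_resonances_density_2016}, to which the paper simply defers for this lemma) is to avoid passing the exact transport through its Bargmann kernel: write $\tilde{\mathrm{Op}}\left(\Phi^{t}\right)=\left(\Upsilon\left(\Phi^{t}\right)\right)^{1/2}\mathcal{P}\,\Phi^{-t\circ}\,\mathcal{P}$ and treat $\Phi^{-t\circ}$ as a measure-preserving change of variables on $L^{2}\left(E\oplus E^{*}\right)$, so that conjugating it by $\mathcal{W}$ produces \emph{exactly} the multiplication by $\mathcal{W}\circ\Phi^{t}/\mathcal{W}$ (controlled by (\ref{eq:decay-1}) on the support of $1-\chi_{\sigma}$) composed with an isometry, while the only kernels to which Schur's test is applied are those of $\mathcal{W}\mathcal{P}\mathcal{W}^{-1}$-type operators, which are $t$-independent and handled by (\ref{eq:temperate-1}) alone. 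With (\ref{eq:norm_Lw_out-2}) established this way, your Hilbert--Schmidt factorization for the trace-class claim and your Weyl-plus-semigroup argument (using $r_{\mathrm{ess}}\left(A^{n}\right)=r_{\mathrm{ess}}\left(A\right)^{n}$ to strip off $C_{\epsilon}$ and $\epsilon$) for (\ref{eq:ress_LWT}) go through as written.
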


\end{cBoxB}

\begin{proof}
For (\ref{eq:norm_Lw_out-2}), see \cite[thm 5.13]{faure_tsujii_Ruelle_resonances_density_2016}.
\end{proof}
\begin{cBoxB}{}
\begin{lem}
\label{lem:Let-.-If}Let $k\in\mathbb{N}$. If $k+1<r$ in (\ref{eq:order_r}),
then $T_{k}:\mathcal{H}_{\mathcal{W}}\left(E\right)\rightarrow\mathcal{H}_{\mathcal{W}}\left(E\right)$
is a bounded operator.
\end{lem}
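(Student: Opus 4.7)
The plan is to prove Lemma \ref{lem:Let-.-If} by writing $T_k$ as an explicit finite sum of rank-one operators and bounding each of them separately via Bargmann-side estimates. From the definition (\ref{eq:def_T_k}),
\[
T_k = \sum_{|\alpha|=k}\frac{1}{\alpha!}\, x^\alpha\otimes \delta_0^{(\alpha)},
\]
so the boundedness on $\mathcal{H}_\mathcal{W}(E)$ reduces to two claims: (a) each monomial $x^\alpha$ with $|\alpha|=k$ defines an element of $\mathcal{H}_\mathcal{W}(E)$ with norm depending only on $k,n,R,\gamma$; and (b) each derivative of the Dirac mass $\delta_0^{(\alpha)}$ extends to a bounded linear functional on $\mathcal{H}_\mathcal{W}(E)$. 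Granted (a) and (b), Cauchy--Schwarz would yield
\[
\|T_k u\|_{\mathcal{H}_\mathcal{W}}\le \Big(\sum_{|\alpha|=k}\frac{1}{\alpha!}\|x^\alpha\|_{\mathcal{H}_\mathcal{W}}\,\|\delta_0^{(\alpha)}\|_{\mathcal{H}_\mathcal{W}^*}\Big)\|u\|_{\mathcal{H}_\mathcal{W}} = C_k\,\|u\|_{\mathcal{H}_\mathcal{W}}.
\]

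To establish (a), I would compute the Bargmann transform explicitly. Substituting $z=y-X$ in the defining integral (\ref{eq:def_Bargman_B}) gives
\[
\mathcal{B}(y^\alpha)(X,\Xi)= \pi^{-n/4}(2\pi)^{n/2}\sum_{\beta\le\alpha}\binom{\alpha}{\beta}X^{\alpha-\beta}\,(-i\partial_\Xi)^{\beta}e^{-\|\Xi\|^2/2},
\]
which is a polynomial of total degree $k$ in $(X,\Xi)$ multiplied by the Gaussian $e^{-\|\Xi\|^2/2}$. Dually, writing $\partial_y^\alpha[e^{i\Xi(y-X)}e^{-\|y-X\|^2/2}]\big|_{y=0}$ as a polynomial of degree $k$ in $(i\Xi+X)$ times $e^{-i\Xi X}e^{-\|X\|^2/2}$, one finds that $\mathcal{B}\delta_0^{(\alpha)}(X,\Xi)$ is a polynomial of degree $k$ in $(X,\Xi)$ multiplied by the Gaussian $e^{-\|X\|^2/2}$. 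The two Gaussians localize in \emph{orthogonal} directions, which is the key mechanism behind the argument. The identification $\mathcal{H}_\mathcal{W}^{*}=\mathcal{H}_{\mathcal{W}^{-1}}$ (via the Bargmann isometry and $\langle u,v\rangle_{L^2(E)}=\langle\mathcal{B}u,\mathcal{B}v\rangle_{L^2(E\oplus E^{*})}$) then shows that (b) is equivalent to $\|\mathcal{W}^{-1}\mathcal{B}\delta_0^{(\alpha)}\|_{L^2}<\infty$.

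The two norm estimates then reduce to integrability questions for the weight $\mathcal{W}$ (\ref{eq:def_W-1-1}). Using the order bound (\ref{eq:order_W}) for $\mathcal{W}$, together with the polynomial degree $k$ of the Hermite-type factors and the Gaussian localization in one of the two variables $(X,\Xi)$, both integrals reduce (in the non-Gaussian variable) to integrals of the form $\int \langle\cdot\rangle^{2k-4r}$ over $\mathbb{R}^n$, convergent when $4r-2k>n$. The hypothesis $k+1<r$ gives $4r-2k>2r+2>n$ in the regime the paper works in (with $R$ eventually taken large), providing the needed integrability with room to spare.

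The main obstacle is the careful bookkeeping of the weight $\mathcal{W}(X,\Xi)$, whose asymptotic behavior changes depending on which of $\|X\|$ or $\|\Xi\|$ dominates in $h_\gamma(X,\Xi)=h_0\langle\|(X,\Xi)\|\rangle^{-\gamma}$. In the regime $\|X\|\gg\|\Xi\|$ the weight behaves like $\langle X\rangle^{2r}$, whereas in $\|\Xi\|\gg\|X\|$ it behaves like $\langle\Xi\rangle^{-2r}$; one must split each integral into these regimes and check that the Gaussian decay in the dominant variable is strong enough in both (a) and (b) to absorb the polynomial factors coming from $\mathcal{B}(x^\alpha)$ and $\mathcal{B}\delta_0^{(\alpha)}$ respectively, with the sharp threshold being governed by $k+1<r$.
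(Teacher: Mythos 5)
Your overall strategy coincides exactly with the paper's proof, which consists of precisely the one sentence "check that $\left\Vert x^{\alpha}\right\Vert _{\mathcal{H}_{\mathcal{W}}\left(E\right)}<\infty$ and $\left\Vert \delta_{0}^{\left(\alpha\right)}\right\Vert _{\mathcal{H}_{\mathcal{W}}\left(E\right)^{*}}<\infty$": you decompose $T_{k}$ into the rank-one pieces $x^{\alpha}\langle\frac{1}{\alpha!}\delta_{0}^{(\alpha)}|\cdot\rangle$ and reduce to those two memberships, and your Bargmann-side computations (degree-$k$ polynomial times a Gaussian in $\Xi$ for $\mathcal{B}(x^{\alpha})$, times a Gaussian in $X$ for $\mathcal{B}\delta_{0}^{(\alpha)}$, and the duality $\|\delta_{0}^{(\alpha)}\|_{\mathcal{H}_{\mathcal{W}}^{*}}=\|\mathcal{W}^{-1}\mathcal{B}\delta_{0}^{(\alpha)}\|_{L^{2}}$) are correct.

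The gap is in the final integrability step, and it is not cosmetic: your stated regime behaviour of $\mathcal{W}$ and your stated conclusion contradict each other. Reading (\ref{eq:def_W-1-1}) literally, as you do, gives $\mathcal{W}\sim\langle X\rangle^{2r}$ when $\|X\|\gg\|\Xi\|$ and $\mathcal{W}\sim\langle\Xi\rangle^{-2r}$ when $\|\Xi\|\gg\|X\|$. But then, since $\mathcal{B}(x^{\alpha})$ is Gaussian-localized in $\Xi$ and spread in $X$, claim (a) reduces to $\int\langle X\rangle^{4r+2k}\,dX$, which diverges; and claim (b) reduces to $\int\mathcal{W}^{-2}|\mathcal{B}\delta_{0}^{(\alpha)}|^{2}\sim\int\langle\Xi\rangle^{4r+2k}\,d\Xi$, which also diverges. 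The convergent exponent $2k-4r$ you write down corresponds to the opposite orientation of the weight -- decaying like $\langle X\rangle^{-2r}$ in the position variable and growing like $\langle\Xi\rangle^{2r}$ in the frequency variable, exactly as in the toy model of Section \ref{subsec:Discrete-Ruelle-Pollicott-spectr}, where $\mathcal{W}=\langle\sqrt{h}\xi\rangle^{R}/\langle\sqrt{h}x\rangle^{R}$ and the condition for $x^{k}\in\mathcal{H}_{\mathcal{W}}(\mathbb{R})$ is verified. Under that orientation the lemma holds (polynomials lie in the space, Dirac derivatives in the dual, with the threshold $4r-2k>n$ rather than literally $k+1<r$); under the orientation you state, the two memberships are exactly reversed and $T_{k}$ is unbounded on $\mathcal{H}_{\mathcal{W}}(E)$. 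To make the proof correct you must commit to the weight with numerator and denominator exchanged relative to (\ref{eq:def_W-1-1}) and redo the regime bookkeeping consistently; as written, the middle of your argument refutes its own conclusion.
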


\end{cBoxB}

\begin{proof}
We check that in (\ref{eq:def_T_k}), $\left\Vert x^{\alpha}\right\Vert _{\mathcal{H}_{\mathcal{W}}\left(E\right)}<\infty$
and $\left\Vert \delta_{0}^{\left(\alpha\right)}\right\Vert _{\mathcal{H}_{\mathcal{W}}\left(E\right)^{*}}<\infty$.
\end{proof}
\begin{cBoxB}{}
\begin{lem}
\label{lem:We-have}Let $K\in\mathbb{N}$ and $R>0$ large enough
such that $-\Lambda<\gamma_{K+1}^{+}$ in (\ref{eq:Lambda}). For
$t>0$, the \href{https://en.wikipedia.org/wiki/Spectral_radius}{spectral radius}
is
\[
r_{\mathrm{spec}}\left(\left(\mathrm{Op}\left(\Phi^{t}\right)T_{\geq\left(K+1\right)}\right)_{\mathcal{H}_{\mathcal{W}}\left(E\right)}\right)=e^{t\gamma_{K+1}^{+}}
\]
\end{lem}

\end{cBoxB}

\begin{rem}
Lemma \ref{lem:We-have} is similar to \cite[Claim (2) in Prop. 3.4.6]{faure-tsujii_prequantum_maps_12}.
\end{rem}

\begin{proof}
First, from (\ref{eq:ress_LWT}) and because $\mathrm{Op}\left(\Phi^{t}\right)T_{k}$
is finite rank, we have that 
\[
r_{\mathrm{ess}}\left(\left(\mathrm{Op}\left(\Phi^{t}\right)T_{\geq\left(K+1\right)}\right)_{\mathcal{H}_{\mathcal{W}}\left(E\right)}\right)\leq e^{-\Lambda t},
\]
i.e. $\mathrm{Op}\left(\Phi^{t}\right)T_{\geq\left(K+1\right)}:\mathcal{H}_{\mathcal{W}}\left(E\right)\rightarrow\mathcal{H}_{\mathcal{W}}\left(E\right)$
has discrete spectrum outside the disk of radius $e^{-\Lambda t}$.
From \href{https://en.wikipedia.org/wiki/Taylor\%27s_theorem}{Taylor-Lagrange remainder formula},
for any $u,v\in\mathcal{S}\left(E\right)$, we have
\begin{equation}
\left|\langle v|\mathrm{Op}\left(\Phi^{t}\right)T_{\geq\left(K+1\right)}u\rangle_{L^{2}}\right|\leq Ce^{t\gamma_{K+1}^{+}}\left\Vert u\right\Vert _{C^{K+1}}\left\Vert x^{K+1}v\right\Vert _{L^{1}}.\label{eq:decay_correl}
\end{equation}
i.e. correlation functions decay faster than $e^{t\gamma_{K+1}^{+}}$.
We deduce that $\mathrm{Op}\left(\Phi^{t}\right)T_{\geq\left(K+1\right)}$
has no spectrum on $\left|z\right|>e^{t\gamma_{K+1}^{+}}$.
\end{proof}
Finally Lemma \ref{lem:We-have} is equivalent to Proposition \ref{prop:For-any-}.

\subsection{\label{subsec:Discrete-Ruelle-Pollicott-spectr}Discrete Ruelle spectrum
in a simple toy model}

We give here a very simple example that illustrates Section \ref{sec:Linear-expanding-maps}
and some mechanisms that play a role in this paper. (See the lecture
notes \cite{faure_cours_cirm_2019} for further details). On $\mathbb{R}_{x}$
let us consider the vector field
\begin{equation}
X=-x\frac{\partial}{\partial x}\label{eq:X_xdx}
\end{equation}
that gives a flow $\phi^{t}\left(x\right)=e^{-t}x$ that is contracting
for $t>0$, hence we think $E_{s}=\mathbb{R}_{x}$ as a stable direction.
We observe that for any $k\in\mathbb{N}$, the monomial $x^{k}$ is
an eigenfunction of $X$ (hence of $e^{tX}$) with eigenvalue $\left(-k\right)$:
\[
Xx^{k}=\left(-k\right)x^{k}
\]
and the spectral projector is
\[
T_{k}=x^{k}\langle\frac{1}{k!}\delta^{\left(k\right)}|.\rangle,
\]
where $\delta^{\left(k\right)}$ is the $k$-th derivative of the
Dirac distribution (notice indeed from $\langle\frac{1}{k!}\delta^{\left(k\right)}|x^{k'}\rangle=\delta_{k=k'}$
that $\left(\delta^{\left(k\right)}\right)_{k}$ forms a dual basis
to $\left(x^{k'}\right)_{k'}$).

However $x^{k},\delta^{\left(k\right)}$ do not belong to $L^{2}\left(\mathbb{R}\right)$.
In the Hilbert space $L^{2}\left(\mathbb{R}\right)$ one has $X^{\dagger}=-X+1\Leftrightarrow\left(X-\frac{1}{2}\right)^{\dagger}=-\left(X-\frac{1}{2}\right)$
that implies that the spectrum of $X$ is on the vertical axis $\frac{1}{2}+i\mathbb{R}$
with some essential spectrum. Some better Hilbert space $\mathcal{H}_{\mathcal{W}}\left(\mathbb{R}\right)$
is constructed as in (\ref{eq:def_HWE}) in order to study the pull
back operator $\left(e^{tX}u\right)\left(x\right)=u\left(\phi^{t}\left(x\right)\right)=u\left(e^{-t}x\right)$
on functions $u\in\mathcal{S}\left(\mathbb{R}\right)$. We consider
the induced (pull-back) flow on the cotangent space $T^{*}E_{s}=T^{*}\mathbb{R}$
given by
\[
\tilde{\phi}^{t}\left(x,\xi\right)=\left(e^{t}x,e^{-t}\xi\right).
\]
For $0<h\ll1$, $R\geq0$ we define the ``escape function'' or ``Lyapounov
function'' on $T^{*}\mathbb{R}$ for $\tilde{\phi}^{t}$ that is
\[
\mathcal{W}\left(x,\xi\right)=\frac{\left\langle \sqrt{h}\xi\right\rangle ^{R}}{\left\langle \sqrt{h}x\right\rangle ^{R}}.
\]
Indeed, it satisfies $\frac{\mathcal{W}\circ\tilde{\phi}^{t}}{\mathcal{W}}\leq C$
everywhere and $\frac{\mathcal{W}\circ\tilde{\phi}^{t}}{\mathcal{W}}\leq e^{-Rt}$
far from the ``trapped set'' or ``non wandering set'' $\left(0,0\right)$.
In the Hilbert space
\[
\mathcal{H}_{\mathcal{W}}\left(\mathbb{R}\right):=\mathrm{Op}\left(\mathcal{W}^{-1}\right)L^{2}\left(\mathbb{R}\right),
\]
for $-R+\frac{1}{2}<-k$, we have that $x^{k}\in\mathcal{H}_{\mathcal{W}}\left(\mathbb{R}\right)$
and $\left\Vert T_{k}\right\Vert _{\mathcal{H}_{\mathcal{W}}}\leq C$
is a bounded operator. See figure \ref{fig:Spectrum-of-the}.For
large time $t\gg1$, the emerging behavior of $e^{tX}u$ is given
by
\begin{align*}
e^{tX}u & =\Pi_{0}u+O_{\mathcal{H}_{\mathcal{W}}}\left(e^{-t}\right)\\
 & =\boldsymbol{1}\,u\left(0\right)+O_{\mathcal{H}_{\mathcal{W}}}\left(e^{-t}\right)
\end{align*}
i.e. projection onto the constant function.
\begin{center}
\begin{figure}
\begin{centering}
\input{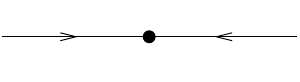tex_t}$\qquad$$\qquad$\input{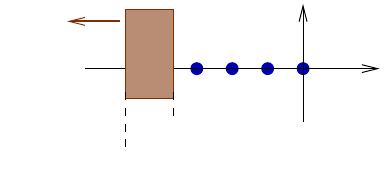tex_t}
\par\end{centering}
\caption{\label{fig:Spectrum-of-the}In blue, discrete spectrum of the operator
(\ref{eq:X_xdx}), $X=-x\frac{\partial}{\partial x}$ in $\mathcal{H}_{\mathcal{W}}\left(\mathbb{R}\right)$.
In brown, the essential spectrum moves far away if $R\rightarrow+\infty$.}
\end{figure}
\par\end{center}

\bibliographystyle{plain}
\bibliography{/home/faure/articles/articles}

\end{document}